\newtheorem{thm}{Theorem}
\newtheorem*{thm*}{Theorem}
\newtheorem{lemma}{Lemma}[section]
\newtheorem{prop}[lemma]{Proposition}
\newtheorem{claim}[lemma]{Claim}
\newtheorem{cor}[lemma]{Corollary}
\theoremstyle{definition}
\newtheorem{defin}[lemma]{Definition}
\newtheorem{rem}[lemma]{Remark}
\newtheorem{exam}[lemma]{Example}
\newtheorem{notation}[lemma]{Notation}
\newcommand{\R}{{\mathbb{R}}}
\newcommand{\T}{{\mathbb{T}}}
\newcommand{\Z}{{\mathbb{Z}}}
\newcommand{\N}{{\mathbb{N}}}
\newcommand{\C}{{\mathbb{C}}}
\newcommand{\cA}{{\mathcal{A}}}
\newcommand{\cD}{{\mathcal{D}}}
\newcommand{\cC}{{\mathcal{C}}}
\newcommand{\cE}{{\mathcal{E}}}
\newcommand{\cF}{{\mathcal{F}}}
\newcommand{\cH}{{\mathcal{H}}}
\newcommand{\cJ}{{\mathcal{J}}}
\newcommand{\cL}{{\mathcal{L}}}
\newcommand{\cM}{{\mathcal{M}}}
\newcommand{\cP}{{\mathcal{P}}}
\newcommand{\cS}{{\mathcal{S}}}
\newcommand{\cZ}{{\mathcal{Z}}}
\def\id{{1\hskip-2.5pt{\rm l}}}
\newcommand{\grad}{{\hbox{\rm grad\,}}}
\newcommand{\spec}{{\text{spec}}}
\newcommand{\im}{{\operatorname{Im}\,}}
\newcommand{\ind}{{\operatorname{ind}\,}}
\newcommand{\hreg}{\cH_{\text{reg}}}
\begin{document}
\title{Floer theory of disjointly supported Hamiltonians on symplectically aspherical manifolds}
\author{Yaniv Ganor and Shira Tanny}
\maketitle
\begin{abstract}
	We study the Floer-theoretic interaction between disjointly supported Hamiltonians by comparing Floer-theoretic invariants of these Hamiltonians with the ones of their sum. These invariants include spectral invariants, boundary depth and Abbondandolo-Haug-Schlenk's action selector. Additionally, our method shows that in certain situations the spectral invariants of a Hamiltonian supported in an open subset of a symplectic manifold are independent of the ambient manifold.
\end{abstract}

\tableofcontents

\section{Introduction and results.}
The paper deals with Hamiltonian diffeomorphisms of symplectic manifolds, which model the Hamiltonian dynamics on phase spaces in classical mechanics. A central tool for studying Hamiltonian diffeomorphisms is Floer theory, which is an infinite-dimensional version of Morse theory applied to the action functional on the space of contractible loops. As such, Floer theory associates a chain complex to each Hamiltonian, which is generated by the critical points of the action functional and whose differential counts certain negative gradient flow lines, called {\it Floer trajectories}. 

Our main object of interest is Floer theory for Hamiltonians supported in pairwise disjoint open sets, namely $F= F_1 +...+F_N$ where $F_i$ is supported in $U_i$ and $U_1,\dots,U_N$ are pairwise disjoint.
On the level of dynamics, the Hamiltonian diffeomorphisms $\varphi_i$ corresponding to $F_i$ do not interact. The Hamiltonian diffeomorphism corresponding to $F$ is the composition $\varphi= \varphi_1\circ\cdots\circ\varphi_N$, and the diffeomorphisms $\varphi_i$ commute. However, it is unclear a priori whether in Floer theory there is any communication between the disjointly supported Hamiltonians
$F_i$. 
The Floer-theoretic interaction between disjointly supported Hamiltonians was studied by Polterovich \cite{polterovich2014symplectic}, Seyfaddini \cite{seyfaddini2014spectral}, Ishikawa \cite{ishikawa2015spectral} and Humili\`ere-Le Roux-Seyfaddini \cite{humiliere2016towards}, mostly through the relation between invariants of the sum of Hamiltonians and invariants of each one. These works suggest that such an interaction should be quite limited.  
The main finding of this paper is a construction, on symplectically aspherical manifolds and under some conditions on the domains $U_i$, of what we call a ``barricade" - a specific perturbation of the Hamiltonians $F_i$ near the boundaries of $U_i$, which prevents Floer trajectories from entering or exiting these domains. The presence of barricades limits the communication between disjointly supported Hamiltonians as expected. 
The construction is motivated by the following simple idea in Morse theory. Given a smooth function $F$ on a Riemannian manifold, that is supported inside an open subset $U$, one can perturb it into a Morse function $f$ that has a ``bump" in a neighborhood of the boundary, as illustrated in Figure~\ref{fig:Morse_bump}. The negative gradient flow-lines of $f$ cannot cross the bump, and therefore a flow-line starting inside $U$, and away from the boundary, remains there. On the other hand, flow-lines that start on the bump can flow both in and out of $U$. Since the Morse differential counts negative gradient flow-lines, such constraints can be used to gain  information about it. 
\begin{figure}[h]
	\centering
	\includegraphics[scale=0.76]{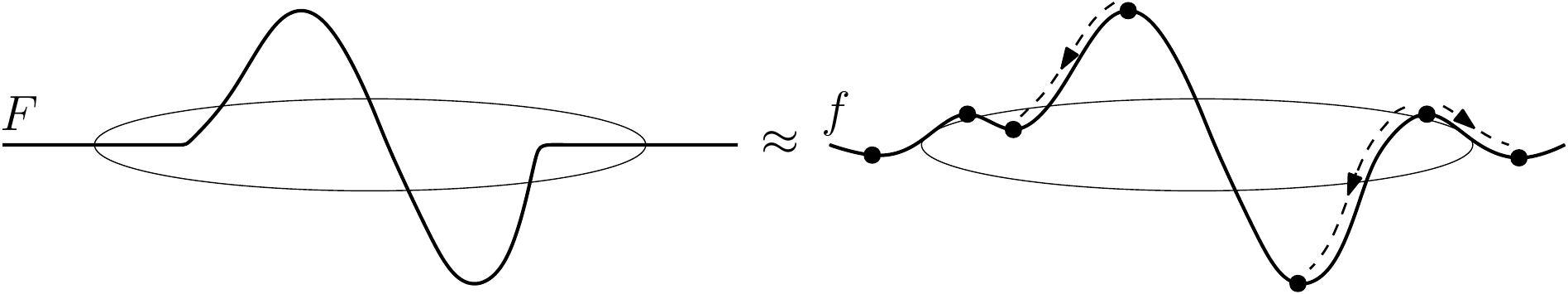}
	\caption{\small{We perturb the function $F$ to create a small ``bump" along a neighborhood of $\partial U$. The dashed lines represent (some of the) flow-lines of $-\grad f$.
	}}
	\label{fig:Morse_bump}
\end{figure}

This idea can be adapted to Floer theory on symplectically aspherical manifolds (that is, when the symplectic form $\omega$ and the first Chern class $c_1$ vanish on $\pi_2(M)$), and under certain assumptions on the domain $U$. The resulting  construction can be used to study Floer-theoretic invariants, such as {\it spectral invariants} and the {\it boundary depth}, of Hamiltonians supported in such domains. Spectral invariants measure the minimal action required to represent a given homology class in Floer homology. These invariants play a central role in the study of symplectic topology and Hamiltonian dynamics. Using the barricades construction, we prove that the spectral invariants with respect to the fundamental and the point classes of Hamiltonians supported in certain domains, do not depend on the ambient manifold. This result is stated formally in Section~\ref{subsec:locality}. Another application of the barricades construction concerns spectral invariants of Hamiltonians with disjoint supports. This problem was studied in \cite{polterovich2014symplectic,seyfaddini2014spectral,ishikawa2015spectral} and lastly in \cite{humiliere2016towards}, where Humili\`ere, Le Roux and Seyfaddini proved that the spectral invariant with respect to the fundamental  class satisfies a ``max formula", namely, the invariant of a sum of disjointly supported Hamiltonians is equal to the maximum over the invariants of the summands. This property does not hold for a general homology class. However, using barricades we show that an inequality holds in general, see Section~\ref{subsec:max_ineq_spec}. A third application of this method concerns the boundary depth, which was defined by Usher in \cite{usher2011boundary} and measures the maximal action gap between a boundary term and its smallest primitive in the Floer chain complex, see Section~\ref{subsec:boundary_depth}. We prove a relation between the boundary depths of disjointly supported Hamiltonians and that of their sum. The last application concerns a new invariant that was constructed by Abbondandolo, Haug and Schlenk in \cite{abbondandolo2019simple}. We give a partial answer to a question posed by them, asking whether a version of Humili\`ere, Le Roux and Seyfaddini's max formula holds for the new invariant, see Section~\ref{subsec:AHS}. 

\subsection{Results.} 
The limitation in Floer theoretic interaction between disjointly supported Hamiltonians is reflected through Floer theoretic invariants of these Hamiltonians and their sum. 
In order to define these invariants, we briefly describe {\it filtered Floer homology}. For more details, we refer to Section~\ref{sec:floer_preliminaries} and the references therein.  
Throughout the paper, $(M,\omega)$ denotes a closed symplectically aspherical manifold, namely, $\omega|_{\pi_2(M)}=0$ and $c_1|_{\pi_2(M)}=0$, where $c_1$ is the first Chern class of $M$. 
Given a Hamiltonian $F:M\times S^1\rightarrow \R$, its symplectic gradient is the vector field given by the equation $\omega(X_F,\cdot)=-dF(\cdot)$. The 1-periodic orbits of the flow of $X_F$, whose set is denoted by $\cP(F)$, correspond to critical points of the action functional associated to $F$ and generate the Floer complex $CF_*(F)$. The differential of this chain complex is defined by counting certain negative-gradient flow lines of the action functional and therefore decreases the value of the action. Note that the gradient of the action functional is taken with respect to a metric induced by an almost complex structure $J$ on $M$. The homology of this chain complex, denoted $HF_*(F)$, is known to be isomorphic to the singular homology of $M$ up to a degree-shift, $HF_*(F)\cong H_{*+n}(M;\Z_2)$. The complex $CF_*(F)$ is filtered by the action value, namely, for every $a\in\R$, we denote by $CF_*^a(F)$ the sub-complex generated by 1-periodic orbits whose action is not greater than $a$. The homology of this sub-complex is denoted by $HF_*^a(F)$.

In what follows we present four applications of the barricades construction, which is an adaptation to Floer theory of the idea presented in Figure~\ref{fig:Morse_bump} and is described in Section~\ref{subsec:barricades_presentation}. 
The class of admissible domains for the barricade construction include symplectic embeddings of nice star-shaped\footnote{A {nice} star-shaped domain is a bounded star-shaped domain in $\R^{2n}$ with a smooth boundary, such that the radial vector field is transverse to the boundary.} domains in $\R^{2n}$ into $M$. In order to present this class in full generality we need to recall a few standard notions.
Let $U\subset M$ be a domain with a smooth boundary. We say that $U$ has a {\it contact type boundary} if there exists a vector field $Y$, called the {\it Liouville vector field}, that is defined on a neighborhood of $\partial U$, is transverse to $\partial U$, points outwards of $U$ and satisfies $\cL_Y\omega= \omega$. 
If the Liouville vector field $Y$ extends to $U$, we say that $U$ is a {\it Liouville domain}.
Finally, a subset $X\subset M$ is called {\it incompressible} if the map $\iota_*:\pi_1(X)\rightarrow\pi_1(M)$, induced by the inclusion $X\hookrightarrow M$, is injective. In particular, every simply connected subset is incompressible.

\begin{defin}
	An open subset $U\subset M$ is called a {\it CIB (Contact Incompressible Boundary) domain} if for each connected component, $U_i$, of $U$, one of the following assertions holds:
	\begin{enumerate}
		\item $\partial U_i$ is of contact-type and is incompressible. 
		\item $U_i$ is an incompressible Liouville domain.
	\end{enumerate} 
\end{defin} 
\begin{exam}
	\begin{itemize}
	\item The image under a symplectic embedding of a nice star-shaped domain in $\R^{2n}$ into $M$ is a CIB domain.	
	\item A non-contractible annulus in $M=\T^2$ is a CIB domain. More generally, if $M=\T^{2n}=\C^n/\Z^{2n}$, then certain tubular neighborhoods of $L=\R^n/\Z^n$ in $M$ are CIB domains.
	\end{itemize}
\end{exam}
\begin{figure}
	\centering
	\begin{subfigure}{.5\textwidth}
		\centering
		\includegraphics[width=.8\linewidth]{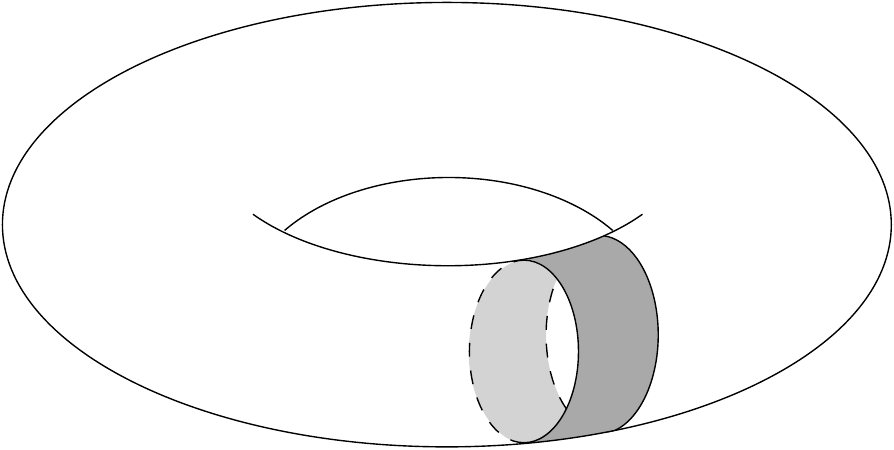}
		\caption{Incompressible embedding}
		\label{fig:Torus_Incompressible}
	\end{subfigure}%
	\begin{subfigure}{.5\textwidth}
		\centering
		\includegraphics[width=.74\linewidth]{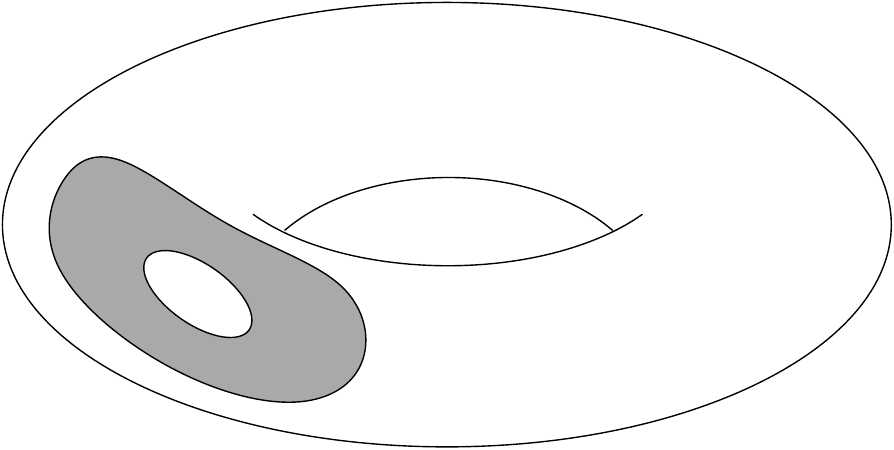}
		\caption{Not incompressible}
		\label{fig:Torus_Not_Incompressible}
	\end{subfigure}
	\caption{Two embeddings of the annulus into $\T^2$. The first is incompressible (as well as its boundary) and hence is a CIB domain. The second embedding is contractible in $\T^2$ and therefore not incompressible.}
	\label{fig:incompressible}
\end{figure}
\begin{rem}\label{rem:CIB_domains}
	\begin{itemize}
		\item Note that a disjoint union of CIB domains is again a CIB domain. 
		\item Every incompressible Liouville domain is a CIB domain. 
		\item Every CIB domain is incompressible, as the fact that $\partial U$ is incompressible implies that $U$ is incompressible, see  Appendix~\ref{app:incompressible}.
	\end{itemize}
\end{rem}

\subsubsection{Locality of spectral invariants and Schwarz's capacities.} \label{subsec:locality}
For a homology class $\alpha\in H_*(M;\Z_2)$ and a Hamiltonian $F$, the spectral invariant $c(F;\alpha)$ is the smallest action value $a$ for which $\alpha$ appears in $HF^a_*(F,J)$, namely,
\begin{equation*}
c(F;\alpha):=\inf\left\{a\ |\ \alpha\in \im(\iota^a_*) \right\},
\end{equation*}
where $\iota^a_*:HF^a_*(F)\rightarrow HF_*(F)$ is induced by the inclusion $\iota^a:CF^a_*(F)\hookrightarrow CF_*(F)$. 	
The following result states that the spectral invariants with respect to the fundamental and the point classes, of a Hamiltonian $F$ supported in a CIB domain, do not depend on the ambient manifold $M$. 
More formally, let $U\subset M$ be a CIB domain and assume that there exists a symplectic embedding, $\Psi:U\hookrightarrow N$, of $U$ into another closed symplectically aspherical manifold $(N,\Omega)$, such that $\Psi(U)$ is a CIB domain in $N$. 
Denote by $c_M(\cdot;\cdot)$, $c_N(\cdot;\cdot)$ the spectral invariants in the manifolds $M,N$ respectively. 

\begin{thm}\label{thm:indep_of_embedding}
	Let $F:M\times S^1\rightarrow\R$ be a Hamiltonian supported in $U$, then 
	\begin{equation}\label{eq:indep_of_embedding}
	c_M(F;[M])=c_N(\Psi_* F;[N])\  \text{ and }\  c_M(F;[pt])=c_N(\Psi_* F;[pt]) ,
	\end{equation}
	where $\Psi_* F :N\times  S^1\rightarrow\R$ is the extension by zero of $F\circ\Psi^{-1}$.
\end{thm}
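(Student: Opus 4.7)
The plan is to use the barricade construction of Section~\ref{subsec:barricades_presentation} to split the Floer complex into an ``inside'' and an ``outside'' subcomplex, and then to exhibit cycle representatives of $[M]$ and $[pt]$ living entirely inside $U$, which are identified by $\Psi$ with the corresponding representatives on $N$.

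First, I would replace $F$ by a non-degenerate Hamiltonian $\tilde F$ with $\|\tilde F - F\|_{C^0}$ arbitrarily small and apply the barricade construction in a collar neighborhood of $\partial U$ inside $M$. By the defining property of the barricade, every Floer trajectory of $\tilde F$ is confined either to $U$ or to $M\setminus U$. Consequently the Floer chain complex decomposes as a direct sum of chain subcomplexes $CF_*(\tilde F) = A_M \oplus B_M$, where $A_M$ is generated by the 1-periodic orbits lying inside $U$. Carrying out the parallel construction on $N$, with all choices on $\Psi(U)$ and near $\partial \Psi(U)$ taken to be the $\Psi$-pushforward of those on $M$, I obtain a decomposition $CF_*(\Psi_* \tilde F) = A_N \oplus B_N$ together with an action-preserving chain isomorphism $\Phi \fcolon A_M \to A_N$ induced by $\Psi$.

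The next step is to compute the two spectral invariants via a PSS map built from a Morse function $h$ on $M$ with a unique global maximum $p_{\max}$ and a unique global minimum $p_{\min}$, both placed inside $U$. I would argue that the barricade argument confining Floer cylinders extends to the PSS half-cylinders: since the Morse asymptote $p_{\max}$ lies inside $U$ the trajectory starts inside, and the bump prevents it from exiting through $\partial U$ in finite time. Hence $\mathrm{PSS}(p_{\max})$ is a cycle supported in $A_M$ that represents the PSS image of $[M]$, and likewise $\mathrm{PSS}(p_{\min}) \in A_M$ represents $[pt]$. Because the direct-sum decomposition respects cycles, boundaries, and the action filtration, both spectral invariants $c_M(\tilde F; [M])$ and $c_M(\tilde F; [pt])$ are determined inside the filtered subcomplex $A_M$. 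Running the same construction on $N$ with $\Psi(p_{\max})$ and $\Psi(p_{\min})$, and using $\Phi$, yields $c_M(\tilde F; [M]) = c_N(\Psi_*\tilde F; [N])$ and the analogue for $[pt]$. A limiting argument using $C^0$-continuity of spectral invariants then removes the perturbation.

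The main obstacle is justifying the extension of the barricade from Floer cylinders to the PSS moduli spaces. For Floer cylinders the barricade rests on an integrated maximum principle tied to the bump perturbation, and the same mechanism should apply to the PSS half-cylinders. However, the analytic setup near the Morse end, where the Hamiltonian and almost-complex structure are turned off, needs to be made compatible with the barricade data; and one must simultaneously choose $h$, the almost-complex structure, and the barricade perturbation so that all relevant moduli spaces are transversally cut out while still preserving $\Psi$-equivariance inside $U$.
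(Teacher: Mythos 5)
Your first step already fails: the barricade does not confine every Floer trajectory to $U$ or to $M\setminus U$, and it does not split $CF_*(\tilde F)$ as a direct sum $A_M\oplus B_M$ with $A_M$ spanned by the orbits inside $U$. The constraints in Definition~\ref{def:barricade} are asymmetric: trajectories emanating from orbits on the bump region $U\setminus U_\circ$ may leave $U$ (this is exactly the Morse picture of Figure~\ref{fig:Morse_bump}), so the differential only has the triangular block form (\ref{eq:block_form}); $C_{U}$ is not a subcomplex (the $*$ entry sends $C_{U\setminus U_\circ}$ into $C_{U^c}$), and only $C_{U_\circ}$ is. Consequently the claim that the spectral invariants are ``determined inside the filtered subcomplex $A_M$'' is unjustified, and for the fundamental class it is actually false: the homology of the genuine subcomplex $C_{U_\circ}$ is $H_*(U)$ (Section~\ref{subsec:hom_of_subcomplex}), which contains no class mapping onto $[M]$, and when $c_M(F;[M])=0$ (e.g.\ for $F\le 0$) every minimizing representative of $[M]$ involves orbits outside $U$. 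This is why the paper never localizes $[M]$ directly: it proves the statement for $[pt]$ and deduces the $[M]$ case by Poincar\'e duality. Even for $[pt]$ the localization of a minimal representative inside $U_\circ$ is not automatic; it rests on first proving $c(F;[pt])\le 0$ (Lemma~\ref{lem:spec_inv_non_neg}), disposing of the case $c_M=c_N=0$ trivially, and then using strict negativity together with the fact that the orbits outside $U_\circ$ are critical points of a small Morse function with action close to $0$.

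Two further gaps. The assertion that $\Psi$ induces an \emph{action-preserving} chain isomorphism on the inside part is exactly where asphericity and incompressibility must enter and needs an argument: an orbit in $U$ that is contractible in $M$ must be capped inside $\bar U$ (or in $\partial U$) to see that its action is the same computed in $M$ and in $N$; this uses Proposition~\ref{pro:incompressible} together with $\omega|_{\pi_2(M)}=0$, and it is precisely what fails in Example~\ref{exa:locality_fails}. Moreover, a chain-level identification under $\Psi$ can only be expected for $C_{U_\circ}$, and only after choosing the perturbations on $M$ and on $N$ coherently (the paper's Lemma~\ref{lem:reg_pairs_for_locality}, which in turn needs the stability results of Section~\ref{sec:transversality_compactness}, since the $\Psi$-related pairs cannot be assumed simultaneously Floer-regular); for bump orbits the differential counts trajectories escaping into $M\setminus U$, respectively $N\setminus\Psi(U)$, where the two manifolds differ. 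Finally, the PSS confinement you single out as the main obstacle is avoidable: the paper identifies the relevant classes by continuation maps to $\cC^2$-small Morse functions, whose Floer complexes coincide with Morse complexes and in which $[pt]$ is represented by a sum of an odd number of minima, so the only confinement statements needed are for Floer cylinders of homotopies, which Theorem~\ref{thm:barricade} already provides.
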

The assertion of Theorem~\ref{thm:indep_of_embedding} does not hold when $M$ is not symplectically aspherical, or when $U$ is not incompressible in $M$. This is shown in Example~\ref{exa:locality_fails}. Theorem~\ref{thm:indep_of_embedding} also holds for the spectral invariants defined in \cite{frauenfelder2007hamiltonian} on open manifolds obtained as completions of compact manifolds with contact-type boundaries, see  Remark~\ref{rem:locality_open_manifolds}. Moreover, Theorem~\ref{thm:indep_of_embedding} can be extended to certain other homology classes, as stated in Claim~\ref{clm:locality_other_classes}. 
One corollary of Theorem~\ref{thm:indep_of_embedding} concerns Schwarz's relative capacities\footnote{We recall the definition of a capacity in Section~\ref{sec:shwartzs_cap}.}.
\begin{defin}[Schwarz, \cite{schwarz2000action}]
	Let $(M,\omega)$ be a symplectically aspherical manifold. For a subset $A\subset M$ define the {\it spectral capacity}
	\begin{equation}\label{eq:def_spec_capacity}
	c_\gamma(A;M):= \sup\left\{c(F;[M])-c(F;[pt])\ : 
	supp X_F\subset A\times S^1\right\}
	\end{equation}
\end{defin}
In \cite{schwarz2000action} Schwarz shows that if the spectral capacity of the support of $X_F$ is finite and  $\varphi_F^1\neq \id$, then the Hamiltonian flow of $F$ has infinitely many geometrically distinct non-constant periodic
points corresponding to contractible solutions. In Section~\ref{sec:shwartzs_cap}, we use Theorem~\ref{thm:indep_of_embedding} to show that, when $A$ is a contractible domain with a contact-type boundary, its spectral capacity does not depend on the ambient manifold. 
\begin{cor}\label{cor:schwarts_capacity}
	Let $\cS$ be the set of contractible compact symplectic manifolds with   contact-type boundaries that can be embedded into symplectically aspherical manifolds, e.g., nice star-shaped domains in $\R^{2n}$. 	
	Then, Schwarz's spectral capacities, $\{c_\gamma(\cdot;M)\}$, induce a capacity, $c_\gamma$, on $\cS$. 
	In particular, $c_\gamma(A;M)$ is finite for every $A\subset M$ such that $A\in\cS$ and can be symplectically embedded into $(\R^{2n},\omega)$:
	\begin{equation}\label{eq:schwarts_capacity}
	c_\gamma(A;M)=c_\gamma(A)\leq 2e(A;\R^{2n}) <\infty,
	\end{equation}
	where $e(A;\R^{2n})$ is the displacement energy\footnote{We remind the definition of the displacement energy in Section~\ref{sec:floer_preliminaries}, equation (\ref{eq:disp_energy_def}).} of $A$ in $\R^{2n}$.
\end{cor}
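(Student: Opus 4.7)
The plan is to combine Theorem~\ref{thm:indep_of_embedding} with the supremum structure of (\ref{eq:def_spec_capacity}) and a classical displacement-energy bound of Schwarz. The first step is to verify that for any $A\in\cS$ and any symplectic embedding $\iota:A\hookrightarrow M$ into a closed symplectically aspherical manifold, the image $\iota(A)$ is a CIB domain of $M$. Since $A$ is contractible, $\iota(A)$ is simply connected in $M$, hence incompressible; and contractibility together with the Poincar\'e lemma on $A$ allows one to extend the Liouville form defined near $\partial A$ to a global Liouville form on $A$, so that $\iota(A)$ becomes an incompressible Liouville domain in $M$, satisfying case (2) of the CIB definition.

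Given embeddings $\iota:A\hookrightarrow M$ and $\Psi:A\hookrightarrow N$ into two closed symplectically aspherical manifolds, any Hamiltonian $F$ on $M$ supported in $\iota(A)$ corresponds by pullback and pushforward to a Hamiltonian on $N$ supported in $\Psi(A)$. By the previous step and Theorem~\ref{thm:indep_of_embedding} (applied both to $[M]/[N]$ and to $[pt]$),
\begin{equation*}
c_M(F;[M])-c_M(F;[pt]) = c_N(\Psi_* F;[N])-c_N(\Psi_* F;[pt]).
\end{equation*}
Taking the supremum over all such $F$ yields $c_\gamma(\iota(A);M)=c_\gamma(\Psi(A);N)$, a common value denoted $c_\gamma(A)$. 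Monotonicity under symplectic embeddings $A\hookrightarrow B$ in $\cS$ is immediate (any Hamiltonian supported in $A$ is also supported in $B$) and conformality under rescaling $\omega$ is inherited from the spectral invariants, so together with the finiteness proved below this exhibits $c_\gamma$ as a capacity on $\cS$.

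Suppose finally that $A$ symplectically embeds into $(\R^{2n},\omega)$. For $\epsilon>0$, pick a compactly supported Hamiltonian $H$ on $\R^{2n}$ of Hofer norm at most $e(A;\R^{2n})+\epsilon$ whose time-$1$ flow displaces $A$. Choose a flat torus $\T^{2n}$ of sufficiently large volume that $A\cup\supp H$ embeds into a Darboux chart; transporting $H$ there exhibits a displacement of $A$ inside $\T^{2n}$ of the same Hofer norm, giving $e(A;\T^{2n})\leq e(A;\R^{2n})+\epsilon$. Schwarz's inequality $c_\gamma(B;N)\leq 2e(B;N)$ from \cite{schwarz2000action} on any closed symplectically aspherical $N$, combined with the locality from the previous paragraph, then yields
\begin{equation*}
c_\gamma(A) = c_\gamma(A;\T^{2n}) \leq 2e(A;\T^{2n}) \leq 2e(A;\R^{2n})+2\epsilon,
\end{equation*}
and the bound follows by letting $\epsilon\to 0$. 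The main subtlety lies in the CIB verification of the first paragraph: while promoting the contact-type structure near $\partial A$ to a global Liouville structure via the Poincar\'e lemma is essentially standard, it is precisely what enables Theorem~\ref{thm:indep_of_embedding} to be invoked in every ambient manifold and thus underlies all three assertions.
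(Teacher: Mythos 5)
Your overall strategy — use Theorem~\ref{thm:indep_of_embedding} to make $c_\gamma(A;M)$ independent of the ambient aspherical manifold, then bound it by embedding $A$ in a large torus and invoking a displacement-energy estimate — is the same as the paper's, but your verification that $\iota(A)$ is a CIB domain has a genuine gap. The Poincar\'e lemma on the contractible manifold $A$ produces \emph{some} primitive $\mu$ of $\omega$, but not a Liouville structure: the vector field dual to $\mu$ need not be transverse to $\partial A$, let alone point outwards, and to repair this you must modify $\mu$ so that near $\partial A$ it agrees with the given contact-type form $\lambda=\iota_Y\omega$. That requires the closed $1$-form $\lambda-\mu$ to be exact on a collar of $\partial A$, i.e.\ a vanishing statement in $H^1(\partial A;\R)$, which is an additional argument you do not supply (it does hold for $2n\ge 4$ by Lefschetz duality because $A$ is contractible, and dimension $2$ needs a separate elementary check). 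Note also that incompressibility of $U=\iota(A)$ itself does not let you fall back on case (1) of the CIB definition, which asks for incompressibility of the \emph{boundary}; so as written your appeal to case (2) carries the full burden of the Liouville extension. The paper avoids this entirely: it uses case (1), arguing that contractibility of $A$ forces $\partial A$ to be simply connected, hence incompressible, and $\partial A$ is of contact type, so no extension of the Liouville field is needed.

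A second missing step: Schwarz's capacity (\ref{eq:def_spec_capacity}) is a supremum over Hamiltonians with $\supp X_F\subset A\times S^1$, so $F$ may equal a nonzero constant $C$ outside $A$, whereas Theorem~\ref{thm:indep_of_embedding} applies only to Hamiltonians supported in $U$. The paper inserts the normalization: since $A$ is contractible its boundary is connected, so $F\equiv C$ on $M\setminus A$, and by spectrality/stability $c_M(F-C;\alpha)=c_M(F;\alpha)-C$, hence the difference $c(F;[M])-c(F;[pt])$ is unchanged when $F$ is replaced by $F-C$, which is supported in $A$; only after this can one push forward. Your identification of the two suprema silently assumes this. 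The final displacement bound is essentially the paper's (the paper applies the energy–capacity inequality to each spectral invariant inside the torus rather than quoting Schwarz's $c_\gamma\le 2e$ directly, but the content is the same). Lastly, "induces a capacity" also requires nontriviality, i.e.\ $c_\gamma(B^{2n}(1),\omega_0)>0$, which you do not address; the paper gets it from Schwarz's lower bound by the Hofer–Zehnder capacity, and it also spells out the conformality argument (rescaling $\omega$, and $\tau=-1$ via $t\mapsto -t$) that you only assert.
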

\noindent Here we used the fact that every bounded subset of $\R^{2n}$ is displaceable with finite energy.

Another corollary of Theorem~\ref{thm:indep_of_embedding} concerns the notions of  {\it heavy} and {\it super-heavy} sets, which were introduced by Entov and Polterovich in \cite{entov2009rigid}:
A closed subset $X\subset M$ is called heavy if
\begin{equation*}
\zeta(F) \geq \inf_{X\times S^1} F, \quad \forall F\in\cC^\infty(M\times S^1),
\end{equation*}
and is called super-heavy if 
\begin{equation*}
\zeta(F) \leq \sup_{X\times S^1} F, \quad \forall F\in\cC^\infty(M\times S^1),
\end{equation*}
where 
\begin{equation*}
\zeta(F) :=\lim_{k\rightarrow\infty}\frac{c(kF;[M])}{k} 
\end{equation*}
is the {\it partial symplectic quasi-state} associated to the spectral invariant $c$ and the fundamental class. 
The following corollary was suggested to us by Polterovich.
\begin{cor}\label{cor:heavy_sets}
	Let $A\subset M$ be a contractible domain with a contact-type boundary that can be symplectically embedded in $(\R^{2n},\omega_0)$. Then, $M\setminus A$ is super-heavy. In particular, $A$ does not contain a heavy set.  
\end{cor}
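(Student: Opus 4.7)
The plan is to combine Theorem~\ref{thm:indep_of_embedding} with the Entov--Polterovich theory of super-heavy sets, transferring the question to a standard torus where an explicit super-heavy subset disjoint from the image of $A$ is available. Unfolding the definition of super-heaviness and using the shift-invariance $\zeta(F+c)=\zeta(F)+c$, it suffices to show $\zeta_M(F)\le 0$ whenever $F\le 0$ on $(M\setminus A)\times S^1$. For such an $F$ and any $\epsilon>0$, a standard cutoff construction produces a smooth $H$ supported in $A\times S^1$ with $H\ge F-\epsilon$: by continuity of $F$ and the inequality $F|_{\partial A\times S^1}\le 0$, one has $F\le\epsilon/2$ on $V\times S^1$ for some open neighbourhood $V$ of $\partial A$ in $M$, so multiplying $F$ by a smooth cutoff $\chi\fcolon M\to[0,1]$ equal to $1$ on $A\setminus V$ and vanishing on $M\setminus A$ produces $H:=\chi F$ with the desired properties (a pointwise check on the three regions $A\setminus V$, $V$, and $M\setminus A$). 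By monotonicity and shift-invariance of $\zeta_M$ this gives $\zeta_M(F)-\epsilon\le\zeta_M(H)$, so the statement reduces to proving $\zeta_M(H)\le 0$ for every Hamiltonian $H$ supported in $A\times S^1$.

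To establish this reduced claim, I would embed $A$ into a standard torus. Since $A$ is compact and admits a symplectic embedding into $(\R^{2n},\omega_0)$, its image lies inside some Euclidean ball of radius $R$. For $T>2R$, that ball embeds symplectically into the interior of a fundamental domain of $N:=\R^{2n}/(T\Z)^{2n}$. Compose to obtain a symplectic embedding $\Psi\fcolon A\hookrightarrow N$ positioned so that $\Psi(A)$ is disjoint from the standard Lagrangian torus $L\subset N$, i.e.\ the image of $\R^n\times\{0\}\subset\R^{2n}$ under the quotient map. The ambient $N$ is closed and symplectically aspherical ($\pi_2(\T^{2n})=0$); $\Psi(A)$ is contractible, so its $\pi_1$ injects trivially into $\pi_1(N)$, and it inherits a contact-type boundary from $A$. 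In fact $\Psi(A)$ is an incompressible Liouville domain, hence a CIB domain in $N$. Applying Theorem~\ref{thm:indep_of_embedding} to each iterate $kH$ yields
\begin{equation*}
c_M(kH;[M])\;=\;c_N(\Psi_*(kH);[N]),\qquad k\in\N,
\end{equation*}
and, using $\Psi_*(kH)=k\Psi_*H$, dividing by $k$ and letting $k\to\infty$ gives $\zeta_M(H)=\zeta_N(\Psi_*H)$.

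The final ingredient is the super-heaviness of the standard Lagrangian torus $L\subset N$ with respect to the partial quasi-state associated with $[N]$, a known result of Entov--Polterovich~\cite{entov2009rigid}. Since $\Psi_*H$ is supported in $\Psi(A)\subset N\setminus L$, it vanishes on $L$, so super-heaviness forces
\begin{equation*}
\zeta_N(\Psi_*H)\;\le\;\sup_{L\times S^1}\Psi_*H\;=\;0.
\end{equation*}
Combining with $\zeta_M(H)=\zeta_N(\Psi_*H)$ and the reduction of the first paragraph proves $\zeta_M(F)\le 0$, so $M\setminus A$ is super-heavy. The ``in particular'' assertion then follows from the standard fact that any heavy set must meet any super-heavy set --- separating a supposed disjoint pair by a smooth test function that is $\ge 1$ on the heavy set and $\le 0$ on the super-heavy set violates both defining inequalities of $\zeta$ simultaneously --- forcing a heavy $X\subset A$ to intersect $M\setminus A$, which is impossible. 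The main technical obstacle is the first-paragraph reduction, which requires producing a smooth cutoff $H\ge F-\epsilon$ supported in $A\times S^1$ and controlling the behaviour of $F$ near $\partial A$; the remainder is a direct application of Theorem~\ref{thm:indep_of_embedding} together with the cited super-heaviness of $L$.
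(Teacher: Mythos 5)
Your reduction in the first paragraph is fine (it essentially re-proves the equivalent characterization of super-heaviness from \cite[Definition 6.1.10]{polterovich2014function} that the paper simply cites), and the transfer $\zeta_M(H)=\zeta_N(\Psi_*H)$ via Theorem~\ref{thm:indep_of_embedding} applied to $kH$ is exactly how the paper uses its locality result. The gap is in your ``final ingredient'': the standard Lagrangian torus $L\subset N=\T^{2n}$ is \emph{not} super-heavy with respect to the partial quasi-state associated with $[N]$, and Entov--Polterovich do not claim it is. Indeed, $\zeta_N$ is invariant under the translations of the torus (they are symplectomorphisms acting trivially on homology), so if $L$ were super-heavy then so would be every parallel translate $L_c$; super-heavy sets are in particular heavy, and by the Entov--Polterovich intersection property a super-heavy set must meet every heavy set, which is impossible for two disjoint parallel translates. (For $\T^2$ this is the standard example: a meridian is heavy but not super-heavy; only the union of a meridian and a longitude is super-heavy.) So the inequality $\zeta_N(\Psi_*H)\le\sup_L\Psi_*H$ is unjustified and your proof does not close.

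The argument can be repaired in two ways. One is to replace $L$ by a set that genuinely is super-heavy and disjoint from $\Psi(A)$, e.g.\ the product over the $\T^2$-factors of the unions of a meridian and a longitude (using Entov--Polterovich's super-heaviness of $\mu\cup\lambda\subset\T^2$ together with their product theorem for super-heavy sets); this keeps your overall scheme but imports more machinery. The paper's route avoids super-heavy sets in the torus altogether: since $\Psi(A)$ lies in a cube of a sufficiently large torus, it is \emph{displaceable} in $N$, so the energy--capacity inequality gives the uniform bound $c_N(\Psi_*(kH);[N])\le e(\Psi(A);N)$ for all $k$, while Lemma~\ref{lem:spec_inv_non_neg} and Poincar\'e duality give $c_M(kH;[M])\ge 0$; combining with Theorem~\ref{thm:indep_of_embedding} and dividing by $k$ yields $\zeta_M(H)=0$ for every $H$ supported in $A$, which is precisely what your reduction requires.
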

\noindent Corollary~\ref{cor:heavy_sets} can be viewed as an extension of the results of \cite{ishikawa2015spectral} to a wider class of domains, when restricting to symplectically aspherical manifolds. 
Theorem~\ref{thm:indep_of_embedding} and Corollaries \ref{cor:schwarts_capacity} and \ref{cor:heavy_sets} are proved in Section~\ref{sec:shwartzs_cap}.

\subsubsection{Max-inequality for spectral invariants.} \label{subsec:max_ineq_spec}
In \cite{humiliere2016towards}, Humili\`ere, Le Roux and Seyfaddini proved a max formula for the spectral invariants, with respect to the fundamental class, of Hamiltonians supported in disjoint incompressible Liouville domains in symplectically aspherical manifolds.
\begin{thm*}[Humili\`ere-Le Roux-Seyfaddini, {\cite[Theorem 45]{humiliere2016towards}}]
	Suppose that $F_1, \dots, F_N$ are Hamiltonians whose supports are contained, respectively, in pairwise disjoint incompressible Liouville domains $U_1,\dots, U_N$. Then,
	$$c(F_1+\cdots +F_N;[M])= \max \{c(F_1;[M]),\dots,c(F_N;[M])\}.$$
\end{thm*}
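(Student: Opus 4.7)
The plan is to use the barricade construction to prove both inequalities in the max formula. Begin by perturbing each $F_i$ into a Floer-regular Hamiltonian $f_i$ that agrees with $F_i$ on a slightly shrunken subdomain of $U_i$ but acquires a barricade ``bump'' in a collar of $\partial U_i$; set $f = \sum_i f_i$ (adding a $C^2$-small Morse term outside $\bigcup_i \overline{U_i}$ to ensure non-degeneracy). The defining property of the barricade is that Floer trajectories of $f$ cannot cross $\partial U_i$ in either direction, so the $1$-periodic orbits split into $\mathcal{P}(f) = \mathcal{P}_0 \sqcup \bigsqcup_i \mathcal{P}_i$ (outside, and inside each $\overline{U_i}$), and the Floer differential respects this splitting, yielding a direct sum decomposition
\[
CF_*(f) \;=\; CF_*^{(0)} \oplus \bigoplus_{i=1}^N CF_*^{(i)}.
\]

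For the direction $\geq$, that is, $c(F;[M]) \geq c(F_i;[M])$ for each fixed $i$, let $\sigma$ be a cycle in $CF_*(f)$ representing $[M]$ with action at most $c(F;[M]) + \varepsilon$. Using the splitting above, write $\sigma = \sigma_0 + \sum_j \sigma_j$; each piece is automatically a cycle because the differential preserves the splitting. The piece $\sigma_i$ lies in $CF_*^{(i)}$, and by the locality theorem (Theorem~\ref{thm:indep_of_embedding}) applied to the inclusion $U_i \hookrightarrow M$, the action-filtered homology of $CF_*^{(i)}$ is canonically identified with that of the Floer complex of $F_i$ on $M$, under which the homology class of $\sigma_i$ corresponds to $[M]$. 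Since the action of $\sigma_i$ is bounded by that of $\sigma$, it is an upper bound for $c(F_i;[M])$. Letting $\varepsilon \to 0$ and taking the usual $C^0$-limit to pass from $f$ back to $F$ gives the inequality.

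For the direction $\leq$, let $i^{*}$ achieve $\max_i c(F_i;[M])$, and take an optimal cycle $\tau \in CF_*(F_{i^{*}})$ representing $[M]$ with action just above $c(F_{i^{*}};[M])$. By the locality identification above (applied in the reverse direction), $\tau$ may be viewed as a cycle in $CF_*^{(i^{*})} \subset CF_*(f)$ with the same action, and its image in $HF_*(f) \cong H_*(M)$ equals $[M]$, since nothing from the other $U_j$'s can interfere across the barricades. Consequently $c(F;[M]) \leq c(F_{i^{*}};[M])$. Combined with the $\geq$ direction, we obtain $c(F_1+\cdots+F_N;[M]) = \max_i c(F_i;[M])$.

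The main obstacle throughout is the precise identification of the subcomplex $CF_*^{(i)}$ with the Floer complex of $F_i$ on $M$, both at the chain level and at the level of the fundamental class. This identification is really a relative version of Theorem~\ref{thm:indep_of_embedding}, and its justification requires constructing filtered continuation maps through another barricade: one stretches the Hamiltonian near $\partial U_i$ to pass between the two settings while controlling orbits and trajectories through the barricade. Once this comparison is established, the argument is essentially formal: the chain-level splitting plus the standard formula ``spectral invariant of a class in a direct sum equals the max of the summand spectral invariants'' delivers the max formula after a $C^0$-continuity limit removes the auxiliary perturbations.
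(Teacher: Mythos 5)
There is a genuine gap, and it sits at the very first step: the barricade does \emph{not} give a direct sum decomposition of $CF_*(f)$. By Definition~\ref{def:barricade}, a barricade only forbids trajectories that \emph{start} in $U_{i,\circ}$ from leaving $U_{i,\circ}$, and trajectories that \emph{end} in $U_i$ from having left $U_i$; it does not forbid trajectories that start at the bump orbits in $U_i\setminus U_{i,\circ}$ from exiting $U_i$, nor from falling into $U_{i,\circ}$. This is exactly the upper-triangular block form (\ref{eq:block_form}): the column of $C_{U\setminus U_\circ}(f)$ has nonzero entries both in the $C_{U_\circ}$-row and in the $C_{U^c}$-row. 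Consequently your splitting $\sigma=\sigma_0+\sum_j\sigma_j$ of a cycle is not into cycles (the $U_{i,\circ}$-component of $\partial$ applied to a bump orbit need not vanish), and the phrase ``nothing from the other $U_j$'s can interfere across the barricades'' is not justified. Moreover, the identification you lean on — that the filtered subcomplex $CF_*^{(i)}=C_{U_{i,\circ}}(f)$ computes the Floer homology of $F_i$ on $M$ ``at the level of the fundamental class'' — is false: Section~\ref{subsec:hom_of_subcomplex} and Claim~\ref{clm:locality_other_classes} show its homology is $H_*(U_i)$, which contains no class mapping to $[M]$ (indeed $H_{2n}(U_i)=0$). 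Representatives of $[M]$ in $CF_*(f_i)$ necessarily involve the small critical points \emph{outside} $U_{i,\circ}$, so in your $\leq$ direction the optimal cycle $\tau$ cannot in general be ``viewed as a cycle in $CF_*^{(i^*)}$'', and in your $\geq$ direction $\sigma_i$ cannot represent $[M]$ there. A ``relative version of Theorem~\ref{thm:indep_of_embedding}'' cannot repair this, precisely because $[M]\notin\operatorname{im}(\iota_*)$.

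For comparison, the paper's route avoids both problems. The inequality $c(F+G;\alpha)\le\max$ is proved at chain level (proof of Theorem~\ref{thm:spectral_inv_max_ineq}) by gluing representatives $a=\Phi_{(h_F,J)}\tilde a$ and $b=\Phi_{(h_G,J)}\tilde b$ with an explicit correction term $d=(\partial\pi_V-\pi_V\partial)p$ built from a primitive $p$ of $a-b$; the barricade enters only through the vanishing relations $\pi_{U_\circ}\partial\pi_{U^c}=0$, $\pi_{V_\circ}\partial\pi_{V^c}=0$ and through Corollary~\ref{cor:almost_constant_homotopy}, which says the continuation maps act as the identity on orbits in $U_\circ$, $V_\circ$. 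The reverse inequality for $[M]$ is not a splitting argument at all: it follows from $c(\cdot;[pt])\le 0$ for CIB-supported Hamiltonians (Lemma~\ref{lem:spec_inv_non_neg}), Poincar\'e duality, and the min-inequality of Lemma~\ref{lem:min_ineq} applied to $-F,-G$ and $\alpha=[pt]$ (Lemma~\ref{lem:spec_inv_inverse_ineq}). If you want to keep a decomposition-flavored argument, you must work with the triangular structure (subcomplex $C_{U_\circ}$ and quotient), track the fundamental class through orbits outside the supports, and replace the ``canonical identification'' with the continuation-map statements above.
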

\noindent The existence of barricades can be used to give an alternative proof for this theorem, as well as to prove a version of it for other homology classes.
Clearly, other homology classes do not satisfy such a max formula - for example, by Poincar\'e duality the class of a point satisfies a min formula. However, an inequality does hold for a general homology class. 
\begin{thm}\label{thm:spectral_inv_max_ineq}
	Let $F,G$ be Hamiltonians supported in disjoint CIB domains and let $\alpha\in H_*(M)$. Then,
	\begin{equation}\label{eq:max-ineq}
	c(F+G;\alpha)\leq \max\left\{c(F;\alpha),c(G;\alpha)\right\}.
	\end{equation}
	Moreover, when $\alpha=[M]$, we have an equality.	
\end{thm}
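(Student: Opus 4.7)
The plan is to apply the barricade construction to obtain a direct-sum decomposition of the Floer complex of $F+G$ into pieces corresponding to $U_F$, $U_G$ and the complement, and to compare the spectral invariants summand by summand with those of $F$ and $G$ separately.

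By the $C^0$-continuity of spectral invariants and density of non-degenerate Hamiltonians, I may assume that $F,G$ are non-degenerate. I then apply the barricade construction of Section~\ref{subsec:barricades_presentation} simultaneously to $F$, $G$ and $F+G$, obtaining perturbations $\tF,\tG$ with $\tH=\tF+\tG$ such that the Floer trajectories of each of $\tF$, $\tG$, $\tH$ cannot cross $\partial U_F\cup\partial U_G$; all relevant spectral invariants shift by at most $O(\epsilon)$, where $\epsilon$ is the size of the perturbation. The barricade then yields a decomposition of filtered chain complexes
\begin{equation*}
	CF_*(\tH)=C^F\oplus C^G\oplus C^{out},
\end{equation*}
where the three summands are generated by 1-periodic orbits of $\tH$ lying in $U_F$, in $U_G$ and outside $U_F\cup U_G$ respectively. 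Since $\tH\equiv\tF$ in a neighborhood of $\overline{U_F}$ (as $\tG$ vanishes there) and the in-$U_F$ trajectories of both Hamiltonians are trapped in this common neighborhood by the barricade, I identify $C^F$ with the analogous in-$U_F$ summand $C^F_{\tF}$ of $CF_*(\tF)$ as filtered chain complexes, and similarly $C^G=C^G_{\tG}$.

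Decomposing the image of $\alpha$ under the splitting $HF_*(\tH)=H^F\oplus H^G\oplus H^{out}$ as $\alpha=\alpha^F+\alpha^G+\alpha^{out}$, and using that for a direct sum of filtered complexes the spectral invariant of a sum equals the maximum of the spectral invariants of the summands, one obtains $c(\tH;\alpha)=\max\{c^F(\alpha^F),c^G(\alpha^G),c^{out}(\alpha^{out})\}$. The main step, and the principal obstacle I anticipate, is to verify that under the identification $C^F=C^F_{\tF}$ the class $\alpha^F$ coincides with the in-$U_F$ part $\alpha^F_0$ of the corresponding decomposition of $\alpha$ in $HF_*(\tF)$ — in other words, naturality of the identifications with $H_*(M)$ under the barricade splitting. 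I plan to establish this by extending the barricade to the Morse--Floer hybrid moduli spaces realizing the isomorphism $HF_*(\cdot)\cong H_*(M)$, forcing those trajectories also not to cross $\partial U_F\cup \partial U_G$. Granting this, $c^F(\alpha^F)=c^F(\alpha^F_0)\leq c(\tF;\alpha)\leq c(F;\alpha)+O(\epsilon)$, and analogously for $G$. The outside contribution satisfies $c^{out}(\alpha^{out})\leq O(\epsilon)$ since $\tH$ has $C^0$-norm at most $\epsilon$ outside the supports; letting $\epsilon\to 0$ gives the inequality (\ref{eq:max-ineq}).

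For the equality when $\alpha=[M]$, the reverse inequality $c(F+G;[M])\geq\max\{c(F;[M]),c(G;[M])\}$ will follow from the same analysis applied to $\tF$ alone: writing $[M]=[M]^F_0+[M]^{out,F}_0$ in $HF_*(\tF)=H^F_{\tF}\oplus H^{out,F}_{\tF}$, one has $c(\tF;[M])=\max\{c^F([M]^F_0),c^{out,F}([M]^{out,F}_0)\}$; the outside contribution is again $O(\epsilon)$, so up to error $O(\epsilon)$ we obtain $c(\tF;[M])=c^F([M]^F_0)=c^F([M]^F)\leq c(\tH;[M])$, and symmetrically for $G$, yielding the matching lower bound.
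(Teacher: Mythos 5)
There is a genuine gap, and it sits at the very first structural claim: the barricade does \emph{not} yield a direct-sum decomposition of filtered chain complexes $CF_*(\tH)=C^F\oplus C^G\oplus C^{out}$. What the construction gives is only the upper-triangular block form (\ref{eq:block_form}): orbits lying in the collar $U\setminus U_\circ$ (on the cylindrical ``bump'') are inside $U$, but their Floer trajectories are allowed to flow both into $U_\circ$ and out into $U^c$ — the barricade constrains only trajectories whose negative end lies in $U_\circ$ or whose positive end lies in $U$. Hence $C^F$ (orbits in $U_F$, bump orbits included) is not a subcomplex, the differential has off-diagonal blocks, and neither the splitting $HF_*(\tH)=H^F\oplus H^G\oplus H^{out}$ nor the formula $c(\tH;\alpha)=\max\{c^F(\alpha^F),c^G(\alpha^G),c^{out}(\alpha^{out})\}$ follows. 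The subsequent identification of $C^F$ with a ``summand'' of $CF_*(\tF)$ as filtered complexes fails for the same reason (it is valid only on the $U_\circ$-parts, where the differentials do agree), and the naturality step you flag as the main obstacle is therefore not the only missing ingredient — the framework it is supposed to live in is not available. Your claim that the perturbed trajectories ``cannot cross $\partial U_F\cup\partial U_G$'' overstates what Theorem~\ref{thm:barricade} provides.

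The paper's proof is designed precisely to work without such a splitting. After the set-up of Lemma~\ref{lem:reg_pairs_for_spec_max_ineq} (two homotopies $h_F,h_G$ with barricades, constant on $U$ resp.\ $V$, with a common right end $h_+\approx F+G$), one takes low-action representatives $\tilde a,\tilde b$ of $\alpha$ for $f\approx F$ and $g\approx G$, pushes them by continuation maps to $a,b\in CF_*(h_+)$, chooses a primitive $p$ of $a-b$, and performs an explicit cycle surgery: $e:=\pi_{V^c}a+\pi_V b-d$ with correction term $d:=\bigl(\partial_{(h_+,J)}\pi_V-\pi_V\partial_{(h_+,J)}\bigr)p$. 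One checks $[e]=\alpha$ and bounds the action of $\pi_{U_\circ}e$, $\pi_{V_\circ}e$, $\pi_{U_\circ^c\cap V_\circ^c}e$ using the triangular identities $\pi_{U_\circ}\circ\partial\circ\pi_{U^c}=0$, $\pi_{V_\circ}\circ\partial\circ\pi_{V^c}=0$ and the fact (Corollary~\ref{cor:almost_constant_homotopy}) that the continuation maps are the identity after projecting to $U_\circ$, resp.\ $V_\circ$ — this replaces your PSS-with-barricades naturality argument. Finally, the equality for $[M]$ is not obtained from a splitting either: it comes from the separately proved min-inequality for classes with negative invariant (Lemma~\ref{lem:min_ineq}) combined with Poincar\'e duality and the non-positivity of $c(\cdot;[pt])$ on CIB domains (Lemma~\ref{lem:spec_inv_non_neg}). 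If you want to salvage your outline, you would need to either prove a genuine splitting (which the bump orbits obstruct) or incorporate a correction-term argument of the above kind.
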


Notice that, by definition, every incompressible Liouville domain is a CIB domain. Moreover, a disjoint union of CIB domains is again a CIB domain. Hence, the inequality for $N$ Hamiltonians follows by induction.
We also mention that a ``min inequality" does not hold in general, namely, $c(F+G;\alpha)$ might be strictly smaller than $\min\{c(F,\alpha),c(G,\alpha)\}$ as shown in Example~\ref{exa:min_ineq_spec}. 
Theorem~\ref{thm:spectral_inv_max_ineq} is proved in Section~\ref{sec:spec_inv_max_ineq}.

\subsubsection{The boundary depth of disjointly supported Hamiltonians.} \label{subsec:boundary_depth}
In \cite{usher2011boundary}, Usher defined the {\it boundary depth} of a Hamiltonian $F$ to be the largest action gap between a boundary term in $CF_*(F)$ and its smallest primitive, namely
\begin{equation*}
\beta(F):=	\inf\left\{b\in \R\ \big|\  CF^a_*(F)\cap\partial_{F,J}(CF_*(F))\subset \partial_{F,J} (CF_*^{a+b}(F)),\ \forall a\in \R\right\}.
\end{equation*}
The following result relates the boundary depths of disjointly supported Hamiltonians to that of their sum, and is proved in Section~\ref{sec:BD_inequality}. 
\begin{thm}\label{thm:bd_ineq}
	Let $F,G$ be Hamiltonians supported in disjoint CIB domains, then
	\begin{equation}\label{eq:bd_ineq_geq}
	\beta(F+G)\geq \max\left\{\beta(F),\beta(G)\right\}.
	\end{equation}	
\end{thm}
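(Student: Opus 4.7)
The plan is to prove $\beta(F+G) \geq \beta(F)$; the symmetric inequality $\beta(F+G) \geq \beta(G)$ then follows by interchanging the roles of $F$ and $G$. The main input is the barricade construction applied to the disjoint CIB domains containing the supports of $F$ and $G$: it produces non-degenerate Floer data for which no Floer trajectory of the perturbed sum can cross $\partial U_F$ or $\partial U_G$.

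From this no-crossing property, I would deduce a splitting of filtered chain complexes
\[
CF_*(f+g) \;=\; CF_*(f) \,\oplus\, CF_*(g) \,\oplus\, C_{\mathrm{out}},
\]
where $f, g$ are the barricade perturbations of $F, G$, and the three summands are generated, respectively, by the $1$-periodic orbits lying in $U_F$, in $U_G$, and outside $U_F \cup U_G$. Since $g$ vanishes on $U_F$, an orbit inside $U_F$ has the same action with respect to $f$ as with respect to $f+g$, so the inclusion $\iota \colon CF_*(f) \hookrightarrow CF_*(f+g)$ preserves the action filtration. Because the barricade blocks Floer trajectories both from exiting and from entering $U_F$, the projection $\pi \colon CF_*(f+g) \to CF_*(f)$ onto this summand is also a filtered chain map, and $\pi \circ \iota = \mathrm{id}$.

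Given this filtered retraction, the inequality $\beta(f) \leq \beta(f+g)$ is a short formal consequence. For any $b > \beta(f+g)$ and any boundary $x \in CF^a_*(f)$, the element $\iota(x) \in CF^a_*(f+g)$ is again a boundary, so $\iota(x) = \partial \tilde y$ for some $\tilde y \in CF^{a+b}_*(f+g)$; applying $\pi$ produces a primitive $\pi(\tilde y) \in CF^{a+b}_*(f)$ of $x$, which shows $\beta(f) \leq b$. Passing to the unperturbed limit via Usher's $C^0$-continuity of the boundary depth then yields $\beta(F) \leq \beta(F+G)$.

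The main obstacle is establishing the splitting in the second step: namely, showing that every Floer cylinder of $f+g$ connecting two $1$-periodic orbits stays on one side of both $\partial U_F$ and $\partial U_G$. This is precisely what the barricade construction is designed to provide, and the argument should rely on the symplectic asphericity of $M$ (to rule out sphere bubbling that could transport a cylinder across $\partial U$) together with the CIB geometry (contact-type boundary and incompressibility), used to engineer a perturbation whose Hamiltonian dynamics and energy-action estimates confine every Floer cylinder to one side of the barricade.
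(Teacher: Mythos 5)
Your overall strategy --- reduce to $\beta(F+G)\ge\beta(F)$ by symmetry, use the barricade to confine Floer trajectories, transport boundary terms and primitives between the two complexes, and finish with Usher's continuity of $\beta$ --- is the same as the paper's. But the central mechanism you invoke, a splitting $CF_*(f+g)=CF_*(f)\oplus CF_*(g)\oplus C_{\mathrm{out}}$ of \emph{filtered chain complexes} together with a filtered chain retraction $\pi$ satisfying $\pi\circ\iota=\mathrm{id}$, is not what the barricade provides, and this is exactly where the difficulty sits. The barricade only forbids trajectories that start in the inner region $U_\circ$ and leave it, and trajectories that end in $U$ after starting outside $U$; a trajectory starting on the ``bump'' annulus $U\setminus U_\circ$ may exit $U$, and may also fall into $U_\circ$. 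Consequently the differential is only block upper-triangular, as in (\ref{eq:block_form}): the only genuine subcomplex is $C_{U_\circ}$, the subspace $C_U$ is \emph{not} a subcomplex, and the projections $\pi_U$, $\pi_{U_\circ}$ are not chain maps (the paper states this explicitly in Notation~\ref{not:CF_element_contained}). So in your formal step the element $\pi(\tilde y)$ need not be a primitive of $x$. A second problem is that $CF_*(f)$ is not literally a summand of $CF_*(f+g)$: the two Hamiltonians have different $1$-periodic orbits inside $U_G$, so there is no inclusion of generators; the identification over $U_{F\circ}$ must be made through the continuation map of a homotopy from $F+G$ to $F$, using that this homotopy is constant on $U_F$ so that the continuation map composed with $\pi_{U_{F\circ}}$ is the identity (Corollary~\ref{cor:almost_constant_homotopy}).

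The paper's proof replaces your retraction by the following two-step argument. Lemma~\ref{lem:bdry_term_in_U0} shows that a boundary term of $f=h_+$ lying in $C_{U_\circ}(f)$ is also a boundary term of $h_-\approx F+G$; a primitive $b$ of controlled action is then cleaned up: if its action level is below $-\delta$ it must lie in $C_{U_\circ\cup V_\circ}(h_-)$ and the $V_\circ$-part can be discarded because the block form forces $\partial\,\pi_{V_\circ}b=0$, after which $b$ is transported back by the continuation map acting as the identity on $C_{U_\circ}$; otherwise one pushes $b$ through the continuation map and bounds $\pi_U b$ and $\pi_{U^c}b$ separately, the latter having action $O(\delta)$ because $h_\pm$ are small Morse functions outside $U_\circ\cup V_\circ$. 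A second step reduces a general boundary term of $f$ to one supported in $U_\circ$ at the cost of another $O(\delta)$. All estimates carry $\delta$-errors that disappear in the limit via Usher's continuity. To repair your argument you would need to prove that $\pi_{U_\circ}$ composed with the continuation map is a filtered chain retraction \emph{up to} the $O(\delta)$ contributions of the bump orbits and the exterior Morse critical points --- which is essentially what the paper's case analysis accomplishes.
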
 
\noindent Note that equality does not hold in (\ref{eq:bd_ineq_geq}) in general, as shown in Example~\ref{exa:BD_inverse_ineq}.

\subsubsection{Min-inequality for the AHS action selector.} \label{subsec:AHS}
In a recent paper, \cite{abbondandolo2019simple}, Abbondandolo, Haug and Schlenk presented a new construction of an action selector, denoted here by $c_{AHS}$, that does not rely on Floer homology. Roughly speaking, given a Hamiltonian $F$, the invariant $c_{AHS}(F)$ is the minimal action value that ``survives" under all homotopies starting at $F$.
In Section~\ref{sec:AHS_selector}, we review the definition of this selector and a few relevant properties. 
An open problem stated in \cite[Open Problem 7.5]{abbondandolo2019simple} is whether $c_{AHS}$ coincides with the spectral invariant of the point class. 
As a starting point,  Abbondandolo, Haug and Schlenk ask whether $c_{AHS}$ satisfies a {\it min formula} like the one proved by Humili\`ere, Le Roux and Seyfaddini in \cite{humiliere2016towards} for the spectral invariant with respect to the point class\footnote{As mentioned above, they proved a max formula for the spectral invariant of the fundamental class.  By Poincar\'e duality for spectral invariants, this is equivalent to a min formula for the point class.}. 
Due to a result from \cite{humiliere2016towards}, this will imply that 	$c_{AHS}$  coincides with the spectral invariant with respect to the point class in dimension 2 on autonomous Hamiltonians.
In Section~\ref{sec:AHS_selector}, we use barricades in order to prove an inequality for the AHS action selector.
\begin{thm}\label{thm:AHS_min_ineq}
	Let $F$, $G$ be Hamiltonians supported in disjoint incompressible Liouville domains, then,
	\begin{equation}\label{eq:AHS_min_ineq}
	c_{AHS}(F+G)\leq\min\{c_{AHS}(F),c_{AHS}(G)\}.
	\end{equation}
\end{thm}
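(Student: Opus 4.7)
\textbf{Proof strategy for Theorem \ref{thm:AHS_min_ineq}.}
The plan is to establish $c_{AHS}(F+G)\leq c_{AHS}(F)$; the bound by $c_{AHS}(G)$ then follows by the symmetric argument. Since $c_{AHS}$ is the minimal action value that ``survives'' under all homotopies starting at the given Hamiltonian, the strategy is to exhibit, for each $\epsilon>0$, a specific homotopy from $F+G$ along which no critical action value exceeding $c_{AHS}(F)+\epsilon$ can persist. Fix such an $\epsilon$ and choose a witnessing homotopy $\{F_s\}_{s\in[0,1]}$ from $F_0=F$ to a small Hamiltonian $F_1$, with each $F_s$ supported in the incompressible Liouville domain $U$ containing $\supp F$, along which the selected action value stays below $c_{AHS}(F)+\epsilon$.

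Next, build a homotopy $\{H_s\}_{s\in[0,1]}$ starting at $H_0=F+G$ by concatenating two phases. On $[0,1/2]$, shrink $G$ linearly via $H_s=F+(1-2s)G$, so that $H_{1/2}=F$; on $[1/2,1]$, set $H_s=F_{2s-1}$. Each intermediate $H_s$ splits as the sum of a Hamiltonian supported in $U$ and one supported in $V:=\supp G$. Now apply the barricade construction, adapted to this one-parameter family, to obtain a nearby family $\{\widetilde H_s\}$ such that every $1$-periodic orbit of $\widetilde H_s$ lies either in $U$, in $V$, or in $M\setminus(\overline U\cup \overline V)$, and such that the continuation-type trajectories entering the AHS minimax cannot cross $\partial U$ or $\partial V$.

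With the barricade in place, the action spectrum of $\widetilde H_s$ decomposes into three disjoint families along the whole homotopy: orbits inside $U$, whose actions agree with those of $F$ for $s\leq 1/2$ and those of $F_{2s-1}$ for $s\geq 1/2$; orbits inside $V$, whose actions are scaled by $(1-2s)$ on $[0,1/2]$ and disappear on $[1/2,1]$; and constant orbits of action $0$. The $U$-family is bounded by $c_{AHS}(F)+\epsilon$ throughout, by the choice of $\{F_s\}$; the $V$-family collapses to $0$; the constant family is already at $0$. Consequently no critical action value above $c_{AHS}(F)+\epsilon$ can survive the homotopy, yielding $c_{AHS}(F+G)\leq c_{AHS}(F)+\epsilon$. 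Letting $\epsilon\to 0$ and interchanging $F,G$ proves the theorem.

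The main difficulty is extending the barricade construction, which is naturally stated for individual Hamiltonians, to a compatible perturbation of the whole one-parameter family $\{H_s\}$, in a way that is recognized by the minimax defining $c_{AHS}$. The delicate moment is $s=1/2$, where the $V$-summand vanishes and the homotopy transitions from the linear shrinking phase to the $F$-witnessing phase; the barricade perturbation must be glued across this transition while retaining the barrier property. One must also verify a continuation/homotopy invariance statement for the AHS selector under such small perturbations, so that the bound obtained for $\{\widetilde H_s\}$ transfers to a genuine upper bound on $c_{AHS}(F+G)$.
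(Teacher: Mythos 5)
There is a genuine gap at the very first step: the quantifier structure of $c_{AHS}$ is used in the wrong direction. By definition, $c_{AHS}(F+G)=\sup_{(K,J)\in\cD(F+G)}A(K,J)$, where $A(K,J)$ is the \emph{minimum} of the left asymptotic action over all finite-energy solutions for $(K,J)$. Hence an upper bound $c_{AHS}(F+G)\leq c_{AHS}(F)+\epsilon$ requires producing, for \emph{every} pair $(K,J)$ with left end $F+G$, some solution whose left asymptotic action is at most $c_{AHS}(F)+\epsilon$. Exhibiting one cleverly chosen homotopy from $F+G$ (shrink $G$, then follow a family inside $U$) and showing that only small action values ``survive'' along it can at best bound $A$ of that single homotopy, which contributes to the supremum from \emph{below}; it gives no control on an arbitrary competitor $(K,J)$ and therefore cannot bound $c_{AHS}(F+G)$ from above. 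Relatedly, the language of ``critical action values persisting along the homotopy'' implicitly treats $c_{AHS}$ as a minimax over a filtered complex with homotopy invariance, but the selector is defined without Floer homology, so no such continuation/filtration argument is available to be ``recognized by the minimax''; also, a ``witnessing homotopy'' for $F$ realizing $c_{AHS}(F)+\epsilon$ as an upper barrier along the family is not something the definition provides.

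The paper's proof is built precisely to respect this quantifier: it fixes an \emph{arbitrary} $(K,J)\in\cD(h_+)$ (with $h_+$ a barricaded perturbation of $F+G$ and $f\approx F$ the other end of a barricaded homotopy $h$ supported in $U\cup V$), pre-concatenates $(K,J)$ with far-shifted copies of $h$ to obtain pairs $(K_n,J_n)\in\cD(f)$, so that $A(K_n,J_n)\leq c_{AHS}(f)$ by definition of the supremum for $f$. Taking minimizers $u_n$ and extracting a subsequence that partially converges to a broken trajectory, the barricade forces the initial pieces and the connecting cylinder for $h$ to lie in $U_\circ$, where $h$ is nearly constant (because $F=F+G$ on $U$), so the action grows by at most $\delta$ across it; the tail of the broken trajectory is then a genuine solution for the given $(K,J)$ with left action at most $c_{AHS}(f)+\delta$, which bounds $A(K,J)$ and hence the supremum. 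Your proposal is missing exactly this mechanism for handling an arbitrary homotopy starting at $F+G$, and without it the argument does not establish the inequality.
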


\subsection{The main tool: Barricades.} \label{subsec:barricades_presentation}
The central construction in this paper is an adaptation of the idea presented in Figure~\ref{fig:Morse_bump} to Floer theory, which is an infinite-dimensional version of Morse theory, applied to the action functional associated to a given Hamiltonian $F:M\times S^1\rightarrow \R$. As in Morse theory, the Floer differential counts certain negative-gradient flow lines of the action functional. These flow lines are called ``Floer trajectories" and correspond to solutions $u:\R\times S^1\rightarrow M$ of a certain partial differential equation, called ``Floer equation" (\ref{eq:FE}), that converge to 1-periodic orbits of the Hamiltonian flow at the ends,
\begin{equation*}
\lim_{s\rightarrow \pm \infty}u(s,t) = x_\pm(t) \text{ for }x_\pm\in\cP(F).
\end{equation*}
In this case we say that  $u$ {\it connects } $x_\pm$, see Section~\ref{sec:floer_preliminaries} for more details.
Following the idea from Morse theory, given a Hamiltonian $F$ supported in a subset $U\subset M$, we wish to construct a perturbation for which Floer trajectories cannot enter or exit the domain. Moreover, we extend this construction to homotopies of Hamiltonians, namely, smooth functions $H:M\times S^1\times \R\rightarrow R$, for the following reason: Most of the results presented above compare Floer theoretic invariants of different Hamiltonians. Such a comparison is usually done using a morphism between the different chain complexes, that is defined by counting solutions of the Floer equation with respect to a homotopy between the two Hamiltonians.
We consider only homotopies that are constant outside of a compact set, namely there exists $R>0$ such that $\partial_s H(\cdot,\cdot,s)$ is supported in  $M\times S^1\times [-R,R]$. We denote by $H_\pm:=H(\cdot, \cdot, \pm R)$ the ends of the homotopy $H$. Note that we think of single Hamiltonians as a special case of this setting, by identifying them with constant homotopies, $H(x,t,s)=F(x,t)$.
Given an almost complex structure $J$ on $M$, we consider solutions of the Floer equation (\ref{eq:FE}) with respect to the pair $(H,J)$. The property of having a barricade is defined through constraints on these solutions.
\begin{defin}\label{def:barricade}
	Let $U$ and $U_\circ$ be open subsets of $M$ such that $U_\circ \Subset U$. We say that a pair $(H,J)$ of a homotopy and an almost complex structure {\it has a barricade in $U$ around $U_\circ$} if the periodic orbits of $H_\pm$ do not intersect the boundaries $\partial U$, $\partial U_\circ$, and for every $x_{\pm}\in\cP(H_{\pm})$ and every solution $u:\R\times S^1\rightarrow M$ of the corresponding Floer equation, connecting $x_{\pm}$, we have:
	\begin{enumerate}
		\item If $x_{-}\subset U_\circ$ then ${\im(u)}\subset U_\circ$.
		\item If $x_{+}\subset U$ then ${\im(u)}\subset U$.
	\end{enumerate} 
\end{defin}
See Figure~\ref{fig:barricade_def} for an illustration of solutions satisfying and not satisfying these constraints. 
When $H$ is a constant homotopy, corresponding to a Hamiltonian $F$, the presence of a barricade yields a decomposition of the Floer complex, in which the differential admits a triangular block form. To describe this decomposition, let us fix some notations: For a subset $X\subset M$ denote by $C_X(F)\subset CF_*(F)$ the subspace generated by orbits contained in $X$, and by $\partial|_X$ the map obtained by counting only solutions that are contained in $X$. Then, for a Floer regular pair $(F,J)$ with a barricade in $U$ around $U_\circ$, 
\begin{equation}\label{eq:block_form}
	CF_*(F):=C_{U_\circ}(F)\oplus C_{U^c}(F)\oplus C_{U\setminus U_\circ} (F),\quad \partial_{F,J} = \left(\begin{array}{ccc}
	\partial|_{U_\circ} & 0&\partial|_{U} \\
	0& * & * \\
	0 & 0& \partial|_{U} 
	\end{array}\right).
\end{equation}
	The block form (\ref{eq:block_form}) implies that the differential restricts to the subspace $C_{U_\circ}(F)$. We study the homology of the resulting subcomplex $(C_{U_\circ}(F),\partial|_{U_\circ})$ in Section~\ref{subsec:hom_of_subcomplex}.  
\begin{figure}
	\centering
	\begin{subfigure}{.5\textwidth}
		\centering
		\includegraphics[width=.8\linewidth]{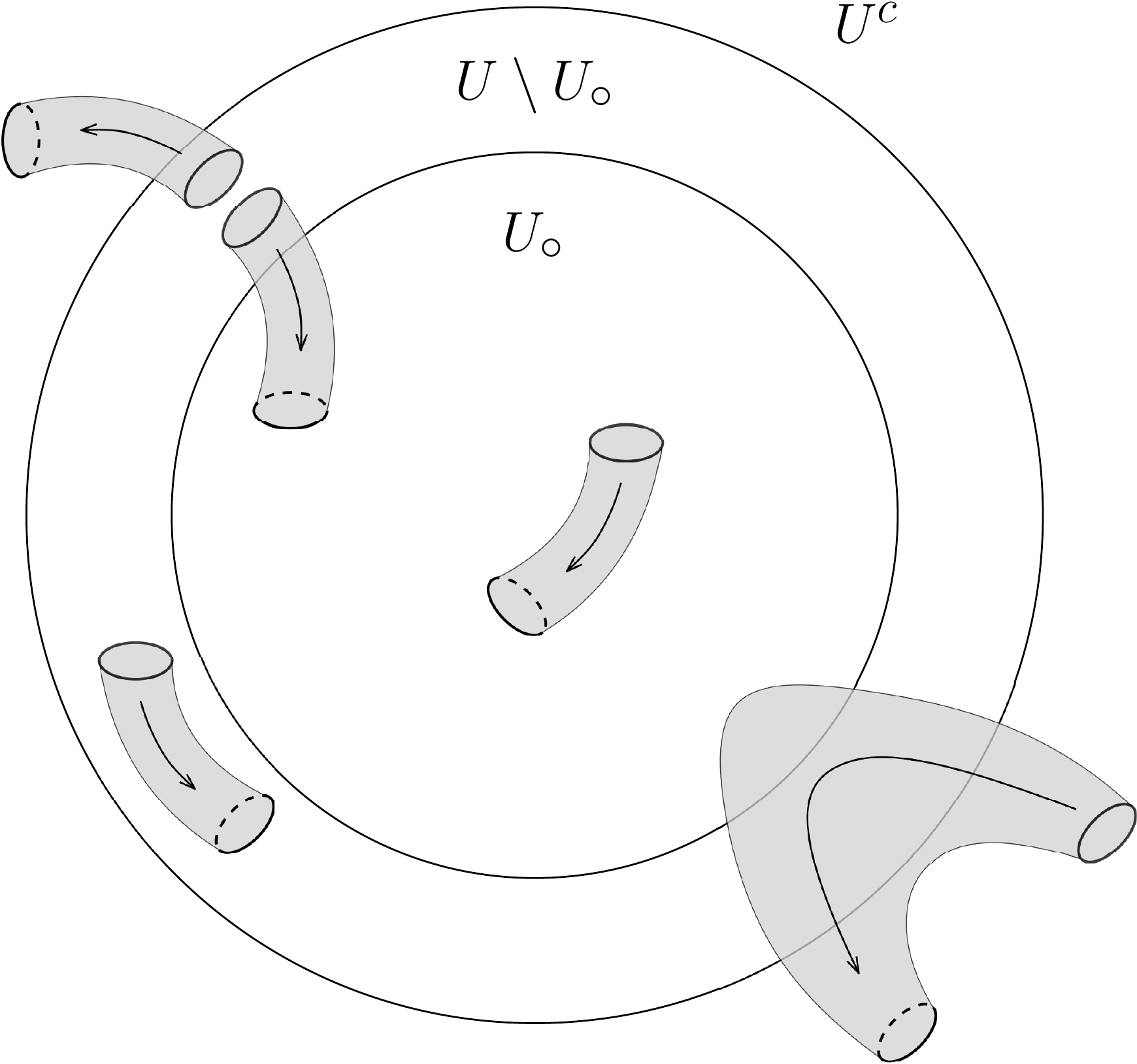}
		\caption{Allowed solutions}
		\label{fig:CylindersYes}
	\end{subfigure}%
	\begin{subfigure}{.5\textwidth}
		\centering
		\includegraphics[width=.74\linewidth]{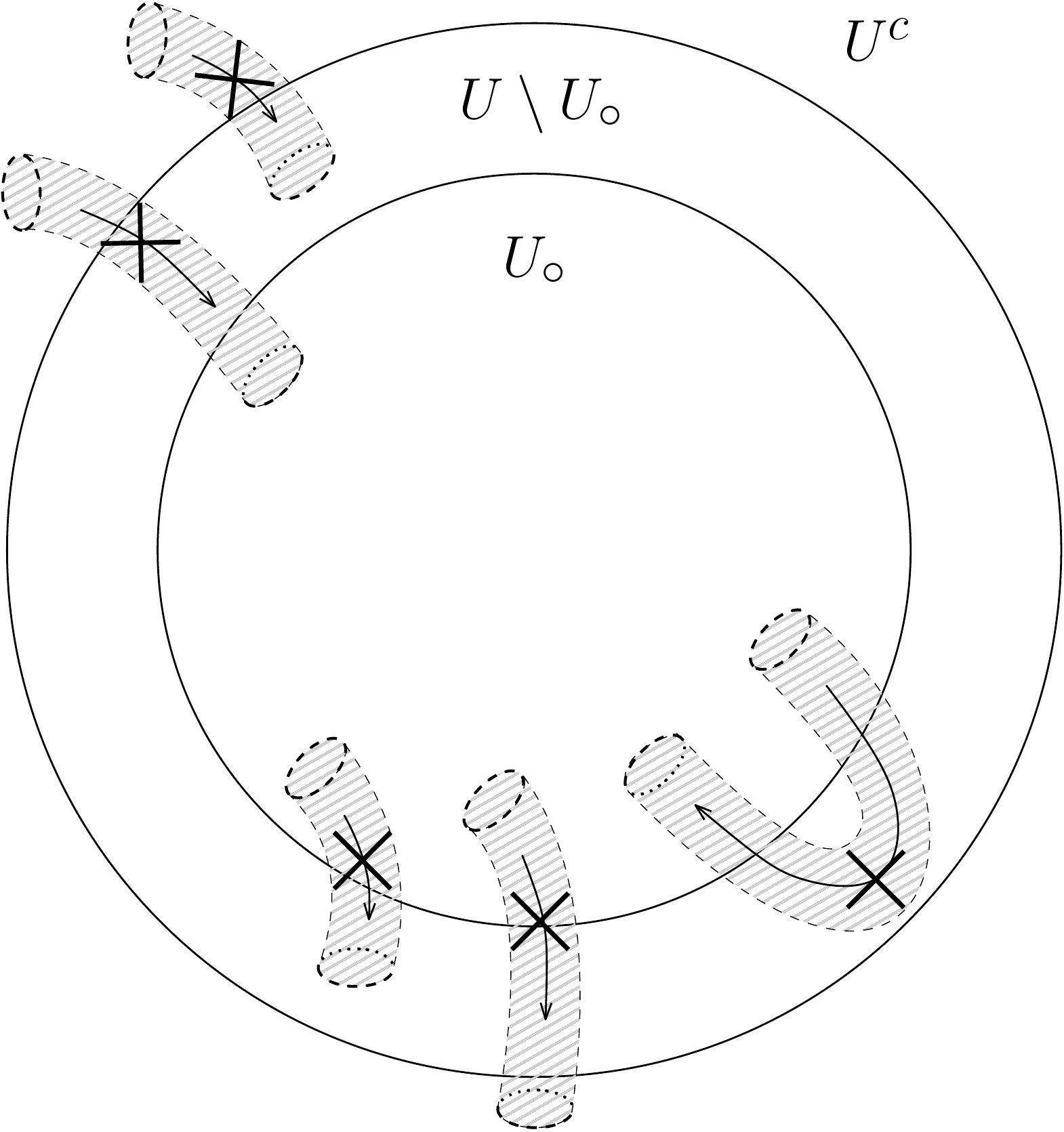}
		\caption{Forbidden solutions}
		\label{fig:CylindersNo}
	\end{subfigure}
	\caption{ An illustration of allowed and forbidden solutions for a pair $(H,J)$ with a barricade.}
	\label{fig:barricade_def}
\end{figure}

Given a homotopy $H$ that is compactly supported in a CIB domain, we construct a small perturbation $h$ of $H$ and an almost complex structure $J$, such that $(h,J)$ has a barricade.
\begin{thm}\label{thm:barricade}
	Let U be a CIB domain and let $H:M\times S^1\times \R\rightarrow\R$ be a homotopy of Hamiltonians, supported in $U\times S^1\times \R$, such that $\partial_s H$ is compactly supported. Then, there exist a $\cC^\infty$-small perturbation $h$ of $H$ and an almost complex structure $J$ such that the pairs $(h,J)$ and $(h_\pm, J)$ are Floer-regular and have a barricade in $U$ around $U_\circ$.
	In particular, when $H$ is independent of the $\R$-coordinate (namely, it is a single Hamiltonian), $h$ can be chosen to be independent of the $\R$-coordinate as well\footnote{For a constant homotopy $H$, the perturbation $h_-$ approximates $H$ and is independent of the $\R$-coordinate.}.
\end{thm}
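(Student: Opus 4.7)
The plan is to transport the Morse-theoretic ``bump'' idea of Figure~\ref{fig:Morse_bump} to Floer theory, using the contact-type structure near $\partial U$ to invoke a maximum-principle argument that plays the role of the gradient-flow non-crossing property. For each connected component $U_i$ of $U$, I will use the Liouville vector field $Y$ (defined near $\partial U_i$ in the contact-type case, or globally on $U_i$ in the Liouville case) to identify a bicollar neighborhood $V_i \cong \partial U_i \times (-2\epsilon,2\epsilon)$ with radial coordinate $r$, so that $\omega = d(e^r\alpha_i)$ for the contact form $\alpha_i = \iota_Y\omega|_{\partial U_i}$, with $\partial U_i = \{r=0\}$ and $U_i$ corresponding locally to $\{r<0\}$. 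After an arbitrarily $\mathcal{C}^\infty$-small deformation of $H$, I may assume $H\equiv 0$ on each $V_i$. I then pick $r_\circ\in(-\epsilon,0)$ and a smooth bump $\rho(r)$ supported in $[r_\circ,0]$, positive on $(r_\circ,0)$, with a single maximum at some $r_*\in(r_\circ,0)$, such that $\rho'(r)$ does not coincide with any Reeb period of $\alpha_i$ off the critical set $\{\rho'=0\}$ -- possible since Reeb periods form a discrete set. I set $h := H + \sum_i \rho_i$ and $U_\circ := U\cap\{r<r_\circ\}$, so $U_\circ \Subset U$ and the bump lives in $U\setminus U_\circ$. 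I take $J$ of contact type on each $V_i$ (meaning $JY = R_{\alpha_i}$ and $J$ preserves $\ker\alpha_i$) and generic $\omega$-compatible elsewhere. Floer regularity of $(h,J)$ and $(h_\pm,J)$ is then obtained by a further $\mathcal{C}^\infty$-small perturbation of $h$ supported away from the collars, together with a small Morse perturbation of each $\rho$ along the Morse--Bott level $\{r=r_*\}$ by a Morse function on $\partial U_i$.

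The core analytic input is a maximum principle in the collar. On the preimage of $V_i$ the Floer equation reads
\begin{equation*}
\partial_s u + J(u)\bigl(\partial_t u - \rho'(r)\, R_{\alpha_i}\bigr) = 0,
\end{equation*}
and a standard contact-type computation (the integrated maximum principle of Abouzaid--Seidel, used in this context in \cite{humiliere2016towards}) shows that $r\circ u$ cannot attain an interior local maximum on the collar outside the critical level $\{r=r_*\}$. Outside $U$, $h\equiv 0$, so Floer trajectories are $J$-holomorphic and $r\circ u$ is subharmonic, forbidding interior maxima there as well. Using the no-resonance condition on $\rho'$ together with the incompressibility of $\partial U_i$, the only $1$-periodic orbits of $h_\pm$ in $V_i$ that are contractible in $M$ lie on the level $\{r=r_*\}$, and after the Morse perturbation they become isolated. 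Then: (i) if $x_-\subset U_\circ$, i.e.\ $r(x_-)<r_\circ$, any excursion of $u$ into $\{r\ge r_\circ\}$ would force an interior maximum of $r\circ u$ above $r_\circ$, contradicting the combined maximum principle, hence $u\subset U_\circ$; (ii) if $x_+\subset U$, the symmetric argument on the outer side of the collar (where $h\equiv 0$) forces $u\subset U$.

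The main obstacle will be closing the maximum-principle argument for case (i) by ruling out ``tunnelling'' Floer trajectories that cross the bump to orbits on the outer side of the collar: this is where the incompressibility hypothesis enters, since it restricts both the relevant $1$-periodic orbits of $h_\pm$ and the Reeb orbits of $\partial U_i$ that can appear to ones already lying within $\partial U_i$ itself. A secondary subtlety is reconciling the rigid contact-type choice of $J$ on each $V_i$ with Floer regularity of the full moduli space; this will be handled by perturbing $h$, not $J$, away from the collars, using the standard somewhere-injectivity of Floer trajectories in the region of perturbation. For homotopies ($s$-dependent $H$), the bump $\rho(r)$ is itself $s$-independent, so the construction commutes with the homotopy structure and the same analysis goes through for the parametric Floer equation, yielding the ``in particular'' statement of the theorem.
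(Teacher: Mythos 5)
Your construction (collar via the Liouville flow, a radial bump supported in $U\setminus U_\circ$, cylindrical $J$, then a perturbation for regularity) matches the paper's strategy, but the analytic core has a genuine gap. A pointwise maximum principle for $r\circ u$ in the collar — even granting it for the non-convex profile $\rho$ — only forbids the trajectory from \emph{touching a level and turning back}; it does not forbid a \emph{monotone crossing} of the collar, i.e.\ a trajectory that enters at $r=r_\circ$, exits at $r=2\epsilon$, and then wanders into $M\setminus U$ where no radial coordinate exists. Ruling out exactly these ``tunnelling'' trajectories (from $x_-\subset U_\circ$ to $x_+\not\subset U_\circ$, or from $x_-\not\subset U$ to $x_+\subset U$) \emph{is} the content of the barricade, and you flag it as ``the main obstacle'' but your proposed resolution — incompressibility restricting which orbits and Reeb orbits occur — does not address it: the forbidden trajectories connect orbits that genuinely exist on both sides. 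The paper's mechanism is not a maximum principle but an integrated action estimate (Lemmas~\ref{lem:bdry_loop} and \ref{lem:cross_solutions}): Stokes' theorem on $\Sigma=u^{-1}(U_\circ^c)$, together with a capping of the boundary loops $\Gamma=u(\partial\Sigma)\subset\partial U_\circ$ inside $\partial U_\circ$ (incompressible boundary) or inside $U_\circ$ (Liouville case), plus symplectic asphericity to compare cappings, yields $\cA_{h_+}(x_+)<-\delta$ for a tunnelling trajectory; this contradicts the arrangement that every orbit outside $U_\circ$ is a critical point with value in $(-\delta,\delta)$. Neither asphericity nor the capping role of incompressibility appears in your argument, yet the theorem is false without them (Example~\ref{exa:locality_fails}). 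A small additional inaccuracy: the Reeb period spectrum need not be discrete in general; what one actually needs is $\max|\rho'|$ below the minimal Reeb period, which is the paper's ``small slope $\delta$''.

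There is a secondary gap in the regularity step. Perturbing $h$ only ``away from the collars'' cannot regularize trajectories that meet the collar region (e.g.\ those asymptotic to the critical points created by the bump), and any perturbation inside the collar destroys the exact radial structure on which your non-crossing argument depends. The paper resolves this by first regularizing the ends by a perturbation vanishing to second order on the orbits, then the homotopy on a compact $s$-interval (Proposition~\ref{pro:pert_hom_bdd_supp}), and separately proving that barricades persist under such small perturbations via a compactness/broken-trajectory argument (Proposition~\ref{pro:appendix}). Some version of that persistence statement is unavoidable and is missing from your plan.
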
 
This result is proved in Section~\ref{sec:Barricades}, by an explicit construction of the perturbation $h$ and the almost complex structure $J$. We remark that the assumptions on $(M,\omega)$ being symplectically aspherical and $U$ having either incompressible boundary or being an incompressible Liouville domain are crucial for this construction. See the proofs of Lemmas \ref{lem:cross_solutions}-\ref{lem:solution_contained_U} for details. 

\subsection{Related works.}
There have been several works studying the Floer-theoretic interaction between disjointly supported Hamiltonians, mainly through the spectral invariants of these Hamiltonians and their sum. Early works in this direction, mainly by Polterovich \cite{polterovich2014symplectic}, Seyfaddini \cite{seyfaddini2014spectral} and Ishikawa \cite{ishikawa2015spectral}, established upper bounds for the invariant of the sum of Hamiltonians, which depend om the supports. Later, Humili\`ere, Le Roux and Seyfaddini \cite{humiliere2016towards} proved that in certain cases the invariant of the sum is equal to the maximum over the invariants of each individual summand. The method was also conceptually different. While previous works relied solely on the properties of spectral invariants,  Humili\`ere, Le Roux and Seyfaddini studied the Floer complex itself. We also take this approach and study the interaction between disjointly supported Hamiltonians on the level of the Floer complex, but our  methods are substantially different.

In a broader sense, it is worth to mention two works which regard {\it symplectic homology}. Symplectic homology is an umbrella term for a type of homological invariants of symplectic manifolds, or of subsets of symplectic manifolds, which are constructed via a limiting process from the Floer complexes of properly chosen Hamiltonians.
In this setting, questions regarding disjointly supported Hamiltonians correspond to local-to-global relations, such as a Mayer-Vietoris sequence.
In  \cite{cieliebak2018symplectic}, Cieliebak and Oancea   defined symplectic homology for Liouville domains and Liouville cobordisms and proved a Mayer-Vietoris relation. Their method include ruling out the existence of certain Floer trajectories, and partially rely on a work by Abouzaid and Seidel, \cite{abouzaid2010open}. Versions of some of these arguments are being used in Section~\ref{sec:Barricades} below.
Another work concerning Mayer-Vietoris property is by Varolgunes, \cite{varolgunes2018mayer}, in which he defines an invariant of compact subsets of closed symplectic manifolds, which is called relative symplectic homology, and finds a condition under which the Mayer-Vietoris property holds. In particular, for a union of disjoint compact sets, the relative symplectic homology splits into a direct sum.

\subsection*{Structure of the paper.}	
In Section~\ref{sec:floer_preliminaries} we review the necessary preliminaries from Floer theory and contact geometry. In Section~\ref{sec:Barricades} we construct barricades and prove Theorem~\ref{thm:barricade}. We then use it to prove Theorem~\ref{thm:indep_of_embedding} in Section~\ref{sec:shwartzs_cap}. In Section~\ref{sec:open_manifolds}, we discuss the relation to Floer homology on certain open manifolds and two extensions of Theorem~\ref{thm:indep_of_embedding}. Sections \ref{sec:spec_inv_max_ineq}-\ref{sec:AHS_selector} are dedicated to the proofs of Theorems \ref{thm:spectral_inv_max_ineq}-\ref{thm:AHS_min_ineq} respectively. Finally, on Section~\ref{sec:transversality_compactness} we prove several transversality  and compactness claims that are required for establishing the main results. Appendix~\ref{app:incompressible} contains a claim about  incompressibility, whose proof we include for the sake of completeness.

\subsection*{Acknowledgements.}	
We are very grateful to Lev Buhovsky and Leonid Polterovich for their guidance and insightful inputs. We thank Vincent Humili\`ere and Sobhan Seyfaddini for useful discussions and, in particular, for proposing a question that led us to Theorem~\ref{thm:indep_of_embedding}. We thank Alberto Abbondandolo, Carsten Haug and Felix Schlenk for sharing a preliminary version of their paper \cite{abbondandolo2019simple} with us. We also thank Felix Schlenk for suggesting to consider Floer homology on open manifolds, which led us to Section~\ref{sec:open_manifolds}.
We thank Mihai Damian and Jun Zhang for discussions concerning the arguments appearing in Section~\ref{app:regular_pair}.
Finally, we thank Henry Wilton for discussions regarding Proposition~\ref{pro:incompressible} over MathOverflow.  

Y.G. was partially supported by ISF Grant 1715/18 and by ERC Advanced grant 338809.
S.T. was partially supported by ISF Grant 2026/17 and by the Levtzion Scholarship.

\section{Preliminaries from Floer theory.}\label{sec:floer_preliminaries}
In this section we briefly review some preliminaries from Floer theory and contact geometry on closed symplectically aspherical manifolds (namely, when $\omega|_{\pi_2(M)}=0$ and $c_1|_{\pi_2(M)}=0$, where $c_1$ is the first Chern class of $M$). 
For more details see, for example, \cite{audin-damian,mcduff2012j,polterovich2014function}.
We also fix some notations that will be used later on.

\subsection{Floer homology, regularity and notations.}
Let $F:M\times S^1\rightarrow \R$ be a Hamiltonian on $M$. 
The corresponding action functional $\cA_F$ is defined on the space of contractible loops in $M$ by
\begin{equation*}
\cA_F(x) :=\int_0^1 F(x(t),t)\ dt - \int \bar{x}^* \omega,
\end{equation*}
where $x:S^1\rightarrow M$ and $\bar x:D^2\rightarrow M$ satisfies $\bar{x}(e^{2\pi i t})=x(t)$. The critical points of the action functional are the contractible 1-periodic orbits of the flow of $X_F$ and their set is denoted by $\cP(F)$. 
The Hamiltonian  $F:M\times S^1\rightarrow\R$ is said to be {\it non-degenerate} if the graph of the linearized flow of $X_F$ at time 1 intersects the diagonal in $TM\times TM$ transversally. In this case, the flow of $X_F$ has finitely many 1-periodic orbits. The Floer complex $CF_*(F)$ is spanned by these critical points, over $\Z_2$\footnote{The Floer complex can be defined over other coefficient rings, we chose to work in the simplest setting.}.
A time dependent $\omega$-compatible\footnote{An almost complex structure $J$  is called {\it $\omega$-compatible} if $\omega(J\cdot, J\cdot)=\omega(\cdot,\cdot)$ and $\omega(\cdot ,J\cdot)$ is an inner-product on $TM$. All almost complex structures considered in this paper are assumed to be $\omega$ compatible.} almost complex structure $J$ induces a metric on the space of contractible loops, in which negative-gradient flow lines of $\cA_F$  
are maps $u:\R\times S^1\rightarrow M$ that solve the Floer equation
\begin{equation}\tag{FE}\label{eq:FE}
\partial_su(s,t)+J\circ u(s,t) \cdot\left(\partial_t u(s,t)-X_F\circ u(s,t)\right)=0.
\end{equation}
The {\it energy} of such a solution is defined to be
$
E(u):=\int_{\R\times S^1}\|\partial_s u\|_J^2\ ds\ dt
$, where $\|\cdot\|_J$ is the norm induced by the the inner product associated to $J$, $\left<\cdot,\cdot\right>_J:=\omega(\cdot,J\cdot)$. When the Hamiltonian $F$ is non-degenerate, for every solution $u$ with finite energy, there exist $x_\pm \in \cP(F)$ such that $\lim_{s\rightarrow \pm\infty}u(s,t)=x_\pm(t)$, and we say that $u$ {\it connects } $x_\pm$. The well known energy identity for such solutions is a consequence of Stokes' theorem:     
\begin{equation}\label{eq:energy_id_Hamiltonian}
E(u):=\int_{\R\times S^1}\|\partial_s u\|^2_{J}\ ds\ dt
=\cA_{F_-}(x_-)-\cA_{F_+}(x_+),
\end{equation} 

For two 1-periodic orbits $x_\pm\in\cP(F)$ of $F$, we denote by $\cM_{(F,J)}(x_-,x_+)$ the set of all solutions $u:\R\times S^1\rightarrow M$ of the Floer equation (\ref{eq:FE}) that satisfy $\lim_{s\rightarrow \pm\infty}u(s,t)=x_\pm(t)$. Notice that $\R$ acts on this set by translation in the $s$ variable. We denote by $\cM_{(F,J)}$ the set of all finite energy solutions. It is well known (e.g., \cite[Theorem 6.5.6]{audin-damian}) that when $F$ is non-degenerate, $\cM_{(F,J)}:=\cup_{x_\pm\in \cP(H)}\cM_{(F,J)}(x_-,x_+)$. Moreover, for non-degenerate Hamiltonians one can define an index $\mu:\cP(F)\rightarrow\Z$, called {\it the Conley-Zehnder index}, which assigns an integer to each orbit (see e.g. \cite[Chapter 7]{audin-damian} and the references therein). The Floer complex is graded by the index $\mu$, namely, for $k\in\Z$, $CF_k(F)$ is the $\Z_2$-vector space spanned by the periodic orbits $x\in\cP(F)$ for which $\mu(x)=k$.

In order to define the Floer differential for the graded complex $CF_*(F)$, one needs an almost complex structure $J$, such that the pair $(F,J)$ is {\it Floer-regular}. The definition of Floer regularity concerns the surjectivity of a certain linear operator and is given in Section~\ref{app:regular_pair}. When the pair $(F,J)$ is Floer-regular, the space of solutions, $\cM_{(F,J)}(x_-,x_+)$ is a smooth manifold of dimension $\mu(x_-)-\mu(x_+)$, for all $x_\pm\in \cP(F)$. Dividing $\cM_{(F,J)}(x_-,x_+)$ by the $\R$ action, we obtain a manifold of dimension $\mu(x_-)-\mu(x_+)-1$.
 
Recall that an element $a\in CF_*(F)$ is a formal linear combination $a=\sum_x a_x\cdot x$ where $x\in\cP(F)$ and $a_x\in \Z_2$. For a Floer-regular pair $(F,J)$, the Floer differential $\partial_{(F,J)}:CF_*(F)\rightarrow CF_{*-1}(F)$ is defined by 
\begin{equation}
\partial_{(F,J)}(a) := \sum_{x_-\in \cP(F)}\sum_{\tiny{\begin{array}{c} 
		x_+\in \cP(F),\\ \mu(x_+)=\mu (x_-)-1
		\end{array}}} a_{x_-}\cdot \#_2\left(\frac{\cM_{(F,J)}(x_-,x_+)}{\R}\right)\cdot x_+,
\end{equation}
where $\#_2$ is the number of elements modulo 2.
The homology of the complex $(CF_*(F),\partial_{(F,J)})$ is denoted by $HF_*(F,J)$ or $HF_*(F)$. A fundamental result in Floer theory states that Floer homology is isomorphic to the singular homology, with a degree shift, $HF_*(F,J)\cong H_{*-n}(M;\Z_2)$. The Floer complex admits a natural filtration by the action value. We denote by $CF_*^a(F)$ the sub-complex spanned by critical points with value not-greater than $a$. Since the differential is action decreasing, it can be restricted to the sub-complex $CF^a_*(F)$. The homology of this sub-complex is denoted by $HF_*^a(F,J)$.

It is well known that when $F$ is a $\cC^2$-small Morse function, its 1-periodic orbits are its critical points, $\cP(F)\cong Crit(F)$, and their actions are the values of $F$, $\cA_F(p) = F(p)$. In this case, the Floer complex with respect to a time-independent almost complex structure $J$, coincides with the Morse complex when the degree is shifted by $n$ (which is half the dimension of $M$), since $\operatorname{Morse-ind}(p) = \mu(p)+n$ for every $p\in Crit(F)\cong\cP(F)$:
\begin{equation*}
\left(CF_*(F), \partial_{(F,J)}^{Floer}\right) = 
\left(CM_{*+n}(F), \partial_{(F,\left<\cdot, \cdot\right>_J)}^{Morse}\right).
\end{equation*} For a proof, see, for example, \cite[Chapter 10]{audin-damian}.  
We conclude this section by fixing notations that will be used later on. 
\begin{notation}\label{not:CF_element_contained}
	Let $a=\sum_x a_x\cdot x$ be an element of $CF_*(H)$.
	
	\begin{itemize}
		\item We say that $x\in a$ if $a_x\neq 0$.
		\item We denote the maximal action of an orbit from $a$ by $\lambda_H(a):=\max\{\cA_H(x):a_x\neq 0\}$. 
		
		\item For a subset $X\subset M$, let $C_X(H)\subset CF_*(H)$ be the subspace spanned by the 1-periodic orbits of $H$ that are contained in $X$.	Moreover, let $\pi_X:CF_*(H)\rightarrow C_X(H)$ be the projection onto this subspace. Note that $C_X(H)$ is not necessarily a subcomplex, and $\pi_X$ is not a chain map in general.
	\end{itemize}
\end{notation}

\subsection{Communication between Floer complexes using homotopies.}
Now let $H:M\times S^1\times\R\rightarrow\R$ denote a homotopy of Hamiltonians, rather than a single Hamiltonian. Throughout the paper, we consider only homotopies that are constant outside of a compact set. Namely, there exists $R>0$ such that $\partial_s H|_{|s|>R}=0$, and we denote by $H_\pm(x,t) := \lim_{s\rightarrow\pm \infty}H(x,t,s)$ the ends of the homotopy $H$. 
Given an almost complex structure $J$, we consider the Floer equation (\ref{eq:FE}) with respect to the pair $(H,J)$:
\begin{equation*}
\partial_su(s,t)+J\circ u(s,t) \cdot\left(\partial_t u(s,t)-X_{H_s}\circ u(s,t)\right)=0,
\end{equation*} 
where $H_s(\cdot,\cdot):=H(\cdot,\cdot,s)$. We sometimes refer this equation as ``the $s$-dependent Floer equation", to stress that it is defined with respect to a homotopy of Hamiltonians. For 1-periodic orbits $x_\pm\in \cP(H_\pm)$, we denote by $\cM_{(H,J)}(x_-,x_+)$ the set of all solutions $u:\R\times S^1\rightarrow M$ of the $s$-dependent Floer equation (\ref{eq:FE}) that satisfy $\lim_{s\rightarrow \pm\infty}u(s,t)=x_\pm(t)$. As before, $\cM_{(H,J)}$ denotes the set of all finite energy solutions and when the ends, $H_\pm$, are non-degenerate, $\cM_{(H,J)} = \cup_{x_\pm\in \cP(H_\pm)} \cM_{(H,J)}(x_-,x_+)$ (see, for example, \cite[Theorem 11.1.1]{audin-damian}).
The  energy identity for homotopies is:     
\begin{eqnarray}\label{eq:energy_id_homotopies}
	E(u)&:=&\int_{\R\times S^1}\|\partial_s u\|^2_{J}\ ds\ dt\nonumber\\
	&=&\cA_{H_-}(x_-)-\cA_{H_+}(x_+) +\int_{\R\times S^1}\partial_s H\circ u\ ds\ dt. 
\end{eqnarray}

As in the case of Hamiltonians, the definition of Floer-regularity concerns the surjectivity of a certain linear operator and is given in Section~\ref{app:regular_pair}. For a Floer-regular pair, $(H,J)$, the space $\cM_{(H,J)}(x_-,x_+)$ is a smooth manifold of dimension $\mu(x_-)-\mu(x_+)$.
In this case, one can define a degree-preserving chain map, called {\it the continuation map}, between the Floer complexes of the ends, $\Phi:CF_*(H_-)\rightarrow CF_*(H_+)$,  by 
\begin{equation}\label{eq:continuation_def} 
\Phi(a)=\sum_{x_-\in a}\sum_{\tiny{\begin{array}{c} 
		x_+\in \cP(H_+),\\ \mu(x_+)=\mu(x_-)
		\end{array}}}a_{x_-}\cdot \#_2\cM(x_-,x_+)\cdot x_+.
\end{equation} 
The regularity of the pair guarantees that the map $\Phi$ is a well defined chain map that induces isomorphism on homologies, see, e.g., \cite[Chapter 11]{audin-damian}.  

\subsection{Contact-type boundaries.}
In order to construct barricades for Floer solutions around a given domain, we need the boundary to have a {\it contact structure}:
Let $U\subset M$ be a domain with a smooth boundary. We say that $U$ has a {\it contact type boundary} if  there exists a vector-field $Y$, called the {\it Liouville vector field}, that is defined on a neighborhood of $\partial U$, is transverse to $\partial U$, points outwards of $U$ and satisfies $\cL_Y\omega= \omega$. The differential form $\lambda:=\iota_Y\omega$ is a primitive of $\omega$, namely $d\lambda =\omega$. The Reeb vector field $R$ is then defined by the following equations:
\begin{eqnarray}
R\in\ker d\lambda|_{T\psi^\tau\partial U},\quad\lambda(R)=1,
\end{eqnarray}
where $\psi^\tau$ is the flow of $Y$.
We stress that the differential form $\lambda$ and the vector field $R$ are defined wherever the Liouville vector field $Y$ is defined.
If the Liouville vector field $Y$ extends to $U$, we say that $U$ is a {\it Liouville domain}.

\section{Barricades for solutions of the ($s$-dependent) Floer equation.} \label{sec:Barricades}
In what follows, $H:M\times S^1\times\R\rightarrow\R$ denotes a homotopy of (time-dependent) Hamiltonians and $J$ denotes a (time-dependent) almost complex structure. We assume that $\partial_s H$ is compactly supported and  denote $H_\pm:=\lim_{s\rightarrow\pm \infty}H(\cdot, \cdot,s)$. Note the we consider the case where $H$ is a single Hamiltonian as a particular case, by identifying it with a constant homotopy.
Fix a CIB domain $U\subset M$, and denote by $Y$ and $R$ the Liouville and Reeb vector fields respectively. Then, $\lambda=\iota_Y\omega$ is the contact form on the boundary $\partial U$. The flow $\psi^{\tau}$ of $Y$ is called {\it the Liouville flow}, and is defined for short times.

In order to prove Theorem~\ref{thm:barricade}, namely, that there exist a perturbation $h$ of $H$ and an almost complex structure $J$ such that $(H,J)$ has a barricade, we construct $h$ and $J$ explicitly. Let us sketch the idea of this construction before giving the details. 
\begin{itemize}
	\item To construct $h$, we first add to $H$ a non-negative bump function in the radial coordinate, which is defined on a neighborhood of $\partial U$ using the Liouville flow. Then, we take $h$ to be a small non-degenerate perturbation of it.
	\item The almost complex structure $J$ is taken to be {\it cylindrical} near $\partial U$ (see Definition~\ref{def:cylindrical} below).
\end{itemize}
We want to rule out the existence of solutions violating the constrains from Definition~\ref{def:barricade}. Suppose there exists a solution $u$ connecting $x_-\subset U_\circ$ with $x_+\subset U_\circ^c$. Then, the image of $u$ intersects $\partial U_\circ$, say along a loop $\Gamma$. We first bound the action of $\Gamma$ (Lemma~\ref{lem:bdry_loop}), and then conclude a negative upper bound for the action of $x_+$ (Lemma~\ref{lem:cross_solutions}). Since $h\approx0$ on $U_\circ^c\supset x_+$, the action of $x_+$ can be taken to be arbitrarily close to zero, in contradiction.

\subsection{Preliminary computations.}
Some of the arguments and results in this section were carried out by Cieliebak and Oancea  in \cite{cieliebak2018symplectic} for the setting of completed Liouville domains, instead of closed symplectically aspherical manifolds. Specifically, some of the computations appearing in the proofs of Lemma~\ref{lem:bdry_loop} and Lemma~\ref{lem:solution_contained_U} can be found in the proof of \cite[Lemma 2.2]{cieliebak2018symplectic}, which follows Abouzaid and Seidel's work, in \cite[Lemma 7.2]{abouzaid2010open}.
\begin{figure}
	\centering
	\includegraphics[scale=0.7]{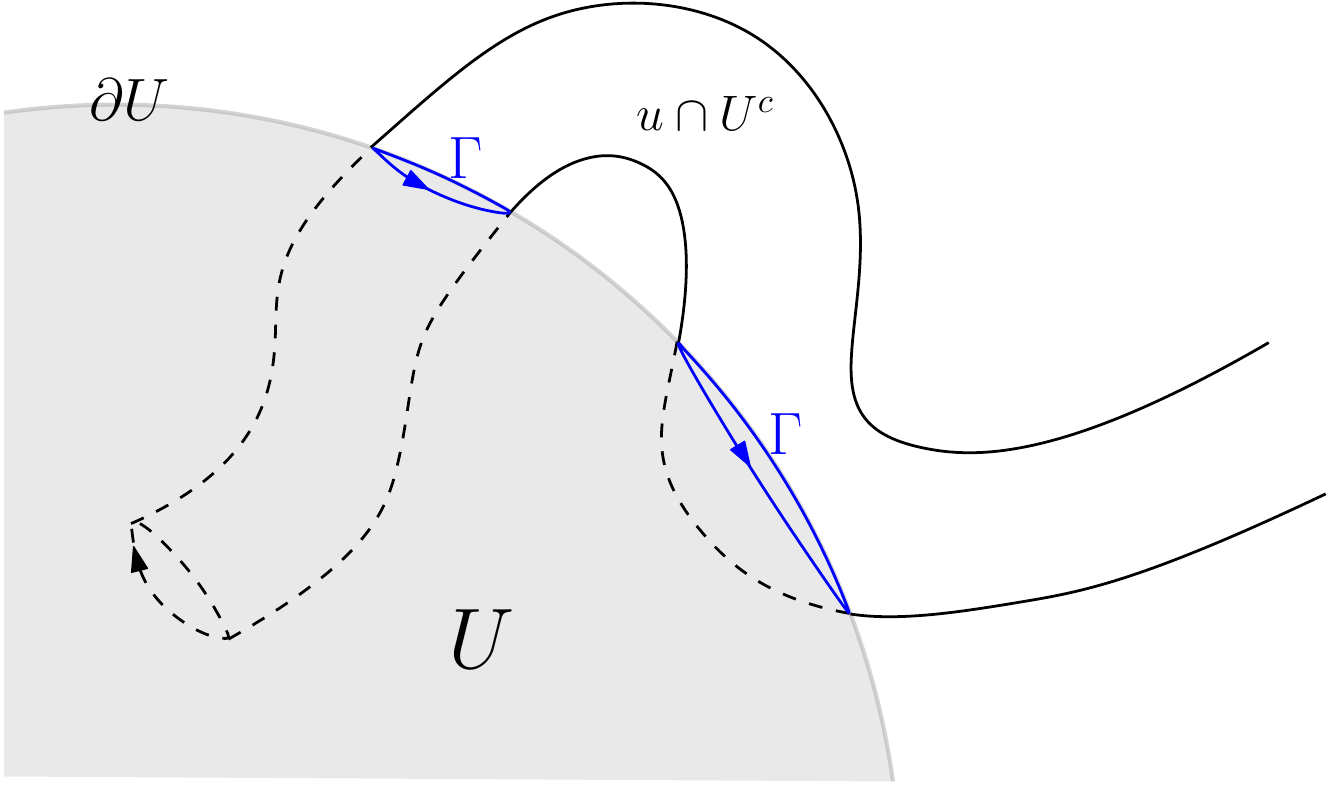}
	\caption{\small{An example of the setting described in Lemma~\ref{lem:bdry_loop}. 
			The gray region is the set $U\subset M$ and the loops $\Gamma$ are given by $\im (u)\cap \partial U =\Gamma$, oriented as the boundary of $\im(u)\cap U^c$. 
	}}
	\label{fig:bdry_loop}
\end{figure}

\begin{defin}\label{def:cylindrical}
	We say that a pair $(H,J)$ of a homotopy and an almost complex structure is {\it $\delta$-cylindrical near $\partial U$}, for $\delta\in \R\setminus\{0\}$, if
	\begin{enumerate}
		\item $J$ is cylindrical near $\partial U$, namely,  $JY=R$ on an open neighborhood of $\partial U$.
		\item $\partial U\times S^1\times \R=\{H=c\}$ is a regular level set of $H$.
		\item $\grad_J H=\delta Y$ on a neighborhood of $\partial U$ and $H$ has no 1-periodic orbits in this neighborhood.
	\end{enumerate}
\end{defin}
\noindent We remark that conditions 2,3 in the above definition imply that, near $\partial U$, $H$ does not depend on the $\R$-coordinate. 
Suppose that $(H,J)$ is $\delta$-cylindrical near $\partial U$ and let $u:\R\times S^1 \rightarrow M$ be a solution of the ($s$-dependent) Floer equation (\ref{eq:FE}) with finite energy, $E(u)<\infty$.
The following lemma gives an upper bound for the integral of $\lambda$ along the oriented curve  $\Gamma:=\partial (\im(u)\cap U^c)$, see Figure~\ref{fig:bdry_loop}.

\begin{lemma}\label{lem:bdry_loop}
	Let  $(H,J)$ be a pair that is $\delta$-cylindrical near $\partial U$ and let  $u:\R\times S^1$ be a finite-energy solution of the $s$-dependent Floer equation connecting $x_\pm\in\cP(H_\pm)$. Suppose that $u$ intersects $\partial U$ transversely and let $\Gamma:=\im(u)\cap \partial U$ denote the intersection, oriented as the boundary of $\im(u)\cap U^c$. Then,
	\begin{equation}
	\int_\Gamma \lambda \leq \begin{cases}
	-\delta, & \text{if } x_-\subset U,\ x_+\subset U^c,\\
	\delta, & \text{if } x_-\subset U^c,\ x_+\subset U,\\
	0, & \text{if } x_\pm\subset U \text{ or } x_\pm\subset U^c.
	\end{cases}
	\end{equation}
\end{lemma}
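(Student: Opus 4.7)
The plan is to use Stokes' theorem with the one-form $\alpha := \lambda - H\, dt$, which is a primitive of $\omega - dH_s\wedge dt$ on the Liouville collar of $\partial U$. The Floer equation $\partial_t u = J\partial_s u + X_{H_s}$, combined with $\omega$-compatibility of $J$, yields the fundamental pointwise identity
\[
u^* d\alpha \;=\; u^*\omega \,-\, dH_s(\partial_s u)\, ds \wedge dt \;=\; |\partial_s u|^2\, ds \wedge dt \;\geq\; 0,
\]
so that $u^*\alpha$ behaves like a sub-primitive of a non-negative $2$-form on the collar preimage.

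I would first apply Stokes' theorem to the preimage $V_{\tau_0} := u^{-1}(\psi^{[0,\tau_0]}(\partial U))$ of a thin outer collar, whose boundary in the cylinder consists of the inner loops $\Gamma_{\mathrm{cyl}} := u^{-1}(\partial U)$ (mapped by $u$ onto $\Gamma$) and the outer loops $\Gamma^{\tau_0} := u^{-1}(\psi^{\tau_0}(\partial U))$. The $\delta$-cylindrical structure forces $H = c + \delta\tau$ at radial coordinate $\tau$ (since $\grad H = \delta Y$ with $|Y|_g = 1$ and $H|_{\partial U}=c$), while a separate application of Stokes to the closed form $dt$ on $V_{\tau_0}$ gives $\int_{\Gamma^{\tau_0}} dt = -\int_{\Gamma_{\mathrm{cyl}}} dt$. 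Combining these, the Stokes identity simplifies to
\[
\int_\Gamma \lambda \,+\, \int_{\Gamma^{\tau_0}} \lambda \;=\; \int_{V_{\tau_0}} |\partial_s u|^2\, ds\, dt \,-\, \delta\tau_0 \int_{\Gamma_{\mathrm{cyl}}} dt.
\]

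Next, I would evaluate $\int_{\Gamma_{\mathrm{cyl}}} dt$ purely topologically, by applying Stokes to $dt$ on the full region $V := u^{-1}(U^c)$ truncated at $|s|\leq R$ and sent to infinity: cap contributions at $s = \pm\infty$ contribute $\pm 1$ precisely when $x_\pm \subset U^c$, so $\int_{\Gamma_{\mathrm{cyl}}} dt$ equals $0$, $-1$, $+1$, or $0$ in the four cases of the lemma, matching exactly the coefficients $0, -\delta, +\delta, 0$ once multiplied by $\delta$.

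To close the argument, the outer-boundary term $\int_{\Gamma^{\tau_0}} \lambda$ must be linked back to $\int_\Gamma \lambda$; using the Liouville scaling $\psi^{\tau*}\lambda = e^\tau \lambda$ together with the non-negativity $\int_{V_{\tau_0}} |\partial_s u|^2 \geq 0$, a careful limiting analysis as $\tau_0 \to 0$ converts the above identity into the inequality
\[
\int_\Gamma \lambda \;\leq\; \delta \int_{\Gamma_{\mathrm{cyl}}} dt,
\]
which is precisely the stated four-case bound after substituting the topological value. The main obstacle will be the careful sign and orientation bookkeeping, especially reconciling $\Gamma$'s orientation as the boundary of $\im(u)\cap U^c$ with the various boundary orientations in the cylinder, and making the limit $\tau_0 \to 0$ rigorous. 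Because the argument uses $\lambda$ only on the collar preimage $V_{\tau_0}$ and the winding count is purely topological, both CIB cases (Liouville domain or contact-type incompressible boundary) are handled uniformly.
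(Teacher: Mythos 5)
Your first two steps are sound and in fact reproduce ingredients of the paper's argument: the pullback identity $u^*(\lambda - H\,dt)$ having non-negative exterior derivative is the standard energy computation, the formula $H=c+\delta\tau$ on the collar is correct (since $\lambda(R)=1$ gives $|Y|_J=1$ where $J$ is cylindrical), and the topological evaluation of $\int_{\gamma}dt$ via Stokes on the compactified cylinder is exactly the paper's equation~(\ref{eq:homology_gamma_in_sigma}). The problem is the closing step. Your Stokes identity on $V_{\tau_0}$ reads
\begin{equation*}
\int_\Gamma\lambda \;+\; \int_{\Gamma^{\tau_0}}\lambda \;+\; \delta\tau_0\int_{\gamma}dt \;=\; \int_{V_{\tau_0}}\|\partial_s u\|_J^2\,ds\,dt \;\geq\;0,
\end{equation*}
and there is no mechanism to convert this into an \emph{upper} bound for $\int_\Gamma\lambda$. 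The Liouville scaling $\psi^{\tau*}\lambda=e^\tau\lambda$ relates $\int_{\psi^{\tau_0}(\Gamma)}\lambda$ to $\int_\Gamma\lambda$, but $\Gamma^{\tau_0}=\im(u)\cap\psi^{\tau_0}(\partial U)$ is a different curve from $\psi^{\tau_0}(\Gamma)$, so no comparison between $\int_{\Gamma^{\tau_0}}\lambda$ and $\int_\Gamma\lambda$ follows from it. Moreover, letting $\tau_0\to0$ the energy term and the $\delta\tau_0$-term both vanish and the identity degenerates to $\int_\Gamma\lambda+\lim_{\tau_0\to0}\int_{\Gamma^{\tau_0}}\lambda=0$, which carries no information; structurally, the only $\delta$-contribution your scheme produces is of size $\delta\tau_0$, so it can never yield the required term $\delta\int_\gamma dt$ of size $\delta\cdot(\pm1)$.

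What is missing is precisely the pointwise sign argument at $\partial U$ that the paper's proof is built on: using the Floer equation one writes $\dot\Gamma(\tau)=J\,N(\tau)+(\text{terms involving }X_H)$ with $N=Du(\nu)$ the pushforward of the outward normal of $\Sigma=u^{-1}(U^c)$, which points \emph{into} $U$; then $\lambda(JN)=\omega(Y,JN)=\langle Y,N\rangle_J\leq 0$ because $Y$ is $J$-orthogonal to $T\partial U$ and outward-pointing, while the Hamiltonian terms, evaluated exactly on $\partial U$ where $\nabla_J H=\delta Y$ and $JY=R$, contribute $\delta\int_{\gamma}dt$. This boundary convexity input (equivalently, a monotonicity-in-$\tau$ statement which itself requires the same sign) cannot be replaced by collar Stokes bookkeeping plus a $\tau_0\to0$ limit, so as written your proof does not establish the inequality.
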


\begin{proof}
	Set $\Sigma:=u^{-1}(U^c)\subset\R\times S^1$ and denote its boundary by $\gamma$, then $u(\gamma) = \Gamma$, since $x_\pm$ do not intersect $\partial U$. 
	The orientation on $\Sigma$ is given by the positive frame $(\partial_s,\partial_t)$. Let $\gamma_i$ be a connected component of $\gamma$, then $\Gamma_i:=u(\gamma_i)$ is connected. Let $\tau\in [0, T_i]$ be a unit-speed parametrization of $\gamma_i$, and notice that this induces parametrization on $\Gamma_i$. Denote by $\nu(\tau)$ the outer normal to $\Sigma$ at $\gamma_i(\tau)$, then $\dot\gamma_i (\tau)= j\nu(\tau)$, where $j$ is the standard complex structure on $\R\times S^1$ (i.e., $j\partial_s=\partial_t$). Pushing  $(\nu(\tau),\dot{\gamma_i}(\tau))$ to $TM$ we obtain
 	\begin{equation*}
 	N(\tau) := Du\left(\nu(\tau)\right),\quad \dot\Gamma_i(\tau) = Du\left(\dot\gamma_i(\tau)\right).
 	\end{equation*}
 	We remark that $N(\tau)$ is not necessarily normal to $\partial U$ (with respect to the inner product induced by $J$), but is always pointing inwards (or tangent to the boundary), see Figure~\ref{fig:bdry_loop2}.
	\begin{figure}
		\centering
		\includegraphics[scale=0.85]{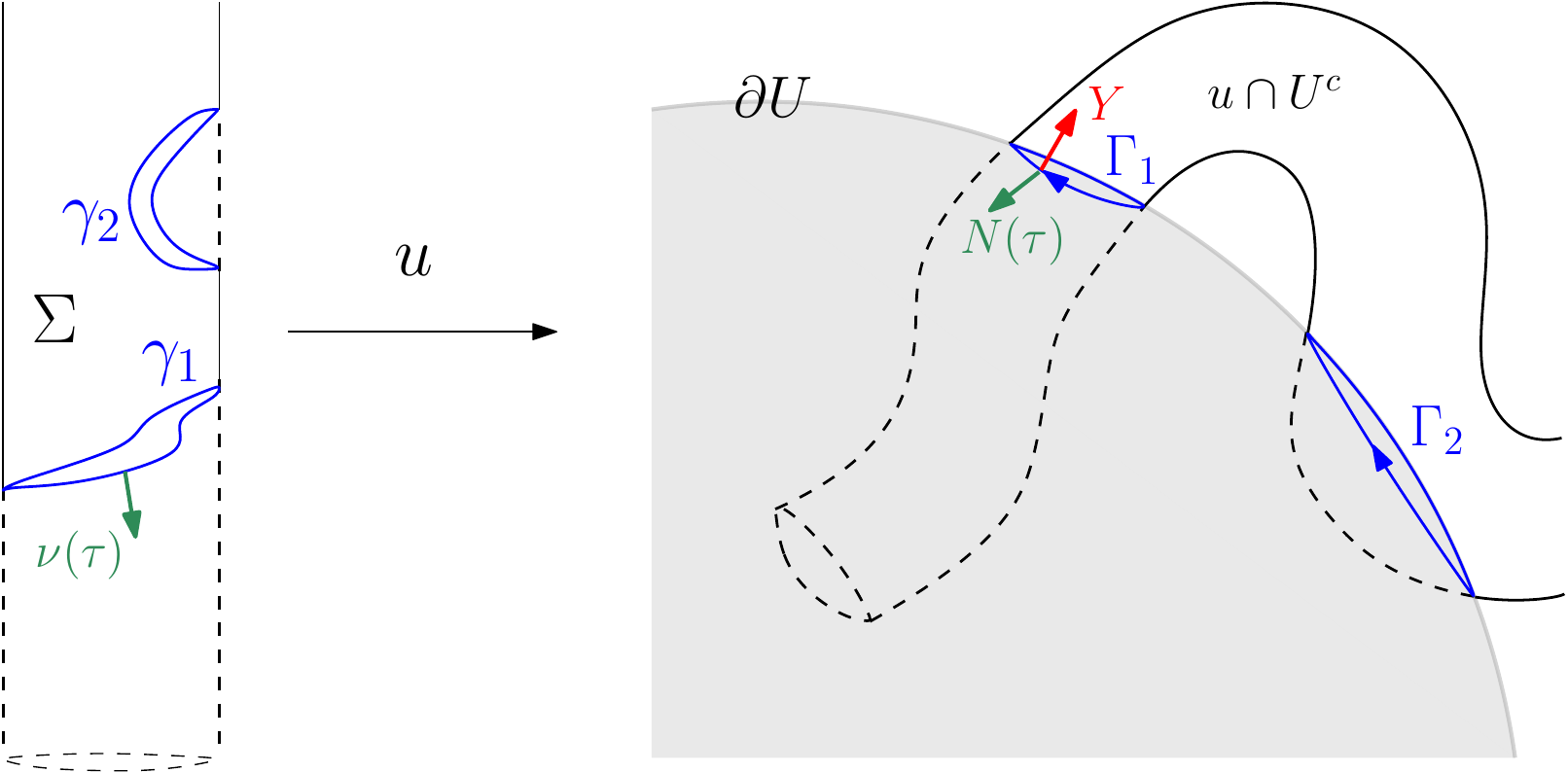}
		\caption{\small{The normal $\nu(\tau)$ to the component $\gamma_1$ of $\partial\Sigma$ and its image, $N(\tau)$, under $Du$. 
		}}
		\label{fig:bdry_loop2}
	\end{figure}
	The relation between $N(\tau)$ and $\dot\Gamma_i(\tau)$ goes through the Floer equation (\ref{eq:FE}), which can be written in the following form:
 	\begin{equation*}
 	J\circ Du = Du\circ j -X_H\circ u\cdot\left<\cdot,\partial_s\right>_j+JX_H\circ u\cdot\left<\cdot,\partial_t\right>_j.
 	\end{equation*}
 	It follows that $\dot\Gamma_i(\tau)$ can be written as a linear combination of $JN(\tau)$, the gradient of $H$ and the symplectic gradient of $H$:
 	\begin{eqnarray*}
 	\dot\Gamma_i(\tau) &=& Du(\dot \gamma_i(\tau)) = Du\left(j\nu(\tau)\right)\\
 	&=&JDu\left(\nu(\tau)\right)+X_H\circ u\cdot\left<\nu(\tau),\partial_s\right>_j -JX_H\circ u\cdot\left<\nu(\tau),\partial_t\right>_j\\
 	&=& JN(\tau) +X_H\circ u\cdot\left<\nu(\tau),\partial_s\right>_j -JX_H\circ u\cdot\left<\nu(\tau),\partial_t\right>_j.
 	\end{eqnarray*}
  	Using this to compute the integral of $\lambda$  along $\Gamma_i$, we obtain
 	\begin{eqnarray}\label{eq:two_terms}
 	\int_{\Gamma_i}\lambda &=& \int \lambda(\dot{\Gamma_i}(\tau))\ d\tau= \int \omega\left(Y\circ\Gamma_i(\tau),\dot{\Gamma_i}(\tau)\right) d\tau \nonumber\\
 	&=& \int \omega\left(Y\circ\Gamma_i(\tau),JN(\tau)\right) d\tau 
 	 +\int \left[\omega(Y,X_H)\cdot\left<\nu,\partial_s\right>_j -\omega(Y,JX_H)\cdot\left<\nu,\partial_t\right>_j\right]  d\tau \nonumber\\
	&=& \int \left<Y\circ\Gamma_i(\tau),N(\tau)\right>_J d\tau 
	\nonumber\\ && 
	 +\int \left[\omega(Y,J\nabla_J H)\cdot\left<\nu,\partial_s\right>_j -\omega(Y,-\nabla_J H)\cdot\left<\nu,\partial_t\right>_j\right]  d\tau. 
	\nonumber 
	\end{eqnarray}
	Recalling our assumptions that $\nabla_J H=\delta Y$ on $\partial U$ and that $JY$ is the Reeb vector-field, we obtain
	\begin{eqnarray}
	\int_{\Gamma_i}\lambda 
	&=& \int \left<Y\circ\Gamma_i(\tau),N(\tau)\right>_J d\tau 
	 +\delta\int\left[ \omega(Y,JY)\cdot\left<\nu,\partial_s\right>_j -\omega(Y,-Y)\cdot\left<\nu,\partial_t\right>_j\right]  d\tau \nonumber\\
 	&=& \int \left<Y\circ\Gamma_i(\tau),N(\tau)\right>_J d\tau + \delta \cdot 1 \cdot \int \left<\nu,\partial_s\right>_j  d\tau,
 	\end{eqnarray}
 	 Let us estimate separately each term in the sum (\ref{eq:two_terms}), starting with the first:
 	Since $JY=R$, the vector field $Y$ is perpendicular to the hyperplane $T(\partial U)$ at each point and is pointing outwards of $U$. By our construction, $N(\tau)$ points inwards to $U$ (as it is tangent to $\im(u)$ and points out of $\im(u)\cap U^c$) and therefore $\left<Y\circ\Gamma_i,N\right> \leq 0$ for all $\tau$. We conclude that 
 	\begin{equation}\label{eq:int_normal_times_liouville}
 	\int \left<Y\circ\Gamma_i(\tau),N(\tau)\right>_J d\tau\leq 0.
 	\end{equation} 
 	We turn to estimate the second summand in (\ref{eq:two_terms}): Noticing that  $\left<\nu,\partial_s\right>_j = \left<j\nu,j\partial_s\right>_j= \left<\dot\gamma_i,\partial_t\right>_j = dt(\dot{\gamma_i})$, we have
 	\begin{equation*}
 	 \int \left<\nu,\partial_s\right>_j  d\tau =  \int dt(\dot\gamma_i)\ d\tau =\int_{\gamma_i} dt.
 	\end{equation*}  
 	Let $\hat \Sigma$ be the closure of $\Sigma$ in the compactification $(\R\cup\{\pm\infty\})\times S^1$ of the cylinder, then $\partial \hat \Sigma\subset \partial \Sigma \cup \{\pm\infty\}\times S^1$. Notice that $\partial \hat \Sigma$ contains $\{-\infty\}\times S^1$ (resp., $\{+\infty\}\times S^1$) if and only if $x_-\subset U^c$ (resp., $x_+\subset U^c$). As $\int_{\{\pm\infty\}\times S^1} dt = \pm 1$ and, by Stokes' theorem, $\int_{\partial \hat \Sigma} dt=0$, we conclude that
 	\begin{equation}\label{eq:homology_gamma_in_sigma}
 	\sum_i\int_{\gamma_i}dt = \int_{\gamma} dt = \int_{\partial \hat \Sigma}dt - \begin{cases}
 	1, & \text{if } x_-\subset U,\ x_+\subset U^c,\\
 	-1, & \text{if } x_-\subset U^c,\ x_+\subset U,\\
 	0, & \text{if } x_-,x_+\subset U \text{ or } x_-,x_+\subset U^c. 
 	\end{cases}
 	\end{equation}
 	Combining (\ref{eq:two_terms}), (\ref{eq:int_normal_times_liouville}) and (\ref{eq:homology_gamma_in_sigma}) we obtain
 	\begin{equation*}
 	\int_{\Gamma}\lambda= \sum_i\int_{\Gamma_i}\lambda \leq 0 + \delta\cdot\begin{cases}
 	-1, & \text{if } x_-\subset U,\ x_+\subset U^c,\\
 	1, & \text{if } x_-\subset U^c,\ x_+\subset U,\\
 	0, & \text{if } x_-,x_+\subset U \text{ or } x_-,x_+\subset U^c. 
 	\end{cases}
 	\end{equation*} 
 \end{proof}
 
When the homotopy $H$ is non-increasing in $U^c$, Lemma~\ref{lem:bdry_loop} can be used to bound the action of the ends of solutions that cross the boundary of $U$. Lemma~\ref{lem:cross_solutions} below is similar to a result obtained by Cieliebak and Oancea  in \cite[Lemma 2.2]{cieliebak2018symplectic} for the setting of completed Liouville domains, using {\it neck-stretching}. The proof of Lemma~\ref{lem:cross_solutions} uses a different approach and is an application of Lemma~\ref{lem:bdry_loop} above.
 
\begin{lemma}\label{lem:cross_solutions}
	Suppose that $(H,J)$ is $\delta$-cylindrical near $\partial U$ and assume in addition that $\partial_sH \leq 0$ on $U^c$. For every finite-energy solution $u$ connecting $x_\pm\in\cP(H_\pm)$,
	\begin{enumerate}
		\item if $x_-\subset U$ and $x_+\subset U^c$ then $\cA_{H_+}(x_+)< c-\delta$,
		\item if $x_-\subset U^c$ and $x_+\subset U$ then $\cA_{H_-}(x_-)> c-\delta$,
	\end{enumerate}
	where $c$ is the value of $H$ on $\partial U$.
\end{lemma}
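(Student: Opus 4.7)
The plan is to apply the energy identity to $u$ restricted to the subsurface $\Sigma := u^{-1}(U^c)$, together with a capping of the ``outer'' end of $u$ constructed from $u$ itself, and to conclude via the bound on $\int_\Gamma\lambda$ supplied by Lemma~\ref{lem:bdry_loop}.

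Concretely, I would integrate the standard pointwise identity $\|\partial_s u\|_J^2\, ds\, dt = u^*\omega - d((H\circ u)\, dt) + \partial_s H\, ds\, dt$ over the closure $\hat\Sigma$ of $\Sigma$ in the compactified cylinder. Since $(H,J)$ is $\delta$-cylindrical we have $H\equiv c$ along $\partial U$, so Stokes' theorem and the evaluation of $\int_\gamma dt$ already performed in the proof of Lemma~\ref{lem:bdry_loop} reduce the $d((H\circ u)\, dt)$-term to $-c + \int_0^1 H_+(x_+,t)\,dt$ in Case~1 (and a symmetric expression in Case~2). Combined with $\partial_s H \le 0$ on $U^c$ and $E|_\Sigma(u)\ge 0$, this yields the bound $\int_{\hat\Sigma} u^*\omega \ge \int_0^1 H_+(x_+,t)\,dt - c$ in Case~1.

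The second ingredient is a capping of $x_+$ built from $u|_{\hat\Sigma}$: since $\hat\Sigma$ is a subsurface of the closed annulus $[-\infty,+\infty]\times S^1$ it is planar, so after appending any fixed capping $\bar x_+$ at $\{+\infty\}\times S^1$ each boundary loop $\gamma_i$ of $\hat\Sigma$ becomes null-homotopic in the compactified domain, and hence $\Gamma_i = u(\gamma_i)$ is null-homotopic in $M$. The CIB hypothesis (incompressibility of $\partial U$) then provides a disk $D_i\subset\partial U$ bounding $\Gamma_i$, and gluing $u|_{\hat\Sigma}$ to $\bigcup D_i$ yields a topological disk in $M$ capping $x_+$. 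Together with $-\bar x_+$ this forms a sphere in $M$, so symplectic asphericity together with $\omega = d\lambda$ near $\partial U$ converts the vanishing of its total $\omega$-area into the identity
\begin{equation*}
\int \bar x_+^*\omega \;=\; \int_{\hat\Sigma} u^*\omega \;-\; \int_\Gamma \lambda.
\end{equation*}

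Substituting this into $\cA_{H_+}(x_+) = \int_0^1 H_+(x_+,t)\,dt - \int\bar x_+^*\omega$ and using the energy estimate, the $\int H_+$ terms cancel, yielding $\cA_{H_+}(x_+) \le c + \int_\Gamma\lambda \le c - \delta$ by Lemma~\ref{lem:bdry_loop}. Strictness follows from $E|_\Sigma(u)>0$: the interior of $\Sigma$ contains a neighborhood of $\{+\infty\}\times S^1$ since $x_+\subset U^c$, and unique continuation for the Floer equation forbids $\partial_s u \equiv 0$ on an open set (which would force $u$ to be $s$-independent and contradict $x_-\neq x_+$). Case~2 is the mirror argument, capping $x_-$ instead and producing the reverse inequality $\cA_{H_-}(x_-) \ge c - \int_\Gamma\lambda > c-\delta$. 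I expect the main obstacle to be the topological step in the capping construction, where the CIB incompressibility of $\partial U$ and symplectic asphericity both enter essentially; a minor side issue is that Lemma~\ref{lem:bdry_loop} requires $u \pitchfork \partial U$, which can be arranged by an arbitrarily small perturbation if needed.
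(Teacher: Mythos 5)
Your proposal is correct and follows essentially the same route as the paper: the energy identity on $\Sigma=u^{-1}(U^c)$, Stokes with $H\equiv c$ on $\partial U$, capping $\Gamma$ by disks supplied by the CIB hypothesis, symplectic asphericity to convert $\int_{u|_\Sigma}\omega$ into $\int_{\bar x_+}\omega+\int_\Gamma\lambda$, and then Lemma~\ref{lem:bdry_loop}. The only cosmetic difference is the transversality step, which the paper arranges by replacing $U$ with $\psi^\tau U$ (choosing the sign of $\tau$ so the value of $H$ on the shifted boundary moves favorably) rather than by perturbing $u$.
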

 
\begin{figure}
 	\centering
 	\includegraphics[scale=0.6]{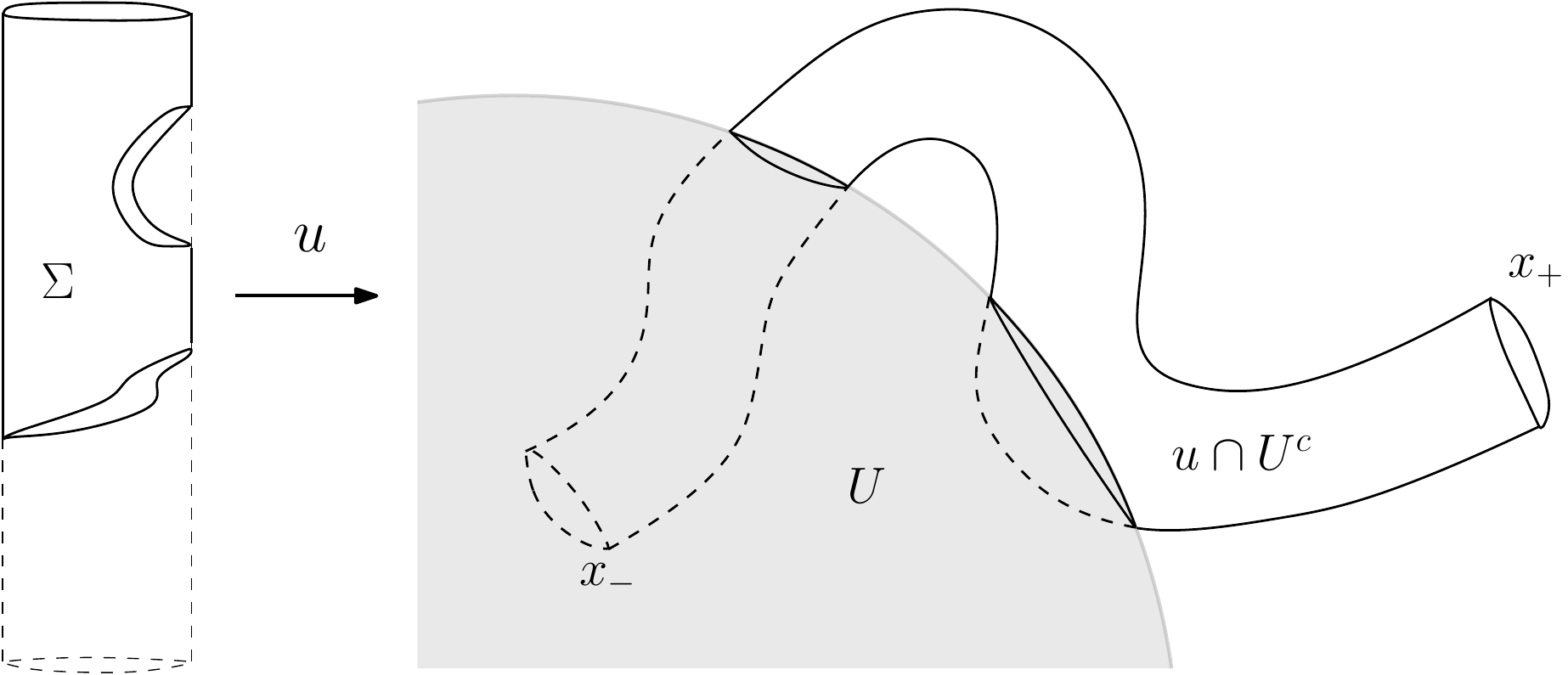}
 	\caption{\small{An example for the setting described in Lemma~\ref{lem:cross_solutions}. 
 			The gray region is the set $U\subset M$ and $\Sigma:=u^{-1}(U^c)\subset\R\times S^1$.
	 	}}
 	\label{fig:cross_diff}
 \end{figure}

\begin{proof}
	We prove the first statement, where $x_-\subset U$ and $x_+\subset U^c$. The second statement is proved similarly.
	As in \cite[Lemma 2.2]{cieliebak2018symplectic}, after replacing $U$ by its image, $\psi^{\tau} U$, under the Liouville flow for small time $\tau$, we may assume that $u$ is transverse to $\partial U$\footnote{The proof of this statement is similar to that of Thom's transversality theorem.}.
	Note that, since $\nabla_JH=\delta Y$ on a neighborhood of $\partial U$, $H$ is constant on $\partial (\psi^\tau U) = \psi^\tau(\partial U)$. Moreover, choosing the sign of $\tau$ to be opposite to the sign of $\delta$, the value of $H$ on ${\psi^\tau(\partial U)}$ is smaller than $c$ (in order to prove the second statement, choose $\tau$ to be of the same sign as $\delta$, and then the value of $H$ on ${\psi^\tau(\partial U)}$ will be greater than $c$).
	Denote $\Sigma:= u^{-1}(U^c)\subset \R\times S^1$ and let us compute an  energy-identity for the restriction $u|_\Sigma$:
	\begin{eqnarray}\label{eq:energy_id_sigma}
	\int_{u|_\Sigma} \omega &=& \int_{\Sigma} \omega(\partial_s u,\partial_t u)\ ds\wedge dt \overset{(\ref{eq:FE})}{=}  \int_{\Sigma} \omega(\partial_s u,J\partial_s u +X_H\circ u)\ ds\wedge dt
	\nonumber\\
	&=& \int_{\Sigma} \|\partial_s u\|_J^2 \ ds\wedge dt + \int_{\Sigma} dH(\partial_s u)\ ds\wedge dt
	\nonumber\\
	&=& E(u|_\Sigma)+ \int_{\Sigma} \frac{\partial}{\partial s} \left(H\circ u\right) \ ds\wedge d t- \int_{\Sigma} ({\partial_s}H)\circ u\ ds\wedge d t
	\nonumber\\
	&=& E(u|_\Sigma)+ \int_{\Sigma}d(\left(H\circ u\right) \ d t)- \int_{\Sigma} ({\partial_s}H)\circ u\ ds\wedge d t
		\nonumber\\
	&\geq&  E(u|_\Sigma)+ \int_{\Sigma} d\left(H\circ u\ dt\right)>\int_{\Sigma} d\left(H\circ u\ dt\right),
`	\end{eqnarray}
	where, in the last two inequalities, we used our assumption that $\partial_s H\leq 0$, and the positivity of the energy, respectively.
	As before, denote by $\hat \Sigma$ the closure of $\Sigma$ in the compactification $(\R\cup\{\pm\infty\})\times S^1$, then $\partial \hat\Sigma=\gamma\cup \{+\infty\}\times S^1$. Since $H$ is constant on 
	$\partial U$,
	$\int_{\gamma}H\circ u\ dt = H({\partial U})\cdot \int_{\gamma} dt=-H(\partial U)$, where the last equality follows from (\ref{eq:homology_gamma_in_sigma}) for $\gamma= \partial \Sigma$. Therefore, using Stokes' theorem, we obtain
	\begin{equation}\label{eq:H_integrals}
	\int_{\Sigma} d\left(H\circ u\ dt\right)=\int_{\partial \hat \Sigma} H\circ u\ dt= -H({\partial U}) +\int_0^1 H\circ x_+.
	\end{equation}
	Let $\bar x_\pm$ be capping disks of $x_\pm$ respectively, 
	and let $v\subset \bar U$ be a union of disks capping the connected components of $\Gamma:=u(\gamma)$, such that the contact form $\lambda$ is defined on $v$. The existence of such disks follows from our definition of a CIB domain: If the relevant connected component of $U$ is an incompressible Liouville domain, then we can take a capping disk that is contained in that component. Otherwise, the boundary of the relevant connected component of $U$ is incompressible and we can take the capping disk to lie in the boundary. Since $M$ is symplectically aspherical and $\omega = d\lambda$ where $\lambda$ is defined, we have 
	\begin{equation}\label{eq:area_comparisons}
	\int_{u|_\Sigma}\omega=\int_{\bar x_+}\omega +\int_v \omega = \int_{\bar x_+}\omega+\int_{\Gamma}\lambda.
	\end{equation}
	Combining (\ref{eq:H_integrals}) and (\ref{eq:area_comparisons}) yields
	\begin{equation*}
	\cA_{H_+}(x_+)=\int_0^1 H\circ x_+-\int_{\bar x_+}\omega = \int_{\Sigma} d\left(H\circ u\ dt\right)+H(\partial U) -\int_{u|_\Sigma}\omega + \int_{\Gamma}\lambda<c +\int_{\Gamma}\lambda,
	\end{equation*}
	where the last inequality is due to (\ref{eq:energy_id_sigma}). Using Lemma~\ref{lem:bdry_loop} we conclude that $\cA_{H_+}(x_+) <c-\delta$. 
\end{proof}

The following Lemma is essentially a version of \cite[Lemma 2.2]{cieliebak2018symplectic} for closed symplectically aspherical manifolds instead of completed Liouville domains.

\begin{lemma}\label{lem:solution_contained_U}
	Suppose that $(H,J)$ is $\delta$-cylindrical near $\partial U$ and that $\partial_s H\leq 0$ on $U^c$. Then, for every $x_\pm\in\cP(H_\pm)$ that are contained in $U$, every solution $u$ connecting them is contained in $U$.
\end{lemma}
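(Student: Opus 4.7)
The plan is to argue by contradiction, adapting the strategy of the proof of Lemma~\ref{lem:cross_solutions} to the situation where both ends of $u$ lie inside $U$. Suppose for contradiction that some finite-energy solution $u$ with $x_\pm \subset U$ satisfies $\im(u) \cap U^c \neq \emptyset$. As in Lemma~\ref{lem:cross_solutions}, I first replace $U$ by $\psi^\tau U$ for a small $\tau$ (of suitable sign) so that $u$ becomes transverse to $\partial U$ while still exiting it; on the new boundary, $H$ remains constant. Set $\Sigma := u^{-1}(U^c)$, $\gamma := \partial \Sigma = u^{-1}(\partial U)$, and let $\Gamma := u(\gamma)$ be oriented as the boundary of $\im(u)\cap U^c$.

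The new ingredient compared to Lemma~\ref{lem:cross_solutions} is that, since $x_-,x_+ \subset U$, the closure $\hat\Sigma$ in the compactification $(\R\cup\{\pm\infty\})\times S^1$ has $\partial \hat\Sigma = \gamma$ with no contribution from $\{\pm\infty\}\times S^1$, and in particular $\hat\Sigma$ is compact. Stokes' theorem applied to $dt$ on $\hat\Sigma$ then yields $\int_\gamma dt = 0$, so combined with $H \equiv c$ on $\partial U$:
\begin{equation*}
\int_\Sigma d(H\circ u\, dt) = c\int_\gamma dt = 0.
\end{equation*}

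Next, I reuse the energy identity derived in the proof of Lemma~\ref{lem:cross_solutions}:
\begin{equation*}
\int_{u|_\Sigma}\omega = E(u|_\Sigma) + \int_\Sigma d(H\circ u\, dt) - \int_\Sigma (\partial_s H)\circ u\, ds\wedge dt.
\end{equation*}
Since $\partial_s H \leq 0$ on $U^c$ and the middle term vanishes by the previous paragraph, this gives $\int_{u|_\Sigma}\omega \geq E(u|_\Sigma)$. The next step is to argue that $E(u|_\Sigma) > 0$: if it vanished, then $\partial_s u \equiv 0$ on $\Sigma$, and on any component of $\Sigma$ (whose closure meets $\partial U$) the Floer equation would force $u(s,t) = y(t)$ for a $1$-periodic orbit $y$ of $X_H$ whose image meets $\partial U$, contradicting the cylindricity assumption that $H$ admits no $1$-periodic orbits in a neighborhood of $\partial U$. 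Hence $\int_{u|_\Sigma}\omega > 0$.

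Finally, the capping argument from Lemma~\ref{lem:cross_solutions} carries over verbatim: using the CIB condition, each component of $\Gamma$ bounds a disk in $\bar U$ contained in the domain of the primitive $\lambda$, and symplectic asphericity together with $d\lambda = \omega$ yield $\int_{u|_\Sigma}\omega = \int_\Gamma \lambda$. Applying Lemma~\ref{lem:bdry_loop} in the case $x_\pm\subset U$ gives $\int_\Gamma\lambda \leq 0$, producing the contradiction $0 < \int_{u|_\Sigma}\omega = \int_\Gamma\lambda \leq 0$. The main subtlety I anticipate is the strict positivity of $E(u|_\Sigma)$, which essentially forces invoking the no-periodic-orbits clause of Definition~\ref{def:cylindrical}; all other steps are direct specializations of the arguments already used in Lemma~\ref{lem:cross_solutions}.
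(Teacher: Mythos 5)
Your overall architecture coincides with the paper's: restrict the energy identity to $\Sigma=u^{-1}(U^c)$, kill the boundary term via Stokes and $\int_\gamma dt=0$ (both ends in $U$), and convert $\int_{u|_\Sigma}\omega$ into $\int_\Gamma\lambda\le 0$ using the CIB capping and Lemma~\ref{lem:bdry_loop}. The one place where you genuinely deviate is the step you yourself flag: to run the argument by contradiction you need the \emph{strict} inequality $E(u|_\Sigma)>0$, and your justification of it does not hold up. From $E(u|_\Sigma)=0$ you get $\partial_s u\equiv 0$ on $\Sigma$, but this does not force $u$ to equal a $1$-periodic orbit $y(t)$ on a component $W$ of $\Sigma$: the time-slices $\{t:(s,t)\in W\}$ are in general arcs rather than full circles (e.g.\ $W$ can be a disk whose boundary circle is contractible in $\R\times S^1$), so $u(s,\cdot)|_W$ is only a segment of a Hamiltonian trajectory and no closed orbit of $X_H$ is produced. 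Hence the appeal to the ``no $1$-periodic orbits near $\partial U$'' clause of Definition~\ref{def:cylindrical} is not available as stated, and the strict positivity is left unproved.

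The conclusion $E(u|_\Sigma)>0$ is in fact true, but it needs a different argument, for instance: at a point $z\in\gamma$ one has $\partial_s u(z)=0$ and, by the Floer equation, $\partial_t u(z)=X_H(u(z))=\delta\, JY=\delta R$, which is tangent to $\partial U$; hence $\im\bigl(Du_z\bigr)\subset T_{u(z)}\partial U$, contradicting the transversality of $u$ to $\partial U$ that you arranged via the Liouville flow (equivalently, one can exploit the equality case in the proof of Lemma~\ref{lem:bdry_loop}, where $\int_\Gamma\lambda=0$ forces $\left<Y,N\right>_J\equiv 0$ along $\Gamma$, again contradicting transversality). Note that the paper avoids this issue altogether: it does not argue by contradiction, but concludes $E(u|_\Sigma)\le 0$, deduces that $u$ cannot enter $int(U^c)$ (so $\im(u)\subset\bar U$), and then repeats the argument with $\psi^\tau U$ for small $\tau<0$ to upgrade this to $\im(u)\subset U$; if you keep your by-contradiction framing, you must either supply the tangency argument above or restructure along the paper's lines.
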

\begin{proof}
	As before, after replacing $U$ by its image, $\psi^{\tau} U$, under the Liouville flow for a small time $\tau$, we may assume that $u$ is transverse to $\partial U$. Setting again $\Sigma:= u^{-1}(U^c)\subset \R\times S^1$ and computing an energy identity, as in (\ref{eq:energy_id_sigma}), for the restriction  of $u$ to $\Sigma$, we have
	\begin{equation*}
		\int_{u|_\Sigma} \omega \geq E(u|_\Sigma)+ \int_{\partial\hat \Sigma} H\circ u\ dt,
	\end{equation*}
	where, as before, $\hat{\Sigma}$ is the closure of $\Sigma$ in the compactification of the cylinder. This time, both ends $x_\pm$ are contained in $U$ and hence $\partial \hat \Sigma = \gamma$. Since $H$ is constant on $\partial U$, it follows from (\ref{eq:homology_gamma_in_sigma}) that
	\begin{equation*}
	\int_{\partial\hat \Sigma} H\circ u\ dt = \int_{\gamma} H\circ u\ dt = H(\partial U)\cdot \int_{\gamma} dt =0.
	\end{equation*}
	On the other hand, taking $v\subset\bar U$ to be a union of disks capping the connected components of $\Gamma=u(\gamma)$ (which is oriented as the boundary of $\im(u)\cap U^c$), such that $\lambda$ is defined on $v$, the fact that $M$ is symplectically aspherical implies that
	\begin{equation*}
	\int_{u|_\Sigma}\omega = \int_v \omega = \int_\Gamma\lambda\leq 0,
	\end{equation*}
	where the last inequality follows from Lemma~\ref{lem:bdry_loop}.
	Combining the above two inequalities we find 
	\begin{equation*}
	E(u|_\Sigma)\leq \int_{u|_\Sigma}\omega\leq 0. 
	\end{equation*}
	Since we assumed that $H_\pm$ are non-degenerate and have no 1-periodic orbits intersecting $\partial U$, this implies $\im(u)\cap int(U^c)=\emptyset$ and hence $\im(u)\subset \bar U$. Noticing that we may argue similarly for the image $\psi^{\tau}U$ of $U$ under the Liouville flow for small negative time, $\tau<0$, we conclude that $\im(u)\subset \overline{\psi^{\tau} U}\subset U$.
\end{proof}

\subsection{Constructing the barricade.}
As before, $U$ denotes a CIB domain and $\psi^{\tau}$  is the flow of the Liouville vector field $Y$, which is defined in a neighborhood of the boundary $\partial U$. Consider a pair $(H,J)$ of a homotopy (or, in particular, a Hamiltonian) and an almost complex structure. The following definition is an adaptation of Figure~\ref{fig:Morse_bump} to Floer theory.
\begin{defin}\label{def:bump}
	We say that the pair $(H,J)$ admits a {\it cylindrical bump of width $\tau>0$ and slope $\delta>0$ around $\partial U$} (abbreviate to $(\tau,\delta)$-bump around $\partial U$) if
\begin{enumerate}
	\item $H=0$ on $\partial U\times S^1\times \R$ and on $\partial U_\circ\times S^1\times\R$, where $U_\circ:=\psi^{-\tau}U$.
	\item \label{itm:J_bicylindrical}
	$J$ is cylindrical near $\partial U$ and $\partial U_\circ$, namely, $JY=R$ on an open neighborhood of $\partial U\cup \partial U_\circ$. 
	\item $\nabla_J H=\delta Y$ near $\partial U_\circ\times S^1\times\R$ and $\nabla_J H=-\delta Y$ near $\partial U\times S^1\times\R$.
	\item The only 1-periodic orbits of $H_\pm$ that are not contained in $U_\circ$ are critical points with values in $(-\delta,\delta)$.
\end{enumerate}
\end{defin}
\begin{figure}
	\centering
	\includegraphics[scale=1.2]{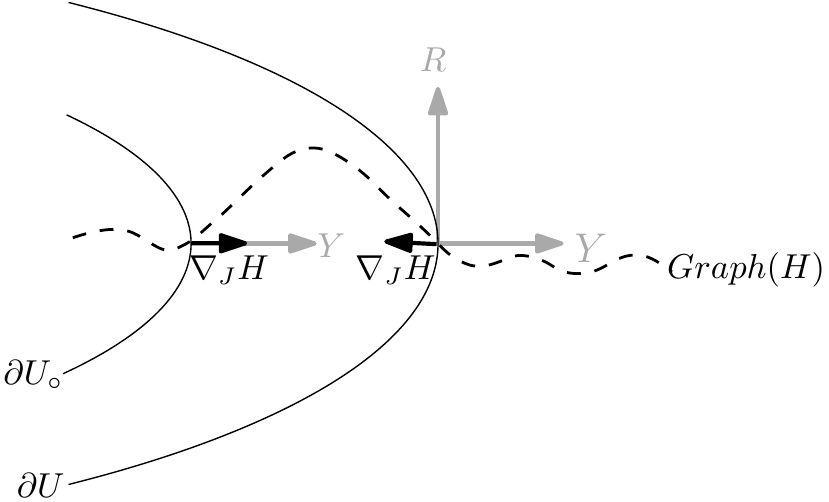}
	\caption{\small{An illustration of a pair with a cylindrical bump.
		}}
		\label{fig:cylindrical_bump}
\end{figure}
\noindent In analogy with the discussion in Morse theory, we show that a pair with a cylindrical bump has a barricade. 
\begin{prop}\label{pro:bump_implies_barricade}
	Let $(H,J)$ be a pair with a cylindrical bump of width $\tau$ and slope $\delta$. Then, the pair $(H,J)$ has a barricade in $U$ around $U_\circ:=\psi^{-\tau}U$.
\end{prop}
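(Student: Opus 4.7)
The plan is to reduce the claim to two applications of Lemmas~\ref{lem:cross_solutions} and~\ref{lem:solution_contained_U}: once with $U_\circ$ in place of $U$ and slope $+\delta$, and once with $U$ itself and slope $-\delta$. By conditions~1--3 of Definition~\ref{def:bump}, the pair $(H,J)$ is $\delta$-cylindrical near $\partial U_\circ$ and $(-\delta)$-cylindrical near $\partial U$, both at level $c=0$, in the sense of Definition~\ref{def:cylindrical}; the no-periodic-orbit clause of that definition is automatic, since $\nabla_J H \neq 0$ in both neighborhoods rules out critical points, and by condition~4 any $1$-periodic orbit of $H_\pm$ outside $U_\circ$ is itself a critical point. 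The same reasoning shows that the orbits of $H_\pm$ avoid $\partial U_\circ$ and $\partial U$. Finally, $\partial_s H$ vanishes on $U^c$ (since $H$ is supported in $U$) and on the annulus $U\setminus U_\circ$ by the $s$-independence of the bump modification, so the monotonicity hypothesis $\partial_s H \le 0$ required by both lemmas holds on $U^c$ and on $U_\circ^c$.

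Granting this, both barricade clauses follow by the same dichotomy. For clause~(1), fix $x_- \subset U_\circ$ and a finite-energy solution $u$ connecting $x_-$ to $x_+ \in \cP(H_+)$. If $x_+ \subset U_\circ$, then Lemma~\ref{lem:solution_contained_U} applied to $U_\circ$ gives $\im(u) \subset U_\circ$. Otherwise $x_+ \not\subset U_\circ$, so by condition~4 it is a constant loop at a critical point of $H_+$ with $\cA_{H_+}(x_+) \in (-\delta,\delta)$; on the other hand, Lemma~\ref{lem:cross_solutions} applied to $U_\circ$ (slope $+\delta$, level $0$) yields $\cA_{H_+}(x_+) < -\delta$, a contradiction. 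Clause~(2) is symmetric: for $x_+ \subset U$ and $u$ connecting $x_- \in \cP(H_-)$ to $x_+$, either $x_- \subset U$ and Lemma~\ref{lem:solution_contained_U} applied to $U$ gives $\im(u) \subset U$, or $x_- \subset U^c \subset U_\circ^c$, in which case condition~4 forces $\cA_{H_-}(x_-) \in (-\delta,\delta)$ while Lemma~\ref{lem:cross_solutions} applied to $U$ (slope $-\delta$) yields $\cA_{H_-}(x_-) > \delta$, again a contradiction.

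The only substantive content beyond bookkeeping is matching signs across the two boundaries: the Liouville field $Y$ points outward from $U_\circ$ aligned with $\nabla_J H = +\delta Y$, but outward from $U$ against $\nabla_J H = -\delta Y$, so the two invocations of Lemma~\ref{lem:cross_solutions} produce oppositely-signed action bounds on the ``exiting'' end, both lying strictly outside the window $(-\delta,\delta)$ that condition~4 enforces for orbits outside $U_\circ$. The one potential technicality is ensuring the input $\partial_s H \le 0$ on the annular region $U \setminus U_\circ$; this is not in Definition~\ref{def:bump} itself, but is built into the construction underlying Theorem~\ref{thm:barricade}, in which the bump is added in an $s$-independent fashion.
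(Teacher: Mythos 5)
Your argument is essentially identical to the paper's proof: the same two-case dichotomy, applying Lemma~\ref{lem:solution_contained_U} to $U_\circ$ (resp.\ $U$) when both ends lie inside, and otherwise deriving a contradiction between the action window $(-\delta,\delta)$ from condition~4 of Definition~\ref{def:bump} and the bounds $\cA_{H_+}(x_+)<-\delta$, resp.\ $\cA_{H_-}(x_-)>\delta$, from Lemma~\ref{lem:cross_solutions} with slopes $+\delta$ and $-\delta$. Your closing remark about the hypothesis $\partial_s H\le 0$ on the complements is a fair observation — the paper's proof uses this silently, and it is indeed supplied by the construction in Theorem~\ref{thm:barricade} rather than by Definition~\ref{def:bump} itself.
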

\begin{proof}
	The proof essentially follows from Lemmas \ref{lem:cross_solutions} and \ref{lem:solution_contained_U}, together with the fact that a pair $(H,J)$ with a $(\tau,\delta)$-bump around $\partial U$ is in particular cylindrical near both $\partial U$ and $\partial U_\circ$. 
	Let $u$ be a solution of the $s$-dependent Floer equation, with respect to $H$ and $J$, that connects $x_\pm\in\cP(H_\pm)$. We need to show that $u$ satisfies the constraints from Definition~\ref{def:barricade}, and therefore split into two cases:
	\begin{enumerate}
		\item \label{itm:start_in_U0}
		Suppose $x_-\subset U_\circ$. If $x_+\subset U_\circ$ we may apply Lemma~\ref{lem:solution_contained_U} to $H$, $J$ and $U_\circ$, and conclude that $\im(u)\subset U_\circ$ as required. Otherwise,
		$x_+\subset U_\circ^c$ is a critical point of $H_+$ and its value lies in the interval $(-\delta, \delta)$. On the other hand, applying  Lemma~\ref{lem:cross_solutions} to $H$, $J$ and $U_\circ$ yields that $\cA_{H_+}(x_+)<-\delta$, in contradiction. 
		
		\item \label{itm:end_in_U}
		Suppose $x_+\subset U$. As before, if $x_-\subset U$ then applying Lemma~\ref{lem:solution_contained_U} to $H$, $J$ and $U$ yields $u\subset  U$ as required. Otherwise, $x_-\subset U^c$ is a critical point of $H_-$ and its value lies in $(-\delta, \delta)$. On the other hand, applying Lemma~\ref{lem:cross_solutions} to $H$, $J$ and $U$, and noticing that $\nabla_JH=-\delta Y$ on $\partial U$, we find that $\cA_{H_-}(x_-)>\delta$, in contradiction.
	\end{enumerate}	
\end{proof}
		 In order to prove Theorem~\ref{thm:barricade}, it remains to guarantee the regularity assertion, for which we use the result from Section~\ref{app:barricade_survives_perturbation} below.
\begin{proof}[Proof of Theorem~\ref{thm:barricade}]
	Let $H$ be a homotopy of Hamiltonians that is supported in $U\times  S^1\times \R$. Then, there exists $\tau>0$ small enough, such that $H$ is supported inside $\psi^{-\tau}U=:U_\circ$. Fix an almost complex structure $J$ that is cylindrical near both $\partial U$ and $\partial U_\circ$ (see Item~\ref{itm:J_bicylindrical} of Definition~\ref{def:bump} above), and let $h$ be a $\cC^\infty$-small perturbation of $H$ such that the pair $(h,J)$ admits a $(\tau,\delta)$-bump around $\partial U$ and $h_\pm$ are non-degenerate. Notice that, by definition, the pairs $(h_\pm,J)$ also admit a $(\tau,\delta)$-bump around $\partial U$. By Proposition~\ref{pro:bump_implies_barricade}, the pairs $(h,J)$ and $(h_\pm, J)$ have a barricade in $U$ around $U_\circ$. 
	
	The pairs $(h,J)$, $(h_\pm,J)$ constructed above are not necessarily Floer-regular. In order to achieve regularity, we perturb the homotopy $h$ and its ends. Proposition~\ref{pro:appendix} below states that for a homotopy $h'$ that   satisfies $\cP(h'_\pm)=\cP(h_\pm)$ and $supp (\partial_s h')\subset {M\times S^1\times I}$ for some fixed finite interval $I$, if $h'$ is close enough to $h$, then $(h',J)$ also has a barricade in $U$ around $U_\circ$. Therefore, it remains to describe a perturbation that satisfies the above constraints, and ensures regularity.
	Starting with the ends and recalling that $h_\pm$ are non-degenerate, we  perturb them without changing their periodic orbits to guarantee that the pairs $(h_\pm, J)$ are Floer-regular (the fact that this is possible is a well known result from Floer theory, cited in Claim~\ref{clm:app_reg_Hamiltonians} below). If the homotopy $h$ is constant, that is, corresponds to a single Hamiltonian, we are done. Otherwise, let us perturb $h$ so that its ends will agree with the regular perturbations of $h_\pm$. Finally, we perturb the resulting homotopy on the set $M\times S^1\times I$, for some fixed finite interval $I$, to make the pair $(h,J)$ Floer-regular. This is possible due  Proposition~\ref{pro:pert_hom_bdd_supp} below, which is a slight modification of standard claims from Floer theory and is proved in Section~\ref{app:regular_pair}. 
\end{proof}

\begin{rem}\label{rem:bump_implies_barricade}
	Proposition~\ref{pro:bump_implies_barricade} suggests that, when given a homotopy (or a Hamiltonian) $H$ that is supported in $U\times S^1\times \R$, we have some freedom in choosing the pair $(h,J)$ from Theorem~\ref{thm:barricade}. 
	Let us mention some additional properties that can be granted for the perturbation $h$ and the almost complex structure $J$, and will be useful in applications. 
	\begin{enumerate}
		\item \label{itm:time-indep}
		The almost complex structure $J$ can be taken to be time-independent. Moreover, if one of the ends of $H$, say $H_-$, is zero, then $h$ can be chosen such that $h_-$ is any time-independent small Morse function that has a cylindrical bump around $\partial U$. To see this, choose $h\approx H$ and $J$ such that $(h,J)$ has a cylindrical bump around $\partial U$, and $J$, $h_-$ are time-independent. Then, the pair $(h_-,J)$ is Floer-regular and, by perturbing $h_+$ first and then replacing the homotopy by a compactly supported perturbation, we end up with a pair $(h,J)$ that is Floer regular, as well as its ends, and $(h_-,J)$ is time-independent.
		
		\item \label{itm:ends_agree_on_P} When the homotopy $H$ is constant on some domain, we can choose the perturbation $h$ such that, on this domain, its ends, $h_\pm$, agree on their 1-periodic orbits up to second order. This follows from the use of Claim~\ref{clm:app_reg_Hamiltonians} in the proof of Theorem~\ref{thm:barricade}.
		
		\item \label{itm:hom_const_outside_of_01} Given an interval $[a,b]\subset \R$ such that $H$ is a constant homotopy for $s\notin[a,b]$, we can chose the perturbation $h$ of $H$ to be also constant outside of $[a,b]$, namely $supp(\partial_s h)\subset M\times S^1\times [a,b]$. This follows from the use of Proposition~\ref{pro:pert_hom_bdd_supp} in the proof of Theorem~\ref{thm:barricade}.
		
		\item Proposition~\ref{pro:bump_implies_barricade} also holds when considering a homotopy of almost complex structures, $\{J_s\}_{s\in\R}$, but the demand on $(h,J)$ to have a $(\tau,\delta)$-bump around $\partial U$ limits the dependence of $J_s$ on $s$ there.
	\end{enumerate}	
\end{rem}

\section{Locality of spectral invariants, Schwarz's capacities and super heavy sets.} \label{sec:shwartzs_cap}
In this section we use barricades to prove Theorem~\ref{thm:indep_of_embedding} and derive Corollaries~\ref{cor:schwarts_capacity} and \ref{cor:heavy_sets}. We will use the definitions and notations from Section~\ref{sec:floer_preliminaries}, in particular Notations~\ref{not:CF_element_contained} and Formula (\ref{eq:continuation_def}). We will also use the following properties of spectral invariants (see \cite[Proposition 12.5.3]{polterovich2014function}, for example):
\begin{enumerate}
	\item{(spectrality)} $c(F;\alpha)\in \spec(F)$.
	\item(stability/continuity) For any Hamiltonians $F$, $G$ and homology class $\alpha\in H_*(M)$,
	\begin{equation*}
	\int_0^1 \min_{x\in M} (F(x,t)-G(x,t))\ dt \leq c(F;\alpha)-c(G;\alpha)\leq \int_0^1 \max_{x\in M} (F(x,t)-G(x,t))\ dt. 
	\end{equation*}
	In particular, the functional $c(\cdot;\alpha):\cC^\infty(M\times S^1)\rightarrow\R$ is continuous.
	\item(Poincar\'e duality) For any Hamiltonian $F$,
	$
	c(F;[M]) = -c(-F;[pt]).
	$ 
	\item(energy-capacity inequality) If the support of $F$ is displaceable, then $c(F)$ is bounded by the displacement energy of the support in $M$, namely $c(F)\leq e(supp(F);M)$.
	We remind that a subset $X$ of a symplectic manifold is displaceable if there exists a Hamiltonian $G$ such that $\varphi_G^1(X)\cap X=\emptyset$. In this case, the displacement energy of $X$ is given by 
	\begin{equation}\label{eq:disp_energy_def}
	e(X;M):=\inf_{G: \varphi_G^1(X)\cap X=\emptyset} \int_0^1 \left(\max_M G(\cdot,t)-\min_M G(\cdot,t)\right)\ dt. 
	\end{equation}
\end{enumerate}
Let us sketch the idea of the proof of Theorem~\ref{thm:indep_of_embedding} before giving the details. We will prove the statement for the class of a point, and use Poincar\'e duality to deduce the same for the fundamental class. We start by showing that the spectral invariant, with respect to $[pt]$, of a Hamiltonian supported in a CIB domain is non-positive (Lemma~\ref{lem:spec_inv_non_neg}). Then, after properly choosing regular perturbations with barricades (Lemma~\ref{lem:reg_pairs_for_locality}), we consider a representative of $[pt]$ of negative action on $M$. Such a representative must be a combination of orbits in $U_\circ$ and thus can be pushed to a cycle on $N$. Finally, we use continuation maps, induced by homotopies to small Morse functions, to conclude that the cycle on $N$ represents $[pt]$ there. 

As mentioned above, our first step towards proving Theorem~\ref{thm:indep_of_embedding} is showing that the spectral invariant with respect to $[pt]$ of a Hamiltonian supported in a CIB domain is always non-positive. 
\begin{lemma}\label{lem:spec_inv_non_neg}
	Let $F:M\times S^1\rightarrow\R$ be a Hamiltonian supported in a CIB domain $U$. Then $c(F;[pt])\leq 0$. 
\end{lemma}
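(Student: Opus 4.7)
The plan is to represent the point class by a Floer cycle of arbitrarily small positive action. Since $F$ is supported in $U$, I will view $F$ as the positive end of a homotopy from $0$ to $F$, also supported in $U$, apply Theorem~\ref{thm:barricade} to extract a Floer-regular perturbation $h$ with a barricade, and transport a representative of $[pt]$ from the negative end $h_-$ (close to $0$) to the positive end $h_+$ (close to $F$) via the continuation map.

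Concretely, fix a small $\delta>0$, a width $\tau>0$ with $\operatorname{supp} F\subset U_\circ:=\psi^{-\tau}U$, and a non-decreasing cutoff $\beta\colon\R\to[0,1]$ equal to $0$ for $s\le -R$ and to $1$ for $s\ge R$. Set $H(x,t,s):=\beta(s)F(x,t)$; this homotopy is supported in $U\times S^1\times\R$ with $\partial_s H$ compactly supported, so Theorem~\ref{thm:barricade} delivers a $C^\infty$-small perturbation $h$ and an almost complex structure $J$ such that $(h,J)$ and $(h_\pm,J)$ are Floer-regular and have a barricade in $U$ around $U_\circ$. Since $H_-\equiv 0$, Remark~\ref{rem:bump_implies_barricade}(\ref{itm:time-indep}) allows me to choose $h_-$ to be a time-independent $C^2$-small Morse function carrying a $(\tau,\delta)$-bump around $\partial U$ (in the sense of Definition~\ref{def:bump}), arranged so that its global minimum $p\in M$ actually lies in $U^c$ with value $h_-(p)\in(-\delta,0)$. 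Because $h_-$ is a small time-independent Morse function and $(h_-,J)$ is Floer-regular, the Floer complex of $h_-$ coincides with its Morse complex, so $p$ is a cycle in $CF_*(h_-)$ representing $[pt]$ with $\cA_{h_-}(p)=h_-(p)$.

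I then pass to $h_+$ via the continuation map $\Phi\colon CF_*(h_-)\to CF_*(h_+)$ induced by $h$, which respects the canonical identification of Floer homologies with $H_{*+n}(M;\Z_2)$; in particular $[\Phi(p)]=[pt]$ in $HF_*(h_+)$. The barricade is the crucial input: any solution of the $s$-dependent Floer equation with negative end $p\in U^c$ cannot have positive end in $U$, since condition~(2) of Definition~\ref{def:barricade} would then force the whole image to lie in $U$, contradicting $p\in U^c$. Hence $\Phi(p)\in C_{U^c}(h_+)$, and each orbit occurring in $\Phi(p)$ is a critical point of $h_+|_{U^c}$ with value in $(-\delta,\delta)$ by item~(4) of Definition~\ref{def:bump}. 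Therefore $c(h_+;[pt])<\delta$.

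Finally, the continuity of spectral invariants recalled at the start of this section gives $c(F;[pt])\le c(h_+;[pt])+\int_0^1\max_M(F-h_+)\,dt$, and the $C^\infty$-smallness of the perturbation lets us make the integral term arbitrarily small simultaneously with $\delta$. Sending all the parameters to zero yields $c(F;[pt])\le 0$. The main technical delicacy I foresee is coordinating the small parameters $\tau$, $\delta$, and the $C^\infty$-size of the perturbation so that the bump is simultaneously tall enough to enforce the barricade yet small enough in $C^0$ to keep $h_+$ close to $F$ and the outside-$U_\circ$ critical values trapped in $(-\delta,\delta)$; this is precisely the flexibility built into Theorem~\ref{thm:barricade} and Remark~\ref{rem:bump_implies_barricade}, so the argument should close.
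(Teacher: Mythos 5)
Your proposal is correct and follows essentially the same route as the paper's proof: a (linear) homotopy from $0$ to $F$, Theorem~\ref{thm:barricade} together with Remark~\ref{rem:bump_implies_barricade} to get a time-independent small Morse function $h_-$ with a minimum $p\subset U^c$ representing $[pt]$, the barricade forcing $\Phi_{(h,J)}(p)\in C_{U^c}(h_+)$, and then the smallness of the critical values outside $U_\circ$ plus continuity of spectral invariants. The only cosmetic difference is that you insist on the \emph{global} minimum lying in $U^c$, whereas any local minimum in $U^c$ suffices, as in the paper.
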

\begin{proof}
	Let $H$ be a linear homotopy\footnote{A {\it linear homotopy} is a homotopy of the form $H(x,t,s) = H_-(x,t)+\beta(s)(H_+(x,t)-H_-(x,t))$, where $\beta:\R\rightarrow\R$ is a smooth step function.} from $H_-:= 0$ to $H_+:=F$. By Theorem~\ref{thm:barricade}, there exist a small perturbation $h$ of $H$ and an almost complex structure $J$ such that $(h,J)$ and $(h_\pm, J)$ are Floer-regular and have a barricade in $U$ around $U_\circ$, where $U_\circ $ contains the support of $F$. By Remark~\ref{rem:bump_implies_barricade}, item~\ref{itm:time-indep}, we can choose $J$ to be time independent and $h$ such that $h_-$ is a time-independent small Morse function. Moreover, we may assume that $h_-$ has a minimum point $p$ that is contained in $U^c$.  Since the Floer complex and differential of $(h_-,J)$ agree with the Morse ones, the point $p$ represents $[pt]$ in $CF_*(h_-)\cong CM_{*+n}(h_-)$. Denoting by $\Phi_{(h,J)}:CF_*(h_-)\rightarrow CF_*(h_+)$ the continuation map associated to the pair $(h,J)$, the presence of the barricade guarantees that $\Phi_{(h,J)}(p)\subset C_{U^c}(h_+)$. Indeed, otherwise, we would have a continuation solution starting at $p\subset U^c$ and ending at some $x_+\subset U$, in contradiction. The image $\Phi_{(h,J)} (p)$ is a cycle representing $[pt]$ in $CF_*(h_+)$ and its action level is close to zero. Indeed, since $h_+$ approximates $F$, which is supported in $U_\circ$, the restriction $h_+|_{U_\circ^c}$ is a small Morse function. Its 1-periodic orbits there are critical points and their actions are the critical values. Therefore, using the stability property of spectral invariants, we conclude that $c(F;[pt])\leq c(h_+;[pt]) +\delta \leq \lambda_{h_+}(\Phi_{(h,J)}(p)) +\delta \leq 2\delta$, for small $\delta>0$. 
\end{proof}

\begin{rem}
	\begin{itemize}
		\item Using Poincar\'e duality for spectral invariants, the above lemma implies that $c(F;[M])\geq 0$ for every Hamiltonian $F$ supported in a CIB domain. This is already known for incompressible Liouville domains. Indeed, it follows easily from the max formula, proved in \cite{humiliere2016towards}, when applied to the functions $F_1=F$ and $F_2=0$:
		$$
		c(F+0;[M]) = \max \{c(F;[M]),c(0;[M])\}\geq 0.
		$$
		
		\item Lemma~\ref{lem:spec_inv_non_neg} does not hold if $M$ is not symplectically aspherical. For example, the equator in $S^2$ is known to be super-heavy. Therefore, if $F$ is a Hamiltonian on $S^2$ which is supported on a disk containing the equator, then
		$$
		\zeta(F)=\lim_{k\rightarrow\infty} c(kF;[M])/k 
		$$ 
		is not greater than the maximal value that $F$ attains on the equator, see \cite[Chapter 6]{polterovich2014function}. Therefore, one can construct a Hamiltonian supported in a disk on $S^2$ with a negative spectral invariant with respect to the fundamental class. 
	\end{itemize}
\end{rem}

Our next step towards the proof of Theorem~\ref{thm:indep_of_embedding} is choosing suitable perturbations for the Hamiltonians $F$ and $\Psi_* F$, as well as homotopies from them to small Morse functions. 
Before that, we use the embedding $\Psi$ to define a linear map between subspaces of Floer complexes of Hamiltonians on $M$ and on $N$, that agree on $U$ through $\Psi$.
\begin{defin}
	Consider non-degenerate Hamiltonians $f_M$ on $M$ and $f_N$ on $N$, such that $f_M$ and $f_N\circ \Psi$ have the same 1-periodic orbits in $U$. For an element $a\in C_U(f_M)\subset CF_*(f_M)$ that is a combination of orbits contained in $U$, we define its pushforward with respect to the embedding $\Psi$ to be 
	\begin{equation*}
	\Psi_*a:= \sum_{x\in a} a_x\cdot \Psi(x)\in C_{\Psi(U)}(f_N)\subset CF_*(f_N).
	\end{equation*}
\end{defin}
\begin{lemma}[Set-up]\label{lem:reg_pairs_for_locality}
	There exist homotopies and time-independent almost complex structures $h_M$ and $J_M$  on $M$, and $h_N$ and $J_N$ on $N$, such that the following hold:
	\begin{enumerate}
		\item The pairs $(h_M, J_M)$, $(h_{M\pm}, J_M)$, $(h_N, J_N)$ and $(h_{N\pm}, J_N)$ are all Floer-regular and have barricades in $U$ around $U_\circ$ and in $\Psi(U)$ around $\Psi(U_\circ)$, respectively, for some $U_\circ\Subset U$ containing the support of $F$.
		\item $h_{M-}$ and $h_{N-}$ are small perturbations of $F$ and $\Psi_*F$ respectively, and $h_{M+}$, $h_{N+}$ are small time-independent Morse functions.
		\item \label{itm:agree_on_emb} On $\Psi(U)$: the Hamiltonians $h_{N-}$ and $h_{M-}\circ\Psi^{-1}$ agree on their periodic orbits up to second order and $J_N=J_M\circ\Psi^{-1}$.
		\item The differentials and continuation maps commute with the pushforwad map $\Psi_*$ when restricted to $U_\circ$:
		\begin{equation}\label{eq:continuation_from_U-}
		\Phi_{(h_N,J_N)}\circ\Psi_*\circ \pi_{U_\circ} 
		=\Psi_*\circ \Phi_{(h_M,J_M)}\circ \pi_{U_\circ}, 
		\end{equation}	
		and 
		\begin{equation}\label{eq:diffetential_in_U-}
		\partial_{(h_{N\pm},J_N)}\circ\Psi_*\circ \pi_{U_\circ} 
		=\Psi_*\circ \partial_{(h_{M\pm},J_M)}\circ \pi_{U_\circ} 
		\end{equation}
	\end{enumerate}
\end{lemma}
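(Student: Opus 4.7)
The plan is to first apply Theorem~\ref{thm:barricade} on $M$ to obtain Floer-regular pairs with a barricade around $U_\circ$, then pull the data back to $N$ via $\Psi$ and extend it so that on $N$ it again admits a cylindrical bump --- and hence a barricade --- around $\partial\Psi(U)$. The commutation relations (\ref{eq:continuation_from_U-})--(\ref{eq:diffetential_in_U-}) will follow from the barricade property together with the agreement of the Floer data on $\Psi(U)$.

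First I would consider the linear homotopy $H_M$ on $M$ with $H_{M-}=F$ and $H_{M+}=0$, which is supported in $U\times S^1\times\R$. Applying Theorem~\ref{thm:barricade} to $H_M$ yields a $\cC^\infty$-small perturbation $h_M$ and an almost complex structure $J_M$ such that $(h_M,J_M)$ and $(h_{M\pm},J_M)$ are Floer-regular and have a barricade in $U$ around $U_\circ:=\psi^{-\tau}U$, for some $\tau>0$ chosen small enough that $\supp(F)\subset U_\circ$. Using Remark~\ref{rem:bump_implies_barricade}, I further arrange that $J_M$ is time-independent and that $h_{M+}$ is a small time-independent Morse function on $M$. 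Transport this data to $\Psi(U)\subset N$ by setting $J_N:=\Psi_*J_M$, $h_{N\pm}:=h_{M\pm}\circ\Psi^{-1}$ and $h_N:=h_M\circ(\Psi^{-1}\times\id)$ there. Since $J_M$ is cylindrical near $\partial U$ with respect to the Liouville field $Y_M$ of $U$, the transported data is cylindrical near $\partial\Psi(U)$ with respect to $\Psi_*Y_M$, which is again a Liouville vector field for $\Omega$. Extend $\Psi_*Y_M$ across $\partial\Psi(U)$ to a Liouville vector field on a neighbourhood of $\partial\Psi(U)$ in $N$, extend $J_N$ to an $\Omega$-compatible almost complex structure on $N$ cylindrical along this extension, and extend $h_{N\pm}$ and $h_N$ outside $\Psi(U)$ by small time-independent Morse functions (resp.\ the corresponding constant homotopy) so that the extended pair admits a $(\tau,\delta)$-bump around $\partial\Psi(U)$. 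This is possible since $\Psi(U)$ is a CIB domain in $N$. By Proposition~\ref{pro:bump_implies_barricade}, $(h_N,J_N)$ and $(h_{N\pm},J_N)$ then have a barricade in $\Psi(U)$ around $\Psi(U_\circ)$, and condition~(3) of the lemma holds by construction.

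The hard part is securing Floer-regularity of all four pairs simultaneously, without breaking the barricades or the agreement on $\Psi(U)$. To handle this, I would apply the same transversality machinery used at the end of the proof of Theorem~\ref{thm:barricade} (namely Claim~\ref{clm:app_reg_Hamiltonians} for the ends and Proposition~\ref{pro:pert_hom_bdd_supp} for the homotopies), restricting the perturbations to be supported outside $\overline{U_\circ}$ on the $M$-side and outside $\overline{\Psi(U_\circ)}$ on the $N$-side. Such perturbations leave the $(\tau,\delta)$-bump structure intact, so the barricades survive, and they leave the data on $U_\circ$ (resp.\ $\Psi(U_\circ)$) unchanged, so the agreement on $\Psi(U)$ up to second order at periodic orbits --- which is all that condition~(3) demands --- is preserved. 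With regularity secured, the commutation relations follow immediately from the barricade: for any orbit $x\subset U_\circ$ of $h_{M\pm}$, every Floer or continuation trajectory on $M$ starting at $x$ is confined to $U_\circ$, hence to the region where the data on $M$ and $N$ coincide via $\Psi$. Such trajectories biject, through $\Psi$, with trajectories on $N$ starting at $\Psi(x)\in\Psi(U_\circ)$ and contained in $\Psi(U_\circ)$, while the barricade on $N$ rules out any additional trajectories from $\Psi(x)$ that leave $\Psi(U_\circ)$. The two sides of (\ref{eq:continuation_from_U-}) and (\ref{eq:diffetential_in_U-}) therefore count the same objects.
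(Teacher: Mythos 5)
Your overall architecture (linear homotopy on $M$, Theorem~\ref{thm:barricade}, transport via $\Psi$, extension by small Morse data on $N$, bijection of moduli spaces confined to $U_\circ$) matches the paper's. The gap is in how you secure Floer-regularity: you propose to regularize all four pairs by perturbations supported \emph{outside} $\overline{U_\circ}$ (resp.\ $\overline{\Psi(U_\circ)}$), so that the data on $U_\circ$ is literally unchanged and the two sides still coincide via $\Psi$. This step fails on the $M$-side. By the barricade, every Floer or continuation solution between orbits in $U_\circ$ is entirely contained in $U_\circ$; a perturbation vanishing on $U_\circ$ does not alter the linearized Floer operator along such a solution, so it cannot repair a failure of surjectivity there. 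The transversality results the paper provides (Claim~\ref{clm:app_reg_Hamiltonians}, Proposition~\ref{pro:pert_hom_bdd_supp}) restrict the support only in the $s$-variable, not in $M$, precisely because achieving regularity generically requires perturbing near the solutions themselves --- i.e.\ inside $U_\circ$. A relative transversality statement with support constraints in $M$ of the kind you invoke is not available in the paper and is false for the trajectories you most care about.

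The paper's way around this is what your proof is missing. It perturbs $h_M^\flat$ everywhere (including on $U_\circ$) to get a genuinely Floer-regular $(h_M,J_M)$, then \emph{redefines} the $N$-side data as an extension $h_N'$ of the perturbed $h_M\circ\Psi^{-1}$, so that agreement on $\Psi(U)$ is restored by fiat rather than preserved. The pair $(h_N',J_N)$ need not be Floer-regular on all of $N$, but it is ``regular on $\Psi(U)$'' in the sense of Definitions~\ref{def:Ham_reg_on_U} and \ref{def:hom_reg_on_U}, because on $\Psi(U)$ it coincides with a Floer-regular pair. A further small perturbation $h_N$ of $h_N'$ achieves global regularity on $N$, and Propositions~\ref{pro:app_pert_Ham_regular_on_U} and \ref{pro:app_pert_hom_regular_on_U} guarantee that $\partial_{(h_{N\pm},J_N)}\circ\pi_{\Psi(U_\circ)}$ and $\Phi_{(h_N,J_N)}\circ\pi_{\Psi(U_\circ)}$ agree with those of $(h_N',J_N)$. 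The commutation relations are then proved for the pair $(h_M,J_M)$ versus $(h_N',J_N)$ --- which \emph{do} literally agree on $U$ through $\Psi$ --- by the moduli-space bijection you describe, and transferred to $(h_N,J_N)$ by the perturbation-stability results. Without this intermediate object and the ``regular on $U$'' machinery, your final counting argument compares two pairs that no longer coincide on $\Psi(U)$, and the bijection of moduli spaces is not justified.
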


We postpone the proof of Lemma~\ref{lem:reg_pairs_for_locality}, and prove Theorem~\ref{thm:indep_of_embedding} first.
\begin{proof}[Proof of Theorem~\ref{thm:indep_of_embedding}]
	We will prove that $c_M(F;[pt]) = c_N(\Psi_*F;[pt])$, and the claim for the fundamental class will follow from Poincar\'e duality for spectral invariants. 
	Suppose that at least one of $c_M(F;[pt])$, $c_N(\Psi_*F;[pt])$ is non-zero, otherwise there is nothing to prove. Without loss of generality, assume that $c_M(F;[pt])\neq 0$, then, by Lemma~\ref{lem:spec_inv_non_neg}, $c_M(F;[pt])<0$. We will show that $c_M(F;[pt])\geq c_N(\Psi_*F;[pt])$. This will imply that $c_N(\Psi_*F;[pt])< 0$ and equality will follow by symmetry. 
	Let $(h_M, J_M)$ and $(h_N, J_N)$ be pairs of homotopies and almost complex structures on $M$ and $N$ respectively, that satisfy the assertions of Lemma~\ref{lem:reg_pairs_for_locality}, and denote $f_M:= h_{M-}$, $f_N:=h_{N-}$. By the continuity of spectral invariants, it is enough to prove the claim for $f_M$ and $f_N$.
	
	Since $c_M(F;[pt])<0$ and $F|_{U_\circ^c}=0$, by taking $f_M$ to be close enough to $F$ and $F|_{U_\circ^c}=0$, we may assume that $c_M(f_M;[pt])<\min_{U_\circ^c} f_M$. Recalling that $f_M$ is a small Morse function on $U_\circ^c$, its 1-periodic orbits there are its critical points, and their actions are the critical values. 
	As a consequence, a representative $a\in CF_*(f_M)$ of $[pt]$ of action level $\lambda_{f_M}(a) = c_M(f_M;[pt])$ is a combination of orbits that are contained in $U_\circ$, namely $a\in C_{U_\circ}(f_M)$. Therefore, the pushforward $\Psi_*a\in CF_*(f_N)$ is defined, and by (\ref{eq:diffetential_in_U-}), $\Psi_* a$ is closed in $CF_*(f_N)$. To see that $\Psi_*a$ represents the class of a point, we will use (\ref{eq:continuation_from_U-}). Indeed, since $a$ represents $[pt]$ on $M$, and continuation maps induce isomorphism on homologies, $\Phi_{(h_M,J_M)}(a)$ is a representative of $[pt]$ in $CF_*(h_{M+})$. Since $h_{M+}$ is a small time-independent Morse function (and $J_M$ is time-independent), its Floer complex and differential coincide with the Morse ones, $(CF_*(h_{M+}),\partial_{(h_{M+},J_M)}) \cong (CM_{*+n}(h_{M+}),\partial_{(h_{M+},g_{J_M})}^{Morse})$. As a consequence, $\Phi_{(h_M,J_M)}(a)$ is a sum of an odd number of minima\footnote{See, for example, the proof of Proposition 4.5.1 in \cite{audin-damian}.}. Using (\ref{eq:continuation_from_U-}), we find that $\Phi_{(h_N,J_N)}(\Psi_*a) = \Psi_* (\Phi_{(h_M,J_M)} a)$ is also a sum of an odd number of minima, and as such, represents the point class in $CM_{*+n}(h_{N+}) \cong CF_*(h_{N+})$. Since $\Psi_*a$ is closed, we conclude that it represents $[pt]$ in $CF_*(f_N)$. Together with the fact that, in $\Psi(U)$, $f_M\circ \Psi^{-1}$ and $f_N$ agree on their 1-periodic orbits, this implies that 
	$$
	c_N(f_N;[pt])\leq \lambda_{f_N}(\Psi_* a) = \lambda_{f_M}(a) = c_M(f_M;[pt]),
	$$
	where the equality $ \lambda_{f_N}(\Psi_* a) = \lambda_{f_M}(a)$ follows from the fact that $U$ is incompressible, see Remark~\ref{rem:CIB_domains} and Proposition~\ref{pro:incompressible}.
\end{proof}

\begin{proof}[Proof of Lemma~\ref{lem:reg_pairs_for_locality}]
	Let $H_M:M\times S^1\times \R \rightarrow\R$ be a linear homotopy from $F$ to zero, that is constant outside of $[0,1]$, i.e., $\partial_s H_M|_{s\notin[0,1]}=0$. Then, $H_M$ is supported in $U$, and its pushforward $H_N:=\Psi_* H_M$ is a linear homotopy from $\Psi_*F$ to zero on $N$. Let $J_M$ be a time-independent almost complex structure on $M$ and let $h_M^\flat$ be a homotopy with non-degenerate ends, that is constant outside of $[0,1]$ and approximates $H_M$, such that the pair $(h_M^\flat, J_M)$ has a $(\tau,\delta)$-bump around $\partial U$ for some $\tau$ and $\delta$, and set $U_\circ=\psi^{-\tau}U$. 
	Let $J_N$ be a time-independent almost complex structure obtained as an extension of $J_M\circ \Psi^{-1}$ from $\Psi(U)$ to $N$\footnote{The fact that $J_M\circ \Psi^{-1}$ can be extended to an almost complex structure on $N$ can be deduced from the path connectivity of the set of almost complex structures on symplectic vector bundles (see, e.g., \cite[Proposition 2.63]{mcduff2012j}), together with the fact that $\partial U$ has a tubular neighborhood.}. Extending $h_M^\flat\circ\Psi^{-1}$ to $N$ by a homotopy of small Morse functions with eigenvalues in $(-\delta, \delta)$, we obtain a pair $(h_N^\flat, J_N)$ with a $(\tau,\delta)$-bump around $\Psi(\partial U) = \partial \Psi (U)$. Moreover, $h_N^\flat$ is a homotopy with non-degenerate ends, it approximates $H_N$, and we can choose it to be constant for $s\notin [0,1]$. 
	Noticing that the ends of these homotopies have $(\tau,\delta)$-bumps as well, Proposition~\ref{pro:bump_implies_barricade} guarantees that the pairs $(h_M^\flat, J_M)$, $(h_{M\pm}^\flat, J_M)$, $(h_N^\flat, J_N)$ and $(h_{N\pm}^\flat, J_N)$ all have barricades in $U$ around $U_\circ$ and in $\Psi(U)$ around $\Psi(U_\circ)$, respectively.
	
	Let us now perturb $h_M^\flat$ to make all of the pairs defined on $M$ regular. As in the proof of Theorem~\ref{thm:barricade}, we first perturb the ends $h_{M\pm}^\flat$ into $h_{M\pm}$, without changing their periodic orbits, so that the pairs $(h_{M\pm},J_M)$ are Floer-regular (as cited in  Claim~\ref{clm:app_reg_Hamiltonians} below). Then, perturb the homotopy $h_M^\flat$ to obtain a homotopy, $h_M$, whose ends are the regular perturbations, $h_{M\pm}$, and that is constant for $s\notin [0,1]$. Finally, Proposition~\ref{pro:pert_hom_bdd_supp} below states that we can perturb the homotopy $h_M$ on the set $M\times S^1\times[0,1]$ to make the pair $(h_M,J_M)$ Floer-regular. We stress that after the perturbations the regular homotopy $h_M$ is constant for $s\notin[0,1]$ as well.  Proposition~\ref{pro:appendix} guarantees that every perturbation of $h_M^\flat$ that is constant outside of $[0,1]$ and whose ends have the same periodic orbits as the ends of $h_M^\flat$, also has a barricade in $U$ around $U_\circ$, when paired with $J_M$. Arguing similarly for the ends $h_{M\pm}$ we conclude that the pairs $(h_M, J_M)$ and $(h_{M\pm},J_M)$ all have barricades in $U$ around $U_\circ$. 
	
	We turn to construct the pairs on $N$. Let $h_N'$ be an extension to $N$ of the homotopy $h_M\circ \Psi^{-1}$, which is defined on $\Psi(U)$. Notice that by replacing $h_M$ with a smaller perturbation of $h_M^\flat$ if necessary, $h_N'$ can be taken to be arbitrarily close to $h_N^\flat$. This way, we can use Proposition~\ref{pro:appendix} again to conclude that $(h_N',J_N)$ has a barricade in $\Psi(U)$ around $\Psi(U_\circ)$. Finally, we repeat the arguments made above and perturb $h_N'$ to make all of the pairs on $N$ Floer-regular. We obtain a homotopy $h_N$ that is constant for $s\notin[0,1]$, approximates $h_M\circ \Psi^{-1}$ on $\Psi(U)$ and such that the pairs $(h_N, J_N)$ and $(h_{N\pm},J_N)$ are all Floer-regular and have barricades in $\Psi(U)$ around $\Psi(U_\circ)$.
	
	It remains to prove that, in $U_\circ$, the pushforward map commutes with the  continuation maps and the differentials for the homotopies $h_M$, $h_N$ and their ends, respectively. We will write the proof for the continuations maps, the proof for the differentials is analogous. We first show that the continuation maps of $h_N$ and $h_N'$ agree on $\Psi(U_\circ)$, and then prove that the commutation relation (\ref{eq:continuation_from_U-}) holds for $h_M$ and $h_N'$, which agree on $U$ through $\Psi$. 
	Proposition~\ref{pro:app_pert_hom_regular_on_U} (for the differentials, Proposition~\ref{pro:app_pert_Ham_regular_on_U}) states that the restriction of the continuation map to $C_{\Psi(U_\circ)}$ does  not change under small perturbations, when the pairs have a barricade and satisfy a certain regularity assumption on $\Psi(U)$. This assumption holds for Floer-regular pairs, as well as for pairs that coincide on $U$ with a Floer-regular pair. Therefore, recalling that $h_N$ is a small perturbation of $h_N'$, and that the pair $(h_N',J_N)$ agrees, on $\Psi(U)$, through a symplectomorphism, with the Floer-regular pair $(h_M, J_M)$, we may apply Proposition~\ref{pro:app_pert_hom_regular_on_U} and conclude that $\Phi_{(h_N,J_N)} \circ \pi_{\Psi(U_\circ)} = \Phi_{(h_N',J_N)}\circ\pi_{\Psi(U_\circ)}$.
	In order to prove $\Phi_{(h_N',J_N)}\circ\Psi_*\circ \pi_{U_\circ} =\Psi_*\circ \Phi_{(h_M,J_M)}\circ \pi_{U_\circ}$, recall the definitions of  $\Psi_*$ and the continuation maps (\ref{eq:continuation_def}). We need to show that for every $x_\pm\in\cP(h_{M\pm})$ such that $x_-\subset U_\circ$, it holds that $\#_2 \cM_{(h_M,J_M)}(x_-,x_+) = \#_2 \cM_{(h_N',J_N)}(\Psi(x_-), \Psi(x_+))$. This essentially follows from the fact that both pairs $(h_M,J_M)$ and $(h_N',J_N)$ have barricades, and that $h_M= h_N'\circ \Psi$ and $J_M= J_N\circ \Psi$ on $U$. Indeed, it follows from $x_-\subset U_\circ$ that $\Psi(x_-)\subset\Psi(U_\circ)$ and thus the barricades guarantee that all of the elements of $\cM_{(h_M,J_M)}(x_-,x_+)$ and $\cM_{(h_N',J_N)}(\Psi(x_-), \Psi(x_+))$ are contained in $U_\circ$ and $\Psi(U_\circ)$ respectively. The symplectic embedding $\Psi$ induces a bijection between these two sets, and so it follows that the counts of their elements coincide.
\end{proof}

Having established Theorem~\ref{thm:indep_of_embedding}, we now explain how to derive Corollaries~\ref{cor:schwarts_capacity}, \ref{cor:heavy_sets}. Let us start by recalling the definition of a symplectic capacity:
\begin{defin}[See, for example, \cite{cieliebak2005quantitative,hofer2012symplectic}]
Given a class $\cS$ of symplectic manifolds, a symplectic capacity on $\cS$ is a map $c:\cS\rightarrow [0,\infty]$ that satisfies the following properties:
\begin{itemize}
	\item (Monotonicity) $c(U,\omega)\leq c(V,\Omega)$ if there exists a symplectic embedding $(U,\omega)\hookrightarrow (V,\Omega)$.
	\item (Conformality) $c(U,\tau\omega)=|\tau|\cdot c(U,\omega)$ for all $\tau\in \R\setminus\{0\}$. 
	\item (Nontriviality) $c(B^{2n}(1),\omega_0) >0$ and $c(Z^{2n}(1),\omega_0)<\infty$, where $B^{2n}(1)\subset \R^{2n}$ is the unit ball and $Z^{2n}(1) = B^2(1)\times \R^{2n-2}$ is the symplectic cylinder\footnote{This is under the assumption that $c$ is defined for the cylinder.}.
\end{itemize}
Let us use Theorem~\ref{thm:indep_of_embedding} to show that Schwarz's relative capacities, which are defined for subsets of a given closed symplectically aspherical manifold, induce a capacity on the class of contractible compact symplectic manifolds with contact-type boundaries that can be embedded into symplectically aspherical manifolds. 
\end{defin}
\begin{proof}[Proof of Corollary~\ref{cor:schwarts_capacity}]
	Let $A\in\cS$ be a contractible symplectic manifold with a contact-type boundary that can be embedded into a symplectically aspherical manifold $(M,\omega)$. Abusing the notations, we write $A\subset M$. Recalling the definition of Schwarz's relative capacity (\ref{eq:def_spec_capacity}),
	\begin{equation*}
	c_\gamma(A;M):= \sup\left\{c(F;[M])-c(F;[pt])\ : 
	supp X_F\subset A\times S^1	\right\},
	\end{equation*}
	we consider a Hamiltonian $F$ on $M$ such that $X_F$ is supported in $A\times S^1$. Since $A$ is contractible, its boundary connected and therefore  $F$ is constant on $\partial A$, as well as on the complement, $M\setminus A$. Denoting $C:=F|_{M\setminus A}$, the difference $F-C$ is supported in $A$. Moreover, it follows from the spectrality and stability of spectral invariants that $c_M(F-C;\alpha) = c_M(F;\alpha)-C$ for every homology class $\alpha\in H_*(M)$. In particular, $c_M(F-C;[M])-c_M(F-C;[pt])=c_M(F;[M])-c_M(F;[pt])$ and hence, by replacing $F$ with $F-C$, we may assume that $F$ is supported in $A$. 
	Suppose that $A$ can be embedded into another symplectically aspherical manifold $(N,\Omega)$. Since $A$ is contractible, its boundary is simply connected, and in particular, incompressible in both $M$ and $N$. Since $\partial A$ is of contact-type, we conclude that $A\subset M$ and $A\subset N$ are CIB domains. 
	By Theorem~\ref{thm:indep_of_embedding}, the spectral invariants of Hamiltonians supported in $A$ on $M$ and $N$ coincide, and therefore the relative capacities of $A$ with respect to $M$ and $N$ agree, and we can define
	\begin{equation*}
	c_\gamma(A):=c_\gamma(A;M) = c_\gamma(A;N).
	\end{equation*}
	We may extend this definition to unbounded domains $U\subset\R^{2n}$ by taking the supremum over all $A\in \cS$ that can be embedded into $U$. 
	Before proving that $c_\gamma$ satisfies the axioms of a symplectic capacity, let us prove the second assertion of the corollary.
	Given $A\in\cS$ that can be symplectically embedded into $(\R^{2n},\omega_0)$, we need to show that $c_\gamma(A;M)\leq 2e(A;\R^{2n})$. 
	Let $Q=[-R,R]^{2n}\subset\R^{2n}$ be a large cube such that the embedding of $A$ into $\R^{2n}$ is displaceable in $Q$ with energy $e(A;\R^{2n})$. Then, embedding $Q$ into a large torus $N=\R^{2n}/(3R\Z^{2n})\cong \T^{2n}$, we conclude that $A$ is displaceable in $N$ with the same energy. By the energy-capacity inequality, for every Hamiltonian $F$ supported in the embedding of $A$ into $N$, and  for every homology class $\alpha$ one has $c(F;\alpha)\leq e(A;N)=e(A;\R^{2n})$. Using Theorem~\ref{thm:indep_of_embedding} we conclude that for every symplectically aspherical $M$ and an embedding of $A$ into $M$, $c_\gamma(A;M)=c_\gamma(A)\leq 2e(A;\R^{2n})$.  
	
	We now briefly explain why $c_\gamma$ satisfies the axioms of a capacity. Nontriviality follows from the fact that Schwarz's capacities are not smaller than the Hofer-Zehnder capacity, and are not greater than twice the displacement energy, see \cite{schwarz2000action}. Monotonicity follows from the definition of $c_\gamma(\cdot ;M)$, together with the fact that the image of every embedding of a domain in $\cS$ into a symplectically aspherical manifold is a CIB domain. To prove the conformality property, suppose $(A,\Omega)\in\cS$ is embedded into $(M,\omega)$, then $(A,\tau\cdot \Omega)$ is embedded into $(M,\tau\cdot \omega)$. In order to prove that 
	$$
		c_\gamma\left((A,\tau\Omega),(M,\tau\omega)\right) = |\tau|\cdot c_\gamma\left((A,\Omega),(M,\omega)\right),
	$$
	we show that for every $F$ such that $supp(X_F)\subset A\times S^1$ and for every homology class $\alpha\in H_*(M)$, it holds that 
	\begin{equation}\label{eq:spec_inv_rescaling}
	c_{(M,\tau\omega)}(|\tau|\cdot F;\alpha) = |\tau|\cdot c_{(M,\omega)}(F;\alpha)
	\end{equation}  
	Starting from the case where $\tau>0$, we notice that the action functional with respect to the form $\tau\omega$ and the Hamiltonian $\tau F$ is proportional to the action functional with respect to $\omega$ and $F$. The Floer complexes of $(\omega,J,F)$ and $(\tau\omega,J,\tau F)$ coincide, while the action filtration is rescaled by $\tau$, and therefore (\ref{eq:spec_inv_rescaling}) holds. It remains to deal with $\tau=-1$. In this case, the Floer complexes of $(\omega, J,F)$ and $(-\omega,-J,F)$ are isomorphic via the map $t\mapsto-t$, and the action filtration is the same. This implies that (\ref{eq:spec_inv_rescaling}) holds for negative $\tau$ as well.
\end{proof}

\begin{proof}[Proof of Corollary~\ref{cor:heavy_sets}]
	Let $A\subset M$ be a contractible domain with a contact-type boundary that can be symplectically embedded in $(\R^{2n},\omega_0)$. As in the proof of Corollary~\ref{cor:schwarts_capacity}, let $Q\subset \R^{2n}$ be a large enough cube such that the image of $A$ in $\R^{2n}$ is displaceable in $Q$. Embedding $Q$ into a large torus, $N\cong\T^{2n}$, we denote by $\Psi:A\hookrightarrow N$ the composition of the embeddings. As $\Psi(A)$ is displaceable in $N$, it follows from non-nativity of $c(\cdot;[M])$ (Lemma~\ref{lem:spec_inv_non_neg}), Theorem~\ref{thm:indep_of_embedding} and the energy capacity inequality that for every Hamiltonian $F:M\times S^1\rightarrow \R$ supported in $A$, 
	$$
	0\leq c_M(F;[M]) = c_N(\Psi_*F;[N])\leq e(A;N)<\infty.
	$$
	As a consequence, the partial symplectic quasi-state, $\zeta$, associated to $c$ vanishes on functions supported in $A$. The fact that the complement of $A$ is super-heavy follows from the following equivalent description of super-heavy sets. 
	\begin{itemize}
		\item[]\cite[Definition~6.1.10]{polterovich2014function}: A closed subset $X\subset M$ is super-heavy if $\zeta(F)=0$ for every Hamiltonian $F$ that vanishes on $X$.
	\end{itemize}  
	The fact that $A$ cannot contain a heavy set can be seen directly from the definition. Alternatively, this fact follows from the intersection property of heavy and super-heavy sets, established by Entov and Polterovich in \cite{entov2009rigid}: Every super-heavy set intersects every heavy set.
\end{proof}

We conclude this section with two examples, showing that Theorem~\ref{thm:indep_of_embedding} does not hold in a more general setting.
\begin{exam}\label{exa:locality_fails}
	The conditions on the manifolds $M$, $N$ and the domain $U$ in 	Theorem~\ref{thm:indep_of_embedding} are necessary:
	\begin{itemize}
		\item The condition on $M$ and $N$ being symplectically aspherical in Theorem~\ref{thm:indep_of_embedding} is necessary. A simple example is to embed the unit disk $D\subset \R^2$ into a small sphere and into a large sphere. Namely, take $M$ and $N$ to be spheres of areas $1.5\pi$ and $2\pi$ respectively. Then, there exist Hamiltonians, supported in the embedding of $D$ into $M$, with arbitrarily large spectral invariants with respect to the fundamental class. This follows from the fact that the embedding of $D$ into $M$ contains the equator, which is a heavy set, see  \cite[Chapter 6]{polterovich2014function}. A Hamiltonian $F$ that attains large values on the equator in $M$ has a large spectral invariant.
		
		On the other hand, the spectral invariant of any Hamiltonian that is supported in the embedding of $D$ into $N$ is bounded by  the displacement energy of this embedded disc in $N$, which is equal to $\pi$.
		
		\item The condition on $\partial U$ to be incompressible is also necessary. One can construct two different embeddings of the annulus $A:=int(D\setminus\frac{1}{2}D)$ into a torus of large area, such that the image under one embedding is heavy (and the boundary is incompressible), and the image under the other embedding is displaceable (and the boundary is not incompressible). As mentioned above, in the first case one can construct Hamiltonians with arbitrarily large spectral invariants (with respect to the fundamental class), and in the second case, the spectral invariant is bounded by the (finite) displacement energy. In particular, the assertion of Theorem~\ref{thm:indep_of_embedding} cannot hold in this case.
	\end{itemize}
\end{exam}

\section{Relation to certain open symplectic manifolds.}\label{sec:open_manifolds}
In this section we discuss an extension of Theorem~\ref{thm:indep_of_embedding}  to CIB domains in certain open symplectic manifolds. We start by briefly reviewing Floer homology on such manifolds, following \cite{frauenfelder2007hamiltonian}\footnote{Note that a lot of our sign choices are opposite to those of \cite{frauenfelder2007hamiltonian}. Essentially, the complex defined in \cite{frauenfelder2007hamiltonian} for a Hamiltonian $F$ coincides with the complex defined here for $-F$.}. 
Let $(W,\omega)$ be a $2n$-dimensional compact symplectic manifold with a contact-type boundary. 
Using the Liouville vector field $Y$, we can symplectically identify a neighborhood of the the boundary in $W$ with $\partial W\times (\varepsilon,0]$ endowed with the symplectic form $d(e^r \lambda)$, where $\lambda = \iota_Y\omega$ and $r$ is the coordinate on the interval. 
The {\it completion} of $(W,\omega)$ is defined to be 
\begin{eqnarray}
\widehat{W} &:=&  W\cup_{\partial W} \partial W\times [0,\infty), \nonumber\\
\widehat{\omega} &:=& \begin{cases}
\omega & \text{on }W,\\
d(e^r \lambda) & \text{on }\partial W \times (-\varepsilon, \infty).
\end{cases} \nonumber
\end{eqnarray}
Let $J$ be an $\widehat{\omega}$-compatible almost complex structure on $\widehat{W}$ that, on $\partial W$, maps $Y$ to the Reeb vector field $R$ and, on $\partial W\times [0,\infty)$, is time-independent and is invariant under $r$-translations. A time dependent Hamiltonian $F$ on $\widehat{W}$ is called {\it admissible} if it coincides on $\partial W\times [0,\infty)$ with $\rho(e^r)$ 
for a function $\rho:[0,\infty)\rightarrow\R$ whose derivative on $(0,\infty)$ is positive and smaller than the minimal period of a periodic Reeb orbit (note that in this case, $F$ has no 1-periodic orbits in $W\times (0,\infty)$).
For a generic admissible Hamiltonian, the Floer complex of the pair $(F,J)$ on the open manifold $(\widehat{W},\widehat{\omega})$ is generated by the 1-periodic orbits of $F$ in $W$, and the differential is defined by counting solutions of the Floer equation, as in the closed case (see Section \ref{sec:floer_preliminaries}). The above assumptions on $F$ and $J$ guarantee that finite energy solutions are contained in $W$. This follows form a standard application of the max-principle (see, for example, \cite[Lemma 1.8]{viterbo1999functors} and \cite[Lemma 2.1]{ritter2013topological}), or from Lemma~\ref{lem:solution_contained_U} above. The homology of this complex is independent of $F$ and $J$ and is isomorphic to the homology of $W$.
Spectral invariants on open manifolds were defined in \cite[Section 5]{frauenfelder2007hamiltonian} in complete analogy with the closed case\footnote{The definition in \cite{frauenfelder2007hamiltonian} is given for the point class, but generalizes as is to any $\alpha\in H_*(W)$.}. These invariants extend by continuity to any Hamiltonian supported in $W$. 

\begin{rem}\label{rem:locality_open_manifolds}
	It was suggested to us by Schlenk that Theorem~\ref{thm:indep_of_embedding} holds for the spectral invariant with respect to the point class on the above open manifolds as well. Namely, given a CIB domain $U$ in $W$ and a symplectic embedding $\Psi:(U,\omega) \rightarrow (W',\omega')$ whose image is a CIB domain in $W'$, for every Hamiltonian $F$ supported in $U$, 
	\begin{equation*}
	c_W(F;[pt])=c_{W'}(\Psi_* F;[pt]),
	\end{equation*}
	where $\Psi_* F :W'\times  S^1\rightarrow\R$ is the extension by zero of $F\circ\Psi^{-1}$.
\end{rem}

\subsection{The homology of the subcomplex $C_{U_\circ}(f)$.} \label{subsec:hom_of_subcomplex}
In what follows, $(M,\omega)$ denotes a closed symplectic manifold, as always. Given a Hamiltonian $F$ supported in $U$, let $(f,J)$ be a Floer regular pair on $M$ with a barricade in $U$ around $U_\circ$, for some $U_\circ\Subset U$. The block form (\ref{eq:block_form}) of the differential implies that the differential restricts to $C_{U_\circ}(f)\subset CF(f)$. In this section we study the homology of this subcomplex. 
We show that for a properly chosen such pair $(f,J)$, the homology of  $(C_{U_\circ}(f),\partial|_{U_\circ})$ coincides with the homology of $U$, namely,
\begin{equation}\label{eq:hom_subcomplex}
H_*\left(C_{U_\circ}(f),\partial|_{U_\circ}\right)\cong H_*(U).
\end{equation}  
For that end, consider a perturbation $f^\flat$ of $F$ such that $(f^\flat,J)$ has a $(\tau,\delta)$-bump around $\partial U$ (in the sense of Definition~\ref{def:bump}). In particular, we assume that $J$ is cylindrical. Let $f$ to be a $C^2$-small perturbation of $f^\flat$ such that the pair $(f,J)$ is Floer regular. 
As argued in the proof of Theorem~\ref{thm:barricade}, it follows from   Proposition~\ref{pro:bump_implies_barricade} and Proposition~\ref{pro:appendix} that the pair $(f,J)$ has a barricade in $U$ around $U_\circ:=\psi^{-\tau}U$. 
Taking $f$ to be close enough to $f^\flat$, the restriction $f\vert^{}_{U_\circ}$ of $f$ to $U_\circ$ can be extended to an admissible Hamiltonian $\hat f:=\widehat{f|_{U_\circ}}$ on $\widehat{U}$ that has no additional 1-periodic orbits. Here $(\widehat{U},\widehat{\omega})$ is the open symplectic manifold obtained as the completion of $U$. As the 1-periodic orbits of $\hat f$ in $\widehat{U}$ coincide with the 1-periodic orbits of $f$ that are contained in $U_\circ$, the Floer complex of $\hat f$ on the open manifold $\widehat{U}$ coincides with $C_{U_\circ}(f)$. Since both in $M$ and in $\widehat{U}$ all finite energy solutions of the Floer equation among orbits in $U_\circ$  are contained in $U_\circ$, the differentials coincide. We conclude that the homology of $(C_{U_\circ}(f),\partial|_{U_\circ})$ indeed coincides with $H_*(U)$.

\subsection{Locality of spectral invariants with respect to other homology classes.}
In this section we show how Floer homology on open manifolds is useful in the study of Floer complexes of Hamiltonians supported in CIB domains in closed manifolds\footnote{The results in this section can be achieved within the scope of Floer homology on closed manifolds, but the proof is slightly more complicated and less natural.}. In particular, we explain how to extend Theorem~\ref{thm:indep_of_embedding} to homology classes in the image of the map induced by the inclusion $\iota:U\hookrightarrow M$.

\begin{claim}\label{clm:locality_other_classes}
	For every class $\alpha\in \operatorname{im}(\iota_*)\subset H_*(M)$ and a Hamiltonian $F$ supported in $U$,
	\begin{equation}\label{eq:locality_other_classes}
	c_M(F;\alpha) = \min_{\tiny{\begin{array}{c}
			\beta\in H_*(U)\\ \iota_*(\beta)=\alpha
			\end{array}}} c_{\widehat{U}}(\hat F;\beta),
	\end{equation}
	where $c_M$ and $c_{\widehat{U}}$ are the spectral invariants in the manifolds $(M,\omega)$ and $(\widehat{U},\widehat{\omega})$ respectively, and $\hat F$ is the extension by zero of $F|_U$ to $\widehat{U}$.
\end{claim}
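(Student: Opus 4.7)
The plan is to compare representatives of $\alpha$ in $CF_*(f)$ with representatives in the subcomplex $(C_{U_\circ}(f),\partial|_{U_\circ})$, which by Section~\ref{subsec:hom_of_subcomplex} is identified with the Floer complex of an admissible extension $\hat f$ on $\widehat U$. Throughout, I would take $(f,J)$ to be a Floer-regular pair produced by Theorem~\ref{thm:barricade}, approximating $F$, with a $(\tau,\delta)$-bump around $\partial U$ and barricade in $U$ around $U_\circ$; arrange that $f|_{U_\circ^c}$ is a small Morse function with critical values in $(-\delta,\delta)$ and that $f|_{U_\circ}$ extends to an admissible $\hat f$ on $\widehat U$ as in Section~\ref{subsec:hom_of_subcomplex}. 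By incompressibility of $U$ (Remark~\ref{rem:CIB_domains} and Proposition~\ref{pro:incompressible}), any contractible $1$-periodic orbit contained in $U_\circ$ can be capped inside $U$, so its action in $M$ and in $\widehat U$ agree.

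The main obstacle is the naturality statement that the inclusion $i\colon C_{U_\circ}(f)\hookrightarrow CF_*(f)$ (a chain map by the block form (\ref{eq:block_form})) induces on homology precisely $\iota_*\colon H_*(U)\to H_*(M)$, modulo the degree shift and the identifications $H_*(C_{U_\circ}(f))\cong H_{*+n}(U)$ and $HF_*(f)\cong H_{*+n}(M)$. I would verify this in two steps. First, in the Morse limit $F\equiv 0$, with $f_0$ a small Morse function carrying a cylindrical bump, the inclusion is literally an inclusion of Morse subcomplexes and realises $\iota_*$ directly, using that $U_\circ$ is a deformation retract of $U$ via the Liouville flow. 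Second, for general $F$, interpolate by a linear homotopy $0\rightsquigarrow F$ and a regular perturbation carrying a barricade (Theorem~\ref{thm:barricade}); the resulting continuation map restricts to $C_{U_\circ}$ and, by the barricade, matches the continuation map between $\hat f_0$ and $\hat f$ on $\widehat U$ under the same identifications. A commutative square of continuation isomorphisms, each compatible with the standard identification with singular homology, transports the Morse naturality to the general case.

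Granting naturality, the direction ``$\leq$'' is immediate: for $\beta$ with $\iota_*\beta=\alpha$ and $\eta>0$, a representative $b\in CF(\hat f)$ of $\beta$ with $\lambda_{\hat f}(b)\leq c_{\widehat U}(\hat F;\beta)+\eta$ is identified with an element of $C_{U_\circ}(f)\subset CF(f)$ that is automatically closed in $CF(f)$ and represents $i_*\beta=\iota_*\beta=\alpha$, giving $c_M(F;\alpha)\leq\lambda_f(b)=\lambda_{\hat f}(b)\leq c_{\widehat U}(\hat F;\beta)+\eta$; a continuity argument removes the perturbation.

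For ``$\geq$'', fix $\eta>0$ and a representative $a\in CF(f)$ of $\alpha$ with $\lambda_f(a)\leq c_M(F;\alpha)+\eta$. The long exact sequence of
\[
0\to C_{U_\circ}(f)\xrightarrow{i} CF_*(f)\xrightarrow{p} Q\to 0,\qquad Q:=CF_*(f)/C_{U_\circ}(f),
\]
together with $\alpha\in\operatorname{im}(i_*)=\operatorname{im}(\iota_*)$ yields $p_*[a]=0$, so $p(a)=\partial_Q c$ for some $c\in Q$. Lifting $c$ to $\tilde c\in C_{U^c}(f)\oplus C_{U\setminus U_\circ}(f)$, the element $a_\circ:=a-\partial_{f,J}\tilde c$ lies in $C_{U_\circ}(f)$, is closed there, and represents some $\beta\in H_*(U)$ with $\iota_*\beta=\alpha$. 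Its generators are critical points of $f$ in $U_\circ^c$ with actions in $(-\delta,\delta)$, so $\lambda_f(\tilde c)<\delta$ and $\lambda_f(a_\circ)\leq\max(\lambda_f(a),\delta)$, whence
\[
c_{\widehat U}(\hat F;\beta)\leq\lambda_{\hat f}(a_\circ)=\lambda_f(a_\circ)\leq\max\bigl(c_M(F;\alpha)+\eta,\,\delta\bigr).
\]
Sending $\eta,\delta\to 0$ and minimising over $\beta$ yields the reverse inequality, completing the proof.
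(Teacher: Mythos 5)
Your overall strategy coincides with the paper's: perturb to a Floer-regular pair $(f,J)$ with a cylindrical bump and a barricade, identify $C_{U_\circ}(f)$ with the Floer complex of $\hat f$ on $\widehat U$ (Section~\ref{subsec:hom_of_subcomplex}), prove ``$\leq$'' by viewing a cycle of the subcomplex as a cycle in $CF_*(f)$, and prove ``$\geq$'' by correcting a representative $a$ of $\alpha$ by a boundary supported on orbits outside $U_\circ$, whose actions lie in $(-\delta,\delta)$. Your quotient-complex/long-exact-sequence formulation of that correction is a clean repackaging of the paper's argument (the paper instead compares with a minimal-action representative of the minimizing class and argues by contradiction), and your explicit treatment of the naturality statement --- that the inclusion $C_{U_\circ}(f)\hookrightarrow CF_*(f)$ induces $\iota_*$ under the identifications --- makes precise something the paper uses implicitly; the Morse-model-plus-continuation-square plan is reasonable, provided the interpolating homotopy is equipped with a barricade as in Lemma~\ref{lem:reg_pairs_for_locality}.

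However, the final step of your ``$\geq$'' direction does not deliver the claim as written. Your estimate is $c_{\widehat{U}}(\hat F;\beta)\leq\max\bigl(c_M(F;\alpha)+\eta,\,\delta\bigr)$, and letting $\eta,\delta\to 0$ only yields $\min_\beta c_{\widehat{U}}(\hat F;\beta)\leq\max\bigl(c_M(F;\alpha),0\bigr)$. This is strictly weaker than the claim whenever $c_M(F;\alpha)<0$, which is precisely the relevant regime (for instance $\alpha=[pt]$, where $c_M(F;[pt])\leq 0$ by Lemma~\ref{lem:spec_inv_non_neg} and is typically negative; this is the case driving the proof of Theorem~\ref{thm:indep_of_embedding}). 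The repair is the case distinction the paper makes: if $a$ contains an orbit outside $U_\circ$, then automatically $\lambda_f(a)>-\delta$, so $\max(\lambda_f(a),\delta)\leq\lambda_f(a)+2\delta$ and your bound becomes $c_M(f;\alpha)+\eta+2\delta$; if instead $a\in C_{U_\circ}(f)$, then $p(a)=0$, you may take $\tilde c=0$, and $a_\circ=a$ gives the bound $\lambda_f(a)$ with no $\delta$-floor. With this adjustment (and invoking continuity of the spectral invariants on both $M$ and $\widehat{U}$ to pass from $f,\hat f$ back to $F,\hat F$, which you only mentioned in the ``$\leq$'' direction), the limit $\eta,\delta\to 0$ gives the desired inequality and the argument is complete.
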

\begin{proof}
	The proof relies on the observations of Section~\ref{subsec:hom_of_subcomplex}: let $f$ be a perturbation of $F$ and $J$ an almost complex structure such that $(f,J)$ has a barricade in $U$ around $U_\circ$. Assume in addition that the perturbation is chosen to be arbitrarily close to some $f^\flat$, for which the pair $(f^\flat, J)$ has a cylindrical bump around $\partial U$. As explained previously, the Floer complex of $\hat f:=\widehat{f|_{U_\circ}}$ on $(\widehat{U},\widehat{\omega})$ coincides with the subcomplex $C_{U_\circ}(f)$ of $CF(f)$ in $M$.  We will show that formula (\ref{eq:locality_other_classes}) holds for $f$ and $\hat f$ up to $2\delta$, 
	for some $\delta$ which can be made arbitrarily small by shrinking the size of the perturbations.
	
	We start by noticing that given a class $\beta\in \iota_*^{-1}(\alpha)$, every representative $b\in C_{U_\circ}(f)$ of $\beta$, is a representative of $\alpha$ in $CF_*(f)$. This immediately implies that $c_M(f;\alpha) \leq \min_{\beta\in \iota_*^{-1}(\alpha)} c_{U_\circ}(\hat f;\beta)$. To prove inequality in the other direction, let $\beta\in\iota_*^{-1}(\alpha)$ be a class on which the minimum in the RHS of (\ref{eq:locality_other_classes}) is attained, and let $a\in CF_*(f)$ and $b\in C_{U_\circ}(f)$ be representatives of $\alpha$ and $\beta$ of minimal action levels. We need to show that $\lambda_f(b)\leq \lambda_f(a)+2\delta$, where $\lambda_f:CF_*(f)\rightarrow\R$ is the maximal action of an orbit, as defined in Notations~\ref{not:CF_element_contained}. 
	Notice that if $a\in C_{U_\circ}(f)$, then it represents in $C_{U_\circ}(f)$ a class in $\iota_*^{-1}(a)$ and, by our choice of $b$, $\lambda_f(b) \leq \lambda_f(a)$, which concludes the proof for this case. 
	Therefore we suppose that $a$ contains critical points in $M\setminus U_\circ$, which implies that $\lambda_f(a)>-\delta$. Assume for the sake of contradiction that  $\lambda_f(b) > \lambda_f(a)+2\delta$, then $\lambda_f(b)>\delta$.
	Recalling that $a$ and $b$ are homologous in $CF_*(f)$ (they both represent $\alpha$), there exists $c\in CF_*(f)$ such that $\partial c= a-b$. Consider the decomposition $c=\pi_{U_\circ}c+\pi_{U_\circ^c}c$, then,
	\begin{equation*}
	b':= b+\partial \pi_{U_\circ} c  = a-\partial\pi_{U_\circ^c}c\in C_{U_\circ}(f)
	\end{equation*}
	is homologous to $b$ in $C_{U_\circ}(f)$. This follows from the fact that $\partial\circ \pi_{U_\circ}=\partial|_{U_\circ}$, since $(f,J)$ has a barricade in $U$ around $U_\circ$. Therefore, $b'$ represents in $C_{U_\circ}(f)$ a class in $\iota_*^{-1}(\alpha)$, and by our choice of $b$, it holds that $\lambda_f(b)\leq \lambda_f(b')$. On the other hand, 
	\begin{equation*}
	\lambda_f(b') = \lambda_f( a-\partial\pi_{U_\circ^c}c)\leq \max\{ \lambda_f(a) ,\lambda_f(\partial\pi_{U_\circ^c}c)\}\leq \max\{ \lambda_f(a) ,\delta\}<\lambda_f(b),
	\end{equation*}
	in contradiction.
\end{proof}

\begin{rem}
	When $U$ is a disjoint union of $\{U_i\}$ and $\alpha=[pt]\in H_*(M)$, equality (\ref{eq:locality_other_classes}) implies the min formula for the point class, which is equivalent, by Poincar\'e duality, to Theorem 45 in \cite{humiliere2016towards} (the max formula). 
\end{rem}

\section{Spectral invariants of disjointly supported Hamiltonians.}	\label{sec:spec_inv_max_ineq}
In this section we use barricades to prove Theorem~\ref{thm:spectral_inv_max_ineq}, which states that a max inequality holds for spectral invariants of Hamiltonians supported in disjoint CIB domains, with respect to a general class $\alpha\in H_*(M)$, and that equality holds when $\alpha=[M]$. Suppose $F$ and $G$ are two Hamiltonians supported in disjoint CIB domains.
In order to prove the max inequality (\ref{eq:max-ineq}) for a homology class $\alpha\in H_*(M)$, we construct a representative of $\alpha$ in the Floer complex of (a perturbation of) the sum $F+G$, out of representatives from the Floer complexes of (perturbations of) $F$ and $G$. The communication between the different Floer complexes is through continuation maps, corresponding to (perturbations of) linear homotopies. The barricades will be used to study the continuation maps, or, more accurately, their restrictions to the CIB domains. In particular, we will use the observation that having a barricade for a disjoint union implies having a barricade for each component:
\begin{rem}
	Consider two disjoint domains $U$ and $V$ in $M$, and a pair $(H,J)$ of a homotopy (or a Hamiltonian) and an almost complex structure, that has a barricade in $U\cup V$ around $U_\circ\cup V_\circ$, for some $U_\circ\Subset U$ and $V_\circ\Subset V$. It follows from Definition~\ref{def:barricade} of the barricade that the pair $(H,J)$ has a barricade in $U$ around $U_\circ$ (and, similarly, in $V$ around $V_\circ$).
\end{rem}
We start by arranging the set-up required for the proof of Theorem~\ref{thm:spectral_inv_max_ineq}.

\begin{lemma}[Set-up]\label{lem:reg_pairs_for_spec_max_ineq}
	Let $F$ and $G$ be Hamiltonians supported in disjoint CIB domains, $U$ and $V$, respectively. Then, there exist an almost complex structure $J$ and homotopies $h_{F}$ and $h_G$ such that the following hold:
	\begin{enumerate}
		\item The pairs $(h_{F},J)$, $(h_{F\pm},J)$, $(h_{G},J)$ and $(h_{G\pm},J)$ are all Floer-regular and have barricades in  $U\cup V$ around $U_\circ\cup V_\circ$ for some $U_\circ\Subset U$ and $V_\circ\Subset V$ containing the supports of $F$ and $G$, respectively.
		\item The left ends, $h_{F-}$ and $h_{G-}$, are small perturbations of $F$ and $G$, respectively. The right ends coincide, $h_{F+}=h_{G+}$, and are a small perturbation of the sum $F+G$.
		\item On $U\times S^1$ (respectively,  $V\times S^1$) the homotopy $h_{F}$ (respectively, $h_G$) is a small perturbation of a constant homotopy, and its ends agree on their 1-periodic orbits up to second order. In particular, $h_{F-}$ and $h_{F+}$ (resp., $h_{G-}$ and $h_{G+}$) have the same 1-periodic orbits in $U$ (resp., $V$).
	\end{enumerate}
\end{lemma}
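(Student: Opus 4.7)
The plan is to apply Theorem~\ref{thm:barricade} to two linear homotopies, coordinated so that the two resulting Floer-regular homotopies share a common right end. Throughout, $U \cup V$ is a CIB domain (Remark~\ref{rem:CIB_domains}), and since $F$ and $G$ have disjoint supports, $F + G = F$ on $U$ and $F + G = G$ on $V$. Let $\beta : \R \to [0,1]$ be a smooth cutoff equal to $0$ for $s \le 0$ and to $1$ for $s \ge 1$, and consider the linear homotopies
\begin{equation*}
H_F(x,t,s) = F(x,t) + \beta(s)\, G(x,t), \qquad H_G(x,t,s) = G(x,t) + \beta(s)\, F(x,t).
\end{equation*}
Both are supported in $(U \cup V) \times S^1 \times \R$; the key structural property is that $H_F$ is constant in $s$ on $U \times S^1 \times \R$ (as $G$ vanishes there) and, symmetrically, $H_G$ is constant in $s$ on $V \times S^1 \times \R$. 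Fix $\tau>0$ small enough that $\operatorname{supp}(F) \cup \operatorname{supp}(G) \subset U_\circ \cup V_\circ$, where $U_\circ$ and $V_\circ$ are the $\tau$-Liouville shrinkings of $U$ and $V$, and fix a time-independent, $\omega$-compatible $J$ that is cylindrical near each of the four disjoint hypersurfaces $\partial U, \partial V, \partial U_\circ, \partial V_\circ$.

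Next, apply Theorem~\ref{thm:barricade} to $H_F$, paired with $J$, to obtain a $C^\infty$-small perturbation $h_F$ such that $(h_F, J)$ and $(h_{F\pm}, J)$ are Floer-regular with barricades in $U \cup V$ around $U_\circ \cup V_\circ$. By Remark~\ref{rem:bump_implies_barricade}(3) we may take $h_F$ to be constant in $s$ outside $[0,1]$, and since $H_F$ is constant in $s$ on $U$, Remark~\ref{rem:bump_implies_barricade}(2) lets us choose the perturbation so that $h_{F-}$ and $h_{F+}$ have the same $1$-periodic orbits in $U$ up to second order; in particular $h_F\vert_{U \times S^1 \times \R}$ is a $C^\infty$-small perturbation of the constant homotopy $F\vert_U$. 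Set $h_+ := h_{F+}$, which is a Floer-regular perturbation of $F + G$ carrying a $(\tau, \delta)$-bump in the sense of Definition~\ref{def:bump}.

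Finally, build $h_G$ with the prescribed right end $h_+$. Define $\widetilde{H}_G(\cdot, \cdot, s) := (1 - \beta(s))\, G + \beta(s)\, h_+$, which is a $C^\infty$-small perturbation of $H_G$ interpolating from $G$ to $h_+$; it is supported in $(U \cup V) \times S^1 \times \R$, constant in $s$ outside $[0,1]$, and on $V \times S^1 \times \R$ it is a small perturbation of the constant homotopy equal to $G$ (because $h_+$ is $C^\infty$-close to $F + G = G$ on $V$). Now rerun the proof of Theorem~\ref{thm:barricade} while prescribing the right end: construct a pair $(h_G^\flat, J)$ with a $(\tau, \delta)$-bump whose right end is exactly $h_+$ (possible since $h_+$ already has such a bump); perturb the left end to a Floer-regular $h_{G-}$ via Claim~\ref{clm:app_reg_Hamiltonians} without changing its $1$-periodic orbits; and invoke Proposition~\ref{pro:pert_hom_bdd_supp} to perturb the interior of the homotopy on the compact slab $M \times S^1 \times [0,1]$ until $(h_G, J)$ becomes Floer-regular, leaving both ends unchanged. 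Proposition~\ref{pro:appendix} guarantees that the barricade in $U \cup V$ around $U_\circ \cup V_\circ$ survives each of these $C^\infty$-small perturbations, and the ``constant on $V$'' property combined with Remark~\ref{rem:bump_implies_barricade}(2) yields the second-order coincidence of $h_{G-}$ and $h_+$ on $V$. The main subtlety is arranging the right ends of $h_F$ and $h_G$ to coincide \emph{exactly}; this is resolved by constructing $h_+$ first and then invoking the relative perturbation Proposition~\ref{pro:pert_hom_bdd_supp}, which modifies only the interior of the homotopy on a compact slab and thus leaves the prescribed $h_+$ in place.
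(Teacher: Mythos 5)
Your proposal is correct and follows essentially the paper's own route---linear homotopies from $F$ and $G$ to $F+G$, one almost complex structure $J$ cylindrical near all four boundaries, bumped perturbations giving barricades via Proposition~\ref{pro:bump_implies_barricade}, then regularization of the ends (Claim~\ref{clm:app_reg_Hamiltonians}) and of the homotopies on a compact slab (Proposition~\ref{pro:pert_hom_bdd_supp}) with barricade survival from Proposition~\ref{pro:appendix}---the only difference being that you construct $h_F$ in full first and then prescribe $h_{G+}=h_{F+}$, whereas the paper builds the two bumped homotopies simultaneously with identical right ends and regularizes that common end once. The one loose point is the parenthetical claim that the prescribed right end $h_+$ ``already has'' a $(\tau,\delta)$-bump: after the end-regularization, $h_+$ is only a small perturbation of the exactly bumped $h_{F+}^\flat$, so $(h_G^\flat,J)$ cannot literally satisfy Definition~\ref{def:bump} with right end exactly $h_+$; this is harmless, since $\cP(h_+)=\cP(h_{F+}^\flat)$ and the perturbation is constant outside a fixed slab, so Proposition~\ref{pro:appendix} (which you invoke anyway) transfers the barricade from the exactly bumped reference homotopy to yours, exactly as in the paper's analogous step.
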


\begin{proof}
	Let $H_F$ and $H_G$ be linear homotopies from $F$ and $G$, respectively, to the sum $F+G$. As in the proof of Lemma~\ref{lem:reg_pairs_for_locality}, we consider perturbations, $h_F^\flat$ and $h_G^\flat$,  of the linear homotopies, that, when paired with $J$, have a cylindrical bump around $\partial U\cup \partial V$. We demand in addition that all ends are non-degenerate, that the right ends coincide, $h_{F+}^\flat =  h_{G+}^\flat$, and that the homotopies are constant on $U$ and $V$ respectively, $h_{F}^\flat|_U\equiv h_{F_-}^\flat|_U$ and $h_{G}^\flat|_V\equiv h_{G_-}^\flat|_V$. By proposition~\ref{pro:bump_implies_barricade}, these homotopies and their ends, when paired with $J$, have barricades in $U\cup V$ around $U_\circ\cup V_\circ$. It remains to perturb again to ensure regularity. As in the proof of Theorem~\ref{thm:barricade}, we replace the ends with regular  perturbations $h_{F-}$, $h_{G-}$ and $h_{F+}=h_{G+}$, without changing their periodic orbits (as cited in Claim~\ref{clm:app_reg_Hamiltonians}, for example), then perturb the homotopies to glue to these regular perturbed Hamiltonians, and finally perturb the homotopies on the set $M\times S^1\times I$ for some fixed finite interval $I$, to obtain homotopies that are Floer regular when paired with $J$. The last step is possible due to Proposition~\ref{pro:pert_hom_bdd_supp}) below. Proposition~\ref{pro:appendix} states that barricades survive under perturbations that do not change the periodic orbits of the ends and are constant (as homotopies) outside of some fixed finite interval.
\end{proof}

The following lemma is actually a part of the proof of Theorem~\ref{thm:spectral_inv_max_ineq}, but, in our opinion, might be interesting on its own.

\begin{lemma}\label{lem:min_ineq}
	Let $\alpha\in H_*(M)$ and let $F,G:M\times  S^1\rightarrow\R$ be Hamiltonians supported in disjoint CIB domains $U$, $V$ respectively. Assume in addition that $c(F;\alpha)< 0$, then
	\begin{equation}\label{eq:min_ineq}
	c(F+G;\alpha)\leq \min\{c(F;\alpha),c(G;\alpha)\}.
	\end{equation}	
\end{lemma}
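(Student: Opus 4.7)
The plan is to reduce to the following sub-claim: whenever $H,K$ are Hamiltonians supported in disjoint CIB domains and $c(H;\alpha)<0$, we have $c(H+K;\alpha)\leq c(H;\alpha)$. Granting this sub-claim, the lemma follows by a case analysis. If $c(G;\alpha)\geq c(F;\alpha)$, then $\min\{c(F;\alpha),c(G;\alpha)\}=c(F;\alpha)$ and the sub-claim with $H=F$, $K=G$ (applicable since $c(F;\alpha)<0$) gives the desired bound. Otherwise $c(G;\alpha)<c(F;\alpha)<0$, so $c(G;\alpha)<0$ as well, and the sub-claim with $H=G$, $K=F$ yields $c(F+G;\alpha)\leq c(G;\alpha)=\min\{c(F;\alpha),c(G;\alpha)\}$.

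To prove the sub-claim (take $H=F$, $K=G$; the other case is symmetric), I would apply Lemma~\ref{lem:reg_pairs_for_spec_max_ineq} to produce a Floer-regular pair $(h_F,J)$ with a barricade in $U\cup V$ around $U_\circ\cup V_\circ$, with $h_{F-}$ close to $F$, $h_{F+}$ close to $F+G$, and with $h_F\vert_{U\times S^1\times\R}$ a small perturbation of a constant homotopy. This last property is available because $F$ and $F+G$ coincide on $U$ (since $G$ vanishes there). Fix $\eta\in(0,-c(F;\alpha)/3)$ and arrange the perturbations to be so small that: (i) every 1-periodic orbit of $h_{F-}$ outside $U_\circ$ has action in $(-\eta,\eta)$; (ii) $|c(h_{F-};\alpha)-c(F;\alpha)|<\eta$ and $|c(h_{F+};\alpha)-c(F+G;\alpha)|<\eta$; and (iii) $\|\partial_s h_F\|_{L^\infty(U\times S^1\times\R)}$ is small enough that $\bigl|\int_{\R\times S^1}\partial_s h_F\circ u\,ds\,dt\bigr|<\eta$ for every map $u$ whose image lies in $U$ (this is feasible because $\partial_s h_F$ is compactly supported in $s$).

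Assuming $\alpha\neq 0$ (the case $\alpha=0$ is trivial), pick a minimal-action representative $a\in CF_*(h_{F-})$ of $\alpha$, so that $\lambda_{h_{F-}}(a)=c(h_{F-};\alpha)<c(F;\alpha)+\eta<-2\eta$. By (i) each orbit appearing in $a$ must lie in $U_\circ$. The cycle $\Phi_{(h_F,J)}(a)\in CF_*(h_{F+})$ represents $\alpha$, since continuation induces an isomorphism on homology and $\alpha\neq 0$. By the barricade, every continuation trajectory starting from an orbit of $a\subset U_\circ$ is contained in $U_\circ\cup V_\circ$, and hence, by connectedness and the disjointness of $U_\circ,V_\circ$, in $U_\circ\subset U$. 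Using the energy identity (\ref{eq:energy_id_homotopies}) together with (iii), for every such trajectory $u$ from $x_-$ to $x_+$,
$$
\cA_{h_{F+}}(x_+)\leq \cA_{h_{F-}}(x_-)+\int_{\R\times S^1}\partial_s h_F\circ u\,ds\,dt\leq \lambda_{h_{F-}}(a)+\eta.
$$
Therefore $c(h_{F+};\alpha)\leq\lambda_{h_{F+}}(\Phi_{(h_F,J)}(a))\leq c(h_{F-};\alpha)+\eta$, and combining with (ii) gives $c(F+G;\alpha)\leq c(F;\alpha)+3\eta$. Letting $\eta\to 0$ closes the sub-claim.

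The main obstacle I expect is achieving (i)--(iii) simultaneously with Floer regularity of $(h_F,J)$ and $(h_{F\pm},J)$ and with the barricade. A naive application of the energy identity to the continuation from $F$ to $F+G$ would only bound the action change by a quantity of order $\|G\|$, which is useless. The essential leverage is that $G$ vanishes on $U$, so the linear homotopy from $F$ to $F+G$ is truly constant there, and Lemma~\ref{lem:reg_pairs_for_spec_max_ineq} (built on Theorem~\ref{thm:barricade}, Proposition~\ref{pro:pert_hom_bdd_supp} and Proposition~\ref{pro:appendix}) supplies a regularity-preserving perturbation that remains arbitrarily close to constant on $U$ while preserving the barricade structure.
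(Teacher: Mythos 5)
Your proposal is correct, and its skeleton is the one the paper uses: reduce by symmetry to showing $c(F+G;\alpha)\leq c(F;\alpha)$ when $c(F;\alpha)<0$, invoke the Set-up Lemma~\ref{lem:reg_pairs_for_spec_max_ineq} to get a Floer-regular barricaded pair $(h_F,J)$ whose homotopy is nearly constant on $U$, use the negativity of the invariant together with the fact that $h_{F-}$ is a small Morse function outside $U_\circ$ to force a minimal-action representative $a$ into $C_{U_\circ}(h_{F-})$, and push $a$ through the continuation map. Where you diverge is the final action bound: the paper applies Corollary~\ref{cor:almost_constant_homotopy}, which gives the exact identity $\Phi_{(h_F,J)}\circ\pi_{U_\circ}=\pi_{U_\circ}$ (so $\Phi_{(h_F,J)}(a)=a$ and the action level is preserved on the nose, using that the ends agree on their orbits in $U$), whereas you use only the barricade confinement of the relevant continuation trajectories to $U_\circ$ plus the energy identity (\ref{eq:energy_id_homotopies}) and the smallness of $\partial_s h_F$ on $U$ over a fixed compact $s$-interval, obtaining the bound up to an $\eta$ that you then send to zero. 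Your route is slightly more elementary at this step: it bypasses the perturbation-of-regular-on-$U$ cobordism machinery of Section~\ref{sec:transversality_compactness} that underlies Corollary~\ref{cor:almost_constant_homotopy} (you still need Theorem~\ref{thm:barricade}, Proposition~\ref{pro:appendix} and Proposition~\ref{pro:pert_hom_bdd_supp} through the Set-up Lemma), and it is in fact the same soft estimate the paper itself deploys in Step 5 of the proof of Theorem~\ref{thm:AHS_min_ineq}, see (\ref{eq:action_growth_AHS}). The trade-off is that the paper's exact identity $\Phi(a)=a$ is reused structurally in the proofs of Theorems~\ref{thm:spectral_inv_max_ineq} and~\ref{thm:bd_ineq} (where one genuinely needs to know the output chain, not just its action level), so for the broader program the stronger corollary is still required; for this lemma alone your approximate version suffices.
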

\begin{proof}
	Let us show that $c(F+G;\alpha)\leq c(F;\alpha)$. The result will follow by symmetry, since, if $c(G;\alpha)<c(F;\alpha)$, then it is in particular negative. 
	
	Let $h_F$ and $J$ be the homotopy and almost complex structure from the Set-up Lemma, \ref{lem:reg_pairs_for_spec_max_ineq} (we will not use $h_G$ in this proof), and denote the left end of the homotopy by $f:=h_{F-}$. Then $f$ approximates $F$ and, since $c(F;\alpha)<0$ and $F|_{U_\circ^c}=0$, we may assume that $c(f;\alpha)<\min_{U_\circ^c}f$. Outside of $U_\circ$, $f$ is a small Morse function and hence its 1-periodic orbits there are critical points, and their actions are the critical values. As a consequence, a representative $a\in CF_*(f)$ of the class $\alpha$, of action level $\lambda_f(a)=c(f;\alpha)$ must be a combination of orbits that are contained in $U_\circ$, namely, $a\in C_{U_\circ}(f)$. As the continuation map, $\Phi_{(h_F,J)}:CF_*(f)\rightarrow CF_*(h_{F+})$, induces isomorphism on homologies, the image $\Phi_{(h_F,J)}(a)$ of $a$ represents the class $\alpha$ in $CF_*(h_{F+})$. Recalling that, on $U$, the homotopy $h_F$ is a small perturbation of a constant homotopy, it follows from Corollary~\ref{cor:almost_constant_homotopy}  that the restriction of the continuation map $\Phi_{(h_F,J)}$ to orbits contained in $U_\circ$ is the identity map:
	\begin{equation*}
	\Phi_{(h_F,J)}\circ \pi_{U_\circ} = \id \circ \pi_{U_\circ}.
	\end{equation*}
	Therefore, $\Phi_{(h_F,J)}(a)=a$ is a representative of the class $\alpha$ of action level $\lambda_{h_{F+}}(\Phi_{(h_F,J)}(a)) = \lambda_f(a)=c(f;\alpha)$. We conclude that $c(h_{F+};\alpha)\leq c(f;\alpha)$ as required.
\end{proof}

The following example shows that a strict inequality can be attained in (\ref{eq:min_ineq}).
\begin{exam}\label{exa:min_ineq_spec}
	Let $(M,\omega)$ be a genus-2 surface endowed with an area form, and let $x,y:S^1\rightarrow M$ be two disjoint non-contractible loops representing two different homology classes $\alpha, \beta\in H_1(M;\Z_2)$ respectively. 
	Let $F,G:M\rightarrow\R$ be two small Morse functions with disjoint supports, such that $F$ vanishes on $y$ and takes a negative value on $x$, whereas $G$ vanishes on $x$ and is negative on $y$. See {Figure}~\ref{fig:sunglasses} for an illustration. After perturbing $F$, $G$ and $F+G$ into Morse functions, representatives of the sum $\alpha+\beta$ first appear for $F$ and $G$ on a sub-level set of values approximately zero. However, this sum of classes appears for $F+G$ in a sub-level set with negative value. We therefore conclude that the spectral invariants of both $F$ and $G$ with respect to the sum $\alpha+\beta$ vanish. On the other hand, the spectral invariant of $F+G$ is negative, and thus
	\begin{equation*}
	c(F+G;\alpha+\beta) < 0 = \min\{c(F;\alpha+\beta),c(F;\alpha+\beta)\}.
	\end{equation*}
\end{exam}
\begin{figure}
	\centering
	\begin{subfigure}{.5\textwidth}
		\centering
		\includegraphics[width=.9\linewidth]{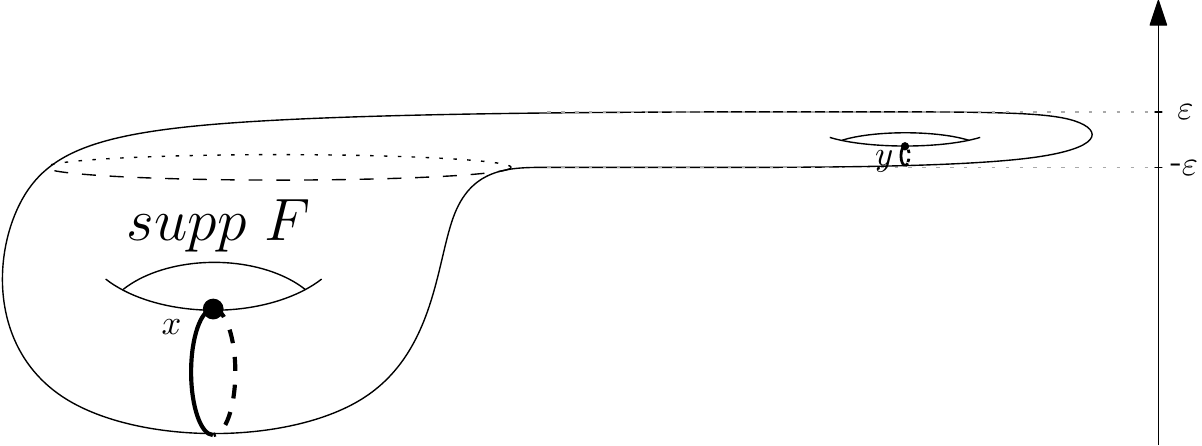}
		\caption{A perturbation of $F$}
		\label{fig:sunglasses1}
	\end{subfigure}%
	\begin{subfigure}{.5\textwidth}
		\centering
		\includegraphics[width=.9\linewidth]{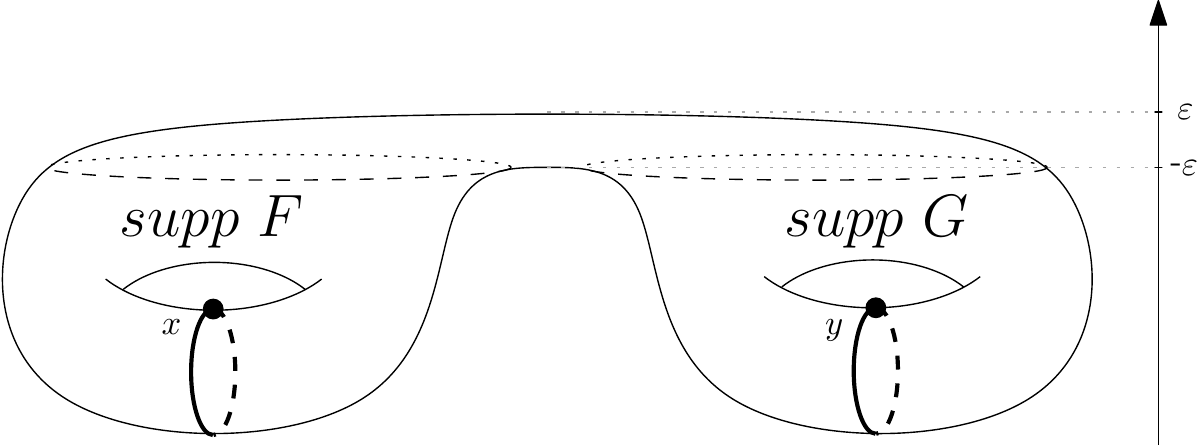}
		\caption{A perturbation of $F+G$}
		\label{fig:sunglasses2}
	\end{subfigure}
	\caption{ An illustration of non-degenerate perturbations of $F$ and $F+G$. A representative of the class $\alpha+\beta$ appears at level $\approx0$ for $F$, and at a negative value for $F+G$.}
	\label{fig:sunglasses}
\end{figure}

The following inequality is a simple application of Lemma~\ref{lem:spec_inv_non_neg} and Lemma~\ref{lem:min_ineq}, and will be used to prove that equality holds in (\ref{eq:max-ineq}) for the fundamental class.
\begin{lemma}\label{lem:spec_inv_inverse_ineq}
	Let $F,G:M\times  S^1\rightarrow\R$ be Hamiltonians supported in disjoint CIB domains, then 
	\begin{equation*}
	c(F+G;[M])\geq \max\{c(F;[M]), c(G;[M])\}.
	\end{equation*}	
\end{lemma}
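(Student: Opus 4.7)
The plan is to derive this inequality from Lemma~\ref{lem:min_ineq} (which gives a min inequality for the point class under a negativity assumption) combined with Poincar\'e duality for spectral invariants.

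First I would apply Poincar\'e duality: the asserted inequality $c(F+G;[M])\geq \max\{c(F;[M]),c(G;[M])\}$ is equivalent, after using $c(H;[M])=-c(-H;[pt])$ for any Hamiltonian $H$, to
\begin{equation*}
c(-(F+G);[pt])\leq \min\{c(-F;[pt]),c(-G;[pt])\}.
\end{equation*}
Since $-F$ and $-G$ are supported in the same disjoint CIB domains $U$ and $V$, and since a disjoint union of CIB domains is again a CIB domain (Remark~\ref{rem:CIB_domains}), $-(F+G)$ is supported in the CIB domain $U\cup V$. Lemma~\ref{lem:spec_inv_non_neg} then gives that all three quantities $c(-F;[pt])$, $c(-G;[pt])$, $c(-(F+G);[pt])$ are nonpositive.

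Next I would split into two cases. If at least one of $c(-F;[pt])$ and $c(-G;[pt])$ is strictly negative, then, without loss of generality say $c(-F;[pt])<0$, the hypothesis of Lemma~\ref{lem:min_ineq} is satisfied for the Hamiltonians $-F,-G$ (supported in the disjoint CIB domains $U,V$) and for the class $\alpha=[pt]$. Applying that lemma directly yields
\begin{equation*}
c(-(F+G);[pt])\leq \min\{c(-F;[pt]),c(-G;[pt])\},
\end{equation*}
which is exactly what we need. In the remaining case both $c(-F;[pt])$ and $c(-G;[pt])$ equal $0$, so the right-hand side above is $0$. Lemma~\ref{lem:spec_inv_non_neg}, applied now to $-(F+G)$ in the CIB domain $U\cup V$, gives $c(-(F+G);[pt])\leq 0$, finishing this case as well.

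There is no real obstacle here, since both inputs (the non-positivity statement of Lemma~\ref{lem:spec_inv_non_neg} and the conditional min inequality of Lemma~\ref{lem:min_ineq}) are already established. The only subtle point is the bookkeeping around the strict inequality hypothesis in Lemma~\ref{lem:min_ineq}, which is why the argument needs to separately handle the degenerate case where both $c(-F;[pt])$ and $c(-G;[pt])$ vanish; that case is then immediately covered by the non-positivity of spectral invariants of Hamiltonians supported in CIB domains.
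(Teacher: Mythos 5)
Your proposal is correct and follows essentially the same route as the paper: dualize to the point class via $c(H;[M])=-c(-H;[pt])$, invoke Lemma~\ref{lem:spec_inv_non_neg} for non-positivity (handling the degenerate case where both invariants vanish), and apply Lemma~\ref{lem:min_ineq} to $-F$, $-G$ with $\alpha=[pt]$ otherwise. The case bookkeeping you do around the strict-negativity hypothesis is exactly the paper's "without loss of generality $c(F;[M])>0$" step, so there is no substantive difference.
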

\begin{proof}
	By Lemma~\ref{lem:spec_inv_non_neg}, the spectral invariants of $F$, $G$ and $F+G$ with respect to $[pt]$ are non-positive, and thus, using the Poincar\'e duality property for spectral invariants, we conclude that $c(F+G;[M])$, 	$c(F;[M])$ and $c(G;[M])$ are all non-negative. If both $c(F;[M])$ and $c(G;[M])$ are equal to zero, the claim is trivial. Thus we assume, without loss of generality, that $c(F;[M])>0$. By Poincar\'e duality, $c(-F;[pt])=-c(F;[M])<0$ and we may apply Lemma~\ref{lem:min_ineq} to $-F$, $-G$ and $\alpha=[pt]$:
	\begin{eqnarray*}
		c(F+G;[M]) &=& -c(-F-G;[pt]) \geq -\min\{c(-F;[pt]), c(-G;[pt])\} \\
		&=&  -\min\{-c(F;[M]), -c(G;[M])\} \\
		&=& \max \{c(F;[M]), c(G;[M])\}
	\end{eqnarray*}
\end{proof}

\begin{proof}[Proof of Theorem~\ref{thm:spectral_inv_max_ineq}]
	In what follows we prove that the spectral invariant of the sum $F+G$ with respect to a homology class $\alpha$ is not greater than the maximum. The equality for the fundamental class will follow from  Lemma~\ref{lem:spec_inv_inverse_ineq}. Consider the almost complex structure $J$, and the homotopies, $h_F$ and $h_G$, from the Set-up Lemma, \ref{lem:reg_pairs_for_spec_max_ineq}, and denote 
	$$
	f:=h_{F-}\approx F,\quad g:=h_{G-}\approx G \quad \text{and}\quad {h_+}:=h_{F+}=h_{G+}\approx F+G.
	$$
	Set $\lambda:=\max\{c(f;\alpha), c(g;\alpha)\}$ and notice that, due to Lemma~\ref{lem:min_ineq} and the continuity of spectral invariants, we may assume that $\lambda\geq -\delta$ if $\delta>0$ is small enough.
	Let $\tilde a \in CF_*(f)$, $\tilde b\in CF_*(g)$ be representatives of $\alpha$ of action levels $\lambda_f(\tilde a),\lambda_g(\tilde b)\leq \lambda$, then $a:=\Phi_{(h_F,J)} \tilde a$ and $b:=\Phi_{(h_G,J)} \tilde b$ are both representatives of $\alpha$ in $CF_*({h_+})$. Notice that $a$ and $b$ might be of action-level higher than $\lambda$. We wish to construct out of $a$ and $b$ a representative of $\alpha$ of action level approximately bounded by $\lambda$. Let $p$ be a primitive of $a-b$, and set $d:=\left(\partial_{({h_+},J)} \pi_V -\pi_V\partial_{({h_+},J)}\right) p$. We claim that  
	$$
	e:=\pi_{V^c}a +\pi_Vb -d
	$$ 
	is a representative of $\alpha$ of the required action level. Indeed, 
	\begin{eqnarray*}
		[\pi_{V^c}a +\pi_Vb -d] &=& [\pi_{V^c}a +\pi_Vb -\partial_{({h_+},J)} (\pi_Vp) +\pi_V(\partial_{({h_+},J)} p)]\\
		&=& [\pi_{V^c}a +\pi_Vb -\partial_{({h_+},J)} (\pi_Vp) +\pi_V(a-b)]\\
		&=& [\pi_{V^c}a +\pi_Vb -\partial_{({h_+},J)} (\pi_Vp) +\pi_Va-\pi_Vb]\\
		&=& [\pi_{V^c}a +\pi_Va -\partial_{({h_+},J)} (\pi_Vp) ] = [a] = \alpha.
	\end{eqnarray*}	
	Let us now bound the action level of $e$. First, notice that outside of $U_\circ \cup V_\circ$, ${h_+}$ is a small Morse function (as it approximates a Hamiltonian that is supported in $U_\circ \cup V_\circ$).  Therefore, its 1-periodic orbits there are its critical points and their actions are the critical values, which we may assume to be bounded by $\delta$. It follows that the action level of the projection $\pi_{U_\circ^c\cap V_\circ^c} (e)$ is bounded by $\delta$, and so it remains to bound the action levels of $\pi_{U_\circ} e$ and $\pi_{V_\circ}e$. It follows form the fact that $({h_+},J)$ has a barricade in $U$ around $U_\circ$ and in $V$ around $V_\circ$ (more specifically, from (\ref{eq:block_form})), that
	\begin{equation}\label{eq:composition_proj_and_diff_0}
	\pi_{U_\circ}\circ \partial_{({h_+},J)} \circ \pi_{U^c} = 0, \quad \pi_{V_\circ}\circ \partial_{({h_+},J)} \circ \pi_{V^c} = 0.
	\end{equation}
	Using this observation, we bound the action levels of the projections of $e$:
	\begin{itemize}
		\item $\lambda_{h_+}(\pi_{U_\circ} e)$: Notice that $\pi_{U_\circ} d=0$. Indeed, 
		\begin{equation*}
		\pi_{U_\circ} d = \pi_{U_\circ}\circ (\partial_{({h_+},J)} \circ \pi_V -\pi_V\circ\partial_{({h_+},J)} )p=\pi_{U_\circ}\circ \partial_{({h_+},J)} \circ \pi_Vp
		\overset{(\ref{eq:composition_proj_and_diff_0})}{=}	0.
		\end{equation*}
		As a consequence, $\pi_{U_\circ} e = \pi_{U_\circ} a = \pi_{U_\circ} \Phi_{(h_F,J)} \tilde a$. Since, on $U$, the homotopy $h_F$ is a perturbation of the constant homotopy, we can apply Corollary~\ref{cor:almost_constant_homotopy} and conclude that $\pi_{U_\circ}\circ \Phi_{(h_F,J)} = \pi_{U_\circ}$. Overall we obtain
		\begin{equation*}
		\lambda_{h_+}(\pi_{U_\circ} e) = \lambda_{h_+}(\pi_{U_\circ}\circ \Phi_{(h_F,J)}\tilde a) = \lambda_{h_+}(\pi_{U_\circ}\tilde a) = \lambda_f(\pi_{U_\circ}\tilde a) \leq \lambda_f(\tilde a)\leq \lambda,
		\end{equation*}
		where we used the fact that in $U$, $f=h_{F-}$ and $h_+ = h_{F+}$ agree on their 1-periodic orbits, and hence the action of $\pi_{U_\circ}\tilde a$ with respect to $h_+$ coincides with the action with respect to $f$. 
		\item $\lambda_{h_+}(\pi_{V_\circ} e)$: Here $\pi_{V_\circ} d=0$ as well, but the computation is a little different:
		\begin{eqnarray*}
		\pi_{V_\circ} d &=& \pi_{V_\circ}\circ (\partial_{({h_+},J)} \circ \pi_V -\pi_V\circ\partial_{({h_+},J)} )p=(\pi_{V_\circ}\circ \partial_{({h_+},J)} \circ \pi_V - \pi_{V_\circ}\circ \partial_{({h_+},J)})p\\
		&=& (\pi_{V_\circ}\circ \partial_{({h_+},J)}-\pi_{V_\circ}\circ \partial_{({h_+},J)} \circ \pi_{V^c} - \pi_{V_\circ}\circ \partial_{({h_+},J)})p
		\overset{(\ref{eq:composition_proj_and_diff_0})}{=}	0.
		\end{eqnarray*}
		Therefore, $\pi_{V_\circ} e = \pi_{V_\circ} b = \pi_{V_\circ} \Phi_{(h_G,J)} \tilde b$ and, since, on $V$, the homotopy $h_G$ is a perturbation of the constant homotopy, we apply Corollary~\ref{cor:almost_constant_homotopy} and conclude that $\pi_{V_\circ}\circ \Phi_{(h_G,J)} = \pi_{V_\circ}$. Overall,
		\begin{equation*}
		\lambda_{h_+}(\pi_{V_\circ} e) = \lambda_{h_+}(\pi_{V_\circ}\circ \Phi_{(h_G,J)}\tilde b) = \lambda_{h_+}(\pi_{V_\circ}\tilde b) = \lambda_g(\pi_{V_\circ}\tilde b \leq \lambda_g(\tilde b) \leq \lambda,
		\end{equation*}
		where we used the fact that on $V$, $g=h_{G-}$ and $h_+ = h_{G+}$ agree on their 1-periodic orbits, and hence the action of $\pi_{V_\circ}\tilde a$ with respect to $h_+$ coincides with the action with respect to $g$. 
	\end{itemize}
	We conclude that 
	\begin{eqnarray}
	\nonumber c(h_+;\alpha) &\leq& \lambda_{h_+} (e) \leq \max\{\lambda_{h_+}(\pi_{U_\circ} e ),\lambda_{h_+}(\pi_{V_\circ} e ),\lambda_{h_+}(\pi_{U_\circ^c\cap V_\circ^c} e )\}\\
	\nonumber &=& \max\{\lambda, \delta\}\leq \lambda +2\delta.
	\end{eqnarray}
\end{proof}

\section{Boundary depth of disjointly supported Hamiltonians.} \label{sec:BD_inequality}
In this section, we use barricades to compare the boundary depths of disjointly supported Hamiltonians and that of their sum. As in the previous section, the communication between Floer complexes of different Hamiltonians is through continuation maps corresponding to homotopies that have barricades. Since we replace the Hamiltonians and their sum by regular perturbations, we will use the continuity property of the boundary-depth:
\begin{itemize}
	\item[] \cite[Theorem 1.1]{usher2011boundary}: Given two Hamiltonians $F$ and $G$, 
	$$
	|\beta(F)-\beta(G)|\leq \int_0^1\left(\max_M(F-G)-\min_M(F-G)\right)dt.
	$$
\end{itemize}
As before, we use Notations~\ref{not:CF_element_contained}. Let us start with a lemma that will enable us to push certain boundary terms from one Floer complex to another.	
\begin{lemma}\label{lem:bdry_term_in_U0}
	Let $J$ be an almost complex structure and let $h$ be a homotopy, such that the pairs $(h,J)$ and $(h_\pm,J)$ are Floer-regular and have a barricade in $U$ around $U_\circ$. Assume in addition that on $U$, $h$ is a small perturbation of a constant homotopy, and that its ends, $h_\pm$, agree up to second order on their 1-periodic orbits in $U$. Then, every boundary term  $a\in \partial_{(h_+,J)}CF_*(h_+) $ that is a combination of orbits in $U_\circ$, namely $a\in C_{U_\circ}(h_+)$, is also a boundary term in $CF_*(h_-)$.	
\end{lemma}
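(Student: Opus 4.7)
The plan is to transport the boundary relation $a = \partial_{(h_+,J)} c$ across to $CF_*(h_-)$ via the forward continuation map $\Phi := \Phi_{(h,J)} : CF_*(h_-) \to CF_*(h_+)$. Since $h_-$ and $h_+$ agree up to second order on their 1-periodic orbits in $U$, there is a canonical bijection between these orbit sets, which induces an identification $\iota_{U_\circ} : C_{U_\circ}(h_-) \xrightarrow{\sim} C_{U_\circ}(h_+)$; let $a' \in C_{U_\circ}(h_-)$ denote the preimage of $a$ under this identification. The goal is to show that $a'$ is a boundary in $CF_*(h_-)$, which under the identification says exactly that $a$ is a boundary in $CF_*(h_-)$ in the sense of the lemma.

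The key step is to verify that $\Phi$ restricts to $\iota_{U_\circ}$ on $C_{U_\circ}(h_-)$. This rests on two ingredients. First, the barricade property of $(h,J)$ guarantees that every Floer solution starting at a 1-periodic orbit contained in $U_\circ$ has image in $U_\circ$, so its positive asymptote is an orbit in $U_\circ$ as well; by the definition of the continuation map this yields $\Phi(C_{U_\circ}(h_-)) \subset C_{U_\circ}(h_+)$. Second, because $h$ is a small perturbation of a constant homotopy on $U$, Corollary~\ref{cor:almost_constant_homotopy} applies and gives $\pi_{U_\circ}\circ \Phi = \pi_{U_\circ}$, where on the right $\pi_{U_\circ}$ is composed with $\iota_{U_\circ}$ to take values in $C_{U_\circ}(h_+)$. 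Combining the two yields $\Phi|_{C_{U_\circ}(h_-)} = \iota_{U_\circ}$, and in particular $\Phi(a') = a$.

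It remains to show $a'$ is closed, and then conclude via the homology isomorphism induced by $\Phi$. The barricade for the pair $(h_-,J)$ (viewed as a constant homotopy) implies, via the block form \eqref{eq:block_form}, that $\partial_{(h_-,J)}$ preserves $C_{U_\circ}(h_-)$; hence $\partial_{(h_-,J)} a' \in C_{U_\circ}(h_-)$. Applying $\Phi$ and using Step~2 together with the chain-map property,
\begin{equation*}
\iota_{U_\circ}\bigl(\partial_{(h_-,J)} a'\bigr) = \Phi\bigl(\partial_{(h_-,J)} a'\bigr) = \partial_{(h_+,J)} \Phi(a') = \partial_{(h_+,J)} a = 0,
\end{equation*}
so $\partial_{(h_-,J)} a' = 0$. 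Since $\Phi$ is a chain homotopy equivalence and $[a]=0$ in $HF_*(h_+)$, we deduce $[a'] = \Phi_*^{-1}[a] = 0$, i.e., $a'$ is a boundary in $CF_*(h_-)$, as required.

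The main obstacle is the careful bookkeeping in Step~2: one must combine the barricade-induced inclusion $\Phi(C_{U_\circ}(h_-))\subset C_{U_\circ}(h_+)$ with the almost-constant-homotopy identity $\pi_{U_\circ}\circ \Phi = \pi_{U_\circ}$ to conclude that $\Phi$ is exactly $\iota_{U_\circ}$ on the subspace $C_{U_\circ}(h_-)$ (not merely the identity after projection). Once this equality is established, both the verification that $a'$ is closed and the final homological conclusion reduce to a short diagram chase.
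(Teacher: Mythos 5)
Your proof is correct and follows essentially the same route as the paper's: identify $C_{U_\circ}(h_-)$ with $C_{U_\circ}(h_+)$ via the shared orbits, use Corollary~\ref{cor:almost_constant_homotopy} to get $\Phi_{(h,J)}(a)=a$, verify that $a$ is closed in $CF_*(h_-)$ via the barricade block form and the chain-map property, and conclude from the isomorphism on homology. One small correction: the identity you quote, $\pi_{U_\circ}\circ\Phi=\pi_{U_\circ}$, is not what the corollary states (and need not hold in general, since the barricade does not forbid trajectories from orbits in $U\setminus U_\circ$ from entering $U_\circ$); what you actually need, and what the corollary does give, is $\Phi\circ\pi_{U_\circ}=\id\circ\pi_{U_\circ}$, which already yields $\Phi\vert_{C_{U_\circ}(h_-)}=\iota_{U_\circ}$ without the extra bookkeeping in your Step~2.
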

\begin{proof}
	We start with the observation that, since $h_-$ and $h_+$ are close on $U$ and agree on their periodic orbits there, the vector spaces $C_U(h_-)$ and $C_U(h_+)$ coincide. Therefore, a boundary term $a\in CF_*(h_+)$ that is a combination of orbits from $U_\circ$ is also an element of $C_{U_\circ}(h_-)$. Let us show that $a$ is a boundary term in the Floer complex of $(h_-, J)$.   
	As the homotopy $h$ is close to a constant homotopy on $U$, we may use  Corollary~\ref{cor:almost_constant_homotopy} and conclude that  $\Phi_{(h,J)}\circ\pi_{U_\circ} = \pi_{U_\circ}$. Applying this equality to $a$, we obtain 
	$$
	\Phi_{(h,J)}a = \Phi_{(h,J)}\circ\pi_{U_\circ}a = \pi_{U_\circ}a = a,
	$$
	namely, $a\in CF_*(h_+)$ is the image of itself under the continuation map. As $\Phi_{(h,J)}$ induces isomorphism on homologies, it is enough to show that $a$ is closed in $CF_*(h_-)$, and it will then follow that it is a boundary term. To see that $a$ is closed in $CF_*(h_-)$, notice that the presence of a barricade (in particular, (\ref{eq:block_form})) implies that $\partial_{(h_-, J)}a\in C_{U_0}(h_-)$, namely, $\partial_{(h_-, J)}a= \pi_{U_\circ} \partial_{(h_-, J)}a$. Therefore,
	$
		\partial_{(h_-,J)} a = \pi_{U_\circ}\partial_{(h_-,J)} a = \pi_{U_\circ}\Phi_{(h,J)}\partial_{(h_-,J)} a = \pi_{U_\circ}\partial_{(h_+,J)} \Phi_{(h,J)}a = \pi_{U_\circ}\partial_{(h_+,J)} a  =0.
	$
\end{proof}

We are now ready to prove Theorem~\ref{thm:bd_ineq}.
\begin{proof}[Proof of Theorem~\ref{thm:bd_ineq}]
	 In what follows we show that $\beta(F+G)\geq \beta(F)$. Inequality (\ref{eq:bd_ineq_geq}) follows by symmetry.
	Let $H$ be a linear homotopy from $F+G$ to $F$. Notice that, since $F$ and $F+G$ agree on $U$, $H$ is a constant homotopy there. By Theorem~\ref{thm:barricade}, there exist a perturbation $h$ of $H$ and an almost complex structure $J$, such that the pairs $(h,J)$ and $(h_\pm,J)$ are Floer-regular and have a barricade in $U\cup V$ around $U_\circ\cup V_\circ$, for $U_\circ\Subset U$, $V_\circ\Subset V$ containing the supports of $F$, $G$, respectively. Since $H$ is a constant homotopy on $U$, it follows from  Remark~\ref{rem:bump_implies_barricade}, item~\ref{itm:ends_agree_on_P}, that $h$ can be chosen such that, in $U$, $h_\pm$ agree on their 1-periodic orbits  up to second order. We stress that $h_-$ approximates $F+G$ and that $f:=h_+$ approximates $F$. Hence, fixing an arbitrarily small $\delta>0$, we may assume (by taking $h$ to be close enough to $H$) that $h_-|_{U_\circ^c\cap V_\circ^c}$ and $f|_{U_\circ^c}$ are small Morse functions with values in $(-\delta,\delta)$.
	Due to the continuity of the boundary depth, it is enough to prove that $\beta(f)$ is approximately bounded by $\beta(h_-)$.
		
	Fix a boundary term  $a\in CF_*(f)$, and let us show that there exists a primitive of $a$ whose action level is bounded by $\lambda_f(a)+\beta(h_-)+4\delta$, for $\delta$ that was fixed above. We prove this claim in two steps:

	\noindent\underline{Step 1:} Assume that $a$ is a combination of orbits that are contained in $U_\circ$, namely $a\in C_{U_\circ}(f)$. Applying Lemma~\ref{lem:bdry_term_in_U0} to $(h,J)$, we find that $a\in CF_*(h_-)$ is also a boundary term. Therefore, there exists $b\in CF_*(h_-)$ such that $\partial_{(h_-,J)}b=a$ and $\lambda_{h_-}(b)\leq \lambda_{h_-}(a)+\beta(h_-)$. Let us split into two cases:
	\begin{itemize}
		\item $\lambda_{h_-}(b)<-\delta$: Since $h_-$ is a small Morse function outside of $U_\circ \cup V_\circ$, its 1-periodic orbits there are its critical points, and their actions are the critical values, which are all contained in the interval $(-\delta, \delta)$. As a consequence, $b$ is necessarily a combination of orbits that are contained in ${U_\circ\cup V_\circ}$, namely,  $b\in C_{U_\circ\cup V_\circ}({h_-})$. Writing $b= \pi_{U_\circ} b+\pi_{V_\circ}b$, the presence of the barricade (in particular, (\ref{eq:block_form}))  guarantees that  $\partial_{(h_-,J)}\pi_{U_\circ} b\in C_{U_\circ}({h_-})$ and $\partial_{(h_-,J)} \pi_{V_\circ}b\in C_{V_\circ}({h_-})$. Recalling that $\partial_{(h_-,J)} b=a\in C_{U_\circ}({h_-})$, we conclude that $\partial_{(h_-,J)} \pi_{V_\circ}b=0$:
		\begin{eqnarray}
			\partial_{(h_-,J)} \pi_{V_\circ}b &=&\pi_{V_\circ} \left(\partial_{(h_-,J)} \pi_{V_\circ}b \right)  = \pi_{V_\circ} \left(\partial_{(h_-,J)} b-\partial_{(h_-,J)} \pi_{U_\circ}b\right) \nonumber\\
			&=&\pi_{V_\circ} \left(a-\pi_{U_\circ}\partial_{(h_-,J)} \pi_{U_\circ}b\right)=0.	 \nonumber
		\end{eqnarray} 
		Replacing $b$ by $\pi_{U_\circ}b$, we still have a primitive of $a$ of non-greater action level, as $\lambda_{h_-}(b) = \max\{\lambda_{h_-}(\pi_{U_\circ}b), \lambda_{h_-}(\pi_{V_\circ}b)\}$. Therefore, we may assume that $b\in C_{U_\circ}({h_-})$, and so it is also an element of $C_{U_\circ}(f)$. Recalling that  $h$ is a perturbation of a constant homotopy on $U$, Corollary~\ref{cor:almost_constant_homotopy} states that $\Phi_{(h,J)}\circ \pi_{U_\circ}  = \pi_{U_\circ}$, and hence $\Phi_{(h,J)}b=b$ and $\Phi_{(h,J)}a=a$. Thus,
		$$
		\partial_{(f,J)} (b)= \partial_{(f,J)}(\Phi_{(h,J)}b)= \Phi_{(h,J)}(\partial_{(h_-,J)} b )= \Phi_{(h,J)}a = a,
		$$
		i.e., $b$ is a primitive of $a$ in $CF_*(f)$, with small enough action level: $\lambda_f(b)=\lambda_{h_-}(b)\leq \lambda_f(a) +\beta({h_-})$.
		
		\item If $\lambda_{h_-}(b)\geq -\delta$. Then, writing $b=\pi_U b+\pi_{U^c}b$, the presence of a barricade (in particular, (\ref{eq:block_form})) implies that $\Phi_{(h,J)} \pi_{U^c} b\in C_{U^c}(f)$ and hence $\lambda_f (\Phi_{(h,J)} \pi_{U^c} b)\leq \delta$. Turning to bound the action of the projection onto $U$, recall that $h$ is a perturbation of a constant homotopy on $U$, and by Corollary~\ref{cor:almost_constant_homotopy}, $\pi_U \circ \Phi_{(h,J)} = \pi_U$. Overall,
		\begin{eqnarray*}
		\lambda_f (\Phi_{(h,J)} b)&\leq &\max\{\lambda_{f}(\Phi_{(h,J)} \pi_{U^c}b), \lambda_{f}(\Phi_{(h,J)} \pi_U b)\} \\
		&\leq& \max\{\delta, \lambda_{h_-}(b)\} \leq \lambda_f(a)+\beta({h_-})+2\delta.
		\end{eqnarray*}
	\end{itemize} 		
	\noindent\underline{Step 2:} Let us prove the claim for general $a$. Note that if $\lambda_f(a)<-\delta$ then $a\in C_{U_\circ}(f)$ and the claim follows from the previous step. Therefore, we assume that $\lambda_f(a)\geq-\delta$. Let $b$ be any primitive of $a$ in $CF_*(f)$, namely, $\partial_{(f,J)} b=a$, and write $b=\pi_{U_\circ}b+\pi_{U_\circ^c}b$.
	Both $\pi_{U_\circ^c}b$ and $\partial_{(f,J)}\pi_{U_\circ^c}b$ have action levels bounded by $\delta$. Set $a':=\partial_{(f,J)} b_{U_\circ}$, then 
	$$
	\lambda_f(a')= \lambda_f(a-\partial_{(f,J)}\pi_{U_\circ^c}b)\leq \max\{\lambda_f(a), \lambda_f(\partial_{(f,J)} \pi_{U_\circ^c}b)\}\leq \lambda_f(a)+2\delta.
	$$
	Moreover, the presence of the barricade implies that $a'\in C_{U_\circ}(f)$. Therefore, we may apply the previous step to $a'$ and obtain $b'\in CF_*(f)$ such that $\partial_{(f,J)}b'=a'$ and $\lambda_f(b')\leq \lambda_f(a')+\beta(h_-)+2\delta\leq \lambda_f(a)+\beta(h_-)+4\delta$. To conclude the proof, notice that $b'+b_{U_\circ^c}$ is a primitive of $a$ and $$\lambda_f(b'+b_{U_\circ^c})\leq \max\{\lambda_f(b'),\lambda_f(b_{U_\circ})\}\leq \lambda_f(a)+\beta(h_-)+4\delta.$$
\end{proof}

The following example shows that equality does not hold in (\ref{eq:bd_ineq_geq}).
\begin{exam}\label{exa:BD_inverse_ineq}
	Let $M=\T^2$ be the two dimensional torus equipped with an area form and take $F$ and $G$ be disjointly supported $\cC^2$-small non-negative bumps, see Figure~\ref{fig:bdry_depth}. Approximating $F$, $G$ and $F+G$ by small Morse functions, their Floer complexes and differentials are equal to the Morse complexes and differentials. Hence, the Floer differentials of both $F$ and $G$ vanish and in particular $\beta(F)=0=\beta(G)$. On the other hand, $\beta(F+G)=\min\{\max F,\max G\}$.
\end{exam}
\begin{figure}
	\centering
	\includegraphics[scale=0.9]{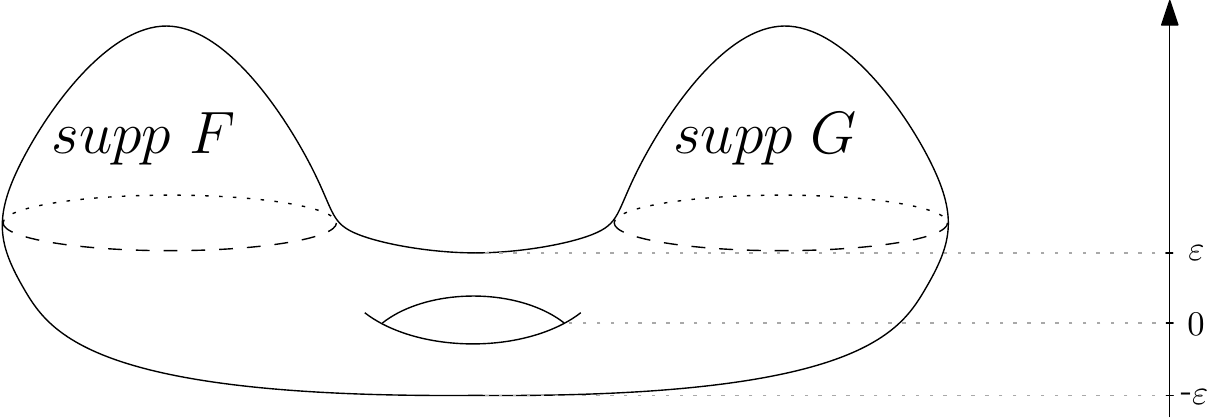}
	\caption{\small{An illustration of a non-degenerate perturbation of the sum $F+G$ from Example~\ref{exa:BD_inverse_ineq}.
	}}
	\label{fig:bdry_depth}
\end{figure}

\section{Min inequality for the AHS action selector.}\label{sec:AHS_selector}
In this section, we use barricades to prove a ``min inequality" for the action selector defined by  Abbondandolo, Haug and Schlenk, in \cite{abbondandolo2019simple}, on symplectically aspherical manifolds. We start by reviewing the construction of this action selector, which we denote by $c_{AHS}$, and state a few of its properties.

Let $H:M\times S^1\times \R\rightarrow\R$ be a homotopy of Hamiltonians and let $J:S^1\times \R\rightarrow\cJ_\omega$ be a homotopy of time-dependent almost complex structures (that are compatible with $\omega$). Assume that $\partial_sH$ and $\partial_s J$ have compact support and denote by $H_\pm$, $J_\pm$ the ends of the homotopies. 
As before, we denote by $\cM_{(H,J)}$ the set of all finite-energy solutions of the Floer equation (\ref{eq:FE}) with respect to $(H,J)$. On this space, define the functional $a_{H_-}:\cM_{(H,J)}\rightarrow\R$ by $a_{H_-}(u):= \lim_{s\rightarrow-\infty} \cA_{H_-}(u(s,\cdot))$. The existence of this limit follows from the fact that the homotopies $H$ and $J$ are constant outside of a compact set, and hence, when $s$ approaches $-\infty$, the function $s\rightarrow\cA_{H_-}(u(s,\cdot))$ is non-increasing and bounded, see for example \cite[p.8]{abbondandolo2019simple}.
Given a Hamiltonian $F:M\times S^1\rightarrow \R$, denote by $\cD(F):=\left\{(H,J)\ |\ H_- = F	\right\}$ the set of all pairs of homotopies  that are constant outside of some compact set, and such that $F$ is the left end of $H$.
		
\begin{defin}[{\cite[Definition 3.1]{abbondandolo2019simple}}]
	Let $F:M\times S^1\rightarrow\R$ be any Hamiltonian and let $(H,J)\in \cD(F)$.  Set 
	\begin{equation}
	A(H,J):= \min_{u\in \cM_{(H,J)}} a_F(u),\qquad c_{AHS}(F):=\sup_{(H,J)\in\cD(F)} A(H,J).
	\end{equation}
\end{defin}
In \cite{audin-damian}, Abbondandolo, Haug and Schlenk proved that the functional $c_{AHS}$ is continuous and monotone, and that it takes values in the action spectrum, namely $c_{AHS}(F)\in \spec(F)$. Let us state the result establishing the continuity of $c_{AHS}$:
\begin{claim}[{\cite[Proposition 3.4]{abbondandolo2019simple}}] \label{clm:AHS_continuity}
	For all $F, G\in \cC^\infty(M\times S^1)$, we have
	\begin{equation*}
	\int_{S^1}\min_{x\in M} \left(F(x,t)-G(x,t)\right) dt 
	\leq c_{AHS}(F)-c_{AHS}(G)
	\leq \int_{S^1}\max_{x\in M} \left(F(x,t)-G(x,t)\right) dt .
	\end{equation*}
\end{claim}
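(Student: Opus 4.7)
By interchanging $F$ and $G$ it suffices to prove the upper bound $c_{AHS}(F)-c_{AHS}(G)\leq M$, where $M:=\int_{S^1}\max_x(F-G)(x,t)\,dt$; the lower bound is the upper bound applied to the pair $(G,F)$. I will show that $A(H,J)\leq c_{AHS}(G)+M$ for every $(H,J)\in\cD(F)$; taking the supremum over $\cD(F)$ yields the claim. The strategy is to construct an auxiliary family $\{(H^\tau,J^\tau)\}_{\tau>0}\subset\cD(G)$ obtained by pre-concatenating $(H,J)$ with a transition from $G$ to $F$ placed at $s\approx-\tau$, and then use a limit $\tau\to\infty$ together with Floer compactness to transfer a bound on $a_G$ for $(H^\tau,J^\tau)$-trajectories into a bound on $a_F$ for $(H,J)$-trajectories, with loss $M$.

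\textbf{Construction and a priori action bound.} After a shift in $s$, assume $H_s=F$ and $J_s=J_-$ for all $s\leq 0$. Fix a smooth nondecreasing function $\rho:\R\to[0,1]$ with $\rho=0$ on $(-\infty,0]$ and $\rho=1$ on $[1,\infty)$, and set
\[
H^\tau_s:=\begin{cases}G,&s\leq-\tau-1,\\ (1-\rho(s+\tau+1))G+\rho(s+\tau+1)F,&-\tau-1\leq s\leq-\tau,\\ H_s,&s\geq-\tau,\end{cases}\qquad J^\tau:=J,
\]
so that $(H^\tau,J^\tau)\in\cD(G)$ with $\partial_s H^\tau$ compactly supported. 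For any $u\in\cM_{(H^\tau,J^\tau)}$, the identity $\tfrac{d}{ds}\cA_{H^\tau_s}(u(s,\cdot))=\int_{S^1}\partial_s H^\tau\,dt-\|\partial_s u\|^2_{J^\tau}$ applied on $(-\infty,-\tau-1]$ (where $\partial_s H^\tau=0$) and on $[-\tau-1,-\tau]$ (where $\partial_s H^\tau=\rho'(F-G)\leq \rho'\max_x(F-G)$) yields
\[
\cA_F(u(-\tau,\cdot))\leq \cA_G(u(-\tau-1,\cdot))+M\leq a_G(u)+M.
\]
On the autonomous $F$-segment $s\in[-\tau,0]$, $\cA_F(u(s,\cdot))$ is non-increasing, so the same upper bound persists there.

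\textbf{Taking the limit $\tau\to\infty$.} Choose $\tau_n\to\infty$ with $A(H^{\tau_n},J^{\tau_n})\to L:=\liminf_{\tau\to\infty}A(H^\tau,J^\tau)\leq c_{AHS}(G)$, and pick $u_n\in\cM_{(H^{\tau_n},J^{\tau_n})}$ with $a_G(u_n)\to L$. If $L=+\infty$ the inequality is trivial, so assume $L<\infty$. The energy identity, together with the uniform transition bound $\int \rho'(F-G)\,du_n\leq M$, the uniform control on $\int\partial_s H\,du_n$ (its support is fixed and compact), and the lower bound on $\cA_{H_+}$ over its finite orbit set, shows that $E(u_n)$ is uniformly bounded. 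Since for any compact interval $I\subset\R$ one has $(H^{\tau_n}_s,J^{\tau_n}_s)=(H_s,J_s)$ on $I$ for $n$ large, Floer compactness provides a $C^\infty_{\rm loc}$-subsequential limit $u_\infty$, possibly a broken trajectory, satisfying the Floer equation for $(H,J)$. The pointwise bound $\cA_F(u_n(s,\cdot))\leq a_G(u_n)+M$ passes to $u_\infty$ for each fixed $s$ in the autonomous $F$-region, so the past orbit $x_-\in\cP(F)$ of $u_\infty$ satisfies $\cA_F(x_-)\leq L+M$. Applying Floer gluing along the breaking produces a smooth $u\in\cM_{(H,J)}$ with past limit $x_-$, whence $A(H,J)\leq a_F(u)=\cA_F(x_-)\leq L+M\leq c_{AHS}(G)+M$.

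\textbf{Main obstacle.} The delicate step is the compactness-plus-gluing argument: the autonomous $F$-segment of $u_n$ has length $\tau_n\to\infty$ and may degenerate into a chain of several Floer trajectories of $(F,J_-)$ linking intermediate orbits of $F$, so that the limit $u_\infty$ is genuinely broken. One must invoke the standard Floer gluing construction along this chain to replace the broken configuration by an honest smooth element of $\cM_{(H,J)}$ realizing the limiting past action. Rigorously handling this breaking and the growing autonomous domain is what separates the heuristic above from a complete proof.
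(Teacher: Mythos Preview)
The paper does not prove this claim; it simply cites it as \cite[Proposition~3.4]{abbondandolo2019simple}. So there is no in-paper proof to compare against, and your proposal must be assessed on its own merits.

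Your overall strategy is correct and is essentially the AHS argument: given $(H,J)\in\cD(F)$, pre-concatenate a transition $G\to F$ placed at $s\approx-\tau$, take minimizers $u_n$ of $a_G$ for $\tau_n\to\infty$, and pass to a limit. The a priori action estimate $\cA_F(u_n(s,\cdot))\le a_G(u_n)+M$ on the autonomous $F$-segment is exactly right.

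However, what you flag as the ``main obstacle'' (breaking plus Floer gluing) is both unnecessary and illegitimate in this setting. Gluing requires transversality, and here no non-degeneracy or regularity is assumed on $F$ or on $(H,J)$, so you cannot invoke it. Fortunately you do not need it: the \emph{unshifted} $C^\infty_{\rm loc}$ limit $u_\infty$ of the $u_n$ is already an honest element of $\cM_{(H,J)}$. Indeed, on any compact subset of $\R\times S^1$ one has $H^{\tau_n}=H$ for $n$ large, so $u_\infty$ solves the $(H,J)$-Floer equation everywhere; its energy is finite by Fatou and your uniform energy bound. Your pointwise inequality passes to the limit for each fixed $s\le 0$, giving $\cA_F(u_\infty(s,\cdot))\le L+M$. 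Since $s\mapsto\cA_F(u_\infty(s,\cdot))$ is non-increasing on $(-\infty,0]$, we obtain
\[
a_F(u_\infty)=\lim_{s\to-\infty}\cA_F(u_\infty(s,\cdot))\le L+M\le c_{AHS}(G)+M,
\]
hence $A(H,J)\le c_{AHS}(G)+M$ directly. No broken trajectory, no gluing.

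Two further corrections. First, you should not speak of ``the past orbit $x_-\in\cP(F)$ of $u_\infty$'' or of the ``finite orbit set'' of $H_+$: both $F$ and $H_+$ may be degenerate. What you actually use is that the action limits $a_F(u_\infty)$ and $a_{H_+}(u_n)$ exist and lie in the compact sets $\spec(F)$, $\spec(H_+)$ (cf.\ Lemma~\ref{lem:app_ends_action_in_spctrum}); this suffices for both the conclusion and the energy bound. Second, the case $L=+\infty$ never arises, since $A(H^\tau,J)\le c_{AHS}(G)<\infty$ for every $\tau$.
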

In addition, they proved that the action selector takes non-positive values on Hamiltonians supported in incompressible Liouville domains.
\begin{claim}[{\cite[Proposition 5.4]{abbondandolo2019simple}}] \label{clm:c_AHS_leq0}
	If $F$ has support in an incompressible Liouville domain, then $c_{AHS}(F) \leq 0$. In particular, $c_{AHS}(F) = 0$ for all non-negative Hamiltonians which are supported in an	incompressible Liouville domain.
\end{claim}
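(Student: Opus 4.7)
The plan is to deduce the claim from Lemma~\ref{lem:spec_inv_non_neg} via the general inequality $c_{AHS}(F) \le c(F;[pt])$, valid for any Hamiltonian $F$ on a closed symplectically aspherical manifold. Once this is established, the first assertion follows immediately: every incompressible Liouville domain is a CIB domain (Remark~\ref{rem:CIB_domains}), so Lemma~\ref{lem:spec_inv_non_neg} gives $c(F;[pt]) \le 0$. For the ``in particular'' clause, Claim~\ref{clm:AHS_continuity} applied to the pair $(F,0)$ yields $c_{AHS}(F) \ge c_{AHS}(0) + \int_0^1 \min_M F\,dt \ge 0$ when $F \ge 0$; here $c_{AHS}(0) = 0$ is direct from the definition, since for any $(H,J)\in\cD(0)$ every finite-energy $u \in \cM_{(H,J)}$ has $a_0(u) = \cA_0(x_-) = 0$.

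To prove $c_{AHS}(F) \le c(F;[pt])$, fix $(H,J) \in \cD(F)$ and $\epsilon > 0$; it suffices to exhibit $u \in \cM_{(H,J)}$ with $a_F(u) \le c(F;[pt]) + \epsilon$. Approximate $(H,J)$ by a Floer-regular pair $(H',J')$ whose left end $H'_-$ is a non-degenerate $C^\infty$-small perturbation of $F$, close enough that $|c(H'_-;[pt]) - c(F;[pt])| < \epsilon/3$ by continuity of spectral invariants. Let $\Phi \fcolon CF_*(H'_-) \to CF_*(H'_+)$ denote the associated continuation map, which is a chain map inducing an isomorphism on homology. Pick a cycle $a \in CF_*(H'_-)$ representing the point class with $\lambda_{H'_-}(a) \le c(H'_-;[pt]) + \epsilon/3 \le c(F;[pt]) + 2\epsilon/3$. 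Since $[pt]$ is nonzero in $H_0(M;\Z_2)$ (arguing componentwise if $M$ is disconnected), $\Phi(a)$ represents a nonzero homology class and is therefore nonzero as a chain. Hence some orbit $x_+$ is hit with odd total count by $\Phi$ restricted to $a$, and by formula (\ref{eq:continuation_def}) there exist $x_-$ appearing in $a$ and a Floer solution $u' \in \cM_{(H',J')}(x_-, x_+)$. For this $u'$, $a_{H'_-}(u') = \cA_{H'_-}(x_-) \le \lambda_{H'_-}(a) \le c(F;[pt]) + 2\epsilon/3$.

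It remains to pass from the approximating regular pair back to the original $(H,J)$. Take a sequence $(H'_n, J'_n) \to (H,J)$ and corresponding $u'_n$ as above. Since $M$ is symplectically aspherical, sphere bubbling is ruled out, so standard Gromov--Floer compactness (with energies uniformly bounded via the action identity (\ref{eq:energy_id_homotopies})) extracts a subsequential limit in the form of a broken trajectory for $(H,J)$: a chain of $F$-Floer cylinders on the left, a single $s$-dependent cylinder $u \in \cM_{(H,J)}$ in the middle, and a chain of $H_+$-Floer cylinders on the right. Since action decreases along the leftmost chain of $F$-Floer cylinders, the left limit of $u$ satisfies $\cA_F \le \lim_n \cA_{H'_{n,-}}(x_{-,n}) \le c(F;[pt]) + 2\epsilon/3 < c(F;[pt]) + \epsilon$, so $u$ is the desired solution and the inequality $c_{AHS}(F) \le c(F;[pt])$ follows upon taking the supremum over $(H,J)$ and sending $\epsilon \to 0$. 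The main obstacle lies in this final compactness step: while broken trajectories of the standard shape above exist by general theory, one must carefully track the left-limit action through the broken limit using the monotonicity of $\cA_F$ along $F$-cylinders, which is precisely what keeps $a_F(u)$ controlled.
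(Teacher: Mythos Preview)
The paper does not prove this claim at all: it is simply quoted as \cite[Proposition~5.4]{abbondandolo2019simple}. So there is no proof in the paper to compare against; your proposal is an independent argument.

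Your strategy is sound and gives a self-contained proof within the paper's framework. The reduction to the general inequality $c_{AHS}(F)\le c(F;[pt])$ combined with Lemma~\ref{lem:spec_inv_non_neg} is clean, and the ``in particular'' clause is correct (a slicker justification for $c_{AHS}(0)=0$ is spectrality: $\spec(0)=\{0\}$). Your argument for $c_{AHS}(F)\le c(F;[pt])$ is essentially right: given $(H,J)\in\cD(F)$, approximating by a Floer-regular $(H',J')$ and using that $\Phi$ sends a representative of $[pt]$ to a nonzero chain produces a solution $u'$ with left action at most $c(F;[pt])+2\epsilon/3$.

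The final compactness step you flag as the ``main obstacle'' is in fact simpler than you make it. You do not need to invoke broken trajectories at all. Arrange the approximating homotopies $H'_n$ to be constant for $|s|>R$ with $R$ independent of $n$. By the uniform energy bound and asphericity, a subsequence of $u'_n$ converges in $\cC^\infty_{loc}$ to some finite-energy $u\in\cM_{(H,J)}$. For any fixed $s<-R$ the function $\sigma\mapsto\cA_{H'_{n,-}}(u'_n(\sigma,\cdot))$ is non-increasing on $(-\infty,-R)$, so $\cA_{H'_{n,-}}(u'_n(s,\cdot))\le a_{H'_{n,-}}(u'_n)\le c(F;[pt])+2\epsilon/3$. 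Passing to the limit in $n$ (using $\cC^\infty_{loc}$ convergence and $H'_{n,-}\to F$) gives $\cA_F(u(s,\cdot))\le c(F;[pt])+2\epsilon/3$ for all $s<-R$, whence $a_F(u)\le c(F;[pt])+2\epsilon/3$. No breaking analysis is needed, and degeneracy of $F$ or $H_+$ is irrelevant. With this simplification your proof is complete.
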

Using these claims, the barricades construction and ideas from the proof of Proposition~3.3 from \cite{abbondandolo2019simple}, one can prove that a min inequality holds for $c_{AHS}$.

\begin{proof}[Proof of Theorem~\ref{thm:AHS_min_ineq}]
	Let $F$ and $G$ be Hamiltonians supported in disjoint incompressible Liouville domains, which we denote by $U$ and $V$ respectively. Fixing an arbitrarily small $\delta>0$, we will prove that $c_{AHS}(F+G)\leq c_{AHS}(F)+3\delta$. The claim for $G$ will follow by symmetry. 
	We remark that by Claim~\ref{clm:c_AHS_leq0}, $c_{AHS}(F+G)\leq 0$, and hence the result is immediate if $c_{AHS}(F)\geq -3\delta$. Therefore, we assume that $c_{AHS}(F)<-3\delta$.
	We break the proof into several steps.\\
		
	\noindent\underline{Step 1:} Our first step is to perturb $F$ and $F+G$ (as well as a homotopy between them) to create barricades. Let $H$ be a linear homotopy from $F$ to $F+G$ that is constant outside of $[0,1]$, that is, $\partial_s H|_{s\notin [0,1]}=0$. Then, $H$ is supported in the domain $U\cup V$, which, as an incompressible Liouville domain, is also a CIB domain. Applying Theorem~\ref{thm:barricade} to the homotopy $H$ and the domain $U\cup V$, we conclude that there exists a perturbation $h$ of $H$, an almost complex structure $J^\flat$ and subsets $U_\circ\Subset U$, $V_\circ\Subset V$, containing the supports of $F$, $G$ respectively, such that the pairs $(h,J^\flat)$ and $(h_\pm, J^\flat)$ are Floer-regular and have a barricade in $U\cup V$ around $U_\circ \cup V_\circ$. In particular, the ends of $h$ are non-degenerate,  $f:=h_-$ approximates $F$ and $h_+$ approximates $F+G$. By taking $h$ to be close enough to $H$, we can assume that, outside of $U_\circ$, $f$ is a small Morse function with values in $(-\delta, \delta)$. Moreover, by {Remark~\ref{rem:bump_implies_barricade}}, item~\ref{itm:hom_const_outside_of_01}, we can choose the perturbation $h$ such that
	the homotopy $h$ is constant outside of $[0,1]$, namely, $\partial_sh|_{s\notin[0,1]} =0$. Finally, taking these perturbations to be small enough, it follows from Claim~\ref{clm:AHS_continuity} that $c_{AHS}(f)<-2\delta$, and it is sufficient to prove that $c_{AHS}({h_+})\leq c_{AHS}(f) +\delta$.\\
	
	\noindent\underline{Step 2:} Recalling the definition of the action selector $c_{AHS}$, we need to show that for every $(K,J)\in\cD({h_+})$, it holds that $A(K,J)\leq c_{AHS}(f)+\delta$. Therefore, our second step is to construct pairs in $\cD(f)$ out of a given pair in $\cD({h_+})$. Fix $(K,J)\in\cD({h_+})$ and assume, without loss of generality, that $K$ and $J$ stabilize for $s\leq 0$, namely, $K(x,t,s)={h_+}(x,t)$ and $J(s)=J_-$  for $s\leq 0$. We construct a sequence of pairs in $\cD(f)$ by concatenating the homotopies $(K,J)$ with shifts the homotopy $h$ and a homotopy  $\tilde J = \{\tilde J^s\}_{s\in \R}$ of almost complex structures from $J^\flat$ to $J_-$, that is constant outside of $[0,1]$, namely, $\partial_s \tilde J|_{s\notin [0,1]}=0$.  More precisely, for $s\in\R$ denote by $\tau_s$ the shift by $s$, namely, $\tau_{s}h(\cdot,\cdot,\cdot)= h(\cdot,\cdot,\cdot+s)$ and $\tau_s\tilde J(\cdot,\cdot) = \tilde J(\cdot,\cdot+s)$, and consider the sequences
	\begin{figure}
		\centering
		\includegraphics[scale=0.7]{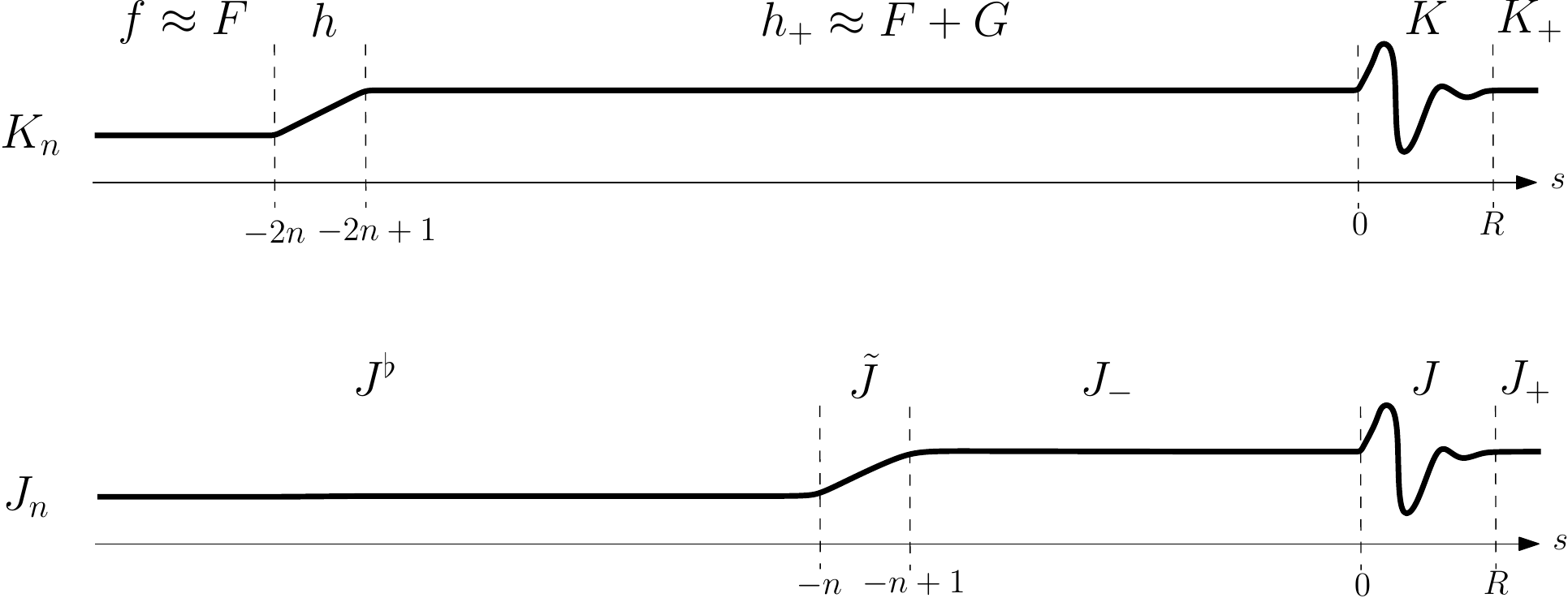}
		\caption{\small{An illustration of the pair of homotopies $(K_n,J_n)\in \cD(f)$ constructed out of a given pair $(K,J)\in \cD({h_+})$. 
			}}
			\label{fig:AHS_Kn_and_Jn}
	\end{figure}
	\begin{equation}
	K_n:=\begin{cases}
	K, & s\geq 0,\\
	{h_+}, & s\in [-2n+1, 0]\\
	\tau_{-2n} h, & s\in [-2n,-2n+1]\\
	f, & s\leq -2n.
	\end{cases} \quad\text{and}\quad	
	J_n:=\begin{cases}
	J, & s\geq 0,\\
	J_-, & s\in [-n+1,0]\\
	\tau_{-n} \tilde J, & s\in [-n,-n+1]\\
	J^\flat, & s\leq -n. 
	\end{cases}
	\end{equation}
	See Figure~\ref{fig:AHS_Kn_and_Jn} for an illustration.
	Noticing that $(K_n, J_n)\in \cD(f)$ for all $n$, we wish to show that there exists $n\in \N$ for which $A(K,J)\leq A(K_n, J_n)+\delta$.\\
	
	\noindent\underline{Step 3:} In this step we choose, for each $n$, a solution minimizing $a_f$ and extract a subsequence that {\it partially converges to a broken trajectory}. Namely, there exists a broken trajectory $\bar v= (v_1,\dots,v_N)$, whose pieces $v_i$ are solutions of (\ref{eq:FE}) with respect to the homotopies concatenated in $(K_n,J_n)$, and are obtained as limits of non-positive shifts of elements from $\{u_n\}$. In particular, for each $i<N$, the solution $v_i$ converges to periodic orbits at the ends, that match the limits of the adjacent pieces, i.e., $\lim_{s\rightarrow+\infty}v_i(s,t)=\lim_{s\rightarrow -\infty}v_{i+1}(s,t)$. Moreover, the left end of the first piece, $\lim_{s\rightarrow -\infty} v_1(s,t)$, coincides with the left end of each element from the subsequence. We stress that unlike the standard convergence to a broken trajectory, in our case, the right end of the last piece in $\bar v$ (as well as the right ends of the solutions $u_n$) does not necessarily converge. 
	The notion of partial convergence to a broken trajectory is defined formally in Proposition~\ref{pro:partial_broken_trajectory} below. 
	
	Let $u_n\in \cM_{(K_n,J_n)}$ be a minimizer of the functional $a_f$, namely 
	\begin{equation*}
	a_f(u_n) = \min_{u\in \cM_{(K_n,J_n)}} a_f(u) = A(K_n, J_n).
	\end{equation*}
	Since the supports of the homotopies $(H_n, J_n)$ are not uniformly bounded and the ends are not all non-degenerate, the sequence of solutions $\{u_n\}_n$ does not necessarily converge to a broken trajectory. However, noticing that for $s\leq 0$, $(H_n, J_n)$ are concatenations of homotopies with non-degenerate ends, one can prove a (weaker) convergence statement, as we do in Section~\ref{app:partial_broken_traj}. In this case,  Proposition~\ref{pro:partial_broken_trajectory} guarantees that there exists a subsequence of $\{u_n\}$, which  we still denote by $\{u_n\}$, partially converging to a broken trajectory 
	\begin{equation*}
	\overline{v} = \left(\{v^{(f,J^\flat),\ell}\}_{\ell=1}^{L_1},\ w^{(h,J^\flat)},\  \{v^{({h_+},J^\flat),\ell}\}_{\ell=1}^{L_2},\  w^{({h_+},\tilde J)},\  \{v^{({h_+},J_-),\ell}\}_{\ell=1}^{L_3},\  w^{(K,J)}\right),
	\end{equation*}
	where $v^{(\cdot,\cdot),\ell}\in \cM_{(\cdot, \cdot)}$ denote solutions of $s$-independent Floer equations, and $w^{(\cdot,\cdot)}\in \cM_{(\cdot, \cdot)}$ denote solutions of $s$-dependent Floer equations. Moreover, the subsequence $\{u_n\}$ is chosen such that for each $n$,   $\lim_{s\rightarrow-\infty} u_n(s,\cdot)=x^{1,0}(\cdot)$, where  $x^{1,0}:=\lim_{s\rightarrow-\infty} v^{(f,J^\flat),1}(s,\cdot) \in \cP(f)$. \\
	
	\noindent\underline{Step 4:} We now use the barricades in order to show that the first few pieces of the broken trajectory $\overline{v}$ are contained in $U_\circ$. It follows from the arguments made above that
	$$
	\cA_f(x^{1,0}) = a_f(u_n)=A(K_n,J_n)\leq c_{AHS}(f) <-2\delta,
	$$
	which implies, by our assumptions on $f$, that $x^{1,0}\subset U_\circ$. We claim that, since $(f,J^\flat)$ and $(h,J^\flat)$ have barricades in $U$ around $U_\circ$, the trajectories $\{v^{(f,J^\flat),\ell}\}_{\ell=1}^{L_1}$ and $w^{(h,J^\flat)}$ are contained in $U_\circ$.
	Indeed,  $\lim_{s\rightarrow-\infty} v^{(f,J^\flat),1}(s,\cdot) = x^{1,0}\subset U_\circ$ implies that $v^{(f,J^\flat),1}\subset U_\circ$ and, in particular, the image of $x^{1,1}(\cdot):=\lim_{s\rightarrow\infty} v^{(f,J^\flat),1}(s,\cdot)$ is contained in $U_0$. Since $x^{1,1}$ is the left end of $v^{(f,J^\flat),2}$, we can repeat this argument and conclude that $v^{(f,J^\flat),2}$ is contained in $U_\circ$. Continuing by induction, we find that	$\{v^{(f,J^\flat),\ell}\}_\ell$ are all contained in $U_\circ$ and, in particular, 
	$$
	x^{1,L_1}:= \lim_{s\rightarrow\infty} v^{(f,J^\flat),L_1}(s,\cdot) = \lim_{s\rightarrow- \infty} w^{(h,J^\flat)}(s,\cdot) \subset U_\circ.
	$$
	Now, since $(h,J^\flat)$ has a barricade in $U$ around $U_\circ$, we conclude that $ w^{(h,J^\flat)}\subset U_\circ$ as well. \\
	
	\noindent\underline{Step 5:} Let us now show that $a_{h_+}(w^{(K,J)})\leq \cA_f(x^{1,0})+\delta= a_f(u_n) +\delta$.
	For that end, we bound the action growth along the broken trajectory $\overline{v}$:
	\begin{enumerate}
		\item Along $v^{(\cdot,\cdot),\ell}$: these are solutions of the $s$-independent Floer equations and, by the energy identity (\ref{eq:energy_id_Hamiltonian}), the action is clearly non-increasing.
		\item Along $w^{(h,J^\flat)}$: this trajectory is contained in $U_\circ$,  where $h$ approximates a constant homotopy, as $F|_U=F+G|_U$. Taking $h$ to be close enough to $H$, we may assume that the derivative $\partial_s h|_{U_\circ}$ is bounded by $\delta$.  Denoting by $x^{1,L_1}\in \cP(f)$ and $x^{2,0}\in \cP({h_+})$ the orbits to which $w^{(h,J^\flat)}$ converges at the ends, it follows from the energy-identity (\ref{eq:energy_id_homotopies}) that
		\begin{eqnarray}\label{eq:action_growth_AHS}
		\nonumber 
		\cA_{h_+}(x^{2,0}) - \cA_f(x^{1,L_1}) &\leq& \Big|\int_{\R\times S^1}(\partial_s h)\circ w^{(h,J^\flat)} \ ds\ dt\Big|\\
		&\leq &\int_{[0,1]\times S^1} \max_{U_\circ} |\partial_s h|\ ds\ dt \leq\delta.
		\end{eqnarray}
		\item Along $w^{({h_+},\tilde J)}$: it follows from the energy-identity (\ref{eq:energy_id_homotopies}) that the action is non-increasing, since $\partial_s {h_+}=0$.
	\end{enumerate} 	
	Overall, we conclude that 
	\begin{eqnarray}
	\nonumber a_{h_+}(w^{(K,J)}) &=& \cA_{h_+}(x^{3,L_3}) \leq \cdots \leq \cA_{h_+}(x^{2,0})  \\
	\nonumber &\overset{(\ref{eq:action_growth_AHS})}{\leq}& \cA_f(x^{1,L_1}) +\delta \leq \cdots \leq \cA_f(x^{1,0}) +\delta = a_f(u_n) +\delta.
\end{eqnarray}
	Since $u_n$ were chosen to be minimizers, $a_f(u_n) = A(K_n,J_n)\leq c_{AHS}(f)$. On the other hand, the fact that $w^{(K,J)}\in \cM_{(K,J)}$ implies that $a_{h_+}(w^{(K,J)})\geq \min_{\cM{(K,J)}}(a_{h_+}) = A(K,J)$. We therefore have proved that for any $(K,J)\in\cD({h_+})$, $A(K,J) \leq c_{AHS}(f) +\delta$, which yields that $c_{AHS}({h_+}) \leq c_{AHS}(f) +\delta$, as required.  
\end{proof}

\section{The required transversality and compactness results.}\label{sec:transversality_compactness}
\subsection{Perturbing homotopies and Hamiltonians to achieve regularity.}\label{app:regular_pair}
Let $(M, \omega)$ be a closed symplectically aspherical manifold. Given a non-degenerate Hamiltonian $H$ and an almost complex structure $J$, we say that a pair $(H,J)$ is {\it Floer-regular} if for every pair of 1-periodic orbits $x_\pm$ of $H_\pm$ and for every $u\in \cM_{(H,J)}(x_-,x_+)$, the differential $(d\cF)_u:W^{1,p}(u^*TM)\rightarrow L^p(u^*TM)$ of the Floer map (see Notations~\ref{not:Transversality_stuff} below), is surjective. In this case,  the space of solutions $\cM_{(H,J)}(x_-,x_+)$ is a smooth manifold of dimension $\mu(x_-)-\mu(x_+)$.
It is well known that for any non-degenerate Hamiltonian $H$ and an almost complex structure  $J$, one can perturb $H$, without changing its periodic orbits, in order to make the pair $(H, J)$ Floer-regular. 
Let us cite a formal statement of this fact. 
\begin{claim}[{\cite[Theorem 5.1]{floer1995transversality}}] \label{clm:app_reg_Hamiltonians}
	Let $H$ be a non-degenerate Hamiltonian and let $J$ be an almost complex structure on $M$, and let $\cC_\varepsilon^\infty(H)$ be the space of perturbation which vanish on $\cP(H)$ up to second order\footnote{This space is endowed with Floer's $\varepsilon$-norm, which is defined below.}. Then, there exist a neighborhood of zero in $\cC_\varepsilon^\infty(H)$, and a  residual set $\hreg$ in this neighborhood, such that for every $h\in \hreg$, the pair $(H+h, J)$ is Floer-regular.
\end{claim}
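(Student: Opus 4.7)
First I would follow the standard Floer--Hofer--Salamon transversality scheme from \cite{floer1995transversality}, adapted to the constraint that perturbations vanish on $\cP(H)$ to second order (which preserves both the 1-periodic orbits of $H$ and their Conley--Zehnder indices). The plan is: equip $\cC^\infty_\varepsilon(H)$ with Floer's $\varepsilon$-norm, making it a separable Banach space suitable for Sard--Smale; for each fixed pair $x_\pm\in \cP(H)$, introduce the $W^{1,p}$-Banach manifold $\cB(x_-, x_+)$ of maps $u:\R\times S^1\to M$ converging to $x_\pm$ at the ends; and consider the universal moduli space
\begin{equation*}
\cM^{univ}(x_-, x_+) := \{(u, h) \in \cB(x_-, x_+)\times \cC^\infty_\varepsilon(H) \,:\, \bar\partial_{H+h, J}(u) = 0\}.
\end{equation*}
If the extended linearization of the Floer operator is surjective at every zero, the implicit function theorem makes $\cM^{univ}(x_-, x_+)$ a smooth Banach submanifold; applying Sard--Smale to the projection $(u, h)\mapsto h$ and intersecting over the (countably many) ordered pairs of orbits yields a residual set $\hreg$ in a neighborhood of $0$.

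The main step to verify is surjectivity of the universal linearization. Writing its action at $(u,h)$ as $(\xi,\eta) \mapsto D_u \xi + J(u)\, X_\eta(u)$ for $\xi \in W^{1,p}(u^*TM)$ and $\eta \in \cC^\infty_\varepsilon(H)$, one argues by duality: any cokernel element $\zeta \in L^q(u^*TM)$ solves a linear Cauchy--Riemann-type equation (hence is smooth by elliptic regularity) and satisfies $\int_{\R\times S^1} \langle \zeta, J X_\eta\circ u\rangle\, ds\, dt = 0$ for every admissible $\eta$. The key geometric input is that every non-stationary Floer trajectory $u$ has an injective point $(s_0, t_0) \in \R\times S^1$ whose image $u(s_0, t_0) \in M$ lies outside the finite subset $\bigcup_{y \in \cP(H)} y(S^1) \subset M$. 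This is where the closed aspherical setting helps: sphere bubbling is excluded, so the somewhere-injectivity arguments of Floer--Hofer--Salamon apply to cylinders. Since perturbations $\eta$ are only constrained to vanish to second order near $\cP(H)$, and $u(s_0, t_0)$ avoids this locus, one may choose $\eta$ supported in a small space-time neighborhood of $(s_0, t_0)$ with $X_\eta(u(s_0, t_0))$ prescribed in the direction of $\zeta(s_0, t_0)$, forcing $\zeta(s_0, t_0) = 0$. Unique continuation for the Cauchy--Riemann-type equation satisfied by $\zeta$ then gives $\zeta \equiv 0$.

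I expect the main obstacle to be precisely this somewhere-injectivity step in the presence of the second-order vanishing constraint: one must guarantee that the injective points of $u$ can be chosen in the complement of $\bigcup_{y\in \cP(H)} y(S^1)$. Because $\cP(H)$ consists of finitely many embedded loops in $M$ (thanks to non-degeneracy of $H$) and $u$ is a non-stationary smooth cylinder, the preimage $u^{-1}\bigl(\bigcup_y y(S^1)\bigr)$ is nowhere dense in $\R\times S^1$, so the candidate set of injective points remains open and dense, and the perturbation argument above goes through. After this, the remainder of the transversality argument is verbatim as in \cite{floer1995transversality}: Sard--Smale applied to the (Fredholm, smooth) projection $\pi_2:\cM^{univ}(x_-, x_+)\to \cC^\infty_\varepsilon(H)$ produces a residual set of regular perturbations for each ordered pair $(x_-,x_+)$, and intersecting these finitely many residual sets yields $\hreg$ in a neighborhood of zero.
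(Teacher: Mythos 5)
First, a point of comparison: the paper does not prove this claim at all --- it is quoted as Theorem~5.1 of Floer--Hofer--Salamon --- so there is no internal proof to match; your outline is essentially a reconstruction of the FHS argument (universal moduli space, duality/cokernel argument, somewhere injectivity, Sard--Smale), which is indeed the right route and the one the cited source takes.

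There is, however, a genuine gap at exactly the step you flag as the crux. What the perturbation argument needs is a point $(s_0,t_0)$ with $\partial_s u(s_0,t_0)\neq 0$, injective in the appropriate sense, and with $u(s_0,t_0)\neq y(t_0)$ for every $y\in\cP(H)$ (avoiding the finitely many points $y(t_0)$ at the \emph{same} time $t_0$ suffices; you ask for the stronger avoidance of the whole images $y(S^1)$). Your justification --- that $\cP(H)$ is a finite union of loops and $u$ is a smooth non-stationary cylinder, hence $u^{-1}\bigl(\bigcup_y y(S^1)\bigr)$ is nowhere dense --- is not a valid deduction: a smooth map can easily send an open set into a one-dimensional set, and nothing in that sentence uses the Floer equation. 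The correct argument compares $u$ with the $s$-independent cylinder $v_y(s,t):=y(t)$, which solves the same Floer equation: by unique continuation (the similarity principle applied to the difference $u-v_y$ in local coordinates), either $u\equiv v_y$ or the coincidence set $\{(s,t)\,:\,u(s,t)=y(t)\}$ is discrete; summing over the finitely many $y\in\cP(H)$ and intersecting with the open dense set of regular points of FHS gives the required point. Two further details are glossed over: the neighborhood of zero in $\cC_\varepsilon^\infty(H)$ is there precisely to guarantee that $H+h$ acquires no new $1$-periodic orbits (vanishing to second order on $\cP(H)$ alone does not prevent orbits appearing elsewhere), and your indexing of the universal moduli spaces by pairs $x_\pm\in\cP(H)$ implicitly uses this; also, periodic orbits need not be embedded, and asphericity/bubble exclusion plays no role in this transversality step (it is a compactness issue), so that remark is a red herring.
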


When $H$ is a homotopy whose ends, $H_\pm$, are Floer-regular with respect to $J$, one can perturb $H$ on a compact set to guarantee that the pair $(H,J)$ is Floer-regular. For the purposes of this paper, we need to control the size of the support of the perturbation. In this section we prove that one can take the support of the perturbation to be any closed interval with non-empty interior. 
Before making a formal claim, let us fix some notations.
Throughout this section, we consider homotopies of Hamiltonians, $H:M\times S^1\times \R\rightarrow\R$, that are constant with respect to the $\R$-coordinate, $s$, outside of a compact set, namely $supp(\partial_s H)\subset M\times S^1\times [-R,R]$ for some $R>0$. We assume that the ends $H_\pm(\cdot, \cdot):=\lim_{s\rightarrow\pm\infty}H(\cdot, \cdot, s)$ are Floer-regular with respect to a fixed almost complex structure $J$.
For a closed finite interval $I\subset \R$ with non-empty interior, we consider the space $\cC_\varepsilon^\infty(I)$ of perturbations with support in $M\times S^1\times I$, whose definition is given in Section~\ref{subsec:C_varepsilon_space} below.
Our main goal for this section is to prove the following proposition.
\begin{prop}\label{pro:pert_hom_bdd_supp}
	Let $H$ be a homotopy such that $(H_\pm, J)$ are Floer-regular, where $J$ is an almost complex structure on $M$, and let $I\subset\R$ be a closed, finite interval with a non-empty interior. Then, there exists a residual subset $\hreg\subset\cC_\varepsilon^\infty(I)$, such that for every $h\in \hreg$, the pair $(H+h, J)$ is Floer-regular.
\end{prop}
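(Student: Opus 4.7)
The plan is to adapt the standard Floer–Hofer–Salamon universal moduli space / Sard–Smale strategy to the restricted class of perturbations $h\in\cC_\varepsilon^\infty(I)$. For each pair of asymptotic orbits $x_\pm \in \cP(H_\pm)$, I would define the universal moduli space
\[
\cM_{\text{univ}}(x_-, x_+) := \left\{ (u, h) \ \Big|\ h \in \cC_\varepsilon^\infty(I),\ u \in \cM_{(H+h, J)}(x_-, x_+)\right\},
\]
where the asymptotes stay in $\cP(H_\pm)$ because $h$ vanishes for $|s|$ large. I would then show $\cM_{\text{univ}}(x_-, x_+)$ is a Banach manifold, apply the Sard–Smale theorem to the projection $\pi\colon \cM_{\text{univ}}(x_-,x_+)\to \cC_\varepsilon^\infty(I)$, and intersect the countably many residual sets (one per asymptote pair) to produce $\hreg$.

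The main step is to prove that the universal linearization
\[
D_{(u,h)}^{\text{univ}}(\xi,\eta) := D_u\xi \;-\; J(u)\, X_\eta(u,t,s)
\]
is surjective at each $(u,h)\in\cM_{\text{univ}}(x_-,x_+)$, where $D_u$ is the usual linearization of the Floer operator. Since $D_u$ is Fredholm, this reduces via the Fredholm alternative to showing that any $\zeta\in L^q(u^*TM)$ annihilating the image of $D_{(u,h)}^{\text{univ}}$ must vanish. Such $\zeta$ satisfies (i) the formal adjoint of the linearized Floer equation, which is linear elliptic; and (ii) the orthogonality
\[
\int_{\R\times S^1}\bigl\langle \zeta(s,t),\, J(u)X_\eta(u(s,t),t,s)\bigr\rangle_J\, ds\,dt = 0 \quad \text{for every } \eta\in\cC_\varepsilon^\infty(I).
\]

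By (i), elliptic regularity gives $\zeta$ smooth, and Aronszajn's unique continuation reduces the problem to forcing $\zeta\equiv 0$ on some nonempty open subset; I will use the interior of $I\times S^1$, which is nonempty by the hypothesis on $I$. If $\zeta(s_0,t_0)\neq 0$ at some $(s_0,t_0)$ in this interior, I would choose a bump $\eta\in\cC_\varepsilon^\infty(I)$ supported in an arbitrarily small neighborhood of $(u(s_0,t_0),t_0,s_0)\in M\times S^1\times I$ whose Hamiltonian vector field at that central point is parallel to $-J(u(s_0,t_0))\zeta(s_0,t_0)$. A standard localization then makes the integral in (ii) strictly positive, a contradiction. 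A pleasant feature of the $s$-dependent setting is that the usual ``somewhere injective'' hypothesis, needed when perturbing $s$-independent Hamiltonians, is unnecessary here: the parameters $(s,t)$ themselves appear in the evaluation $(s,t)\mapsto (u(s,t),t,s)\in M\times S^1\times \R$, so they distinguish distinct integration points directly.

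The main obstacle I anticipate is purely technical: verifying that Floer's weighted $\varepsilon$-norm on $\cC_\varepsilon^\infty(I)$ is fine enough to contain compactly supported bump perturbations arbitrarily close to zero, while still being a separable Banach space for which Sard–Smale applies. This is handled exactly as in \cite{floer1995transversality} by choosing the weight sequence $(\varepsilon_k)$ to decay faster than the $C^k$-norms of a fixed reference bump localized in $\operatorname{int}(I)$. Once surjectivity of $D_{(u,h)}^{\text{univ}}$ and the Banach structure on $\cC_\varepsilon^\infty(I)$ are in place, the implicit function theorem makes $\cM_{\text{univ}}(x_-,x_+)$ a Banach manifold, and Sard–Smale applied to $\pi$ (whose fibers are Fredholm of index $\mu(x_-)-\mu(x_+)$) yields the required residual set $\hreg$.
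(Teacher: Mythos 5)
Your proposal is correct and follows essentially the same route as the paper: the same universal moduli space $\cZ(x_-,x_+)$, the same reduction of transversality to showing that an $L^q$ annihilator $Z$ of the universal linearization vanishes via unique continuation plus a bump perturbation localized near a point of the graph $(s,t)\mapsto(u(s,t),t,s)$ in $M\times S^1\times \operatorname{int}(I)$, the same observation that the graph is automatically an embedding so no somewhere-injectivity is needed, the same density/separability care with the $\varepsilon$-norm on $\cC_\varepsilon^\infty(I)$, and the same Sard--Smale conclusion. No gaps.
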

The proof of this proposition is postponed to Section~\ref{subsec:pert_homo_proof}. We start by describing the space of perturbations and its relevant properties.

\subsubsection{The Banach space $\cC_\varepsilon^\infty(I)$.} \label{subsec:C_varepsilon_space}
In this section we define the perturbations space $\cC_\varepsilon^\infty(I)$ and prove useful properties. 

\begin{defin}\label{def:epsilon_norm}
	\begin{itemize}
		\item Let $\varepsilon = \{\varepsilon_n\}$ be a sequence of positive numbers. For $h\in \cC^\infty(M\times S^1\times \R)$, Floer's $\varepsilon$-norm is defined to be 
		$$
		\|h\|_\varepsilon:=\sum_{k\geq 0}\varepsilon_k \sup_{M\times S^1\times \R}|d^kh|,
		$$ 
		see \cite[p.230]{audin-damian} for details. For a proof that it is a norm, see \cite[Theorem B.2]{wendl1612lectures}
		
		\item For a closed and finite interval $I\subset\R$ with a non-empty interior, let $\cC_\varepsilon^\infty(I)$ be the space of functions  $h\in \cC^\infty(M\times S^1\times \R)$, supported in $M\times S^1\times I$, whose $\varepsilon$-norm is finite, namely $\|h\|_\varepsilon<\infty$. Then, $\cC_\varepsilon^\infty(I)$ is a Banach space. 
		In what follows we identify between the tangent space   $T_h\cC_\varepsilon^\infty(I)$ at a point $h$, and the space $\cC_\varepsilon^\infty(I)$ itself.
	\end{itemize}
\end{defin}
The following claims guarantee that the properties that are required of a space of perturbations hold for $\cC_\varepsilon^\infty(I)$.
\begin{claim}\label{clm:Cepsilon_dense_in_Cinfty}
	There exists a sequence $\varepsilon$ for which  $\cC_\varepsilon^\infty(I)$ is dense in $\cC^\infty(I)$.
\end{claim}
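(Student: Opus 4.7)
The plan is to carry out the standard Floer trick: choose the sequence $\varepsilon=\{\varepsilon_k\}$ decaying fast enough that a prescribed countable dense subset of $\cC^\infty(I)$ sits inside $\cC^\infty_\varepsilon(I)$.

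First I would recall that $\cC^\infty(I)$, viewed as the space of smooth functions on $M\times S^1\times\R$ supported in the compact set $M\times S^1\times I$ and equipped with its natural Fr\'echet topology (given by the seminorms $\|h\|_{C^k}:=\sup|d^k h|$), is a separable Fr\'echet space. Hence I can fix a countable dense subset $\{h_n\}_{n\geq 1}\subset \cC^\infty(I)$. Each $h_n$ is compactly supported and smooth, so $\sup|d^k h_n|$ is finite for every $k$.

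Next, I would define the sequence $\varepsilon$ so that every $h_n$ lies in $\cC^\infty_\varepsilon(I)$. Set
\[
A_k:=\max\Big(1,\ \max_{1\le n\le k}\sup_{M\times S^1\times\R}|d^k h_n|\Big)\in (0,\infty),
\qquad \varepsilon_k:=\frac{1}{2^k A_k}.
\]
For any fixed $n$, the sum
\[
\|h_n\|_\varepsilon=\sum_{k\ge 0}\varepsilon_k\sup|d^k h_n|
\]
splits into the finite initial segment $k<n$, which is a finite sum of finite terms, and a tail $k\ge n$, for which $\sup|d^k h_n|\le A_k$ by construction and hence $\varepsilon_k\sup|d^k h_n|\le 2^{-k}$. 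The tail is thus dominated by a geometric series, so $\|h_n\|_\varepsilon<\infty$ and $h_n\in\cC^\infty_\varepsilon(I)$.

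Finally, since $\{h_n\}$ is dense in $\cC^\infty(I)$ and is contained in $\cC^\infty_\varepsilon(I)$, the subspace $\cC^\infty_\varepsilon(I)$ is itself dense in $\cC^\infty(I)$. There is no real obstacle here beyond verifying that the compactness of $M\times S^1\times I$ guarantees the finiteness of $A_k$; the only point worth emphasizing is that the choice of $\varepsilon$ depends on the chosen dense sequence, which is the content of the phrasing ``there exists a sequence $\varepsilon$''.
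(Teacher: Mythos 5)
Your proof is correct and follows essentially the same route as the paper's: both fix a countable dense sequence in $\cC^\infty(I)$ (the separability being the content of Lemma~\ref{lem:C_infty_I_separable}) and then choose $\varepsilon_k$ decaying fast enough, relative to the derivatives of the first $k$ functions, that every member of that sequence has finite $\varepsilon$-norm. The only cosmetic difference is that the paper normalizes by $\max_{k\le n}\|f_k\|_{\cC^n}$ rather than your $A_k$; both choices work for the same reason.
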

\begin{claim}\label{clm:C_epsilon_is_separable}
	The Banach space $\cC_\varepsilon^\infty(I)$ is separable. 
\end{claim}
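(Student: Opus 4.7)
The plan is to show separability by exhibiting $\cC_\varepsilon^\infty(I)$ as a subspace of a separable Banach space; since any subset of a separable metric space is separable, this suffices. The key simplification is that the support constraint puts everything on the compact set $K := M \times S^1 \times I$, on which the space of continuous sections of any finite-rank tensor bundle is separable (as a closed subspace of the separable space $C^0(K, \R^N)$ once a trivializing cover is fixed).

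Concretely, I would introduce, for each $k \geq 0$, the Banach space $B_k := C^0(K, E_k)$ of continuous sections over $K$ of the bundle $E_k$ of $k$-tensors on $M \times S^1 \times \R$, equipped with its sup norm. Each $B_k$ is separable. I would then form the weighted $\ell^1$-direct sum
\[
X := \Big\{ (f_k)_{k \geq 0} \in \prod_{k \geq 0} B_k \;:\; \|(f_k)\|_X := \sum_{k \geq 0} \varepsilon_k \|f_k\|_{C^0} < \infty \Big\},
\]
which is a Banach space. It is separable: the union, over all $N \in \N$, of $\{(f_0, \dots, f_N, 0, 0, \dots) : f_k \in D_k\}$, where $D_k \subset B_k$ is a fixed countable dense subset, is countable and dense in $X$.

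The map $\Phi \colon \cC_\varepsilon^\infty(I) \to X$ defined by $\Phi(h) := (d^k h)_{k \geq 0}$ is well defined (each derivative is continuous on $K$, vanishes on $\partial K$, and the finiteness of the target norm is exactly the condition $\|h\|_\varepsilon < \infty$) and is isometric by the definition of the $\varepsilon$-norm. Hence $\cC_\varepsilon^\infty(I)$ embeds isometrically into a separable Banach space, and therefore is itself separable.

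There is no genuine obstacle here: the argument is purely formal once one notices that the support condition confines everything to the compact set $K$ and that the $\varepsilon$-norm is by construction a weighted $\ell^1$-sum of sup norms of derivatives, each of which lives in a separable space. The only mild point of care is interpreting $d^k h$ consistently as a section of a finite-rank bundle (so that $\|d^k h\|_{C^0}$ has an unambiguous meaning and the embedding $\Phi$ is truly isometric), which is standard.
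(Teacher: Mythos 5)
Your proof is correct and follows essentially the same route as the paper: an isometric embedding of $\cC_\varepsilon^\infty(I)$ into a weighted $\ell^1$-type sum of spaces of continuous sections over the compact set $M\times S^1\times I$, each separable, with the finitely supported sequences over countable dense subsets giving a countable dense set. The only cosmetic difference is that you send $h$ to the sequence of individual derivatives $(d^kh)_k$ while the paper (following Wendl) uses the cumulative tuples $(\eta,\nabla\eta,\dots,\nabla^k\eta)$; both make the embedding isometric for the $\varepsilon$-norm, so nothing of substance changes.
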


In order to prove these claims we first state and prove two lemmas. We use notations and ideas from \cite[Section 8.3]{audin-damian} and \cite[Appendix B]{wendl1612lectures}.
\begin{lemma}\label{lem:C0_I_E_sec_count}
	Let $E$ be a finite dimensional vector bundle over $M\times S^1\times \R$, then, the space $\cC^0_I(E)$ of continuous sections of $E$ that are supported in $M\times S^1\times I$ is second countable with respect to the uniform norm.
\end{lemma}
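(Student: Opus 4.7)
The plan is to reduce the statement to the classical separability of $C(X,\R^N)$ for compact metric $X$, by identifying $\cC^0_I(E)$ with a closed subspace of a Banach space of sections over a compact base. Recall that a metric space is second countable iff it is separable, so it suffices to exhibit a countable dense subset.

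First, I would observe that $X := M\times S^1\times I$ is a compact metric space, since $M$ is closed, $S^1$ is compact, and $I\subset\R$ is a closed finite interval. Any $\sigma\in\cC^0_I(E)$ is determined by $\sigma|_X$, and continuity of $\sigma$ on $M\times S^1\times\R$ combined with $\sigma\equiv 0$ off $X$ forces $\sigma|_Y=0$, where $Y:=M\times S^1\times\partial I$. Conversely, any continuous section of $E|_X$ vanishing on $Y$ extends by zero to a continuous section supported in $X$. Thus the restriction map gives an isometric identification, with respect to the uniform norm, between $\cC^0_I(E)$ and the closed linear subspace
\[
\{\sigma\in \cC^0(E|_X)\ :\ \sigma|_Y=0\}\ \subset\ \cC^0(E|_X).
\]
Since closed subspaces of separable metric spaces are separable, it is enough to prove that the Banach space $\cC^0(E|_X)$ is separable.

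Next, I would show the separability of $\cC^0(E|_X)$ by a standard partition of unity argument. Compactness of $X$ allows us to choose a finite open cover $\{U_\alpha\}_{\alpha=1}^k$ of $X$ such that $E|_{\overline{U_\alpha}}$ is trivial, together with a subordinate partition of unity $\{\rho_\alpha\}$. Every section decomposes as $\sigma=\sum_\alpha \rho_\alpha\sigma$, and via the local trivializations each $\rho_\alpha\sigma$ is identified with an element of $C(\overline{U_\alpha},\R^r)$ (where $r=\operatorname{rank}E$) vanishing on $\partial U_\alpha$. The space $C(K,\R^r)$ is separable in the uniform norm for any compact metric space $K$, for example by applying Stone–Weierstrass to polynomials in the coordinates of an isometric embedding $K\hookrightarrow \R^m$ and then restricting to coefficients in $\Q$. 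Combining countable dense subsets for the finitely many factors and taking finite $\Q$-linear combinations produces a countable dense subset of $\cC^0(E|_X)$.

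Transferring this to $\cC^0_I(E)$ through the identification above yields a countable dense subset of $\cC^0_I(E)$, so it is separable and hence second countable. The only real step is the reduction to a trivial bundle, but this is a routine partition-of-unity manipulation; there is no serious obstacle, and no use is made of any specific structure beyond compactness of $M\times S^1\times I$ and finite-dimensionality of $E$.
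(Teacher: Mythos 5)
Your proof is correct and follows essentially the same route as the paper's: both reduce $\cC^0_I(E)$ to a closed subspace of the continuous $\R^k$-valued functions on a compact metric space and then invoke (Stone--)Weierstrass to get separability, hence second countability. The only difference is in how the bundle is flattened --- the paper embeds $M\times S^1\times I$ into a cube and realizes $E$ as a subbundle of a trivial bundle over the compact base, whereas you use a finite trivializing cover with a partition of unity; both steps are routine and equally valid.
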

\begin{proof}
	Embedding $M\times S^1\times I$ into $[-N,N]^m$ for some large $N, m$, the space $\cC^0_I(E)$ is isometrically embedded into $\cC^0([-N,N]^m;\R^k)$ for some $k\in\N$\footnote{This uses the fact that every vector bundle over a compact base is a sub-bundle of a trivial vector bundle, see \cite[Proposition 1.4]{hatcher2003vector}.}. By Weierstrass approximation theorem, the latter space is separable, and hence (being a normed space) is also second countable. We conclude that the same holds for the closed subspace $\cC^0_I(E)$.	
\end{proof}

Following \cite[Appendix B]{wendl1612lectures}, set $E^{(0)}:=E$ and $E^{(k+1)}:= \operatorname{Hom}\left(T(M\times S^1\times \R); E^{(k)}\right)$, then, fixing connections and bundle metrics on both $T(M\times S^1\times \R)$ and $E$, any section $\eta\in\Gamma(E^{(k)})$ has a covariant derivative $\nabla\eta\in \Gamma(E^{(k+1)})$. Set $F^{(k)}:=E^{(0)}\oplus\cdots\oplus E^{(k)}$ and consider the countable product $\prod_{k\in\N} \cC^0_I(F^{(k)})$, endowed with the product topology. By Lemma~\ref{lem:C0_I_E_sec_count}, each factor is second countable and therefore so is the product. 

\begin{lemma}\label{lem:C_infty_I_separable}
	The space $\cC^\infty(I)$ of smooth functions $M\times S^1\times \R\rightarrow\R$ supported on $M\times S^1\times I$, is separable with respect to the  $\cC^\infty$-topology.
\end{lemma}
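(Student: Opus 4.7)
The plan is to realize $\cC^\infty(I)$ as a topological subspace of the countable product $\prod_{k\in\N} \cC^0_I(F^{(k)})$, which was shown just above to be second countable, and then use the fact that subspaces of second countable spaces are separable.

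More concretely, I would define the jet map
\begin{equation*}
\iota \fcolon \cC^\infty(I) \longrightarrow \prod_{k\in\N} \cC^0_I(F^{(k)}), \qquad \iota(h) := \bigl(h,\ \nabla h,\ \ldots,\ \nabla^k h,\ \ldots\bigr),
\end{equation*}
where $\nabla^k h \in \Gamma(E^{(k)})$ is formed using the fixed connections, and the $k$-th component of $\iota(h)$ is the partial jet $(h, \nabla h,\ldots,\nabla^k h) \in \Gamma(F^{(k)})$. Since $h$ is supported in $M\times S^1\times I$, so are all its covariant derivatives, and since $h$ is smooth and $M\times S^1\times I$ is compact, each $\nabla^k h$ is bounded and continuous, so $\iota$ is well-defined. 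The map $\iota$ is also injective (from the $0$-th component alone).

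The key point is that $\iota$ is a homeomorphism onto its image when the target carries the product topology. Indeed, the $\cC^\infty$-topology on $\cC^\infty(I)$ is generated by the seminorms $h \mapsto \sup_{M\times S^1\times\R} |\nabla^k h|$ for $k \in \N$, which are precisely the pullbacks under $\iota$ of the uniform-norm seminorms on the factors $\cC^0_I(F^{(k)})$. Thus a basis of neighborhoods of $0$ in $\cC^\infty(I)$ corresponds exactly to a basis of neighborhoods of $\iota(0)$ in the subspace topology inherited from the product.

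Finally I would invoke the preceding lemma: each factor $\cC^0_I(F^{(k)})$ is second countable (Lemma~\ref{lem:C0_I_E_sec_count}), so the countable product is second countable in the product topology, and the subspace $\iota(\cC^\infty(I))$ inherits second countability. Since every second countable space is separable, $\cC^\infty(I)$ is separable. The only mildly delicate point is verifying that the $\cC^\infty$-topology really agrees with the topology pulled back through $\iota$; this is essentially tautological once the seminorms defining the $\cC^\infty$-topology are written out, and no compactness-of-support subtleties intervene because everything lives over the compact set $M\times S^1\times I$.
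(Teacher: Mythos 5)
Your proof is correct and follows essentially the same route as the paper: embedding $\cC^\infty(I)$ via the jet map into the second countable product $\prod_{k\in\N}\cC^0_I(F^{(k)})$ and passing second countability (hence separability) to the subspace. Your explicit check that the $\cC^\infty$-topology agrees with the subspace topology is a detail the paper leaves implicit, but the argument is the same.
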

\begin{proof}
	The space $\cC^\infty(I)$ can be embedded into the product $\prod_{k\in\N} \cC^0_I(F^{(k)})$, by 
	$$
	\eta\mapsto(\eta, (\eta,\nabla\eta), (\eta, \nabla\eta, \nabla^2\eta),\dots).
	$$
	As explained above, the product  $\prod_{k\in\N} \cC^0_I(F^{(k)})$ is second countable and hence so is any closed subspace of it. In particular, $\cC^\infty(I)$ is separable.
\end{proof}
We are now ready to prove Claim~\ref{clm:Cepsilon_dense_in_Cinfty}. The proof is exactly that of \cite[Proposition 8.3.1]{audin-damian}.
\begin{proof}[Proof of Claim~\ref{clm:Cepsilon_dense_in_Cinfty}]
	Let $f_n\in \cC^\infty(I)$ be a dense sequence, whose existence is guaranteed by Lemma~\ref{lem:C_infty_I_separable}. Let 
	\begin{equation*}
	\varepsilon_n :=1/\left(2^n \cdot \max_{k\leq n}\|f_k\|_{\cC^n(M\times S^1\times \R)}\right).
	\end{equation*}  
	For this choice of a sequence $\varepsilon$, it holds that $\|f_n\|_\varepsilon<\infty$ for all $n$, namely, $f_n\in \cC_\varepsilon^\infty(I)$.
\end{proof}
The proof of Claim~\ref{clm:C_epsilon_is_separable} is essentially that of Lemma~B.4 and Theorem~B.5 from \cite{wendl1612lectures}, we include it for the convenience of the reader.
\begin{proof}[Proof of Claim~\ref{clm:C_epsilon_is_separable}]
	Consider again the product $\prod_{k\in\N} \cC^0_I(F^{(k)})$  and let $X_\varepsilon$ be the space of sequences $\xi:=(\xi^0, \xi^1, \xi^2,\dots)\in \prod_{k\in\N} \cC^0_I(F^{(k)})$ such that 
	\begin{equation*}
	\|\xi\|_{X_\varepsilon} := \sum_{k=0}^\infty \varepsilon_k \cdot \|\xi^k\|_{\cC^0}<\infty.
	\end{equation*}
	We will first show that $X_\varepsilon$ is separable and then embed $\cC_\varepsilon^\infty(I)$ into $X_\varepsilon$ in order to prove the claim. Indeed, since $\cC^0_I(F^{(k)})$ is separable for each $k$ (by Lemma~\ref{lem:C0_I_E_sec_count}), we can fix a dense countable subset $P^k\subset \cC^0_I(F^{(k)})$. The set 
	\begin{equation*}
	P:= \left\{(\xi^0,\dots,\xi^N,0,0\dots)\in X_\varepsilon\ |\ N\geq 0 \text{ and } \forall\ 0\leq k\leq N,\ \xi^k\in P^k \right\}
	\end{equation*}
	is countable and dense in $X_\varepsilon$. Now consider the injective linear map 
	\begin{equation*}
	\cC_\varepsilon^\infty(I) \hookrightarrow X_\varepsilon \ :\ 
	\eta\mapsto (\eta, (\eta,\nabla\eta), (\eta, \nabla\eta, \nabla^2\eta),\dots).
	\end{equation*}
	It is an isometric embedding, and hence we may view $\cC_\varepsilon^\infty(I)$ as a closed subspace of the separable space $X_\varepsilon$. The latter is also second countable (being a normed space) and hence so is $\cC_\varepsilon^\infty(I)$. 
\end{proof}

\begin{rem}\label{rem:C_epsilon_separable} 
	The proof of Claim~\ref{clm:C_epsilon_is_separable} shows that spaces of perturbations with compact support are separable in general. This observation will be used in Section~\ref{app:Hamiltonian_reg_in_U}.
\end{rem}

\subsubsection{Proof of Proposition~\ref{pro:pert_hom_bdd_supp}.} \label{subsec:pert_homo_proof}
We follow the proofs from Chapters 8 and 11 of \cite{audin-damian} and make the necessary changes. 
Let us start by recalling the relevant notations.
\begin{notation}\label{not:Transversality_stuff}
	Let $H$ be a homotopy, let $J$ be an almost complex structure, and let $x_\pm$ be 1-periodic orbits of $H_\pm$ respectively.
	\begin{itemize}
		\item  We denote by $\cM_{(H,J)}(x_-, x_+)$ the set of solutions of the ($s$-dependent) Floer equation with respect to $H, J$ that converge to $x_\pm$ at the ends. We denote by $\cM_{(H,J)}$ the set of all finite energy solutions.
		
		\item (\cite[Def. 8.2.2]{audin-damian}) Denote by $\cP(x_-,x_+)$ the space of maps $\R\times S^1\rightarrow M$, of the form
		\begin{equation*}
		(s,t)\mapsto \exp_{w(s,t)}Y(s,t),
		\end{equation*}
		for $Y\in W^{1,p}(w^*TM)$ and $w\in \cC^\infty_{\searrow}(x_-,x_+)$. The latter is the space of smooth maps $\R\times S^1\rightarrow M$ converging to $x_\pm$ at the ends with exponentially decaying derivatives. We denote by $\cL^p(x_-,x_+)$ the fiber bundle over $\cP(x_-,x_+)$ whose fiber at $u$ is $L^p(u^*TM)$.
		
		\item The Floer map with respect to $H$ is
		\begin{eqnarray}
		\cF^H:\cP(x_-,x_+) &\rightarrow& \cL^p(x_-,x_+)\\
		\nonumber u &\mapsto& \frac{\partial u}{\partial s}+ J(\frac{\partial u}{\partial t} - X_{H}\circ u) = \frac{\partial u}{\partial s}+ J\frac{\partial u}{\partial t} +\grad_u H,
		\end{eqnarray}
		where $(\grad_u H)(s,t)$ is the gradient of $H(\cdot, t,s)$ with respect to $J$, restricted to $u$. In unitary (i.e., symplectic, orthonormal) coordinates, the differential of the Floer map, 
		$(d\cF^H)_u:W^{1,p}(\R\times S^1;u^*TM) \rightarrow L^p(\R\times S^1;u^*TM)$, can be written as 
		$(d\cF)_u (Y) = \overline{\partial}Y+SY$, where $S:\R\times S^1\rightarrow End(\R^{2n})$, see \cite[Section 8.4 and p.389]{audin-damian}.

		\item Set
		\begin{equation}
		\cZ(x_-,x_+):=\left\{(u,h)\in\cP(x_-,x_+)\times\cC_\varepsilon^\infty(I)\ |\ u\in\cM_{(H+h,J)}(x_-,x_+) \right\}
		\end{equation}
	\end{itemize}
\end{notation}

The main ingredients in the proof of Proposition~\ref{pro:pert_hom_bdd_supp} are the following two lemmas. 
\begin{lemma}\label{lem:Z_is_Banach}
	The set $\cZ(x_-,x_+)$ is a Banach manifold.
\end{lemma}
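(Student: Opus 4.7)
The plan is to realize $\cZ(x_-,x_+)$ as the zero set of a smooth section of a Banach bundle and then invoke the implicit function theorem for Banach manifolds. Specifically, I would consider the universal Floer map
\[
\mathcal{F}^{\mathrm{univ}}: \mathcal{P}(x_-, x_+) \times \mathcal{C}_\varepsilon^\infty(I) \;\longrightarrow\; \mathcal{L}^p(x_-, x_+),\qquad (u,h)\longmapsto \partial_s u + J\partial_t u + \grad_u(H+h),
\]
so that $\cZ(x_-,x_+) = (\mathcal{F}^{\mathrm{univ}})^{-1}(0)$. By the implicit function theorem it suffices to show that at every $(u,h)\in \cZ(x_-,x_+)$ the differential is surjective with a closed, complemented kernel; this will endow $\cZ(x_-,x_+)$ with the structure of a smooth, separable (thanks to Claim~\ref{clm:C_epsilon_is_separable}) Banach manifold.

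The differential splits as $(d\mathcal{F}^{\mathrm{univ}})_{(u,h)}(Y,\eta) = (d\mathcal{F}^{H+h})_u(Y)+\grad_u \eta$, and the first summand is Fredholm (by the non-degeneracy of $H_\pm$), so the image of $d\mathcal{F}^{\mathrm{univ}}$ is closed and, once surjectivity is established, the kernel will automatically split. For surjectivity I would argue by duality: suppose $\eta_0 \in L^q(u^*TM)$, $1/p+1/q=1$, annihilates the image. Plugging $\eta=0$ and arbitrary $Y$ shows that $\eta_0$ is a weak solution of the formal adjoint of $(d\mathcal{F}^{H+h})_u$, hence smooth by elliptic regularity. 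Plugging $Y=0$ and arbitrary $\eta\in \cC_\varepsilon^\infty(I)$ gives
\[
\int_{\R\times S^1} \omega\!\left(\eta_0(s,t),\, J\,(\nabla_x \eta)(u(s,t),t,s)\right)\,ds\,dt \;=\; 0.
\]
By Claim~\ref{clm:Cepsilon_dense_in_Cinfty} the identity extends to every smooth $\eta$ compactly supported in $M\times S^1\times \mathrm{int}(I)$, and in particular to product bumps $\eta(x,t,s)=\chi(s)\psi(t)\phi(x)$ with $\chi$ compactly supported in $\mathrm{int}(I)$.

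The main obstacle, and the only place where the bounded-support constraint on $h$ bites, is turning this integral identity into pointwise vanishing of $\eta_0$. Concretely, I would locate a point $(s_0,t_0)$ with $s_0 \in \mathrm{int}(I)$ at which $u$ is injective and $\partial_s u(s_0,t_0)\ne 0$; such a point exists by the standard somewhere-injectivity analysis for Floer cylinders (cf.\ \cite[Section~8.3]{audin-damian}), provided $u$ is not identically an $s$-independent $1$-periodic orbit over the open slab $s\in \mathrm{int}(I)$ — a degenerate case which reduces to the constant-homotopy Hamiltonian setting already handled by Claim~\ref{clm:app_reg_Hamiltonians}. Choosing $\phi$ supported in a small ball around $u(s_0,t_0)$ disjoint from the remaining image of $u$, and $\chi,\psi$ localised near $s_0, t_0$, a nonzero value $\eta_0(s_0,t_0)\ne 0$ would produce a nonzero pairing, contradicting the identity above. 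Thus $\eta_0$ vanishes on a nonempty open subset of $\R\times S^1$, and Aronszajn's unique continuation principle applied to the elliptic adjoint operator (cf.\ \cite[Section~3.2]{floer1995transversality}) propagates this to $\eta_0\equiv 0$. Hence $d\mathcal{F}^{\mathrm{univ}}$ is surjective, completing the proof.
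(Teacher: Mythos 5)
Your overall architecture is the same as the paper's: realize $\cZ(x_-,x_+)$ as the zero set of the universal section $(u,h)\mapsto \partial_su+J\partial_tu+\grad_u(H+h)$, prove surjectivity of the linearization $\Gamma(Y,\eta)=(d\cF^{H+h})_u(Y)+\grad_u\eta$ by a duality argument, use Fredholmness of $(d\cF^{H+h})_u$ for closed image and a complemented kernel, and kill the annihilator $Z\in L^q$ by unique continuation once it vanishes on an open set. The divergence, and the problem, is in how you force $Z$ to vanish somewhere over $\operatorname{int}(I)\times S^1$.

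Your localization step leans on somewhere-injectivity of the Floer cylinder: you require a point $(s_0,t_0)$ with $s_0\in\operatorname{int}(I)$ where $\partial_su\neq 0$ and where a small ball around $u(s_0,t_0)$ misses \emph{the entire remaining image of $u$}. This is not delivered by the standard somewhere-injectivity results: they concern $s$-independent Floer equations, give injectivity only in the $s$-direction at fixed $t$ (the image may still self-intersect at different $t$), and do not guarantee such points inside a prescribed slab $\operatorname{int}(I)\times S^1$ for a solution of the genuinely $s$-dependent equation attached to $H+h$. Your fallback branch is also unjustified: if $u$ coincides with a periodic orbit over the slab, Claim~\ref{clm:app_reg_Hamiltonians} (existence of regular Hamiltonian perturbations) says nothing about surjectivity of $\Gamma$ at this particular $u$ for perturbations supported in $M\times S^1\times I$, so the "reduction to the constant-homotopy setting" does not close that case. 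The point you are missing is that because the perturbations $\eta$ are allowed to depend on $s$, no injectivity of $u$ is needed at all: the paper (following Audin--Damian, Lemma~11.1.9) works with the graph $\tilde u(s,t)=(u(s,t),s,t)$ in $M\times S^1\times\R$, which is automatically an embedding, and builds $\eta$ along the flow lines of $Z$ near $\tilde u$ (equivalently, a bump $\chi(s)\psi(t)\phi(x)$ with $\chi$ supported in $\operatorname{int}(I)$ and $d\phi_{u(s_0,t_0)}(Z(s_0,t_0))>0$ already yields a nonzero integral by continuity). With that replacement your argument goes through and no degenerate case arises; as written, however, the surjectivity step has a genuine gap.
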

\begin{lemma}\label{lem:pi_is_Fredholm}
	The projection $\pi:\cZ(x_-,x_+)\rightarrow\cC_\varepsilon^\infty(I)$, $(u,h)\mapsto h$, is a Fredholm map.
\end{lemma}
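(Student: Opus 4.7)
The plan is to reduce the Fredholm property of $d\pi$ at a point $(u,h)\in\cZ(x_-,x_+)$ to the classical Fredholm property of the linearized Floer operator $(d\cF^{H+h})_u\fcolon W^{1,p}(u^*TM)\to L^p(u^*TM)$. Write $F\fcolon\cP(x_-,x_+)\times\cC_\varepsilon^\infty(I)\to\cL^p(x_-,x_+)$ for the section $F(v,k):=\cF^{H+k}(v)$, so that $\cZ(x_-,x_+)=F^{-1}(0)$. By the proof of Lemma~\ref{lem:Z_is_Banach}, $dF_{(u,h)}$ is surjective and its kernel is $T_{(u,h)}\cZ$; moreover,
\begin{equation*}
dF_{(u,h)}(Y,\eta)=(d\cF^{H+h})_u(Y)+\grad_u\eta,
\end{equation*}
where $(\grad_u\eta)(s,t):=(\grad \eta(\cdot,t,s))(u(s,t))$ is the variation of the Hamiltonian term in the direction $\eta\in\cC_\varepsilon^\infty(I)$.

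First I would compute the kernel of $d\pi_{(u,h)}\fcolon T_{(u,h)}\cZ\to\cC_\varepsilon^\infty(I)$, $(Y,\eta)\mapsto\eta$. Setting $\eta=0$ in the defining equation $(d\cF^{H+h})_u(Y)+\grad_u\eta=0$ gives the identification
\begin{equation*}
\ker d\pi_{(u,h)} \;=\; \bigl\{(Y,0)\fcolon (d\cF^{H+h})_u(Y)=0\bigr\} \;\cong\; \ker (d\cF^{H+h})_u.
\end{equation*}
Next, for the cokernel, the map $\eta\mapsto[\grad_u\eta]$ from $\cC_\varepsilon^\infty(I)$ to $L^p(u^*TM)/\im (d\cF^{H+h})_u$ descends to a linear map
\begin{equation*}
\operatorname{coker} d\pi_{(u,h)} \;\longrightarrow\; \operatorname{coker}(d\cF^{H+h})_u.
\end{equation*}
Surjectivity of this map is an immediate consequence of the surjectivity of $dF_{(u,h)}$: any class $[\xi]\in\operatorname{coker}(d\cF^{H+h})_u$ can be written as $\xi=(d\cF^{H+h})_u(Y)+\grad_u\eta$ for some $(Y,\eta)$, so $[\xi]=[\grad_u\eta]$. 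Injectivity is essentially tautological, since $[\eta]=0$ in $\operatorname{coker} d\pi_{(u,h)}$ precisely when $\grad_u\eta\in\im(d\cF^{H+h})_u$.

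At this stage the Fredholm property of $d\pi_{(u,h)}$ follows from the corresponding classical fact for the linearized Floer operator. Since the ends $H_\pm$ are non-degenerate, the operator $(d\cF^{H+h})_u$, written in unitary trivializations as $\bar\partial+S$, has asymptotic operators $-J\partial_t-S(\pm\infty,t)$ that are invertible, which is precisely the condition guaranteeing that $(d\cF^{H+h})_u$ is Fredholm of index $\mu(x_-)-\mu(x_+)$ (see, e.g., \cite[Ch.~8]{audin-damian} for the Hamiltonian case and \cite[Ch.~11]{audin-damian} for the homotopy case). Combining this with the kernel/cokernel identifications above, $d\pi_{(u,h)}$ is Fredholm of the same index.

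The only slightly non-routine point is verifying that the induced map on cokernels is well defined and bijective; once the surjectivity of $dF_{(u,h)}$ from Lemma~\ref{lem:Z_is_Banach} is in hand, this is a short diagram-chase and the Fredholm property of $(d\cF^{H+h})_u$ does the rest. I would therefore expect the main care in writing out the argument to be in the bookkeeping for the snake-lemma-style identification of $\operatorname{coker} d\pi$ with $\operatorname{coker}(d\cF^{H+h})_u$, rather than in any analytical obstacle, all of which is absorbed into the classical Fredholm statement.
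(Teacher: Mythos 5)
Your proposal is correct and follows essentially the same route as the paper: identify $\ker d\pi_{(u,h)}$ with $\ker(d\cF^{H+h})_u$, compare cokernels via $\eta\mapsto\grad_u\eta$, and invoke the classical Fredholm property of the linearized Floer operator (the paper's Claim on $(d\cF^{H+h})_u$). The only cosmetic difference is that you also prove surjectivity of the induced cokernel map (via the surjectivity of $\Gamma$ from the proof of the Banach-manifold lemma), obtaining the index equality, whereas the paper only uses injectivity of the induced quotient map to get finite codimension and then closedness of the image — a refinement, not a different argument.
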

The outline of the proof is as follows: We first prove that the set $\cZ(x_-,x_+)$ is a Banach manifold (Lemma \ref{lem:Z_is_Banach}), and then we show that the projection $\pi:\cZ(x_-, x_+)\rightarrow\cC_\varepsilon^\infty(I)$ is a Fredholm map (Lemma~\ref{lem:pi_is_Fredholm}). Taking $\hreg$ to be the set of regular values of $\pi$, the Sard-Smale theorem guarantees that it is a residual set. 
We will use the following claim from \cite{audin-damian}.
\begin{claim}[{\cite[Theorem 11.1.7]{audin-damian}}] \label{clm:dF_is_Fredholm}
	For every homotopy $H$ such that $(H_\pm, J)$ are Floer-regular and every $u\in \cM_{(H,J)}(x_-,x_+)$, the differential of the Floer map, $(d\cF^H)_u$, at $u$, is a Fredholm operator of index $\mu(x_-)-\mu(x_+)$.
\end{claim}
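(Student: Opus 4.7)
The plan is to reduce the claim to the standard Fredholm theorem for Cauchy–Riemann type operators on the cylinder with non-degenerate asymptotics, and to identify the index with a spectral flow that computes the difference of Conley–Zehnder indices.

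First I would choose a unitary (symplectic, $J$-orthonormal) trivialization of $u^*TM$. Since the ends $x_\pm$ are contractible orbits in a symplectically aspherical manifold, any two symplectic trivializations of $x_\pm^*TM$ extending the chosen trivialization of $u^*TM$ differ by a loop of symplectic matrices whose Maslov class vanishes, so the Conley–Zehnder indices $\mu(x_\pm)$ are unambiguously defined. In these coordinates the linearization $(d\cF^H)_u$ becomes a bounded operator
\[
L \fcolon W^{1,p}(\R\times S^1;\R^{2n})\to L^p(\R\times S^1;\R^{2n}), \qquad L Y = \partial_s Y + J_0\,\partial_t Y + S(s,t)\,Y,
\]
where $S(s,t)\in \operatorname{End}(\R^{2n})$ is a smooth family. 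Because $H$ is constant in $s$ outside a compact set and $u$ converges exponentially to $x_\pm$ at the ends (standard exponential decay for finite-energy Floer cylinders with non-degenerate asymptotics, using the hypothesis that $(H_\pm,J)$ are Floer-regular, which forces $x_\pm$ to be non-degenerate), $S(s,t)$ converges exponentially in $s$ to symmetric matrices $S_\pm(t)$ that are exactly the ones attached to $x_\pm$ via the Hamiltonian flow of $H_\pm$.

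Next I would analyze the asymptotic operators $A_\pm \fcolon W^{1,2}(S^1;\R^{2n})\to L^2(S^1;\R^{2n})$, $A_\pm Y = -J_0 \partial_t Y - S_\pm(t) Y$. These are unbounded self-adjoint operators with compact resolvent, and $\ker A_\pm$ is naturally identified with the space of fixed vectors of the linearized time-$1$ flow along $x_\pm$. Non-degeneracy of $x_\pm$ is precisely the statement $\ker A_\pm = 0$, so $A_\pm$ are invertible. With this in hand, I would apply the standard Fredholm theorem for operators of the form $L = \partial_s + A(s)$ on the cylinder where $A(s)\to A_\pm$ are invertible self-adjoint operators. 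The proof of that theorem (already used in \cite{audin-damian} for the autonomous case, Chapter 8) adapts verbatim: invertibility of $\partial_s + A_\pm$ on half-cylinders is obtained via Fourier expansion in $t$ on the eigenbasis of $A_\pm$, a cut-off argument pieces together the ends with the bounded middle piece, and Rellich's lemma gives compactness of the perturbation $S(s,t) - S_\pm(t)$ on bounded intervals. This yields closedness of the range, finite-dimensional kernel and cokernel, hence the Fredholm property, with the index depending continuously on the path $s\mapsto A(s)$ through invertible asymptotic endpoints.

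For the index formula I would invoke the identification of the Fredholm index of $\partial_s + A(s)$ with the spectral flow of the family $\{A(s)\}_{s\in\R}$. Since the spectral flow is homotopy invariant under homotopies with fixed invertible endpoints, I can homotope within the space of such families to the concatenation of a path from $A_-$ to some reference operator $A_0$ with a path from $A_0$ to $A_+$. By the Robbin–Salamon characterization of the Conley–Zehnder index as a spectral flow (equivalently, by the definition of $\mu$ via crossing forms in \cite{audin-damian}, Chapter 7), the spectral flows of these two pieces are $-\mu(x_-)$ and $+\mu(x_+)$ up to a common constant, so their sum is $\mu(x_-)-\mu(x_+)$. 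The main obstacle in executing this plan is the Fredholm step itself: the delicate point is upgrading the pointwise invertibility of the asymptotic operators $A_\pm$ to uniform a priori estimates for $L$ on the whole cylinder, which is what allows the cut-off/gluing argument to close up; everything else is essentially bookkeeping once that estimate is in place.
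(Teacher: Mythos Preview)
The paper does not give its own proof of this claim: it is quoted verbatim as \cite[Theorem~11.1.7]{audin-damian} and used as a black box in the transversality arguments of Section~\ref{app:regular_pair}. So there is no proof in the paper to compare against.

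Your sketch is the standard textbook argument (and is, up to presentation, the one in \cite{audin-damian}): trivialize, reduce to $\partial_s + J_0\partial_t + S(s,t)$ with non-degenerate asymptotics, prove Fredholmness by the parametrix/cut-off argument using invertibility of the asymptotic operators $A_\pm$, and compute the index via spectral flow. Two small remarks. First, the phrase ``using the hypothesis that $(H_\pm,J)$ are Floer-regular, which forces $x_\pm$ to be non-degenerate'' slightly inverts the logic of the paper's conventions: non-degeneracy of $H_\pm$ is a standing assumption preceding regularity, not a consequence of it (and non-degeneracy is all you need for the Fredholm property; regularity of the ends is not used here). Second, the arithmetic in your last step does not close: if the two pieces of spectral flow are $-\mu(x_-)$ and $+\mu(x_+)$ then their sum is $\mu(x_+)-\mu(x_-)$, not $\mu(x_-)-\mu(x_+)$. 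The sign discrepancy is a matter of conventions (orientation of the spectral flow, the sign in $L=\partial_s \pm A(s)$, and the normalization of $\mu$), and in Audin--Damian's normalization the index does come out as $\mu(x_-)-\mu(x_+)$; but you should track the signs carefully rather than asserting the answer.
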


In order to prove Lemma~\ref{lem:Z_is_Banach}, we present $\cZ(x_-,x_+)$ as an intersection of a certain section with the zero section in a certain vector bundle. The following lemma will be used to guarantee that this intersection is transversal. Its proof, which is a combination of the proofs of \cite[Propositions 8.1.4, 11.1.8]{audin-damian}, contains the main difference between the proof of Proposition~\ref{pro:pert_hom_bdd_supp} and that of \cite[Theorem 11.1.6]{audin-damian}. 
\begin{lemma}\label{lem:Gamma_is_surj}
	For $(u,h)\in \cZ(x_-,x_+)$, the linear operator 
	\begin{eqnarray}
	\Gamma: W^{1,p}(\R\times S^1;\R^{2n})\times\cC_\varepsilon^\infty(I) &\rightarrow& L^p(\R\times S^1;\R^{2n})\\
	\nonumber (Y,\eta) &\mapsto& (d\cF^{H+h})_u(Y)+\grad_u \eta
	\end{eqnarray}
	is surjective and has a continuous right inverse.
\end{lemma}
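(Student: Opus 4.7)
The plan is to prove surjectivity by the ``closed finite-codimension image plus dense image'' strategy, and then extract a continuous right inverse from the Fredholm structure of $(d\cF^{H+h})_u$. First, since $(H_\pm, J)$ is Floer-regular, Claim~\ref{clm:dF_is_Fredholm} implies that $(d\cF^{H+h})_u$ is Fredholm, so the subspace $X_0 := (d\cF^{H+h})_u(W^{1,p}(u^*TM))$ is closed and of finite codimension in $L^p(\R\times S^1;\R^{2n})$. The image of $\Gamma$ contains $X_0$, hence is itself closed and of finite codimension, so it suffices to show that the image of $\Gamma$ is $L^p$-dense. Suppose for contradiction there exists a nonzero $Z\in L^q(\R\times S^1;\R^{2n})$, with $1/p+1/q=1$, that is $L^2$-orthogonal to the image of $\Gamma$. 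Orthogonality to $(d\cF^{H+h})_u(W^{1,p})$ shows that $Z$ is a weak solution of the formal adjoint of the linearized Floer equation, and by elliptic regularity (as in the proof of \cite[Proposition~8.1.4]{audin-damian}) $Z$ is smooth.

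To exploit the remaining orthogonality condition, I would fix $(s_0,t_0)\in \operatorname{int}(I)\times S^1$ with $Z(s_0,t_0)\neq 0$ and build a test perturbation $\eta\in\cC_\varepsilon^\infty(I)$ as a product of three bump factors in the $M$, $S^1$, and $\R$ variables, supported in a small neighborhood of $(u(s_0,t_0),t_0,s_0)\subset M\times S^1\times \operatorname{int}(I)$, whose spatial gradient $\grad_{u(s_0,t_0)}\eta(\cdot,t_0,s_0)$ points in the direction of $Z(s_0,t_0)$. By continuity and tight localization in $(s,t)$, the inner product $\langle Z(s,t),\grad_{u(s,t)}\eta(\cdot,t,s)\rangle$ retains a constant sign on a small neighborhood of $(s_0,t_0)$ and vanishes elsewhere, yielding $\int_{\R\times S^1}\langle Z,\grad_u\eta\rangle\neq 0$, a contradiction. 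Hence $Z$ vanishes identically on $\operatorname{int}(I)\times S^1$. The main obstacle, which is precisely what makes the bounded-support constraint non-trivial, is that this argument only controls $Z$ on the interior of $I$ in the $s$-direction. To propagate vanishing to the complement, I would invoke Aronszajn's unique continuation theorem applied to the Cauchy--Riemann type elliptic adjoint operator satisfied by $Z$: vanishing on the non-empty open set $\operatorname{int}(I)\times S^1$ forces $Z\equiv 0$ on all of $\R\times S^1$, contradicting $Z\neq 0$.

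Once surjectivity is established, a continuous right inverse is constructed as follows. Fix a finite-dimensional topological complement $F\subset L^p$ of $X_0$ and choose perturbations $\eta_1,\ldots,\eta_N\in\cC_\varepsilon^\infty(I)$ whose gradients $\grad_u\eta_1,\ldots,\grad_u\eta_N$ form a basis of $F$ modulo $X_0$ (possible since $\Gamma$ surjects onto $L^p$). Combining a bounded right inverse of the surjection $(d\cF^{H+h})_u\colon W^{1,p}\twoheadrightarrow X_0$ (which exists by Fredholmness and the open mapping theorem) with the linear isomorphism $F\to\operatorname{span}\{\eta_1,\ldots,\eta_N\}\subset\cC_\varepsilon^\infty(I)$ yields a bounded right inverse of $\Gamma$.
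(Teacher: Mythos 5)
Your outline follows essentially the same route as the paper's proof: non-surjectivity yields a nonzero smooth annihilator $Z\in L^q$; orthogonality to the image of $(d\cF^{H+h})_u$ makes $Z$ a solution of the adjoint Cauchy--Riemann-type operator; orthogonality to $\grad_u\eta$ for perturbations supported in $M\times S^1\times I$ forces $Z$ to vanish on $\operatorname{int}(I)\times S^1$; and unique continuation (your appeal to Aronszajn plays exactly the role of the continuation principle invoked in the paper) gives $Z\equiv 0$, a contradiction. Your explicit right inverse is the standard Fredholm argument (finite-dimensional complement of the image of $(d\cF^{H+h})_u$ spanned modulo that image by finitely many $\grad_u\eta_i$, plus a bounded right inverse on the closed image, which exists because the kernel is finite-dimensional and hence complemented); this is the content of the lemma the paper cites for this point.

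The one genuine gap is the assertion that your test perturbation lies in $\cC_\varepsilon^\infty(I)$. A hand-built bump-type function supported near $(u(s_0,t_0),t_0,s_0)$ is smooth and compactly supported in $M\times S^1\times I$, but nothing guarantees that its Floer $\varepsilon$-norm $\sum_k\varepsilon_k\sup|d^k\eta|$ is finite for the fixed sequence $\varepsilon$, so it need not be an admissible perturbation. The paper's proof deals with exactly this: it first produces $\eta\in\cC^\infty(I)$ with $\int\langle Z,\grad_u\eta\rangle\neq 0$ and then approximates it by $\eta'\in\cC_\varepsilon^\infty(I)$ via the density statement (Claim~\ref{clm:Cepsilon_dense_in_Cinfty}); compact supports make the pairing with $Z$ continuous under this approximation, so the integral stays nonzero. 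Your argument needs this extra step. A smaller, fixable point of presentation: a bump function peaked at $u(s_0,t_0)$ has vanishing gradient at its peak, so the $M$-factor should be a cutoff times a function that is linear in the direction of $Z(s_0,t_0)$ in local coordinates, with the $(t,s)$-localization taken tight enough that $u(s,t)$ remains in the region where the cutoff is constant. With these repairs your localized-product construction is a legitimate, arguably simpler, substitute for the paper's construction via the embedding $(s,t)\mapsto(u(s,t),s,t)$, transversality of $Z$, and integral curves of $Z$.
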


\begin{proof}
	Assume for the sake of contradiction that $\Gamma$ is not surjective. By \cite[Lemma 8.5.1]{audin-damian}\footnote{This lemma is formulated for a slightly different space, but its proof applies to our case as it is.}, there exists a non-zero vector field $Z\in L^q(\R\times S^1;\R^{2n})$ (here $\frac{1}{p}+\frac{1}{q}=1$), of class $\cC^\infty$, such that for every $Y\in W^{1,p}(\R\times S^1;\R^{2n})$ and $\eta\in \cC_\varepsilon^\infty(I)$, 
	\begin{eqnarray}
	\left<Z,(d\cF^{H+h})_u (Y)\right> &=&0,\label{eq:ortho_to_dF}\\
	\left<Z,\grad_u\eta \right> &=&0,\label{eq:ortho_to_grad_eta}
	\end{eqnarray}
	where $\left<\cdot,\cdot\right>$ denotes the pairing of $L^q$ and $L^p$. 
	As mentioned above, the differential of the Floer map can be written in unitary coordinates as $\overline{\partial}+S(s,t)$. Since $Z$ is of class $\cC^\infty$, it follows from (\ref{eq:ortho_to_dF}) that $Z$ is a zero of the dual operator of $(d\cF)_u$, which is of a ``perturbed Cauchy-Riemann"-type. The continuation principle (\cite[Proposition 8.6.6]{audin-damian}) now implies that if $Z$ has an infinite-order zero, then it is identically zero, $Z\equiv 0$.
	
	Therefore, let us show that (\ref{eq:ortho_to_grad_eta}) guarantees that $Z$ vanishes on {$I\times S^1$}, and conclude that it vanishes identically, since we assumed that the interior of $I$ is not empty. The proof is roughly the same as that of \cite[Lemma 11.1.9]{audin-damian}, but we include it for the sake of completeness. An equivalent reformulation of (\ref{eq:ortho_to_grad_eta}) is:
	\begin{equation*}
	\int_{\R\times S^1}d\eta (Z)\ ds\ dt= 0\ \text{  for every } \eta\in\cC_\varepsilon^\infty(I).
	\end{equation*}
	Consider the map $\tilde u:\R\times S^1\rightarrow M\times \R\times S^1$ defined by $(s,t)\mapsto\left(u(s,t),s,t\right)$. It is easy to see that $\tilde u$ is an embedding. Viewing $Z$ as a vector field along $\tilde u$ on $M\times \R\times S^1$ that does not have components in the directions $\partial/\partial t\in TS^1$ and $\partial/\partial s\in T\R$, we see that it is not tangent to $\tilde u$ at the points where it is not zero. Assume for the sake of contradiction that there exists a point $(s_0,t_0)\in I\times S^1$ at which $Z$ does not vanish. Since $Z$ is continuous, there exists a small neighborhood $C_\delta$ of $(s_0, t_0)$, in which $Z(s,t)$ does not vanish and therefore is transversal to $\tilde u$ for all $(s,t)\in C_\delta$. Notice that if $(s_0,t_0)$ is not in the interior of $I\times S^1$, we may replace it with a point in $C_\delta\cap (int(I)\times S^1)$, and then replace $C_\delta$ by a smaller neighborhood that is contained in $int(I)\times S^1$. Therefore we assume, without loss of generality, that $C_\delta\subset int(I)\times S^1$.  Let $\beta:\R\times S^1\rightarrow\R$ be a smooth function supported in $C_\delta$, whose integral is not zero, $\int_{\R\times S^1}\beta(s,t)\ ds\ dt\neq 0$. Define $\eta:M\times S^1\times \R\rightarrow \R$ with support in a tubular neighborhood $B$ of $\tilde u (C_\delta)$ in such a way that if $\gamma_{(s,t)}(\sigma)$ is a parametrized integral curve of $Z$ passing through $\tilde u(s,t)$ at $\sigma=0$, then 
	$$
	\eta(\gamma_{(s,t)}(\sigma),t,s) := \beta(s,t)\cdot \sigma,\quad \text{ for } |\sigma|\leq\epsilon.
	$$ 
	The fact that $Z$ is transversal to $\tilde u(C_\delta)$ guarantees that  $\eta$ is well defined. We also assume that $B\cap\im (\tilde u)= \tilde u(C_\delta)$, which means that $supp(\eta)\cap\im(\tilde u)\subset \tilde u(C_\delta)$. Let us compute the integral of $d\eta(Z)$:
	\begin{eqnarray}
	\nonumber\int_{\R\times S^1} d\eta_{s,t}(Z(s,t))\ ds\ dt &=& \int_{C_\delta} d\eta_{s,t}(Z(s,t))\ ds\ dt \\
	\nonumber &=&\int_{C_\delta} d\eta_{s,t}\left(\frac{\partial \gamma_{s,t}(\sigma)}{\partial \sigma}\Big|_{\sigma=0}\right)\ ds\ dt\\
	\nonumber &=&\int_{C_\delta} \frac{\partial}{\partial \sigma}\left(\eta(\gamma_{s,t}(\sigma),t,s)\right)\Big|_{\sigma=0}\ ds\ dt\\	
	\nonumber &=&\int_{C_\delta} \frac{\partial}{\partial \sigma}\left(\beta(s,t)\cdot \sigma\right)\Big|_{\sigma=0}\ ds\ dt\\	
	\nonumber &=&\int_{C_\delta} \beta(s,t)\ ds\ dt.
	\end{eqnarray}
	As we chose $\beta$ to be a function with a non-vanishing integral, we find that (\ref{eq:ortho_to_grad_eta}) does not hold for the function $\eta$ constructed above. Note that $\eta$ is a smooth function, supported in $M\times S^1\times I$, but its $\varepsilon$-norm is not necessarily finite. Therefore, to arrive at a contradiction, it remains to approximate $\eta$ by $\eta'\in \cC_\varepsilon^\infty(I)$. This is possible due to Claim~\ref{clm:Cepsilon_dense_in_Cinfty}. When $\eta'$ is close to $\eta$, the integral of $d\eta'(Z)$ will be close to that of $d\eta(Z)$ (since their supports are contained in the compact set $M\times S^1\times I$), and hence equality (\ref{eq:ortho_to_grad_eta}) will not hold for $\eta'\in\cC_\varepsilon^\infty(I)$, in contradiction.
	
	This shows that $\Gamma$ is surjective. The fact that it has a continuous right inverse follows from \cite[Lemma~8.5.6]{audin-damian} and Claim~\ref{clm:dF_is_Fredholm}. 
\end{proof}

Having Lemma~\ref{lem:Gamma_is_surj}, the proof of Lemma~\ref{lem:Z_is_Banach}, which asserts that $\cZ(x_-,x_+)$ is a Banach manifold, is precisely that of \cite[Proposition 8.1.3]{audin-damian}:
\begin{proof}[Proof of Lemma~\ref{lem:Z_is_Banach}]
	Let $\cE:=\{(u,h,Y)\ |\ Y\in L^p(u^*TM)\}$ be a vector bundle over $\cP(x_-,x_+)\times \cC_\varepsilon^\infty(I)$, and consider the section induced by $\cF^{H+h}$:
	\begin{eqnarray}
	\nonumber \sigma:\cP(x_-,x_+)\times \cC_\varepsilon^\infty(I) &\rightarrow& \cE,\\
	\nonumber (u,h) &\mapsto& \left(u,h,\frac{\partial u}{\partial s}+ J\frac{\partial u}{\partial t} +\grad_u (H+h)\right).
	\end{eqnarray}
	Notice that the space $\cZ(x_-, x_+)$ is the intersection of $\sigma$ with the zero section in $\cE$. Therefore, in order to prove that $\cZ(x_-,x_+)$ is a Banach manifold, it is sufficient to show that $\sigma$ intersects the zero section transversally, or, equivalently, that $d\sigma$ composed with the projection onto the fiber is surjective and has a right inverse, at all points for which $\sigma(u,h)=0$. But, this composition is precisely the operator $\Gamma$ whose surjectivity and right-invertability are guaranteed by Lemma~\ref{lem:Gamma_is_surj}. 
\end{proof}

Our next goal is to show that $\pi$ is a Fredholm map, that is, to prove Lemma~\ref{lem:pi_is_Fredholm}.
\begin{proof}[Proof of Lemma~\ref{lem:pi_is_Fredholm}] 
	The projection $\pi:\cZ(x_-,x_+)\rightarrow \cC_\varepsilon^\infty(I)$, $\pi(u,h)=h$, is clearly smooth. Let us show that its differential, $d\pi$, has a finite dimensional  kernel and a closed image of finite co-dimension.
	\begin{itemize}
		\item $\ker (d\pi)_{(u,h)} = \left\{(Y,0)\in T_{(u,h)}\cZ(x_-,x_+)\right\}$. The tangent space of $\cZ(x_-,x_+)$ is
		\begin{equation*}
		T_{(u,h)}\cZ(x_-,x_+) = \left\{(Y,\eta)\ |\ (d\cF^{H+h})_u(Y)+\grad_u \eta =0	\right\},
		\end{equation*}
		and therefore, the kernel of $(d\pi)_{(u,h)}$ agrees with the kernel of $(d\cF^{H+h})_u$, which, is finite dimensional by Claim~\ref{clm:dF_is_Fredholm}.
		\item $\im (d\pi)_{(u,h)}=\left\{ \eta\ |\ \exists Y\in W^{1,p}(\R\times S^1;u^*TM),\ \grad_u\eta= -(d\cF^{H+h})_u(Y) \right\}$. Consider the linear map $G:\cC_\varepsilon^\infty(I)\rightarrow L^p(\R\times S^1;u^*TM)$, defined by $G(\eta)=\grad_u \eta$, then,
		\begin{equation}\label{eq:Im_dpi_vs_dF}
		\im (d\pi)_{(u,h)}=\left\{ \eta\ |\  \grad_u\eta\in \im (d\cF^{H+h})_u \right\} = G^{-1}\left(\im (d\cF^{H+h})_u\right).
		\end{equation}
		By Claim~\ref{clm:dF_is_Fredholm}, the image of $(d\cF^{H+h})_u$ is closed and of finite co-dimension. Let us show that the same hold for the image of $(d\pi)_{(u,h)}$. Consider the map induced by $G$ on the quotients,
		\begin{equation*}
		A:=\frac{\cC_\varepsilon^\infty(I)}{\im(d\pi)_{(u,h)}} \overset{G'}{\longrightarrow} B:=\frac{L^p(\R\times S^1;u^*TM)}{\im(d\cF^{H+h})_u},
		\end{equation*}
		which is well defined due to (\ref{eq:Im_dpi_vs_dF}). It is easy to see that $G'$ is injective and, together with the fact that $B$ is finite dimensional, this yields that $\operatorname{codim}(\im(d\pi)_{(u,h)}) = \dim(A)$ is finite. This now implies that the image of $(d\pi)_{(u,h)}$ is also closed and hence $(d\pi)_{(u,h)}$ is a Fredholm operator.
	\end{itemize}
\end{proof}
Having proved Lemmas \ref{lem:Z_is_Banach} and \ref{lem:pi_is_Fredholm}, we are ready to prove the main proposition. 
\begin{proof}[Proof of Proposition~\ref{pro:pert_hom_bdd_supp}]
	By Lemma~\ref{lem:pi_is_Fredholm}, the projection $\pi:\cZ(x_-,x_+)\rightarrow \cC_\varepsilon^\infty(I)$ is a (smooth) Fredholm map. By Claim~\ref{clm:C_epsilon_is_separable}, the space $\cC_\varepsilon^\infty(I)$ is separable. To see that $\cZ(x_-,x_+)$ is a separable Banach manifold, recall that it is modeled over a subspace of the Banach space $W^{1,p}(\R\times S^1;\R^{2n})\times \cC_\varepsilon^\infty(I)$. The latter is a separable metric space, and therefore second-countable. As any subspace of a second-countable space is also second-countable, and, in particular, separable, we conclude that $\cZ(x_-,x_+)$ is separable. It follows that we may apply Sard-Smale's theorem to $\pi$ and conclude that the set of regular values of $\pi$ is a countable intersection of open dense sets in $\cC_\varepsilon^\infty(I)$.
	The set $\hreg\subset \cC_\varepsilon^\infty(I)$ is defined to be the intersection of the regular values of the projections for all choices of 1-periodic orbits, $x_\pm$.
	
	Let us show that for each $h\in\hreg$, the pair $(H+h,J)$ is Floer-regular. 
	Fix 1-periodic orbits $x_\pm$, then  $h$ is a regular value of the projection $\pi:\cZ(x_-,x_+)\rightarrow\cC_\varepsilon^\infty(I)$. Let us show that for every $u\in\cM_{(H+h,J)}(x_-,x_+)$, the differential of the Floer map, $(d\cF^{H+h})_u$, is surjective. Indeed, otherwise, arguing as in the proof of Lemma~\ref{lem:Gamma_is_surj}, there exists $Z\in L^q(\R\times S^1;\R^{2n})$, where $\frac{1}{p}+\frac{1}{q}=1$, such that $\left<Z,(d\cF^{H+h})_u(Y)\right>=0$ for all $Y$. Since $(d\pi)_{(u,h)}$ is surjective, for every $\eta\in\cC_\varepsilon^\infty(I)$, there exists $Y$ such that $\grad_u \eta = -(d\cF^{H+h})_u(Y)$, and hence $\left<Z,\grad_u \eta\right>=0$ as well. We conclude that $Z$ satisfies both equations (\ref{eq:ortho_to_dF}) and (\ref{eq:ortho_to_grad_eta}), and, proceeding as in the proof of Lemma~\ref{lem:Gamma_is_surj}, we find $Z=0$. Thus $(d\cF^{H+h})_u$ is indeed surjective. 
	
	It remains to show that $\cM_{(H+h,J)}(x_-,x_+)$ is a smooth manifold of the correct dimension. The inverse image $\pi^{-1}(h)$ is the space of maps $u\in \cP(x_-,x_+)$, of class $W^{1,p}$, that are solutions of the Floer equation, $\cF^{H+h}(u)=0$. By elliptic regularity, these solutions are all smooth, and hence $\pi^{-1}(h) = \cM_{(H+h,J)}(x_-,x_+)$. Since $h$ is a regular value of $\pi$, we therefore conclude that $\cM_{(H+h,J)}(x_-,x_+)$ is indeed a smooth manifold. Its dimension is 
	\begin{equation*}
	\dim\ker (d\pi)_{(u,h)} = \dim \ker(d\cF^{H+h})_u = \ind (d\cF^{H+h})_u = \mu(x_-) -\mu(x_+),
	\end{equation*}
	where the last equality follows from Claim~\ref{clm:dF_is_Fredholm} above.
\end{proof}

\subsection{Convergence to broken trajectories.}\label{app:broken_traj}
A well known phenomenon in Floer theory on symplectically aspherical manifolds is the convergence of sequences of solutions to a {\it broken trajectory}. In this section we formulate and prove results of this sort for the settings that are considered throughout the paper. 

\subsubsection{Convergence for homotopies with non-degenerate ends.} \label{app:broken_traj_non_deg}
In what follows we consider homotopies with non-degenerate ends. We remark that  the same arguments apply for non-degenerate Hamiltonians, when one considers them as constant homotopies. 
Let $H$ be a homotopy that is constant outside of $M\times S^1\times [-R,R]$ for some fixed $R>0$, namely, $\partial_s H|_{|s|>R}=0$. Let $H_n$ be a sequence of homotopies converging $H$, such that for each $n$, 
\begin{equation}\label{eq:sequence_supp_and_cP}
supp(\partial_sH_n)\subset M\times S^1\times [-R,R]\ \text{ and }\ \cP(H_{n\pm}) = \cP(H_\pm).
\end{equation}
Recall that $\cM_{(H,J)}$ denotes the set of finite energy solutions of the Floer equation (\ref{eq:FE}) with respect to $H$ and $J$; for $x_\pm\in\cP(H_\pm)$, we denote by $\cM_{(H,J)}(x_-,x_+)\subset\cM_{(H,J)}$ the subset of solutions connecting $x_\pm$. 
Let 
$$
\cM(x_-,x_+):=\bigcup_n\cM_{(H_n,J)}(x_-,x_+)\ \cup\ \cM_{(H,J)}(x_-,x_+)
$$ 
be the space of all finite-energy solutions connecting $x_\pm$ with respect to $(H,J)$ and $(H_n, J)$ for all $n$, and set $\cM:=\cup_{x_\pm\in \cP(H_\pm)}\cM(x_-,x_+)$. 
The following proposition is an adjustment of \cite[Theorems 11.1.10, 11.3.10]{audin-damian} to our case.
\begin{prop}\label{pro:appendix_broken_traj}
	Let $H$ be a homotopy with non-degenerate ends, and let $H_n$ be a sequence converging to	 $H$ in $ \cC^\infty(M\times S^1\times \R)$ that satisfies (\ref{eq:sequence_supp_and_cP}) for each $n$. Given a sequence $u_n\in \cM_{(H_n,J)}(x_-,x_+)$ of solutions and a sequence of real numbers $\{\sigma_n\}$, there exist:
	\begin{itemize}
		\item Subsequences of $\{u_n\}$ and $\{\sigma_n\}$, which we still denote by $\{u_n\}$ and $\{\sigma_n\}$,
		\item periodic orbits $x_-=x_0,x_1,\dots,x_k\in \cP(H_-)$ and $y_0,y_1,\dots,y_\ell=x_+\in \cP(H_+)$,
		\item sequences of real numbers $\{s_n^i\}_{n}$ for $1\leq i\leq k$ and $\{s'^j_n\}_{n}$ for $1\leq j\leq \ell$,
		\item solutions $v_i\in \cM_{(H_-,J)}(x_{i-1},x_i)$, $1\leq i\leq k$ and $v'_j\in \cM_{(H_+,J)}(y_{j-1},y_j)$, $1\leq j\leq \ell$, 
		\item a solution $w\in\cM_{(H,J)}(x_k, y_0)$
	\end{itemize}
	such that, in $\cC^\infty_{\text{loc}}(\R\times S^1;M)$, for $1\leq i\leq k$ and $1\leq j\leq \ell$,
	\begin{equation*}
	\lim_{n\rightarrow\infty} u_n(\cdot +s_n^i,\cdot) =v_i,\quad
	\lim_{n\rightarrow\infty} u_n(\cdot +s'^j_n,\cdot) =v'_j,\quad \lim_{n\rightarrow \infty} u_n= w,
	\end{equation*}
	and the sequence $ u_n(\cdot +\sigma_n,\cdot)$ converges to one of $v_i, w, v_j'$, perhaps up to a shift in the $s$-coordinate.
\end{prop}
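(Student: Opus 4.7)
My plan is to run the standard Floer-theoretic compactness argument (e.g.\ as in \cite[Theorems 11.1.10, 11.3.10]{audin-damian}), with the main adjustment being that the underlying homotopy is itself varying through a sequence. The essential observation is that the homotopies $H_n$ are supported in the fixed slab $M\times S^1\times[-R,R]$ and converge smoothly to $H$, so their time derivatives are controlled uniformly. The fact that $\cP(H_{n\pm})=\cP(H_\pm)$ removes any worry about orbits splitting off in the limit.

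\textbf{Step 1 (uniform energy bound).} I would first show $\sup_n E(u_n)<\infty$. By the energy identity (\ref{eq:energy_id_homotopies}),
\begin{equation*}
E(u_n)=\cA_{H_{n-}}(x_-)-\cA_{H_{n+}}(x_+)+\int_{\R\times S^1}(\partial_s H_n)\circ u_n\; ds\, dt.
\end{equation*}
The action terms are bounded because $H_{n\pm}\to H_\pm$ in $\cC^\infty$ and $\cP(H_{n\pm})=\cP(H_\pm)$, and the integral is bounded because $\partial_s H_n$ is supported in $M\times S^1\times[-R,R]$ and converges uniformly.

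\textbf{Step 2 (extracting a limit without shift).} Since $(M,\omega)$ is symplectically aspherical, no bubbling occurs, so uniform energy bounds give uniform $\cC^0$ bounds on $du_n$ by the standard mean-value/monotonicity argument, and then $\cC^k$ bounds on compact sets by elliptic bootstrapping for the Floer equation (the coefficients of which converge smoothly because $H_n\to H$). Arzel\`a--Ascoli yields a subsequence $u_n\to w$ in $\cC^\infty_{\text{loc}}$, and $w$ solves the Floer equation for $(H,J)$. Finite energy and non-degeneracy of $H_\pm$ give $\lim_{s\to\pm\infty}w(s,\cdot)=x_k(\cdot),y_0(\cdot)$ for some $x_k\in\cP(H_-)$, $y_0\in\cP(H_+)$.

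\textbf{Step 3 (iteratively picking up lost energy at the left end).} If $x_k=x_-$, nothing is needed on the left. Otherwise, energy was lost at $s\to-\infty$. I would detect it by choosing $s_n^k\to-\infty$ so that $u_n(s_n^k,\cdot)$ stays away from every periodic orbit of $H_-$ by a fixed distance (e.g., by tracking the first time, going backward from $-R$, at which $u_n$ leaves a small neighborhood of $x_k$). For $n$ large, the shifted homotopy $H_n(\cdot,\cdot,\cdot+s_n^k)$ is uniformly close to the constant homotopy $H_-$ on every compact set, so the shifts $u_n(\cdot+s_n^k,\cdot)$ satisfy $\cC^\infty_{\text{loc}}$-bounded Floer equations for $H_-$ (up to vanishing error). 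Passing to a subsequence produces a non-constant $v_k\in\cM_{(H_-,J)}(x_{k-1},x_k)$. Iterate the procedure with $v_k$ playing the role of $w$. Since each $v_i$ has strictly positive energy and $\cP(H_-)$ is finite, the process terminates after $k<\infty$ steps with $x_0=x_-$. Repeat the symmetric procedure on the right end with shifts $s_n'^j\to+\infty$ to produce $v'_1,\dots,v'_\ell$ landing at $y_\ell=x_+$.

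\textbf{Step 4 (handling the marked shifts $\sigma_n$).} After all the above diagonal subsequences are fixed, I would classify the asymptotic position of $\sigma_n$ relative to the chosen shift sequences: either $\{\sigma_n\}$ stays bounded, in which case (along a further subsequence) $\sigma_n\to\sigma_\infty$ and $u_n(\cdot+\sigma_n,\cdot)\to w(\cdot+\sigma_\infty,\cdot)$; or $\sigma_n-s_n^i$ stays bounded for some $i$, giving convergence to a shift of $v_i$; or analogously on the right for some $v'_j$; or $\sigma_n$ escapes to $\pm\infty$ faster/slower than all $s_n^i,s'^j_n$, in which case $u_n(\cdot+\sigma_n,\cdot)$ converges to the constant loop at an endpoint orbit, which is a trivial shift limit of the adjacent piece. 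Pass to a further subsequence so that one of these cases holds.

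\textbf{Main obstacle.} The genuinely delicate step is Step 3: selecting the shifts $s_n^i$ so that the shifted sequences are neither asymptotic to a periodic orbit (which would give a trivial limit) nor escape to infinity in a way that picks up the same piece twice. This is the standard $\varepsilon$-neighborhood/return-time argument of Floer compactness, but I must verify that the convergence $H_n\to H$ in $\cC^\infty$ together with the uniform support condition on $\partial_s H_n$ is enough for the shifted homotopies $H_n(\cdot,\cdot,\cdot+s_n^i)$ to converge to the constant homotopy $H_-$ uniformly on compact sets—which follows directly because $\partial_s H_n=0$ outside $[-R,R]$, so on any fixed compact $s$-interval the shifted Hamiltonian equals $H_{n-}$ once $s_n^i<-R$ minus the radius of the compact set, and $H_{n-}\to H_-$ smoothly. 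The remainder is routine adaptation of \cite[Theorem 11.1.10]{audin-damian}.
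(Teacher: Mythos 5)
Your Steps 1--3 are essentially the paper's own argument: the uniform energy bound from the energy identity together with the fixed support $[-R,R]$ of $\partial_s H_n$ and the condition $\cP(H_{n\pm})=\cP(H_\pm)$, then gradient bounds, Arzel\`a--Ascoli plus elliptic regularity for shifted subsequences (the observation that $\tau_{s_n}H_n$ stabilizes to $H_{n\mp}$ on compact $s$-intervals once $s_n\to\mp\infty$ is exactly how the paper identifies the limiting equation), and the $\varepsilon$-ball/first-exit-time induction, which terminates because each non-constant piece strictly decreases the action and $\cP(H_\pm)$ are finite. Up to that point there is no problem.

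The gap is in Step 4, in the case you dismiss as trivial. If $\sigma_n$ drifts into a ``gap'' --- e.g.\ $\sigma_n-s_n^i\to+\infty$ and $\sigma_n-s_n^{i+1}\to-\infty$, or $\sigma_n\to-\infty$ while $\sigma_n-s_n^k\to+\infty$ --- then $u_n(\cdot+\sigma_n,\cdot)$ converges (along a subsequence) to a \emph{constant} solution at the intermediate orbit, and a constant solution is \emph{not} a shift of the adjacent non-constant piece: writing that it is ``a trivial shift limit of the adjacent piece'' conflates a $\cC^\infty_{\text{loc}}$-limit of shifts with an actual shift, so the final clause of the proposition is not delivered by the trajectory you built. (You also need, and do not verify, that in such a gap the limit really is constant, i.e.\ that no energy escapes between consecutive exit times; this does follow from the exit-time definitions, but it must be said.) The repair is easy: either insert the constant solution as an additional piece $v\in\cM_{(H_-,J)}(x_i,x_i)$ with shift sequence $s_n:=\sigma_n$ --- the statement permits repeated orbits and constant pieces --- or, as the paper does, reverse the order of operations: first extract the $\cC^\infty_{\text{loc}}$-limit of $\tau_{\sigma_n}u_n$ via your Step 2 machinery and then build the broken trajectory around \emph{both} this limit and the unshifted limit $w$, so that the $\sigma_n$-limit is one of the pieces by construction and no case analysis on the position of $\sigma_n$ is needed.
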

The finite sequence $(v_1,\dots,v_k,w,v'_1,\dots,v'_\ell)$ is called a broken trajectory of $(H,J)$. 
Before proving the above proposition, we state and prove two lemmas. The first is an analogous statement to \cite[Theorem 11.2.7]{audin-damian}, and gives a uniform bound for the $J$-gradient of a solution $u$ of the Floer equation with respect to $(H,J)$ or $(H_n,J)$. 
\begin{lemma}\label{lem:app_bdd_gradient}
	There exists a constant $A>0$ such that for every $u\in \cM$ and every $(s,t)\in \R\times S^1$, 
	$$
	\big\|\frac{\partial u}{\partial s}\big\|_J^2+\big\|\frac{\partial u}{\partial t}\big\|_J^2\leq A.
	$$
	
\end{lemma}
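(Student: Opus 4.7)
The plan is a standard bubbling argument, adapted to our setting where both the Hamiltonian and the homotopy vary in a family converging in $\cC^\infty$. Suppose for contradiction that no such $A$ exists. Then there exist solutions $u_n \in \cM$, each solving the Floer equation for a pair $(K_n, J)$ where $K_n \in \{H, H_m : m \in \N\}$, and points $(s_n, t_n) \in \R \times S^1$ with $R_n := \|\partial_s u_n(s_n, t_n)\|_J^2 + \|\partial_t u_n(s_n, t_n)\|_J^2 \to \infty$. After applying Hofer's lemma to reposition $(s_n, t_n)$, we may assume the gradient is almost-maximal on a shrinking disk of radius $\varepsilon_n \sqrt{R_n} \to \infty$ around $(s_n, t_n)$.

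First I would rescale: define $v_n(s,t) := u_n\bigl(s_n + s/\sqrt{R_n},\, t_n + t/\sqrt{R_n}\bigr)$, viewed as maps from disks of radius $\varepsilon_n\sqrt{R_n}$ in $\C$ (lifting $S^1$ locally). A direct computation shows that $v_n$ satisfies a modified Floer equation in which the Hamiltonian term $X_{K_n}$ is multiplied by $1/\sqrt{R_n}$. Since $K_n \to H$ in $\cC^\infty$ and $supp(\partial_s K_n) \subset M \times S^1 \times [-R,R]$, the family $\{X_{K_n}\}$ has uniformly bounded $\cC^0$ norm, so the Hamiltonian perturbation goes to zero uniformly. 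Meanwhile, $\|\nabla v_n(0,0)\| = 1$ by construction, and $\|\nabla v_n\|$ is uniformly bounded on every compact set by the Hofer-type choice. Standard elliptic bootstrapping then yields, along a subsequence, convergence of $v_n$ in $\cC^\infty_{\text{loc}}$ to a non-constant $J$-holomorphic map $v : \C \to M$.

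Second, I would bound the energy of $v$. By conformal invariance of energy for $J$-holomorphic maps and the energy identity \eqref{eq:energy_id_homotopies},
\begin{equation*}
E(v) \leq \liminf_{n} E(u_n) \leq \sup_n \Bigl(\cA_{K_{n-}}(x_-) - \cA_{K_{n+}}(x_+) + \|\partial_s K_n\|_{L^\infty} \cdot \operatorname{vol}(M \times [-R,R] \times S^1) \Bigr),
\end{equation*}
which is finite because the actions of $x_\pm$ are computed with respect to the fixed ends $H_\pm$ (using $\cP(H_{n\pm}) = \cP(H_\pm)$), and $\|\partial_s K_n\|_{L^\infty}$ is uniformly bounded. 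Hence $v$ has finite energy. By the standard removable singularity theorem, $v$ extends to a non-constant $J$-holomorphic sphere $\bar v : S^2 \to M$ with $\int_{S^2} \bar v^*\omega = E(v) > 0$.

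The hard part — really the only substantive ingredient — is then obtaining the contradiction, which is immediate here: since $(M,\omega)$ is symplectically aspherical, $\omega|_{\pi_2(M)} = 0$, so $\int_{S^2} \bar v^* \omega = 0$. This contradicts $E(v) > 0$, completing the proof. The main subtlety throughout is ensuring uniformity in $n$ of all bounds (on $X_{K_n}$, $\partial_s K_n$, and the actions at the asymptotic ends), which is guaranteed by $\cC^\infty$-convergence $H_n \to H$ together with conditions \eqref{eq:sequence_supp_and_cP}.
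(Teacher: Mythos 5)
Your argument is correct and is essentially the paper's: one first gets a uniform energy bound for all $u\in\cM$ from the energy identity (\ref{eq:energy_id_homotopies}), using that $\cP(H_{n\pm})=\cP(H_\pm)$ is a fixed finite set and that $\sup_n\|\partial_s H_n\|_\infty<\infty$ with $supp(\partial_s H_n)\subset M\times S^1\times[-R,R]$, and then deduces the uniform gradient bound by the standard rescaling/bubbling argument, where symplectic asphericity excludes the nonconstant $J$-holomorphic sphere — the paper simply cites this last step from Audin--Damian (Propositions 6.6.2 and 11.1.5) rather than writing it out. One trivial slip: the $\partial_s$-term in your energy estimate is an integral over the domain cylinder, supported in $[-R,R]\times S^1$, so the correct bound is $2R\cdot\sup_n\|\partial_s H_n\|_{L^\infty}$ rather than a factor of $\operatorname{vol}(M\times[-R,R]\times S^1)$; this does not affect finiteness or the rest of the proof.
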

\begin{proof}
	For convenience we set $H_0:=H$. Let $x_\pm\in \cP(H_\pm)$ be periodic orbits such that $u\in \cM(x_-,x_+)$, then, by the energy identity (\ref{eq:energy_id_homotopies}),
	\begin{equation}\label{eq:unif_bnd_energy}
	E(u)\leq \cA_{H_-}(x_-)-\cA_{H_+}(x_+) +2R\cdot C',
	\end{equation}
	where
	\begin{equation*}
	C':= \sup\left\{\frac{\partial H_n}{\partial s}(x,t,s)\ |\ (x,t,s)\in M\times S^1\times \R,\  n\geq 0\right\},
	\end{equation*}
	and $R>0$ is the constant from (\ref{eq:sequence_supp_and_cP}). 
	The fact that $C'$ is finite follows from the uniform convergence (with derivatives) of $H_n$ to $H_0=H$. Setting 
	$$
	C:=\max_{x_\pm\in\cP(H_\pm)}\left(\cA_{H_-}(x_-)-\cA_{H_+}(x_+) \right) +2R\cdot C',
	$$
	we obtain a uniform bound for the energy, $E(u)\leq C$, for all $u\in \cM$. As in \cite[Propositions 6.6.2, 11.1.5]{audin-damian}, we conclude that there exists $A>0$ such that $\big\|\frac{\partial u}{\partial s}\big\|_J^2+\big\|\frac{\partial u}{\partial t}\big\|_J^2\leq A$. 
\end{proof}

The next lemma uses Arzel\'a-Ascoli theorem and elliptic regularity to show that every sequence of shifted solutions has a converging subsequence. It is an adjustment of Theorem 11.3.7 and Lemma 11.3.9 from \cite{audin-damian} to our setting.
\begin{lemma}\label{lem:app_shifts_have_conv_subseq}
	Let $u_n\in \cM_{(H_n,J)}(x_-, x_+)$ be a sequence of solutions and let $s_n\in\R$ be a sequence of numbers. Then, the sequence of shifted solutions $\tau_{s_n}u_n(\cdot, \cdot) = u_n(\cdot+s_n, \cdot)$ has a subsequence that converges in the $\cC^{\infty}_{{loc}}$ topology to a limit $v$. Moreover:
	\begin{enumerate}
		\item If $s_n\rightarrow\sigma \in \R$, then $v\in \cM_{(\tau_\sigma H,J)}$, where $\tau_\sigma H(x,t,s):= H(x,t,s+\sigma)$.
		\item If $s_n\rightarrow -\infty$, then  $v\in \cM_{(H_-,J)}$.
		\item If $s_n\rightarrow +\infty$, then  $v\in \cM_{(H_+,J)}$.
	\end{enumerate}
\end{lemma}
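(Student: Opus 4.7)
The plan is to combine the uniform first-derivative estimate already established in Lemma~\ref{lem:app_bdd_gradient} with standard elliptic bootstrapping for the Floer equation. Note that the shifted map $\tau_{s_n}u_n$ satisfies the $s$-dependent Floer equation with respect to the shifted pair $(\tau_{s_n}H_n, J)$, and Lemma~\ref{lem:app_bdd_gradient} gives $\|\partial_s(\tau_{s_n}u_n)\|_J^2+\|\partial_t(\tau_{s_n}u_n)\|_J^2\le A$ uniformly in $n$. Since $M$ is compact, the family $\{\tau_{s_n}u_n\}$ is uniformly bounded and equicontinuous on every compact subset of $\R\times S^1$, so Arzel\`a--Ascoli produces a subsequence converging in $\cC^0_{\text{loc}}$ to a continuous map $v:\R\times S^1\to M$.

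Next I would upgrade this to $\cC^\infty_{\text{loc}}$ convergence. Because each $\tau_{s_n}u_n$ solves a perturbed Cauchy--Riemann equation with smooth coefficients, one applies the standard interior elliptic estimates for the Floer equation (as in, e.g., \cite[Section 6.5]{audin-damian}): the uniform $W^{1,p}_{\text{loc}}$-bound coming from the gradient estimate, combined with the equation, yields uniform $W^{k,p}_{\text{loc}}$-bounds for every $k$, hence $\cC^\infty_{\text{loc}}$ convergence along a diagonal subsequence. In particular $v$ is smooth.

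To identify the equation that $v$ satisfies, I would examine $\tau_{s_n}H_n$ in $\cC^\infty_{\text{loc}}$. In case~(1), $s_n\to\sigma\in\R$: since $H_n\to H$ in $\cC^\infty(M\times S^1\times\R)$ and the shift is eventually close to $\tau_\sigma$, we get $\tau_{s_n}H_n\to\tau_\sigma H$ uniformly with all derivatives on compact sets, so $v\in\cM_{(\tau_\sigma H,J)}$. In cases~(2)--(3), $s_n\to\pm\infty$: the hypothesis $\operatorname{supp}(\partial_sH_n)\subset M\times S^1\times[-R,R]$ forces $\tau_{s_n}H_n$ to coincide on any fixed compact $K\subset\R\times S^1$ with the constant homotopy $H_{n\mp}$ for all $n$ large enough (where the sign flips because shifting left by a large negative $s_n$ sends $[-R,R]$ to $+\infty$, etc.); since $H_{n\pm}\to H_\pm$ in $\cC^\infty(M\times S^1)$ by assumption, the limit equation is the autonomous Floer equation with Hamiltonian $H_\pm$. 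Finite energy of $v$ is automatic: Lemma~\ref{lem:app_bdd_gradient} in fact bounds $E(u_n)$ uniformly by a constant $C$, and Fatou applied to $\|\partial_s\tau_{s_n}u_n\|_J^2$ gives $E(v)\le C<\infty$, placing $v$ in the appropriate moduli space $\cM_{(\tau_\sigma H,J)}$ or $\cM_{(H_\pm,J)}$.

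The only mildly delicate point is making sure the limit equation is genuinely autonomous in cases~(2) and (3); this is precisely where the uniform support condition (\ref{eq:sequence_supp_and_cP}) is used, since without it the shifted $s$-dependence of $H_n$ could persist in the limit. Everything else is a routine adaptation of \cite[Proposition~6.5.3 and Theorem~11.3.7]{audin-damian} to a sequence of homotopies rather than a single one, and the uniform gradient bound has already absorbed the dependence on $n$.
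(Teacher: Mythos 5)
Your argument is essentially the paper's own proof: uniform gradient bound from Lemma~\ref{lem:app_bdd_gradient}, Arzel\`a--Ascoli plus elliptic regularity for $\cC^\infty_{\text{loc}}$ convergence, and identification of the limit equation from the $\cC^\infty_{\text{loc}}$ behaviour of $\tau_{s_n}H_n$, using the compact support of $\partial_s H_n$ in the divergent cases. One slip: your parenthetical claim that ``the sign flips'' is backwards --- since $\tau_{s_n}H_n(\cdot,\cdot,s)=H_n(\cdot,\cdot,s+s_n)$, for $s_n\to-\infty$ and $s$ in a fixed compact set the argument $s+s_n$ lies to the left of $[-R,R]$, so one sees $H_{n-}$ (not $H_{n+}$); i.e.\ $s_n\to\pm\infty$ yields $H_{n\pm}\to H_\pm$, exactly as the lemma asserts, and your later sentence ``the limit equation is ... $H_\pm$'' is the correct version.
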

\begin{proof}
	Lemma~\ref{lem:app_bdd_gradient} implies that the sequence $v_n:=\tau_{s_n}u_n$ is equicontinuous. By Arzel\'a-Ascoli theorem and elliptic regularity (see \cite[Lemma 12.1.1]{audin-damian}), there exists a  subsequence, which we still denote by $\{v_n\}$, that converges to a limit $v$ in the $\cC^\infty_{loc}$ topology. The fact that the energy of $v$ is finite follows from the uniform bound (\ref{eq:unif_bnd_energy}) on the energies of $u_n$. It remains to show that the limit $v$ is a solution of the corresponding equation, for the above choices of shifts $s_n$. For each $n$,  $v_n$ is a solution of the equation
	\begin{eqnarray}
	\nonumber
	0&=&\frac{\partial v_n}{\partial s}+ J\frac{\partial v_n}{\partial t} + \grad_{v_n}(\tau_{s_n} H_n)\\
	\nonumber
	&=& \left(\frac{\partial v_n}{\partial s}+ J\frac{\partial v_n}{\partial t} + \grad_{v_n} (\tau_{s_n} H)\right)
	+ \grad_{v_n} (\tau_{s_n} (H_n-H)).
	\end{eqnarray}
	Since the sequence $H_n$ converges to $H$ uniformly with the derivatives, for every $\epsilon>0$ there exists $N$ such that for $n\geq N$,
	\begin{equation}\label{eq:app_almost_Floer_sol}
	\Big\|\frac{\partial v_n}{\partial s}+ J\frac{\partial v_n}{\partial t} + \grad_{v_n} (\tau_{s_n} H)\Big\| < \epsilon.
	\end{equation}	
	Let us split into cases:
	\begin{enumerate}
		\item Assume $s_n\rightarrow\sigma\in \R$. 
		Fix an arbitrarily large $r>|\sigma|$, then the derivatives of $H$ are uniformly continuous on the compact set $M\times S^1\times [-r,r]$.
		Using (\ref{eq:app_almost_Floer_sol}) together with our assumption that $s_n\rightarrow\sigma$, we have
		\begin{equation*}
		\max_{[-r,r]\times S^1}\Big|\frac{\partial v_n}{\partial s}+ J\frac{\partial v_n}{\partial t} + \grad_{v_n} (\tau_{\sigma} H)\Big|
		<\epsilon+ \max_{[-r,r]\times S^1}\Big| \grad_{v_n} (\tau_\sigma H - \tau_{s_n} H)\Big| <2\epsilon,
		\end{equation*}
		when $n$ is large enough. It follows that the limit $v$ of the sequence $v_n$ is a solution of the $s$-dependent Floer equation with respect to $\tau_\sigma H$ and $J$.
		\item Assume $s_n\rightarrow -\infty$.	
		Recalling that the homotopy $H$ is constant for $|s|\geq R$, we have $H(x,t,s) = H_-(x,t)$ whenever $s\leq-R$. Since $s_n\rightarrow-\infty$, for every $r>0$, there exists $N$ large enough, such that for $n\geq N$, $s_n<-R-r$. 
		For such $n$, the restriction of (\ref{eq:app_almost_Floer_sol}) to the compact subset $[-r,r]\times S^1$ is
		\begin{equation*}
		\max_{[-r,r]\times S^1}\Big|\frac{\partial v_n}{\partial s}+ J\frac{\partial v_n}{\partial t} + \grad_{v_n} H_-\Big| < \epsilon,
		\end{equation*}
		since $\tau_{s_n} H(x,t,s) = H(x,t,s+s_n) = H_-$, when $s\in [-r,r]$.
		Taking the limit when $n\rightarrow\infty$ (and $\epsilon\rightarrow0$), we conclude that $v$ is a solution of the Floer equation with respect to $(H_-,J)$. 
		\item When $s_n\rightarrow\infty$, the proof is as in the previous case.
	\end{enumerate} 
\end{proof}

Having Lemma~\ref{lem:app_shifts_have_conv_subseq},  the proof of Proposition~\ref{pro:appendix_broken_traj} (namely, the convergence to a broken trajectory) is similar to the that of \cite[Theorem 11.1.10]{audin-damian}. We follow it and make the necessary adjustments.

\begin{proof}[Proof of Proposition~\ref{pro:appendix_broken_traj}]
	Let us prove the claim for the case where $\sigma_n\rightarrow -\infty$, the other cases are analogous. We start by fixing $\epsilon>0$ small enough, such that the open balls 
	\begin{equation*}
	B(x,\epsilon):=\{\gamma\in \cL M\ |\ d_\infty(x,\gamma)<\epsilon\}
	\end{equation*}
	are disjoint for $x\in \cP(H_-)$. Here $\cL M$ is the space of contractible loops in $M$, endowed with the uniform metric $d_\infty$. By shrinking $\epsilon$ if necessary, we  assume that the balls $\{B(y,\epsilon)\}_{y\in \cP(H_+)}$ are also disjoint.
	Lemma~\ref{lem:app_shifts_have_conv_subseq} guarantees that after passing to a subsequence, the sequence $\tau_{\sigma_n}u_n$ converges in $\cC^\infty_{loc}$ to a finite energy solution $v\in\cM_{(H_-,J)}$. Since $H_-$ is non-degenerate, there exist periodic orbits $x_0, x_1\in\cP(H_-)$ such that $v\in\cM_{(H_-,J)}(x_0,x_1)$. Moreover, applying Lemma~\ref{lem:app_shifts_have_conv_subseq} to the sequence $u_n$ with zero shifts, we conclude that after extracting a subsequence, it converges to a finite energy solution $w\in \cM_{(H,J)}(x_k,y_0)$, for some $x_k\in\cP(H_-)$ and $y_0\in \cP(H_+)$. Let us find the solutions preceding to $v$, connecting $v$ to $w$ and following $w$ in the broken trajectory:
	\begin{itemize}
		\item\underline{Solutions preceding $v$:} There exists  $s_\star\leq0$ such that for any $s\leq s_\star$, $v(s,\cdot)\in B(x_0,\epsilon)$. Since $v=\lim \tau_{\sigma_n}u_n$, when $n$ is large enough, $u_n(s_\star+\sigma_n,\cdot)\in B(x_0,\epsilon)$ as well.
		If $x_0 = x_-$ there are no preceding solutions and we are done. Otherwise, $x_0\neq x_-$, and since $u_n$ converges to $x_-$ when $s\rightarrow-\infty$, it must exit the ball $B(x_0,\epsilon)$ for $s\leq s_\star$. Let us denote by $s_n$ the first exit point:
		\begin{equation*}
		s_n:=\inf\{s\leq s_\star\ |\ u_n(\sigma_n+s',\cdot)\in B(x_0,\epsilon)\ \text{ for } s'\in[s,s_\star]\}.
		\end{equation*}
		Let us now show that $s_n\rightarrow -\infty$. Indeed, if $\{s_n\}$ were bounded, it would have had a subsequence converging to some $s_\circ\in \R$. Since $\tau_{\sigma_n}u_n$ converges to $v$ in $\cC^\infty_{loc}$ and since $s_\circ\leq s_\star$, we would get 
		\begin{equation*}
		\lim_{n\rightarrow \infty} u_n(s_n+\sigma_n,\cdot) = v(s_\circ,\cdot)\in B(x_0,\epsilon),
		\end{equation*}
		in contradiction to our choice of $s_n$, namely, that $u_n(\sigma_n+s_n,\cdot)\in \partial B(x_0,\epsilon)$. Therefore, we conclude that  $s_n\rightarrow -\infty$ and, in particular, $s_n+\sigma_n\rightarrow-\infty$ as well. Using Lemma~\ref{lem:app_shifts_have_conv_subseq} for $\tau_{s_n+\sigma_n}u_n$, we conclude that, after passing to a subsequence, this shifted sequence converges to some $v_{-1}\in \cM_{(H_-,J)}$. We need to prove that $v_{-1}$ converges to $x_0$ when $s\rightarrow\infty$. Fix $s>0$, then for $n$ sufficiently large, $s_n< s+s_n<s_\star$ and  
		$$
		\tau_{s_n+\sigma_n }u_n(s,\cdot) \in B(x_0,\epsilon).
		$$ 
		This implies that $v_{-1}(s,\cdot)\in \overline{B(x_0,\epsilon)}$ for all $s>0$, and hence $v_{-1}\in \cM_{(H_-,J)}(x_{-1},x_0)$, for some $x_{-1}\in \cP(H_-)$. 
		
		Continuing in this way we find $v_{-2}$, $v_{-3}$ and so on, until $x_{-k'} = x_-$. This process is finite, since there are finitely many orbits in $\cP(H_-)$ and the action is strictly decreasing in each step, namely, $\cA_{H_-}(x_{-i})>\cA_{H_-}(x_{-i-1})$, for $0\leq i\leq k'$. 
		
		\item\underline{Solutions connecting $v$ to $w$:} Recall that $\tau_{\sigma_n}u_n$ converges to $v\in\cM_{(H_-,J)}(x_0,x_1)$ and that $u_n$ converges to $w\in \cM_{(H,J)}(x_k, y_0)$. Let us find the solutions that connect $v$ to $w$ (or prove that $x_1= x_k$).
		In analogy with the previous case, pick $s_\star\geq 0$ such that $v(s,\cdot)\in B(x_1,\epsilon)$ for all $s\geq s_\star$. Then, for $n$ large enough, $u_n(s_\star+\sigma_n,\cdot)\in B(x_1,\epsilon)$ as well. Arguing similarly for $w\in\cM_{(H,J)}(x_k,y_0)$, there exists $s_\dagger\leq 0$ such that $w(s,\cdot)\in B(x_k,\epsilon)$ for all $s\leq s_\dagger$ and, since $u_n$ converge to $w$, for $n$ large enough, $u_n(s_\dagger,\cdot )\in B(x_k,\epsilon)$ as well. As $\sigma_n\rightarrow-\infty$, we have $s_\star+\sigma_n<s_\dagger$ for large $n$.  Consider the first exit of $u_n$ from $B(x_1,\epsilon)$,
		\begin{equation*}
		s_n:=\sup\{s\geq s_\star\ |\  u_n(\sigma_n+s',\cdot)\in B(x_1,\epsilon) \ \text{ for } s'\in[s_\star,s] \},
		\end{equation*}
		then, repeating the arguments from the previous step, one sees that $s_n\rightarrow\infty$. 
		Moreover, it follows from the definitions of $s_n$ and $s_\dagger$ that $s_n+\sigma_n<s_\dagger$. Therefore, the sequence $\{\sigma_n+s_n\}$ is either bounded or tends to $-\infty$. In the first case, it converges, after passing to a subsequence, to some number $s_\circ\in\R$. Moreover, since $u_n$ converges to $w$ on compacts, we conclude that $\tau_{\sigma_n+s_n}u_n$ converges to $\tau_{s_\circ}w$. In particular, this implies that $x_1 = x_k$. Indeed, for every $s<s_\circ$ and $n$ sufficiently large, $s\in [\sigma_n+s_\star,\sigma_n+s_n]$, and thus $u_n(s,\cdot)\in B(x_1,\epsilon)$. As a consequence, $w(s,\cdot)\in B(x_1,\epsilon)$ for all $s<s_\circ$, which means that $x_k=x_1$ and we are done.
		Let us now deal with the case where $s_n+\sigma_n\rightarrow-\infty$. By Lemma~\ref{lem:app_shifts_have_conv_subseq}, there exists a subsequence of $\tau_{s_n+\sigma_n}u_n$ that converges to a finite energy solution $v_1\in\cM_{(H_-,J)}$. We need to show that the left end of $v_1$ converges to $x_1$, namely, that $v_1\in\cM_{(H_-,J)}(x_1, x_2)$ for some $x_2\in \cP(H_-)$. Fix $s<0$ and let us show that $v_1(s,\cdot)\in B(x_1,\epsilon)$. Since $s_n\rightarrow\infty$, when $n$ is large enough, $s+s_n\in [s_\star, s_n]$. As we saw above, this implies that $\tau_{\sigma_n+s_n}u_n(s,\cdot) = u_n(s+s_n+\sigma_n,\cdot)\in B(x_1,\epsilon)$, and thus $v_1(s,\cdot)\in \overline{B(x_1,\epsilon)}$ as required. Repeating this process, we find solutions $v_2,\cdots v_{k-1}$ such that $v_i\in \cM_{(H_-,J)}(x_i,x_{i+1})$, and therefore these connect $v$ to $w$. As in the previous case, this process is finite since every solution $v_i$ is action decreasing and $H_-$ has finitely many 1-periodic orbits. 
		
		\item\underline{Solutions following $w$:} The right end of $w$ converges to $y_0\in \cP(H_+)$, and hence there exists $s_\star\geq 0$ such that for every $s\geq s_\star$, $w(s,\cdot)\in B(y_0,\epsilon)$. As $u_n$ converge to $w$ in $\cC^\infty_{loc}$, for $n$ large enough, $u_n(s_\star,\cdot)\in B(y_0,\epsilon)$ as well. Assume that $y_0\neq x_+$, otherwise there is nothing to prove. Then, since $u_n$ converge to $x_+$ for each $n$, is must leave the ball $B(y_0,\epsilon)$ at some point. Consider the first exit,
		\begin{equation*}
		s_n:= \sup\{s\geq s_\star\ |\ u_n(s',\cdot)\in B(y_0,\epsilon)\ \text{ for } s'\in[s_\star,s] \},
		\end{equation*}
		then, arguing as above, $s_n\rightarrow\infty$. Applying Lemma~\ref{lem:app_shifts_have_conv_subseq} to the sequence $u_n$ shifted by $s_n$, it converges (up to a subsequence) to a finite energy solution $v_1'\in \cM_{(H_+,J)}$. We need to show that the left end of $v_1'$  converges to $y_0$, namely, that $v_1'\in \cM_{(H_+,J)}(y_0,y_1)$ for some $y_1\in\cP(H_+)$. As before, fix any $s<0$, then when $n$ is large enough, $s+s_n\in [s_\star, s_n]$ and therefore, $\tau_{s_n}u_n(s,\cdot) = u_n(s+s_n,\cdot)\in B(y_0,\epsilon)$. Again, we conclude that $v_1'(s,\cdot)\in \overline{B(y_0,\epsilon)}$, which guarantees that $v_1'$ converges to $y_0$. Continuing by induction and using the fact that each $v_j'$ reduces the action concludes the proof. 
	\end{itemize}
\end{proof}
\begin{figure}
	\centering
	\includegraphics[scale=0.75]{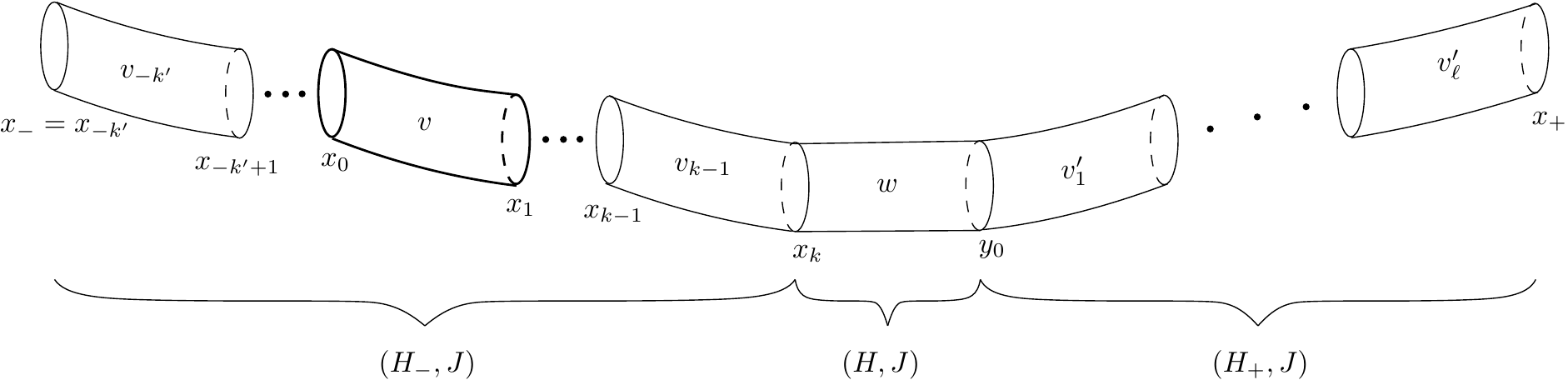}
	\caption{\small{An illustration of the broken trajectory as constructed in the Proof of Proposition~\ref{pro:appendix_broken_traj}.
	}}
	\label{fig:broken_traj1}
\end{figure}

\subsubsection{Concatenation of homotopies with possibly degenerate ends.} \label{app:partial_broken_traj}
In what follows, we study the breaking mechanism for solutions of (\ref{eq:FE}) with respect to homotopies of Hamiltonians, that are obtained as concatenations of finitely many given homotopies, with possibly degenerate ends.  In addition, we consider homotopies of almost complex structures, as opposed to the constant structures considered previously. When the ends of the first few concatenated homotopies are non-degenerate, we prove what we call a {\it partial convergence to a broken trajectory}.

Let $(H^1,J^1),\dots, (H^K, J^K)$  be pairs of homotopies of Hamiltonians and homotopies of almost complex structures, respectively, that are constant outside of $[0,1]$, namely 
\begin{equation*}
\partial_s H^k=0 \text{ and }\partial_s J^k=0,\ \ \text{ for }s\notin [0,1],\ k=1,\dots,K.
\end{equation*} 
Assume in addition that $H_+^k = H_-^{k+1}$ and $J_+^k = J_-^{k+1}$ for $k=1,\dots, K-1$.
Let  $\{\sigma_n^1\}_n,\dots, \{\sigma_n^K\}_n$ be monotone sequences of real numbers, such that for each $n$, $\sigma_n^1<\cdots<\sigma_n^K$ and for each $k\neq j$, the sequence of differences $\{\sigma_n^k-\sigma_n^j\}_n$ is unbounded. 
For the rest of this section, we consider the sequences $\{H_n\}$ and $\{J_n\}$ of homotopies of Hamiltonians and almost complex structures obtained by concatenating the shifts of $\{H^k\}$ and $\{J^k\}$ by the sequences $\{-\sigma_n^k\}$. More formally, $H_n$ and $J_n$ are the sequences satisfying 
\begin{equation*}	
H_n = \tau_{-\sigma_n^k} H^k \text{ on } M\times S^1\times [\sigma_n^k,\sigma_n^k+1],\quad \text{ and } \quad
J_n = \tau_{-\sigma_n^k} J^k \text{ on } S^1\times [\sigma_n^k,\sigma_n^k+1],
\end{equation*}
 for each $k=1, \dots, K$, and are locally constant elsewhere, see Figure~\ref{fig:App_concatenating_shifted_homotopies}.
\begin{figure}[h]
	\centering
	\includegraphics[scale=0.9]{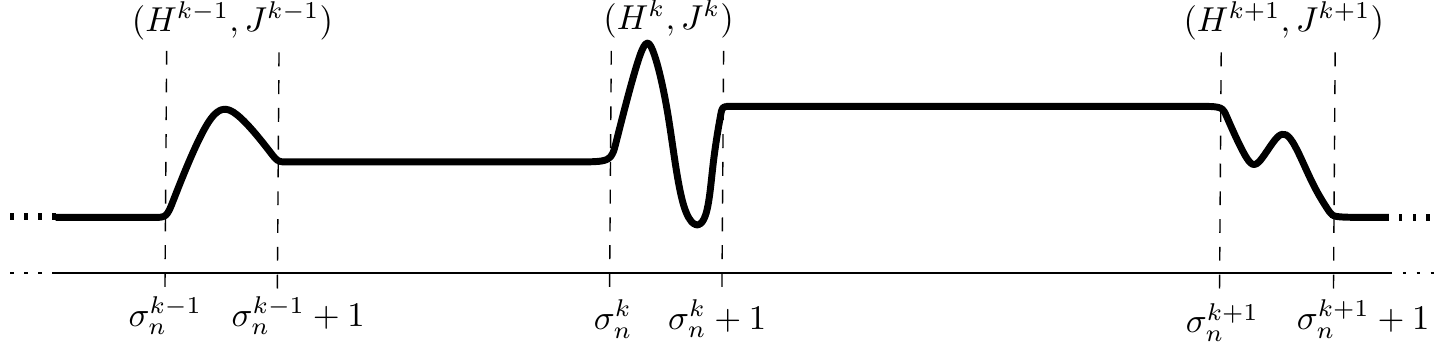}
	\caption{\small{An illustration of the pair $(H_n,J_n)$, which is a concatenation of the homotopies $(H^k, J^k)$ shifted by $\{\sigma_n^k\}$.
		}}
	\label{fig:App_concatenating_shifted_homotopies}
\end{figure}
Since the ends of the homotopies $H^k$ might be degenerate, a sequence of solutions $u_n\in \cM_{(H_n, J_n)}$ does not necessarily admit a subsequence converging to a broken trajectory. However, when some of the homotopies have non-degenerate ends, a slightly weaker statement holds:

\begin{prop}\label{pro:partial_broken_trajectory}
	Assume that there exists $1< K'\leq K$, such  that for every $k< K'$, the ends of the homotopy $H^k$ are non-degenerate. Then, for every sequence $u_n\in \cM_{(H_n, J_n)}$, there exist:
	\begin{itemize}
		\item A subsequence of $\{u_n\}$, which we still denote by $\{u_n\}$,
		\item periodic orbits $x^{k,\ell}\in \cP(H_-^{k+1})$  for  $\ell =0,\dots, L_k$ and $k=1,\dots, K'$, where $x^{1,0} = \lim_{s\rightarrow -\infty}u_n(s,\cdot)$ for all $n$,
		\item real numbers $s_n^{k,\ell}\in \R$  for $\ell =1,\dots, L_k$ and $k=1,\dots,K'$, such that $s_n^{k,\ell}<\sigma_n^{k+1}<s_n^{k+1,\ell'}$ for all $\ell =1,\dots, L_k$ and $\ell' =1,\dots, L_{k+1}$,
		\item solutions of $s$-independent Floer equations $v^{k,\ell}\in \cM_{(H_-^{k},J_-^{k})}(x^{k,\ell-1},x^{k,\ell})$ for $\ell=1,\dots, L_k$ and $k=1,\dots, K'$,
		\item solutions of $s$-dependent Floer equations $w^k\in\cM_{(H^k,J^k)}(x^{k,L_k}, x^{k+1,0})$, for $k=1,\dots, K'-1$, and $w^{K'}\in\cM_{(H^{K'}, J^{K'})}$ such that $\lim_{s\rightarrow-\infty}w^{K'}(s,\cdot)=x^{K', L_{K'}}$,
	\end{itemize}
	such that, in $\cC^\infty_{\text{loc}}(\R\times S^1;M)$, 
	\begin{equation*}
	\lim_{n\rightarrow \infty} u_n(\cdot +\sigma_n^k,\cdot)= w^k \quad \text{and} \quad 
	\lim_{n\rightarrow\infty} u_n(\cdot +s_n^{k,\ell},\cdot) =v^{k,\ell}.
	\end{equation*}
	for $1\leq \ell \leq L_k$ and $1\leq k\leq K'$.
\end{prop}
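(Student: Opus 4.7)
The plan is to iterate the breaking analysis of Proposition~\ref{pro:appendix_broken_traj} across the $K'$ concatenated pieces, treating each $w^k$ as a local $C^\infty_{\mathrm{loc}}$ limit centered at $\sigma_n^k$ and inserting $s$-independent trajectories $v^{k,\ell}$ in the ``buffer zones'' between consecutive centers.

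First, I would establish uniform $C^0$ bounds on $|du_n|$ by adapting Lemma~\ref{lem:app_bdd_gradient}. The energy identity~(\ref{eq:energy_id_homotopies}) gives
\begin{equation*}
E(u_n) \leq \cA_{H_n^-}(x_n^-) - \cA_{H_n^+}(x_n^+) + \sum_{k=1}^{K}\int_{\R\times S^1}|\partial_s H^k|\,ds\,dt .
\end{equation*}
Since $H_n \equiv H^1_-$ for $s\leq \sigma_n^1$, the left end $x_n^-$ lies in the finite set $\cP(H^1_-)$, and passing to a subsequence I may assume $x_n^- \equiv x^{1,0}$. The action of $x_n^+$ is bounded below because $H^K_+$ is uniformly bounded on $M$, so $E(u_n)$ is uniformly bounded; combined with the exclusion of sphere bubbles via symplectic asphericity, this yields a uniform $C^0$ gradient bound by the standard argument of \cite[Proposition~11.1.5]{audin-damian}.

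Second, for each $k\in\{1,\dots,K'\}$, I analyze the shifted sequence $\tau_{\sigma_n^k} u_n$. On any compact $[-R,R]\times S^1$, the hypothesis $|\sigma_n^k-\sigma_n^j|\to\infty$ for $j\neq k$ forces $\tau_{\sigma_n^k}(H_n,J_n) \equiv (H^k,J^k)$ on $[-R,R]$ for all sufficiently large $n$. Arzel\`a--Ascoli and elliptic regularity (see \cite[Lemma~12.1.1]{audin-damian}) then yield, via a diagonal extraction across $k$ and $R$, a single subsequence for which each $\tau_{\sigma_n^k} u_n$ converges in $C^\infty_{\mathrm{loc}}$ to a finite-energy solution $w^k \in \cM_{(H^k,J^k)}$. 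The non-degeneracy of $H^k$ for $k<K'$ forces both asymptotic ends of $w^k$ to exist as genuine periodic orbits, and for $k=K'$ the left end $H^{K'}_-=H^{K'-1}_+$ is non-degenerate, giving a left-limit of $w^{K'}$ but no constraint at $+\infty$.

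Third, I would insert the intermediate $s$-independent trajectories by the exit-time argument of Proposition~\ref{pro:appendix_broken_traj}. Fix $\varepsilon>0$ smaller than the pairwise $d_\infty$-distances among orbits in each $\cP(H^k_-)$ (finite by non-degeneracy). Starting from the right end of $w^k$ and the left end of $w^{k+1}$, track the first times $s_n^{k+1,\ell}$ at which $u_n$ exits $\varepsilon$-balls around the relevant orbits; because $s_n^{k+1,\ell}-\sigma_n^k \to \infty$ and $\sigma_n^{k+1}-s_n^{k+1,\ell}\to\infty$, an application of the analogue of Lemma~\ref{lem:app_shifts_have_conv_subseq} (for sequences of concatenated homotopies and homotopies of almost complex structures) forces the $C^\infty_{\mathrm{loc}}$ limit of $\tau_{s_n^{k+1,\ell}} u_n$ to be a solution of the $s$-independent Floer equation for $(H^{k+1}_-,J^{k+1}_-)$. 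Strict action decrease along such trajectories, together with finiteness of $\cP(H^{k+1}_-)$, terminates the process after $L_{k+1}$ steps. The boundary case $k=0$ (between $x^{1,0}$ and $w^1$) is handled identically, using that $u_n$ starts at $x^{1,0}$ for all $n$.

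The main obstacle is the bookkeeping of the diagonal extraction in Step~3: one must arrange that a single subsequence of $\{u_n\}$ simultaneously witnesses the correct limits at every ``center'' $\sigma_n^k$ and at every intermediate exit time $s_n^{k,\ell}$, while ensuring that the gaps $s_n^{k,\ell}-\sigma_n^{k-1}$, $\sigma_n^k - s_n^{k,\ell}$, and $\sigma_n^{k+1}-s_n^{k+1,\ell'}$ all tend to infinity so that the limiting equations have the correct ($s$-independent or $s$-dependent) type. This is handled by performing the breaking extraction iteratively from left to right, one component at a time; non-degeneracy of $H^k_-$ for each $k\leq K'$ is the key ingredient that makes each sub-extraction terminate. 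No additional ingredient is needed beyond adapting the one-homotopy compactness analysis of Section~\ref{app:broken_traj_non_deg} to the $s$-dependent almost complex structures $J^k$, which is routine since the elliptic estimates used there apply uniformly over compact families of $J$.
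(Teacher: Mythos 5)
Your overall strategy is the same as the paper's (uniform gradient bound, extraction of the limits $w^k$ at the centers $\sigma_n^k$ via an adapted shift lemma, then an exit-time induction inserting the $v^{k,\ell}$, terminated by strict action decrease and finiteness of $\cP(H^k_-)$), but your first step has a genuine gap exactly at the point this proposition is designed to handle. You write the energy bound as $E(u_n)\leq \cA_{H_{n-}}(x_n^-)-\cA_{H_{n+}}(x_n^+)+\sum_k\int|\partial_s H^k|$ and bound the last action ``because $H^K_+$ is uniformly bounded on $M$''. This is doubly problematic: since the right end $H^K_+$ may be degenerate, $u_n$ need not converge at $+\infty$ to any periodic orbit $x_n^+$ at all, so the quantity $\cA_{H_{n+}}(x_n^+)$ is not defined; and even for genuine periodic orbits, boundedness of the Hamiltonian does not bound the action, because $\cA_{H}$ also contains the symplectic area $\int\bar x^*\omega$ of a capping, which $\|H\|_\infty$ does not control. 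The correct argument, which is the one missing ingredient relative to your sketch, is Lemma~\ref{lem:app_ends_action_in_spctrum} (quoted from \cite{abbondandolo2019simple}): for a finite-energy solution the limit $\lim_{s\to+\infty}\cA_{H_{n+}}(u_n(s,\cdot))$ exists and lies in $\spec(H^K_+)$, which is a compact subset of $\R$; plugging this into the energy identity is precisely how Lemma~\ref{lem:app_bdd_gradient_partial_bt} obtains the uniform energy (hence gradient) bound. Without this, your foundational compactness step does not get off the ground.

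A secondary, more minor point: in your Step~3 you assert that the gaps $s_n^{k,\ell}-\sigma_n^{k-1}$ and $\sigma_n^{k}-s_n^{k,\ell}$ all tend to infinity, so that every intermediate limit solves an $s$-independent equation. One cannot ``ensure'' this; after showing $s_n^{k,\ell}-\sigma_n^{k-1}\to\infty$, the difference $s_n^{k,\ell}-\sigma_n^{k}$ may remain bounded, and in that case (as in the proof of Proposition~\ref{pro:appendix_broken_traj}) the shifted sequence converges to a translate of $w^{k}$, which forces the right end of the last inserted $v$ to coincide with the left end of $w^{k}$ and terminates the insertion at that stage. This dichotomy must be stated and used; it is what guarantees the asymptotic matching $x^{k,L_k}=\lim_{s\to-\infty}w^{k}(s,\cdot)$ rather than being a bookkeeping choice.
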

\noindent In this case, we say that $\{u_n\}$ partially converges to the {broken trajectory}
\begin{equation*}
\overline{v} = \left(\{v^{1,\ell}\}_{\ell=1}^{L_1},w^1,\{v^{2,\ell}\}_{\ell=1}^{L_2},w^2,\dots,w^{K'-1}, \{v^{K',\ell}\}_{\ell=1}^{L_{K'}},w^{K'}\right).
\end{equation*}

In order to prove Proposition~\ref{pro:partial_broken_trajectory}, we need  statements analogous to Lemma~\ref{lem:app_bdd_gradient} and Lemma~\ref{lem:app_shifts_have_conv_subseq} adapted for the current setting. Notice that due to our assumption, that $H^k$ has non-degenerate ends for $1\leq k<K'$, the left end of the homotopies $H_n$, which is equal to $H^1_-$, is non-degenerate. On the other hand, the right end, $H_{n+}=H^K_+$, might be degenerate. A solution $u$ of the Floer equation with respect to a homotopy with degenerate ends does not necessarily converge to periodic orbits at the ends. However, the following lemma asserts that the action of $u(s,\cdot)$ converges as $s\rightarrow\pm\infty$, to a limit that belongs to the action spectrum of the corresponding Hamiltonian. The following statement is proved in the proof of Proposition~2.1, (ii) from \cite{abbondandolo2019simple} for the left end of $u$, namely, $\lim_{s\rightarrow -\infty} \cA_{H_-}(u(s,\cdot)) \in \spec(H_-)$. The proof for the right end is completely analogous and we therefore omit it. 
\begin{lemma}[\cite{abbondandolo2019simple}]\label{lem:app_ends_action_in_spctrum}
	Let $(H, J)$ be a pair of homotopies of Hamiltonians and almost complex structures. Then, for every finite energy solution $u\in \cM_{(H,J)}$, 
	\begin{equation*}
	\lim_{s\rightarrow \pm\infty} \cA_{H_\pm}(u(s,\cdot)) \in \spec(H_\pm).
	\end{equation*}
\end{lemma}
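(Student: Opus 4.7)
The plan is to reduce to the $s$-independent setting at infinity and combine monotonicity of the action along the gradient flow with standard Floer compactness. By symmetry, I focus on the $s \to -\infty$ end; the case $s \to +\infty$ is analogous.

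First, choose $R>0$ so that $\partial_s H = 0$ and $\partial_s J = 0$ outside $[-R, R]$. Then on $(-\infty, -R] \times S^1$, $u$ solves the $s$-independent Floer equation for the pair $(H_-, J_-)$, and the standard computation gives
$$\frac{d}{ds} \cA_{H_-}(u(s,\cdot)) = -\|\partial_s u(s,\cdot)\|_J^2 \leq 0.$$
Integrating from $s$ to $-R$ yields
$$\cA_{H_-}(u(s,\cdot)) = \cA_{H_-}(u(-R,\cdot)) + \int_s^{-R} \|\partial_\sigma u\|_J^2 \, d\sigma.$$
Because $u$ has finite energy, the right-hand side converges to $\cA_{H_-}(u(-R,\cdot)) + E(u|_{(-\infty, -R] \times S^1})$ as $s \to -\infty$, so the limit $\ell := \lim_{s \to -\infty} \cA_{H_-}(u(s,\cdot))$ exists in $\R$.

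To identify $\ell$ with an element of $\spec(H_-)$, I pick any sequence $s_n \to -\infty$ and set $u_n(\sigma, t) := u(\sigma + s_n, t)$. For $n$ large, $u_n$ solves the $(H_-, J_-)$-Floer equation on $[-T, T] \times S^1$ for any fixed $T>0$, and
$$E(u_n|_{[-T, T] \times S^1}) = \int_{s_n - T}^{s_n + T} \|\partial_s u\|_J^2 \, d\sigma \longrightarrow 0,$$
being a tail of the total energy integral. Since $(M,\omega)$ is closed and symplectically aspherical, any hypothetical bubble would be a non-constant $J$-holomorphic sphere of positive $\omega$-area, contradicting $\omega|_{\pi_2(M)} = 0$. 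Standard elliptic estimates together with the absence of bubbling furnish uniform $\cC^k_{\text{loc}}$ bounds, so a subsequence of $\{u_n\}$ converges in $\cC^\infty_{\text{loc}}(\R \times S^1; M)$ to a finite-energy solution $u_\infty$ of the $(H_-, J_-)$-Floer equation of vanishing energy. Such a zero-energy solution is $s$-independent, hence $u_\infty(\sigma, t) = x_\infty(t)$ for some $x_\infty \in \cP(H_-)$.

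The $\cC^\infty_{\text{loc}}$ convergence together with the continuity of $\cA_{H_-}$ on the loop space then give
$$\ell = \lim_{n \to \infty} \cA_{H_-}(u(s_n, \cdot)) = \cA_{H_-}(x_\infty) \in \spec(H_-),$$
as required. The principal difficulty is the compactness step, and it is precisely the asphericity hypothesis that makes it go through cleanly: without $\omega|_{\pi_2(M)}=0$, bubbling could absorb the vanishing energy in the limit and shift the action by a nontrivial period of $\omega$, breaking the identification of $\ell$ with a point of the spectrum. The case $s\to+\infty$ is handled identically, replacing $(H_-, J_-)$ by $(H_+, J_+)$ and reversing the direction of monotonicity.
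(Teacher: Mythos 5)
Your argument is correct and is essentially the standard proof that the paper itself defers to (it cites the proof of Proposition 2.1(ii) of Abbondandolo--Haug--Schlenk rather than reproving it): monotonicity and boundedness of $s\mapsto\cA_{H_\pm}(u(s,\cdot))$ on the region where the homotopy is constant give existence of the limit, and shifting the solution plus asphericity-based compactness identifies that limit with the action of a $1$-periodic orbit. The only point worth adding is that the limit loop $x_\infty$ is contractible (being a $\cC^0$-limit of the contractible loops $u(s_n,\cdot)$), so its action does indeed lie in $\spec(H_\pm)$.
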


Denoting by $\cM:=\cup_n \cM_{(H_n,J_n)}$ the set of finite energy solutions, the next lemma  provides a uniform bound for the energy of $u\in \cM$ and is an adjustment of Lemma~\ref{lem:app_bdd_gradient} to the current setting.
\begin{lemma}\label{lem:app_bdd_gradient_partial_bt}
	There exists a constant $A>0$ such that for every $u\in \cM$ and $(s,t)\in \R\times S^1$, one has $\|\grad_{(s,t)} u\|\leq A$.
\end{lemma}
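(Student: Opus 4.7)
I will mimic the proof of Lemma~\ref{lem:app_bdd_gradient}: first establish a uniform bound on the energy $E(u)$ for $u\in\cM$, and then upgrade that to a uniform pointwise bound on the gradient via the standard rescaling/bubbling argument, which is available because $(M,\omega)$ is symplectically aspherical. The one new difficulty compared with Lemma~\ref{lem:app_bdd_gradient} is that the right end $H^K_+$ of the concatenated homotopy $H_n$ may be degenerate, so $u$ need not converge to a $1$-periodic orbit as $s\to+\infty$. That is precisely what Lemma~\ref{lem:app_ends_action_in_spctrum} is for.

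First I would observe that on the closed symplectically aspherical manifold $M$ the action spectrum $\spec(F)$ of any smooth Hamiltonian $F$ is a bounded subset of $\R$: elements $x\in\cP(F)$ satisfy $\|\dot x\|_\infty\leq\|X_F\|_\infty$, so $\cP(F)$ is equicontinuous and hence pre-compact in $C^0(S^1,M)$ by Arzel\`a--Ascoli; asphericity makes $\cA_F$ a well-defined continuous functional on contractible $C^0$-loops, so $\cA_F(\cP(F))$ is bounded. Applying this to $H^1_-$ and $H^K_+$, and combining with Lemma~\ref{lem:app_ends_action_in_spctrum}, any finite-energy $u\in\cM_{(H_n,J_n)}$ admits limits $a^\pm(u):=\lim_{s\to\pm\infty}\cA_{H_{n\pm}}(u(s,\cdot))$ lying in a fixed bounded subset of $\R$, independent of $n$ and $u$.

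The next step is the generalized energy identity. A direct computation using the Floer equation gives $\tfrac{d}{ds}\cA_{H_{n,s}}(u(s,\cdot))=-\|\partial_s u\|_{J_n}^2+(\partial_sH_n)\circ u$; integrating over $[-S,S]\times S^1$ and letting $S\to\infty$ yields
\begin{equation*}
E(u)\;=\;a^-(u)-a^+(u)+\int_{\R\times S^1}(\partial_sH_n)\circ u\;ds\,dt .
\end{equation*}
The support of $\partial_sH_n$ is the disjoint union of $K$ strips $[\sigma_n^k,\sigma_n^k+1]\times S^1$, each of area $1$, and on the $k$-th strip $\partial_sH_n$ coincides, up to an $s$-translation, with $\partial_sH^k$. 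Hence the last integral is bounded in absolute value by $\sum_k\|\partial_sH^k\|_\infty$, which is independent of $n$. Together with the previous step this gives $E(u)\leq C$ for a constant $C$ depending only on the fixed data $\{(H^k,J^k)\}_{k=1}^K$.

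Finally, the uniform $L^2$-bound on $\partial_s u$ is upgraded to a pointwise bound by the standard bubbling argument (as in \cite[Propositions 6.6.2 and 11.1.5]{audin-damian}): if $\|\partial_s u\|$ blew up, a suitable rescaling would produce a non-constant $J$-holomorphic plane of finite area in $M$, hence after removal of singularities a non-constant $J$-holomorphic sphere, contradicting asphericity of $\omega$. The rescaling limit is $J$-holomorphic for some $\omega$-compatible $J$ lying in the compact family $\bigcup_k J^k(S^1\times[0,1])$, which keeps the analysis uniform in $n$. A bound on $\|\partial_t u\|$ then follows from the Floer equation and the uniform bound on $\|X_{H_n}\|_\infty$. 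The only genuinely delicate point is the generalized energy identity in the presence of possibly degenerate ends, and that is exactly where Lemma~\ref{lem:app_ends_action_in_spctrum} is indispensable.
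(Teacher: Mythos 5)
Your proof is correct and follows essentially the same route as the paper's: a uniform energy bound obtained from the generalized energy identity, with the boundary terms controlled via Lemma~\ref{lem:app_ends_action_in_spctrum} and the compactness of the action spectrum, and the term $\int(\partial_sH_n)\circ u$ controlled by the fixed number $K$ of unit-width strips supporting $\partial_sH_n$; the pointwise gradient bound then follows by the standard bubbling argument of \cite[Propositions 6.6.2, 11.1.5]{audin-damian} using asphericity. Your added justifications (boundedness of the spectrum via Arzel\`a--Ascoli, compactness of the family of almost complex structures) are details the paper leaves implicit but do not change the argument.
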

\begin{proof}
	For a finite energy solution $u$ of a homotopy with possibly degenerate ends, the limits $\lim_{s\rightarrow \pm \infty} \cA_{H_\pm}(u(s,\cdot))$ exist and $u$ satisfies the following energy-identity: 
	\begin{equation*}
	E(u) = \lim_{s\rightarrow-\infty}\cA_{H_{n-}}(u(s,\cdot))- \lim_{s\rightarrow\infty}\cA_{H_{n+}}(u(s,\cdot)) +\int_{\R\times S^1} \partial_s H_n(u(s,t),t,s)\ ds\ dt,
	\end{equation*}
	see, for example, \cite[p.8]{abbondandolo2019simple}. When $u\in \cM_{(H_n, J_n)}$, it follows from Lemma~\ref{lem:app_ends_action_in_spctrum}, together with the fact that the action spectrum is a compact subset of $\R$, that 
	\begin{eqnarray}
	\nonumber
	E(u) &\leq& \max{\spec(H_{n-})}-\min{\spec(H_{n+})} +\int_{\R\times S^1} \partial_s H_n(u(s,t),t,s)\ ds\ dt \\
	\nonumber
	&=& \max{\spec(H_{-}^1)}-\min{\spec(H_{+}^K)} +\int_{\R\times S^1} \partial_s H_n(u(s,t),t,s)\ ds\ dt \\
	\nonumber
	&\leq& \max{\spec(H_{-}^1)}-\min{\spec(H_{+}^K)} +K\cdot C,
	\end{eqnarray}
	where, by our construction, $K$ bounds the area of the support of $\max_{x\in M}\partial_s H_n(x,t,s)$ in $S^1\times \R$, and $C$ is defined by 
	\begin{eqnarray}
	\nonumber
	C&:=&\sup\left\{\frac{\partial H_n}{\partial s}(x,t,s)\ |\ (x,t,s)\in M\times S^1\times \R,\  n\geq 0\right\}\\
	&=& \max_{k\leq K}\  \sup\left\{\frac{\partial H^k}{\partial s}(x,t,s)\ |\ (x,t,s)\in M\times S^1\times [0,1]\right\}.
	\end{eqnarray}
	We therefore have obtained a uniform bound on the energies of solutions in $\cM$. Arguing as in \cite[Propositions 6.6.2, 11.1.5]{audin-damian}, we conclude that there exists $A>0$ such that $\|\grad_{(s,t)} u\|\leq A$. 
\end{proof}
The last lemma for this section is analogous to Lemma~\ref{lem:app_shifts_have_conv_subseq}. It can be viewed as a special case of Proposition~2.1 from \cite{abbondandolo2019simple}, but we include the proof for the sake of completeness.
\begin{lemma}\label{lem:app_partial_shifts_have_conv_subseq}
	Let $u_n\in \cM_{(H_n,J_n)}$ be a sequence of solutions and let $s_n\in\R$ be a sequence of numbers such that, for some $0\leq k \leq K$ and for every $n$, $\sigma_n^k\leq s_n\leq\sigma_n^{k+1}$, where we set $\sigma_n^0=-\infty$ and $\sigma_n^{K+1}=+\infty$ to simplify the notations. Then, the sequence of shifted solutions $\tau_{s_n}u_n(\cdot, \cdot) = u_n(\cdot+s_n, \cdot)$ has a subsequence that converges in the $\cC^{\infty}_{{loc}}$ topology to a limit $v$. Moreover:
	\begin{enumerate}
		\item If $s_n-\sigma_n^k\rightarrow\sigma\in \R$, then $v\in \cM_{(\tau_\sigma H^k,\tau_\sigma J^k)}$.
		\item If $s_n-\sigma_n^{k+1}\rightarrow\sigma\in \R$, then $v\in \cM_{(\tau_\sigma H^{k+1},\tau_\sigma J^{k+1})}$.
		\item If $s_n-\sigma_n^{k+1}\rightarrow -\infty$ and $s_n-\sigma_n^{k}\rightarrow \infty$, then  $v\in \cM_{(H_+^k,J_+^k)}=\cM_{(H_-^{k+1},J_-^{k+1})}$.
	\end{enumerate}
\end{lemma}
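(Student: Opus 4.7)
The plan is to mimic closely the proof of Lemma~\ref{lem:app_shifts_have_conv_subseq}, replacing the single homotopy with a concatenation. The first step is to invoke the uniform gradient bound from Lemma~\ref{lem:app_bdd_gradient_partial_bt}, which implies that the shifted sequence $\{\tau_{s_n}u_n\}$ is equicontinuous; combining this with Arzelà--Ascoli and elliptic bootstrapping (as in \cite[Lemma~12.1.1]{audin-damian}) yields a subsequence converging in $\cC^\infty_{\text{loc}}(\R\times S^1;M)$ to some limit $v$. Finite energy of $v$ follows from Fatou applied to the energy density, using the uniform bound in the proof of Lemma~\ref{lem:app_bdd_gradient_partial_bt}.

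The remaining task is to identify the PDE satisfied by $v$ in each of the three cases, which amounts to understanding the shifted pair $(\tau_{s_n}H_n,\tau_{s_n}J_n)$ on an arbitrary compact cylinder $[-r,r]\times S^1$ for $n$ large. The key inputs are that each building block $(H^\ell,J^\ell)$ is $s$-constant outside $[0,1]$, that $H_n$ (resp. $J_n$) agrees with the appropriate shift $\tau_{-\sigma_n^\ell}H^\ell$ (resp. $\tau_{-\sigma_n^\ell}J^\ell$) on $[\sigma_n^\ell,\sigma_n^\ell+1]$ and is $s$-constant between these intervals, and that for $\ell\neq\ell'$ the differences $|\sigma_n^\ell-\sigma_n^{\ell'}|$ tend to $\infty$.

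In case~(1), where $s_n-\sigma_n^k\to\sigma\in\R$, the interval $s_n+[-r,r]$ lies inside $[\sigma_n^k-(r+|\sigma|)-1,\sigma_n^{k+1}]$ for large $n$, and only the $k$-th block influences $\tau_{s_n}H_n$ on $[-r,r]\times S^1$; that is, $\tau_{s_n}H_n|_{[-r,r]\times S^1}=\tau_{s_n-\sigma_n^k}H^k$, which converges in $\cC^\infty$ on compacts to $\tau_\sigma H^k$, and similarly for the almost complex structures. Passing to the limit in the Floer equation yields $v\in\cM_{(\tau_\sigma H^k,\tau_\sigma J^k)}$. Case~(2) is treated symmetrically with respect to the $(k{+}1)$-th block. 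In case~(3), both $s_n-\sigma_n^k-1\to+\infty$ and $\sigma_n^{k+1}-s_n\to+\infty$, so $s_n+[-r,r]$ eventually lies in $[\sigma_n^k+1,\sigma_n^{k+1}]$, where $H_n\equiv H_+^k$ and $J_n\equiv J_+^k$ are $s$-independent; hence $\tau_{s_n}H_n$ and $\tau_{s_n}J_n$ are eventually constant on $[-r,r]$, and the limit $v$ solves the $s$-independent Floer equation for $(H_+^k,J_+^k)=(H_-^{k+1},J_-^{k+1})$.

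The only real obstacle is the careful bookkeeping of these ``range of influence'' inequalities, i.e., verifying in each case that exactly one block determines $\tau_{s_n}H_n$ on $[-r,r]\times S^1$ for $n$ large; once this is set up, passage to the limit in the Floer equation is immediate from the $\cC^\infty_{\text{loc}}$ convergence of both $u_n$ and the pair $(\tau_{s_n}H_n,\tau_{s_n}J_n)$.
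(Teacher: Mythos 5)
Your proposal is correct and follows essentially the same route as the paper's proof: the uniform gradient bound of Lemma~\ref{lem:app_bdd_gradient_partial_bt} plus Arzel\`a--Ascoli and elliptic regularity give the $\cC^\infty_{loc}$-convergent subsequence, and in each case one identifies the limit equation by checking that $(\tau_{s_n}H_n,\tau_{s_n}J_n)$ stabilizes on compact cylinders to the single relevant block (or its constant end) and converges to the required pair. The ``range of influence'' bookkeeping you describe is exactly the content of the paper's sketch, so no gap remains.
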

\begin{proof}
	The proof is very similar to that of Lemma~\ref{lem:app_shifts_have_conv_subseq} and therefore we only sketch the changes. As before, Lemma~\ref{lem:app_bdd_gradient_partial_bt} implies that the sequence $v_n:=\tau_{s_n}u_n$ is equicontinuous, and by the Arzel\`a-Ascoli theorem and elliptic regularity there exists a subsequence converging to $v$. The maps $v_n$ solve the Floer equation with respect to the translated pair $(\tau_{s_n} H_n,\tau_{s_n} J_n)$:
	\begin{equation*}
	0=\frac{\partial v_n}{\partial s}+ (\tau_{s_n}J_n)\frac{\partial v_n}{\partial t} + \grad (\tau_{s_n} H_n)\circ v_n.
	\end{equation*}
	In each case, in order to prove that $v$ is a solution of the corresponding equation, one shows that the translated homotopies $(\tau_{s_n} H_n, \tau_{s_n} J_n)$ converge uniformly on compacts to the required pair. For example, in the first case, where $s_n-\sigma_n^k\rightarrow\sigma\in \R$, it follows from the definition of $(H_n, J_n)$ that, given $r>0$, the sequence $(\tau_{\sigma_n^k} H_n, \tau_{\sigma_n^k} J_n)$ eventually stabilizes to $(H^k,J^k)$ on $\{|s|\leq r\}$. As a consequence, $(\tau_{s_n} H_n, \tau_{s_n} J_n) \xrightarrow{\cC^\infty_{loc}} (\tau_\sigma H^k,\tau_\sigma J^k)$.
\end{proof} 

We are now ready to sketch the proof of Proposition~\ref{pro:partial_broken_trajectory}. Note that we will skip some of the details appearing in the proof of Proposition~\ref{pro:appendix_broken_traj}.
\begin{proof}[Proof of Proposition~\ref{pro:partial_broken_trajectory}]
	As mentioned above, for each $n$, the left end of $H_{n}$ is a non-degenerate Hamiltonian. As a consequence, the left end of $u_n$ converges to a periodic orbit, namely, there exist $x_{n-}\in \cP(H_{n-})$, such that $\lim_{s\rightarrow-\infty}u_n(s,\cdot) = x_{n-}(\cdot)$ (see, for example, the proof of Theorem 6.5.6 from \cite{audin-damian}). Since $ \cP(H_{n-})=\cP(H_-^1)$ is a finite set, we may assume, by passing to a subsequence, that $x^{1,0}:=x_{n-}$ is independent of $n$. 
	
	Next, let us apply Lemma~\ref{lem:app_partial_shifts_have_conv_subseq} to the sequence $\{u_n\}$ with the shifts $\sigma_n^k$. Then, after passing to a subsequence, for each $k=1,\dots, K'$ we obtain $w^k\in\cM_{(H^k, J^k)}$ such that $\tau_{\sigma_n^k}u_n\xrightarrow{\cC^\infty_{loc}}w^k$. Fixing $1\leq k\leq K'$, we need to find solutions $\{v^{k,\ell}\}_{\ell=1}^{L_k}$ connecting $w^{k-1}$ to $w^{k}$ (and $x^{1,0}$ to $w^1$).  It follows from the non-degeneracy of $H^k_-$ that $\cP(H_-^k)= \cP(H_+^{k-1})$ is a  finite set (notice that the left end of $H^{K'}$ is non-degenerate, as it coincides with the right end of $H^{K'-1}$). Therefore, we can repeat the arguments from the proof of Proposition~\ref{pro:appendix_broken_traj}. For $\epsilon>0$ small enough, the balls $\{B(x_-,\epsilon)\}_{x_-\in \cP(H^k_-)}$ are disjoint, and denoting $y^{k}:=\lim_{s\rightarrow-\infty} w^k(s,\cdot)$ and  $x^{k,0}=\lim_{s\rightarrow\infty} w^{k-1}(s,\cdot)$, there exists $s_\star \in \R$ such that $w^{k-1}(s,\cdot)\in B(x^{k,0},\epsilon)$ for $s\geq s_\star$. It follows from the convergence of  $\tau_{\sigma_n^{k-1}}u_n$ to $w^{k-1}$  that $u_n(s_\star+\sigma_n^{k-1},\cdot)\in  B(x^{k,0},\epsilon)$ when $n$ is large. Denoting by 
	\begin{equation*}
	s_n^{k,1}:=\sup\{s\geq s_\star+\sigma_n^{k-1}\ |\  u_n(s',\cdot)\in B(y^k,\epsilon) \ \text{ for } s'\in[s_\star+\sigma_n^{k-1},s] \}
	\end{equation*}
	the first exit, one can argue as in the proof of Proposition~\ref{pro:appendix_broken_traj} to show that $s_n^{k,1}-\sigma_n^{k-1}\xrightarrow[{n\rightarrow\infty}]{}\infty$. Applying Lemma~\ref{lem:app_partial_shifts_have_conv_subseq} to $\{u_n\}$ shifted by $s_n^{k,1}$, we conclude that $\tau_{s_n^{k,1}}u_n$ either converges to $\tau_\sigma w^{k}$, for some $\sigma\in \R$, or to $v^{k,1}\in\cM_{(H^k_-, J^k_-)}=\cM_{(H^{k-1}_+, J^{k-1}_+)}$. In the first case, the right end of $w^{k-1}$ and the left end of $w^{k}$ coincide, namely $x^{k,0}=y^k$, and we are done. Otherwise, we continue by induction and find $v^{k,\ell}\in\cM_{(H^k_-, J^k_-)}$ connecting $w^{k-1}$ to $w^{k}$. As argued previously, this process is finite since each $v^{k,\ell}$ decreases the action, and $\spec(H_-^k)$ is a finite set. 
\end{proof}

\subsection{Barricades and perturbations.}\label{app:barricades_and_perturbations}
Throughout this section, we fix an almost complex structure $J$ on $M$, a CIB domain $U$, and $U_\circ\Subset U$. We will consider non-degenerate Hamiltonians, or homotopies with non-degenerate ends, that have a barricade in $U$ around $U_\circ$, when paired with $J$.

\subsubsection{Barricades survive under small enough perturbations.}\label{app:barricade_survives_perturbation}
In this section we  show that barricades survive under perturbations of $H$. Here $H$ denotes a homotopy with non-degenerate ends and we consider Hamiltonians as a special case, by identifying them with constant homotopies.

\begin{prop}\label{pro:appendix}
	Let $H$ be a homotopy with non-degenerate ends, such that $\partial_s H|_{|s|>R}=0$ for some $R>0$ (in particular, $H$ can be a non-degenerate Hamiltonian), and such that the pairs $(H,J)$, $(H_\pm,J)$ have a barricade in $U$ around $U_\circ$. Then, for every $\cC^\infty$-small enough perturbation $H'$ of $H$ that satisfies $\cP(H_\pm)=\cP(H_\pm')$ and $\partial_s H'|_{|s|>R} =0$, the pair $(H',J)$ has a barricade in $U$ around $U_\circ$. 
\end{prop}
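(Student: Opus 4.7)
The plan is to argue by contradiction, using Proposition~\ref{pro:appendix_broken_traj} to extract a broken trajectory of the original pair $(H,J)$ from a hypothetical sequence of barricade-violating solutions of nearby pairs, and then use the barricades of $(H_\pm,J)$ and $(H,J)$ to force every piece into $U_\circ$ (or $U$), contradicting the fact that the violating solutions must escape this set.

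Suppose the conclusion fails: there is a sequence of perturbations $H_n \to H$ in $\cC^\infty(M\times S^1\times \R)$ satisfying $\cP(H_{n\pm})=\cP(H_\pm)$ and $\partial_s H_n$ supported in $[-R,R]$, for which $(H_n,J)$ has no barricade. For each $n$, pick a solution $u_n\in \cM_{(H_n,J)}(x_{n-},x_{n+})$ that violates one of the two conditions in Definition~\ref{def:barricade}. Passing to a subsequence and relabeling, we may assume all $u_n$ violate the same condition; treat condition (1), the other being analogous. Thus $x_{n-}\subset U_\circ$ and there is $(s_n,t_n)\in\R\times S^1$ with $u_n(s_n,t_n)\in U_\circ^c$. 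Since $\cP(H_\pm)$ is finite, a further subsequence makes $x_\pm:=x_{n\pm}$ independent of $n$, and, since $S^1$ is compact, we may also assume $t_n\to t_0\in S^1$.

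Apply Proposition~\ref{pro:appendix_broken_traj} to $\{u_n\}$ with the sequence of shifts $\{\sigma_n\}=\{s_n\}$. After a further subsequence, $\{u_n\}$ converges to a broken trajectory
\[
\overline{v}=(v_1,\ldots,v_k,\,w,\,v'_1,\ldots,v'_\ell),\qquad v_i\in\cM_{(H_-,J)}(x_{i-1},x_i),\ w\in\cM_{(H,J)}(x_k,y_0),\ v'_j\in\cM_{(H_+,J)}(y_{j-1},y_j),
\]
with $x_0=x_-$, $y_\ell=x_+$, and the shifted sequence $u_n(s_n+\cdot,\cdot)$ converges in $\cC^\infty_{\rm loc}$ to a piece $V$ of $\overline{v}$, possibly translated in the $s$-coordinate by some $\sigma^\star\in\R$. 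Since $x_-=x_0\subset U_\circ$ and $(H_-,J)$ has a barricade in $U$ around $U_\circ$, applying the barricade condition iteratively yields $\im(v_i)\subset U_\circ$, and hence $x_i\subset U_\circ$, for every $i$. In particular $x_k\subset U_\circ$, so by the barricade for $(H,J)$ we get $\im(w)\subset U_\circ$ and $y_0\subset U_\circ$. Applying the barricade for $(H_+,J)$ inductively along $v'_1,\ldots,v'_\ell$ then gives $\im(v'_j)\subset U_\circ$ for all $j$.

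Thus every piece of $\overline{v}$ has image in the open set $U_\circ$. In particular $V(-\sigma^\star,t_0)\in U_\circ$. On the other hand, the local uniform convergence $u_n(s_n+\cdot,\cdot)\to V(\cdot-\sigma^\star,\cdot)$ evaluated along the bounded sequence $(0,t_n)\to(0,t_0)$ gives $u_n(s_n,t_n)\to V(-\sigma^\star,t_0)$. Since $U_\circ^c$ is closed and $u_n(s_n,t_n)\in U_\circ^c$ for all $n$, the limit also lies in $U_\circ^c$, contradicting $V(-\sigma^\star,t_0)\in U_\circ$. The case of a violated condition (2), with $x_{n+}\subset U$, is handled identically by propagating the containment $\im(\cdot)\subset U$ from the right end leftward through $v'_\ell,\ldots,v'_1$, $w$, $v_k,\ldots,v_1$, using the barricades of $(H_+,J)$, $(H,J)$ and $(H_-,J)$ respectively. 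The main obstacle in this argument is purely conceptual: reconciling the merely $\cC^\infty_{\rm loc}$ convergence of bubble-free breaking with the pointwise escape $u_n(s_n,t_n)\notin U_\circ$. The trick is to use the shift sequence $\sigma_n=s_n$ in Proposition~\ref{pro:appendix_broken_traj} so that the violating point is captured at a fixed $s$-coordinate in the limit, which is possible precisely because the proposition gives convergence \emph{for any} sequence of shifts.
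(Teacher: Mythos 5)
Your proof is correct and follows essentially the same route as the paper: argue by contradiction, apply Proposition~\ref{pro:appendix_broken_traj} with shifts located at the escaping points, propagate the barricade through the broken trajectory (your inductive argument is exactly the content of the paper's Lemma~\ref{lem:barricade_ban_broken_traj}), and contradict the $\cC^\infty_{\rm loc}$ convergence. The only cosmetic difference is that you track a single escaping point $(s_n,t_n)$ where the paper tracks the loop $u_n(\sigma_n,\cdot)$; both work.
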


In order to prove this proposition, we will use the convergence to broken trajectories, which was established in Section~\ref{app:broken_traj_non_deg}. Therefore, we start by showing that barricades also restrict broken trajectories.
\begin{lemma}\label{lem:barricade_ban_broken_traj}
	Let $H$ be a homotopy with non-degenerate ends (or, in particular, a non-degenerate Hamiltonian) such that the pairs $(H,J)$ and $(H_\pm, J)$  have a barricade in $U$ around $U_\circ$. Then, for a broken trajectory $\overline{v} = (v_1,\dots,v_k,w,v_1',\dots,v_\ell')$ connecting $x_\pm\in\cP(H_\pm)$, we have:
	\begin{itemize}
		\item If $x_-\subset U_\circ$, then $\overline{v}\subset U_\circ$.
		
		\item If $x_+\subset U$, then $\overline{v}\subset U$.
	\end{itemize}
\end{lemma}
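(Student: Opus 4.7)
The plan is a straightforward induction on the pieces of the broken trajectory, using the two defining properties of a barricade (Definition~\ref{def:barricade}) applied to each individual solution in the chain.

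For the first statement, I would argue left-to-right. Suppose $x_- \subset U_\circ$. Recall that $v_1 \in \cM_{(H_-,J)}(x_-, x_1)$ and that $(H_-,J)$ has a barricade in $U$ around $U_\circ$. Since the left end $x_- \subset U_\circ$, property (1) of the barricade immediately yields $\im(v_1) \subset U_\circ$, and in particular $x_1 = \lim_{s\to+\infty} v_1(s,\cdot) \subset U_\circ$. Inductively, assuming $x_{i-1} \subset U_\circ$, the same argument applied to $v_i \in \cM_{(H_-,J)}(x_{i-1}, x_i)$ shows $\im(v_i) \subset U_\circ$ and $x_i \subset U_\circ$. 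Thus $x_k \subset U_\circ$. Now $w \in \cM_{(H,J)}(x_k, y_0)$; since $(H,J)$ has a barricade and the left end $x_k \subset U_\circ$, property (1) again gives $\im(w) \subset U_\circ$ and hence $y_0 \subset U_\circ$. A further induction on $v'_j \in \cM_{(H_+,J)}(y_{j-1}, y_j)$, using property (1) for the barricade of $(H_+,J)$, finishes the first statement.

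For the second statement, I would run the same induction in reverse, this time invoking property (2) of the barricade. Starting with $y_\ell = x_+ \subset U$, property (2) applied to $v'_\ell$ yields $\im(v'_\ell) \subset U$, whence $y_{\ell-1} \subset U$, and by backward induction all $v'_j \subset U$, so $y_0 \subset U$. Applying property (2) to $w$ with right end $y_0 \subset U$ gives $\im(w) \subset U$, and thus $x_k \subset U$. A final backward induction on the $v_i$, again via property (2) for $(H_-,J)$, yields $\im(v_i) \subset U$ for every $i$.

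There is no real obstacle here: the content of the lemma is essentially ``barricades compose under concatenation of trajectories,'' and both directions are immediate from the two clauses of Definition~\ref{def:barricade} once one observes that the matching condition $\lim_{s\to+\infty} v_i = \lim_{s\to-\infty} v_{i+1}$ (and analogously at the $w$-junctions) transports containment of endpoints from one piece to the next. The only thing to keep in mind is to apply the correct clause of the barricade definition to the correct pair among $(H_-,J)$, $(H,J)$, $(H_+,J)$, depending on which piece of $\overline{v}$ is under consideration.
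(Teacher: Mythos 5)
Your proof is correct and follows essentially the same route as the paper: a left-to-right induction along the pieces of the broken trajectory using clause (1) of Definition~\ref{def:barricade} for the first statement, applied to $(H_-,J)$, $(H,J)$, $(H_+,J)$ in turn, with the matching of endpoints transporting the containment. The paper proves only the first statement and declares the second "completely analogous"; your explicit backward induction via clause (2) is exactly that analogue.
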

\begin{proof}
	We prove the first statement, the second statement is completely analogous.
	Let 
	$$
	\overline{v}:=(v_1,\dots,v_k,w,v'_1,\dots,v'_\ell)
	$$ 
	be a broken trajectory of $(H,J)$ such that the periodic orbit $x_0:= \lim_{s\rightarrow-\infty} v_1(s,\cdot)$ is contained in $U_\circ$. Then, since $(H_-, J)$ has a barricade in $U$ around $U_\circ$, $v_1\subset U_\circ$ and in particular, the periodic orbit $x_1:=\lim_{s\rightarrow+\infty}v_1(s,\cdot)$ is contained in $U_\circ$. By definition (see Proposition~\ref{pro:appendix_broken_traj}), $x_1$ is the negative end of $v_2$, namely $x_1=\lim_{s\rightarrow-\infty}v_2(s,\cdot)$. Applying the same argument again and again, we conclude that $v_2,\dots,v_k \subset U_\circ$.  Now, $x_k:=\lim_{s\rightarrow+\infty}v_k(s,\cdot) =\lim_{s\rightarrow-\infty}w(s,\cdot)$ is also contained in $U_\circ$ and since $(H,J)$ has a barricade in $U$ around $U_\circ$, this means that $w\subset U_\circ$. Arguing the same way and using the fact that $(H_+,J)$ has a barricade in $U$ around $U_\circ$ we conclude that $v'_j\subset U_\circ$ for all $1\leq j\leq \ell$, and so the broken trajectory is completely contained in $U_\circ$. 
\end{proof}

Given the above lemma, the proof of Proposition~\ref{pro:appendix} is a simple application of Proposition~\ref{pro:appendix_broken_traj}.
\begin{proof}[Proof of Proposition~\ref{pro:appendix}]	
	Let $\{H_n\}$ be a sequence of regular homotopies converging to $H$, such that for each $n$, $\cP(H_{n\pm}) = \cP(H_\pm)$ and $\partial_s H_n|_{|s|>R} =0$. Assume for the sake of contradiction that, for each $n$, there exists a solution $u_n\in \cM_{(H_n,J)}$ such that $x_-^n:=\lim_{s\rightarrow-\infty}u_n(s,\cdot)$ is contained in $U_\circ$ but $u_n$ is not. For each $n$, let $\sigma_n\in \R$ be such that $u_n(\sigma_n,\cdot)$ is not contained in $U_\circ$. Since $x_\pm^n\in\cP(H_{n\pm})=\cP(H_\pm)$  are elements of a finite sets, by passing to a subsequence, we may assume that $x_\pm^n=x_\pm$ are independent of $n$, which means that $u_n\in \cM(x_-,x_+)$ for all $n$. Applying Proposition~\ref{pro:appendix_broken_traj} to the sequence of solutions $\{u_n\}$ and the sequence of shifts $\{\sigma_n\}$, after passing again to a subsequence, $\{u_n\}$ converges to a broken trajectory $\overline{v}$ of $(H,J)$, and the sequence $u_n(\cdot+\sigma_n,\cdot)$ converges to one of the solutions in $\overline{v}$ (perhaps up to a shift). Lemma~\ref{lem:barricade_ban_broken_traj}, together with our assumption that $x_-=x_0\subset U_\circ$, guarantee that the entire broken trajectory $\overline{v}$ is contained in $U_\circ$, and  in particular $\lim_{n\rightarrow \infty}u_n(\cdot+\sigma_n,\cdot)\subset U_\circ$.
	Since the latter limit is uniform on compacts, it follows that 
	$$
	\lim_{n\rightarrow\infty} u_n(\sigma_n,\cdot) = \lim_{n\rightarrow\infty}u_n(0+\sigma_n,\cdot)
	$$
	is also contained in $U_\circ$. Recalling that we chose $\sigma_n$ such that, for each $n$, the loop $u_n(\sigma_n,\cdot)$ is not contained in the open set $U_\circ$, we arrive at a contradiction. 
	
	Similarly, one can prove that when $n$ is large enough, every solution $u_n$ of the Floer equation with respect to $(H_n,J)$ ending in $U$ is contained in $U$.
\end{proof}

\subsubsection{Perturbing Hamiltonians that are regular on a subset.} \label{app:Hamiltonian_reg_in_U}
In this section, we define the notion of regularity on a subset, $U\subset M$,  for a pair $(H,J)$ of a Hamiltonian and an almost complex structure that has a barricade in $U$ around some $U_\circ\Subset U$. We prove that for such a pair, the restriction of the Floer differential to the set is well defined, and is stable under (regular) perturbations. 
Since Floer-regularity concerns the differential of the Floer map we start with a reminder. Given a Hamiltonian $H$ and an almost complex structure $J$, the Floer map associated to the pair $(H,J)$ is 
\begin{eqnarray}
\nonumber
\cF=\cF^H:\cC^\infty(\R\times S^1;M) &\longrightarrow& \cC^\infty(\R\times S^1;TM),\\
\nonumber
u &\mapsto& \frac{\partial u}{\partial s}+J\frac{\partial u}{\partial t} +\grad_u(H_t),
\end{eqnarray} 
where $\grad_u H:=\nabla_J H\circ u$ is the gradient of $H$ with respect to $J$, composed on $u$.
\begin{defin}\label{def:Ham_reg_on_U}
	Let $H$ be a non-degenerate Hamiltonian such that the pair $(H,J)$ has a barricade in $U$ around $U_\circ$.
	\begin{enumerate}
		\item We say that the pair $(H,J)$ is {\it regular on $U$} if for 
		every solution $u$ of the Floer equation that is contained in $U$, the linearization $(d\cF)_u$ of the Floer map $\cF$ at $u$ is surjective. 
		
		In particular, by \cite[Theorem 8.1.2]{audin-damian}, for every $x_\pm \in \cP(H)$ such that $x_+\subset U$, the space of solutions $\cM_{(H,J)}(x_-, x_+)$ is a smooth manifold of dimension $\mu(x_-)-\mu(x_+)$. 
		\item We say that the pair $(H,J)$ is {\it semi-regular on $U$} if for every $x_\pm \in \cP(H)$, with $\mu(x_-)\leq\mu(x_+)$ and such that $x_+\subset U$, we have:
		\begin{enumerate}
			\item If $x_-\neq x_+$, then $\cM_{(H,J)}(x_-, x_+)=\emptyset$.
			\item If $x_- = x_+$, then $\cM_{(H,J)}(x_-, x_+)$ contains only the constant solution $u(s,t)=x_-(t)$.
		\end{enumerate}
	\end{enumerate}		
\end{defin}

\begin{rem}\label{rem:reg_on_U}
	\begin{itemize}
		\item If $(H,J)$ is regular on $U$, then it is also semi-regular on $U$.
		
		\item If $(H,J)$  has a barricade in $U$ around $U_\circ$ and agrees, on $U$, with a Floer-regular pair, then it is regular on $U$.
		
		\item For a pair $(H,J)$ that is regular on $U$, the differential of the Floer complex might not be defined everywhere. However, using Proposition~\ref{pro:appendix_broken_traj} (see also the proof of Lemma~\ref{lem:semi_regular_is_open} below), one can show that when $\mu(x_-)-\mu(x_+)=1$ and $x_+\subset U$, the quotient manifold $\cM_{(H,J)}(x_-, x_+)/\R$ is compact and of dimension 0, and hence finite.
		Therefore, the composition $\pi_U\circ \partial_{(H,J)}$ can be defined by counting the elements of the latter quotients. This is a slight abuse of notations, as the map $\partial_{(H,J)}$ is not defined on its own. Similarly, one can define the composition $\partial_{(H,J)}\circ\pi_{U_\circ}$ using the fact that $x_-\subset U_\circ$ implies that $x_+\subset U_\circ \subset U$, due to the barricade.	
	\end{itemize}
\end{rem}

Our main goal for this section is to prove the following statement.
\begin{prop}\label{pro:app_pert_Ham_regular_on_U}
	Suppose that $H$ is a non-degenerate Hamiltonian such that $(H,J)$ is regular on $U$. Let $H'$ be a small perturbation of $H$ such that the pair $(H',J)$ is Floer-regular and $H'$ agrees with $H$ on $\cP(H)$ up to second order. Then, the compositions of the differentials and projections agree:
	\begin{equation}\label{eq:comp_of_diff_agree}
	\pi_U\circ \partial_{(H,J)}=\pi_U\circ \partial_{(H',J)},
	\qquad\partial_{(H,J)}\circ\pi_{U_\circ} = \partial_{(H',J)}\circ\pi_{U_\circ}. 
	\end{equation}  
\end{prop}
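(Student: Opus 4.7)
The plan is to realize both sides of each identity in~(\ref{eq:comp_of_diff_agree}) as the $\Z_2$-count of boundary points of the \emph{same} compact $1$-dimensional cobordism, built from a parametrized moduli space over a path of Hamiltonians from $H$ to $H'$. I would take the linear interpolation $H_\tau:=(1-\tau)H+\tau H'$ for $\tau\in[0,1]$. Since $H'$ agrees with $H$ on $\cP(H)$ up to second order, every $H_\tau$ has the same non-degenerate periodic orbits with the same Conley-Zehnder indices, and for $H'$ sufficiently $\cC^\infty$-close to $H$ no new orbits appear. Moreover, by Proposition~\ref{pro:appendix} every $(H_\tau,J)$ inherits a barricade in $U$ around $U_\circ$.

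Next I would fix orbits $x_\pm\in\cP(H)$ with $\mu(x_-)-\mu(x_+)=1$ such that either $x_+\subset U$ (for the first identity) or $x_-\subset U_\circ$ (for the second); in the remaining cases the relevant matrix coefficients of $\pi_U\circ\partial_{(H,J)}$ and $\pi_U\circ\partial_{(H',J)}$ (respectively $\partial_{(H,J)}\circ\pi_{U_\circ}$ and $\partial_{(H',J)}\circ\pi_{U_\circ}$) vanish trivially because of the barricades. The barricade forces every $u\in\cM_{(H_\tau,J)}(x_-,x_+)$ to be contained in $U$ (or in $U_\circ$), and since $(H,J)$ is regular on $U$ and $(H',J)$ is Floer-regular, the Floer linearization is already surjective at such $u$ when $\tau\in\{0,1\}$. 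I would then achieve parametrized regularity for
\[
\widetilde\cM(x_-,x_+) := \{(\tau,u) : \tau\in[0,1],\ u\in\cM_{(H_\tau,J)}(x_-,x_+)\}
\]
by a generic further perturbation of the path $\{H_\tau\}$ supported in $U\times S^1\times(0,1)$ and vanishing on $\cP(H)$ to second order, applying a Sard-Smale argument of the same kind as in Section~\ref{app:regular_pair}; taking the perturbation small enough retains the barricade by Proposition~\ref{pro:appendix}. Then $\widetilde\cM(x_-,x_+)/\R$ becomes a smooth $1$-manifold.

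Finally I would analyze its compactification. Bubbling is excluded by symplectic asphericity. A broken trajectory at some $(\tau^\ast,y)$ would correspond to a point in $\cM_{(H_{\tau^\ast},J)}(x_-,y)/\R\times\cM_{(H_{\tau^\ast},J)}(y,x_+)/\R$, but since $\mu(x_-)-\mu(x_+)=1$, at least one of the two quotient-dimensions is strictly negative, so no honest breaking can occur. Hence the compactification is a compact $1$-manifold whose boundary is precisely $\cM_{(H,J)}(x_-,x_+)/\R$ at $\tau=0$ and $\cM_{(H',J)}(x_-,x_+)/\R$ at $\tau=1$; the parity of its boundary yields the required $\Z_2$-equality of counts, proving both identities. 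The main difficulty will be the parametrized transversality step: because $(H,J)$ is not assumed regular outside $U$, the perturbation cannot be taken generic in the standard sense, and must be confined to $U$ (which, by the barricade, is precisely where the relevant solutions live) while simultaneously keeping the endpoints $H$, $H'$ fixed, preserving $\cP(H)$ and its non-degeneracy, and preserving the barricade along the entire path.
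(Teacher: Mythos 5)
Your cobordism strategy is essentially the one the paper follows (interpolate, perturb the path parametrically, count boundary points of a compact $1$-manifold), but the compactness step contains a genuine gap. You rule out breaking at an intermediate parameter $\tau^\ast$ by arguing that one of the two factors $\cM_{(H_{\tau^\ast},J)}(x_-,y)/\R$, $\cM_{(H_{\tau^\ast},J)}(y,x_+)/\R$ has negative dimension. That is a transversality statement about the \emph{slice} $(H_{\tau^\ast},J)$, and no such regularity is available: a generic perturbation of the path only makes the total parametrized space regular, not each slice, and in a generic $1$-parameter family nonconstant solutions with index drop $0$ (or less) do appear at isolated parameter values and do produce breaking of the parametrized $1$-dimensional moduli space. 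This is exactly the mechanism by which Floer differentials change under perturbation in general (they are only invariant up to chain homotopy), so without an additional input your argument would ``prove'' too much; the conclusion of the proposition really hinges on the special hypotheses (smallness, agreement with $H$ on $\cP(H)$ up to second order, the barricade, regularity of $(H,J)$ on $U$), and your proof never uses them at this point.

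The paper closes precisely this gap with the notion of a pair being \emph{semi-regular on $U$} (no nonconstant solutions contained in $U$ with non-positive index drop) and with Lemma~\ref{lem:semi_regular_is_open}, which shows semi-regularity is an open condition among Hamiltonians agreeing with $H$ on $\cP(H)$ up to second order: the proof uses convergence to broken trajectories, the barricade to confine the limit to $U$, the regularity of $(H,J)$ on $U$ to force strict index decrease, and the energy identity (actions of the fixed orbits are unchanged) to kill would-be nonconstant solutions with equal ends. With all $(H_\lambda,J)$ semi-regular on $U$, the limiting broken trajectory in the compactness argument can have only one nonconstant piece when $\mu(x_-)-\mu(x_+)=1$, which is the statement you need. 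Your remaining points (confining the parametrized perturbation to $U$ away from $\cP(H)$, regularity at the endpoints $\tau=0,1$, treating $\partial\circ\pi_{U_\circ}$ via the barricade) are fine, modulo the standard device of making the path stationary near $\lambda=0,1$ so that the parametrized space is a manifold with boundary there; but as written the no-breaking claim is unjustified and is the missing core of the argument.
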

We remark that the second equation in (\ref{eq:comp_of_diff_agree}) follows immediately from the first. Indeed, due to Proposition~\ref{pro:appendix}, $(H',J)$ also has a barricade, and $\partial\circ\pi_{U_\circ}=\pi_U\circ\partial\circ\pi_{U_\circ}$ for both $(H,J)$ and $(H',J)$.  
In order to prove Proposition~\ref{pro:app_pert_Ham_regular_on_U}, we connect $H$ and $H'$ by a path of Hamiltonians $\{H_\lambda\}_{\lambda\in[0,1]}$, such that, for each $\lambda\in [0,1]$, $H_\lambda$ agrees with $H$ on the 1-periodic orbits up to second order, and the pair $(H_\lambda, J)$ is semi-regular on $U$. Note that the first condition implies that, for each $\lambda$, $\cP(H_\lambda)=\cP(H)$. Given $x_\pm\in \cP(H)$, such that $x_+\subset U$, the space 
\begin{equation}\label{eq:cobordism_space}
\cM_\Lambda(x_-, x_+):=\left\{(\lambda, u): u \in \cM_{(H_\lambda,J)}(x_-, x_+)\right\}
\end{equation}
is invariant under the $\R$ action $u(\cdot,\cdot)\mapsto u(\sigma+\cdot,\cdot)$. We show that when $\mu(x_-)-\mu(x_+)=1$,  the quotient $\overline{\cM}_\Lambda(x_-, x_+)=\cM_\Lambda(x_-, x_+)/\R$ is a smooth, compact 1-dimensional manifold with boundary, that realizes  a cobordism between $\cM_{(H,J)}(x_-,x_+)/\R$ and $\cM_{(H',J)}(x_-,x_+)/\R$. We will then conclude that the number of elements in the quotients  $\cM_{(H,J)}(x_-,x_+)/\R$ and  $\cM_{(H',J)}(x_-,x_+)/\R$ coincides modulo 2. 

The existence of a semi-regular path between $H$ and $H'$ follows from the fact that semi-regularity is an open condition.
\begin{lemma}\label{lem:semi_regular_is_open}
	Suppose that $(H,J)$ is semi-regular on $U$, then, for every Hamiltonian $H'$ that is close enough to $H$ and agrees with $H$ on $\cP(H)$ up to second order, the pair $(H',J)$ is also semi-regular on $U$.
\end{lemma}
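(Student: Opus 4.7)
The plan is to argue by contradiction, combining the broken-trajectory convergence from Proposition~\ref{pro:appendix_broken_traj} with the barricade confinement of Lemma~\ref{lem:barricade_ban_broken_traj}. Suppose the conclusion fails: then there exists a sequence of Hamiltonians $H_n\to H$ in $\cC^\infty$, with $H_n$ agreeing with $H$ on $\cP(H)$ up to second order (so $\cP(H_n)=\cP(H)$ and the Conley--Zehnder indices coincide), for which $(H_n,J)$ fails to be semi-regular on $U$. This yields, for each $n$, orbits $x_\pm^n\in\cP(H)$ with $\mu(x_-^n)\leq\mu(x_+^n)$, $x_+^n\subset U$, and a non-trivial solution $u_n\in\cM_{(H_n,J)}(x_-^n,x_+^n)$. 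Finiteness of $\cP(H)$ lets me pass to a subsequence along which $x_\pm^n=x_\pm$ is independent of $n$.

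I first dispose of the easy case $x_-=x_+$: viewing $H_n$ as a constant homotopy, the energy identity (\ref{eq:energy_id_Hamiltonian}) gives $E(u_n)=\cA_{H_n}(x_-)-\cA_{H_n}(x_+)=0$, forcing $\partial_s u_n\equiv 0$ and hence $u_n$ to be the constant Floer trajectory at $x_-$ --- contradicting non-triviality. So I may assume $x_-\neq x_+$. I then apply Proposition~\ref{pro:appendix_broken_traj} to $\{u_n\}$ (with $\sigma_n\equiv 0$), extracting a further subsequence along which $u_n$ converges to a broken trajectory $(v_1,\ldots,v_k,w,v_1',\ldots,v_\ell')$ of $(H,J)$ running from $x_-$ to $x_+$. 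Applying Lemma~\ref{lem:barricade_ban_broken_traj} to the pair $(H,J)$, which has a barricade in $U$ around $U_\circ$, confines the entire broken trajectory --- and in particular every intermediate orbit $x_i,y_j$ --- to $U$.

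Semi-regularity of $(H,J)$ on $U$ then closes the argument piece by piece. The construction in the proof of Proposition~\ref{pro:appendix_broken_traj} inserts a new $v_i$ or $v_j'$ only when $u_n$ exits a small ball around the preceding orbit, so each such piece connects two \emph{distinct} orbits lying in $U$; semi-regularity therefore forces the strict Maslov drop $\mu(x_{i-1})>\mu(x_i)$ (and similarly for $v_j'$). The middle piece $w\in\cM_{(H,J)}(x_k,y_0)$ either connects distinct orbits, yielding another strict drop $\mu(x_k)>\mu(y_0)$, or is the constant trajectory at $x_k=y_0$. Since $x_-\neq x_+$, at least one piece is non-constant, so the Maslov indices strictly drop overall, giving $\mu(x_-)>\mu(x_+)$ and contradicting $\mu(x_-^n)\leq\mu(x_+^n)$.

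The main technical obstacle is verifying the two structural features of the broken trajectory that drive the argument: that each $v_i,v_j'$ is genuinely non-constant (so semi-regularity can be invoked) and that the whole broken trajectory is confined to $U$. Both follow directly from results already in hand --- the first from the construction in the proof of Proposition~\ref{pro:appendix_broken_traj}, where new pieces are introduced only upon exiting disjoint $\epsilon$-balls around distinct orbits, and the second from Lemma~\ref{lem:barricade_ban_broken_traj} --- so once these are recorded, the Maslov-index bookkeeping itself is routine.
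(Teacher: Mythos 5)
Your proposal is correct and follows essentially the same route as the paper's proof: argue by contradiction, fix the ends by finiteness of $\cP(H)$, invoke Proposition~\ref{pro:appendix_broken_traj} to extract a broken trajectory of $(H,J)$, confine it to $U$ via Lemma~\ref{lem:barricade_ban_broken_traj}, and use semi-regularity to force strict Conley--Zehnder index drops that contradict $\mu(x_-)\leq\mu(x_+)$. The only difference is cosmetic: you dispose of the case $x_-=x_+$ at the outset via the energy identity (\ref{eq:energy_id_Hamiltonian}), whereas the paper reaches the same energy argument at the end after concluding the broken trajectory is a single constant solution.
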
 
\begin{proof}
	Consider a sequence $H_n$, converging to $H$, such that, for each $n$, $H_n$ agrees with $H$ on $\cP(H)$. Then, in particular, $\cP(H_n)=\cP(H)$. Suppose that for each $n$, there exist a solution $u_n \in\cM_{(H_n,J)}(x_-^n, x_+^n)$, for some $x_\pm^n\in\cP(H_n)$, such that $\mu(x_-^n)\leq\mu(x_+^n)$ and $x_+^n\subset U$. Moreover, we assume that if $x_-^n=x_+^n$, then $u_n$ is non-constant. Since $x_\pm^n\in  \cP(H)$ are elements of a finite set, we may assume, by passing to a subsequence, that $x_\pm^n = x_\pm$ are independent of $n$. By Proposition~\ref{pro:appendix_broken_traj}, there exists a subsequence of the solutions $u_n$ that converges to a broken trajectory $\overline{v}$ of $(H,J)$. Moreover, the ends of the broken trajectory are $x_\pm$. Since $x_+$ is contained in $U$ and $(H,J)$ has a barricade in $U$ around $U_\circ$, it follows from Lemma~\ref{lem:barricade_ban_broken_traj} that the broken trajectory $\overline{v}$ is contained in $U$. As the pair $(H,J)$ is semi-regular on $U$, for every non-constant solution in the broken trajectory, the index difference between the left end and the right end is positive. Therefore, in the notations of Proposition~\ref{pro:appendix_broken_traj}, we have
	$$
	\mu(x_-) = \mu(x_0)>\mu(x_1)> \cdots > \mu(x_+).
	$$
	Together with our assumption that  $\mu(x_-)=\mu(x_-^n)\leq\mu(x_+^n)=\mu(x_+)$, this implies that the broken trajectory $\overline{v}$ contains only one solution: $v_1(s,t) =x_-(t)=x_+(t)$. In particular, we conclude that $u_n \in\cM_{(H_n,J)}(x_-, x_+)$ are Floer-solutions with equal ends. By the energy identity (\ref{eq:energy_id_Hamiltonian}), the energy of $u_n$ vanishes, 
	$$
	E(u_n) = \cA_{H_n}(x_-)-\cA_{H_n}(x_-)=\cA_{H}(x_-)-\cA_{H}(x_-) =0,
	$$ 
	which guarantees that $u_n$ is a constant solution, $u_n(s,t)=x_-(t)$ for all $n$, in contradiction. 
\end{proof}

Our next aim is to show that for a suitable choice of a path of Hamiltonians $\{H_\lambda\}$, the set (\ref{eq:cobordism_space}) is a smooth manifold. Let us start with preliminary definitions. Let $\{H_\lambda\}_{\lambda\in [0,1]}$ be a path of Hamiltonians that is stationary for $\lambda \notin[\delta,1-\delta]$ for some fixed $\delta>0$, and such that $H_\lambda$ agrees with $H_0$ on $\cP(H_0)$ up to second order, for all $\lambda\in [0,1]$.
We will consider the space $\cC_\varepsilon^\infty(\{H_\lambda\}_\lambda)$ (of perturbations) consisting of maps:
\begin{equation*}
h:M\times S^1\times [0,1]\rightarrow\R, 
\end{equation*}
with compact support in $M\times S^1\times [\delta,1-\delta]$, that vanish up to second order on $\cP(H_0)\times [0,1]$, and such that $\|h\|_\varepsilon<\infty$. Here $\|\cdot\|_\varepsilon$ is Floer's $\varepsilon$-norm, see Definition~\ref{def:epsilon_norm} and \cite[p.230]{audin-damian}. We identify the map $h$ with the path of time-dependent Hamiltonians $\{h_\lambda(\cdot, \cdot):=h(\cdot, \cdot,\lambda)\}_\lambda$.

The next claim is an adjustment of \cite[Theorem 11.3.2]{audin-damian} to our setting and is proved similarly. For the sake of completeness we include the proof, but we postpone it until the end of this section.
\begin{claim}\label{clm:reg_cobordism}
	Let $\{H_\lambda\}_{\lambda\in [0,1]}$ be a path of Hamiltonians as above, and assume that $(H_0, J)$ and $(H_1,J)$ are regular on $U$. Then, there exist a neighborhood of 0 in $\cC_\varepsilon^\infty (\{H_\lambda\}_\lambda)$ and a residual set $\hreg$ in this neighborhood, such that if $h\in \hreg$, then for $\Lambda = (\{H_\lambda +h_\lambda\}_\lambda, J)$ and every $x_\pm\in\cP(H_0)$ with $x_+\subset U$, the space $\cM_\Lambda(x_-,x_+)$ is a manifold with boundary, of dimension $\mu(x_-)-\mu(x_+)+1$, and its boundary is  
	\begin{equation}\label{eq:boundary_of_cob}
	\partial \cM_\Lambda(x_-, x_+) = \{0\}\hspace{-0.08cm}\times \hspace{-0.08cm} \cM_{(H,J)}(x_-,x_+)\ \cup \ \{1\}\hspace{-0.08cm}\times\hspace{-0.08cm}\cM_{(H',J)}(x_-,x_+).
	\end{equation}
\end{claim}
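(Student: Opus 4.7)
The plan is to adapt the parametric transversality scheme developed in Section~\ref{app:regular_pair} to the parameter interval $[0,1]$. First I would define the universal moduli space
\[
\cZ(x_-,x_+) := \Big\{(\lambda,u,h) : \lambda\in[0,1],\ h\in \cC_\varepsilon^\infty(\{H_\lambda\}_\lambda),\ u\in\cM_{(H_\lambda+h_\lambda,J)}(x_-,x_+)\Big\},
\]
viewed as the zero set of the section $\sigma(\lambda,u,h) = \partial_s u + J\partial_t u +\grad_u(H_\lambda+h_\lambda)$ of the bundle $\cE \to [0,1]\times \cP(x_-,x_+)\times \cC_\varepsilon^\infty(\{H_\lambda\}_\lambda)$. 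The goal is to show $\sigma$ is transverse to the zero section, so that $\cZ(x_-,x_+)$ is a Banach manifold with boundary $\{\lambda=0\}\cup\{\lambda=1\}$, and then to invoke Sard--Smale on the projection $\pi(\lambda,u,h)=h$, which is Fredholm of index $\mu(x_-)-\mu(x_+)+1$ (the $+1$ coming from the $\lambda$ direction, by the same calculation as in Lemma~\ref{lem:pi_is_Fredholm}).

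The key step is to prove the analogue of Lemma~\ref{lem:Gamma_is_surj}, namely that at each $(\lambda,u,h)$ with $\sigma=0$, the linearized operator
\[
\Gamma(\mu,Y,\eta) = \mu\cdot\grad_u(\partial_\lambda H_\lambda) + (d\cF^{H_\lambda+h_\lambda})_u(Y) + \grad_u \eta_\lambda
\]
is surjective with a continuous right inverse, on $\R\times W^{1,p}(u^*TM)\times \cC_\varepsilon^\infty(\{H_\lambda\}_\lambda)$ when $\lambda\in(0,1)$, and on the corresponding half-space when $\lambda\in\{0,1\}$. For $\lambda\in(0,1)$, I would repeat the argument of Lemma~\ref{lem:Gamma_is_surj} verbatim, using the freedom in $\eta_\lambda$ in a small interval around $\lambda$ — this works because the interval $[\delta,1-\delta]$ has nonempty interior and the continuation principle forces any $Z\in L^q$ orthogonal to $\im(d\cF)_u$ with $\left<Z,\grad_u\eta_\lambda\right>=0$ for all admissible $\eta$ to vanish. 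For $\lambda\in\{0,1\}$, the surjectivity comes for free from the hypothesis that $(H_0,J)$ and $(H_1,J)$ are regular on $U$, because the barricade (which persists for small $h$ by Proposition~\ref{pro:appendix}) forces such $u$ to lie in $U$, where regularity on $U$ gives surjectivity of $(d\cF)_u$ alone.

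Applying Sard--Smale to $\pi$ (separability and smoothness are handled as in Section~\ref{subsec:pert_homo_proof}, using Claim~\ref{clm:C_epsilon_is_separable} applied to the parametric space), one obtains a residual set $\hreg$ of regular values. For $h\in\hreg$, elliptic regularity identifies $\pi^{-1}(h)$ with $\cM_\Lambda(x_-,x_+)$, and the implicit function theorem gives a manifold with boundary of dimension $\mu(x_-)-\mu(x_+)+1$. The boundary description (\ref{eq:boundary_of_cob}) is then immediate: since $H_\lambda$ and $h_\lambda$ are stationary for $\lambda\notin[\delta,1-\delta]$, the contributions at $\lambda\in\{0,1\}$ are precisely $\cM_{(H,J)}(x_-,x_+)$ and $\cM_{(H',J)}(x_-,x_+)$.

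The main obstacle is verifying surjectivity of $\Gamma$ at the boundary points $\lambda\in\{0,1\}$ and arranging the proof so that the resulting manifold is the \emph{whole} moduli space (including solutions that a priori might leave $U$), rather than only those staying in $U$. This is where the barricade machinery is essential: I would first restrict attention to $x_+\subset U$, invoke Proposition~\ref{pro:appendix} to guarantee that for all $h$ in a sufficiently small $\cC^\infty$-neighborhood of $0$ the perturbed pair $(H_\lambda+h_\lambda, J)$ still has a barricade in $U$ around $U_\circ$, and conclude that every element of $\cM_\Lambda(x_-,x_+)$ is automatically contained in $U$. This reduces the surjectivity check to solutions $u\subset U$, where the $\lambda=0,1$ regularity hypothesis applies, and simultaneously ensures that the cobordism has no hidden components escaping the region controlled by the barricade.
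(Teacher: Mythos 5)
Your overall skeleton --- a universal moduli space cut out by the parametric Floer section, surjectivity of the linearization via the perturbation freedom in the interior and via regularity on $U$ at the ends, then Sard--Smale applied to the projection onto $\cC_\varepsilon^\infty(\{H_\lambda\}_\lambda)$ --- is the same as the paper's, but your case division for surjectivity has a hole. For $\lambda\in(0,\delta)\cup(1-\delta,1)$ every admissible perturbation satisfies $\eta_\lambda=0$, because $\cC_\varepsilon^\infty(\{H_\lambda\}_\lambda)$ consists of maps supported in $M\times S^1\times[\delta,1-\delta]$; so the argument of Lemma~\ref{lem:Gamma_is_surj} cannot be repeated ``verbatim'' on all of $(0,1)$ --- the freedom in $\eta_\lambda$ you invoke simply does not exist near the ends. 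On those sub-intervals one must instead use that $H_\lambda+h_\lambda$ coincides with the unperturbed $H_0$ (resp.\ $H_1$), whose own barricade together with $x_+\subset U$ forces $u\subset U$, so that regularity on $U$ gives surjectivity of $(d\cF)_u$ alone; this is exactly your $\lambda\in\{0,1\}$ argument, which has to be extended to the whole collar. The paper sidesteps the issue (and the Banach-manifold-with-boundary version of Sard--Smale, which your setup would also need to address) by running the transversality and Sard--Smale argument only over $\lambda\in(\delta,1-\delta)$, and by observing that near the ends the moduli space is literally a product, $\cM_\Lambda(x_-,x_+)\cap\{\lambda<\delta\}=[0,\delta)\times\cM_{(H_0,J)}(x_-,x_+)$ and similarly at $1$, which is already a manifold with boundary by the regularity-on-$U$ hypothesis.

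Second, your final paragraph is both unjustified and unnecessary. Proposition~\ref{pro:appendix} perturbs a pair that \emph{already} has a barricade, while the hypotheses of Claim~\ref{clm:reg_cobordism} say nothing about the intermediate pairs $(H_\lambda,J)$ having barricades, nor that $H_\lambda$ is a small perturbation of $H_0$ (only that the path is stationary for $\lambda\notin[\delta,1-\delta]$ and agrees with $H_0$ on $\cP(H_0)$ up to second order). So you cannot conclude that $(H_\lambda+h_\lambda,J)$ has a barricade for $\lambda\in(\delta,1-\delta)$, and hence not that every element of $\cM_\Lambda(x_-,x_+)$ lies in $U$. Fortunately the claim does not require this: it concerns the full moduli space, and in the interior the transversality comes from the perturbation freedom, which is global on $M$ and applies to solutions wherever they lie; containment in $U$ is only needed --- and is automatic, without Proposition~\ref{pro:appendix} --- where the Hamiltonian is the unperturbed $H_0$ or $H_1$. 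Dropping that step and repairing the case division above turns your outline into the paper's proof.
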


\begin{proof}[Proof of Proposition~\ref{pro:app_pert_Ham_regular_on_U}]
	Recall that $H$ is a non-degenerate Hamiltonian such that $(H,J)$ is regular on $U$. Let $H'$ be a small perturbation of $H$ that agrees with $H$ on $\cP(H)$ up to second order, and such that the pair $(H',J)$ is Floer-regular. We wish to show that the compositions of the differentials with respect to $(H,J)$ and $(H',J)$ with the projections onto $C_U$ and $C_{U_\circ}$ agree.
	Let $H_\lambda$ be a linear path (or, a linear homotopy) between $H$ and $H'$ that is stationary near $\lambda =0,1$, and such that for each $\lambda$, $H_\lambda$ agrees with $H$ on $\cP(H)$ up to second order (in particular, $\cP(H_\lambda) = \cP(H)$). Taking $H'$ to be close enough to $H$, and using Lemma~\ref{lem:semi_regular_is_open}, one can guarantee that all of the pairs $(H_\lambda,J)$ are semi-regular on $U$.
	
	By Claim~\ref{clm:reg_cobordism}, there exists a small perturbation of the path $\{H_\lambda\}$, such that for $\Lambda = (\{H_\lambda +h_\lambda\}_\lambda, J)$ and for every $x_\pm\in\cP(H_0)$ with $x_+\subset U$, the space $\cM_\Lambda(x_-,x_+)$ is a manifold with boundary, of dimension $\mu(x_-)-\mu(x_+)+1$. Let us show that when $\mu(x_-)-\mu(x_+)=1$, the quotient of this manifold by the $\R$ action,  $\overline{\cM}_\Lambda(x_-, x_+)=\cM_\Lambda(x_-, x_+)/\R$, is compact. Let $(\lambda_n, u_n)\in \cM_\Lambda(x_-, x_+)$ be any sequence. Since $\lambda_n\in[0,1]$, we may assume, by passing to a subsequence, that the sequence $\lambda_n$ converges to a number $\lambda_\star\in [0,1]$. By the definition of the space $\cM_\Lambda(x_-, x_+)$, $u_n\in\cM_{(H_{\lambda_n}, J)}(x_-, x_+)$ are solutions to the Floer equation with respect to Hamiltonians converging to $H_{\lambda_\star}$. By Proposition~\ref{pro:appendix_broken_traj}, there exists a subsequence of $u_n$ converging to a broken Floer trajectory $\overline{v}=\{v_1,\cdots,v_k\}$ of $(H_{\lambda_\star},J)$. Since the pair $(H_{\lambda_\star}, J)$ is semi-regular on $U$ and $x_+\subset U$, every solution in $\overline{v}$ that is non-constant (in the $s$-coordinate) decreases the index:
	\begin{equation*}
	\mu(x_-) = \mu(x_0)>\mu(x_1)>\cdots>\mu(x_k)=\mu(x_+).
	\end{equation*} 
	Recalling that $\mu(x_-)-\mu(x_+)=1$, we conclude that $\overline{v}$ contains exactly one non constant solution, $\overline{v}=v_1\in \cM_{(H_{\lambda_\star}, J)}(x_-,x_+)$. In other words, given the sequence $(\lambda_n, u_n)\in \cM_\Lambda(x_-, x_+)$, there exists a sequence of shifts $s_n\in \R$ such that, after passing to a subsequence, 
	\begin{equation*}
	(\lambda_n, \tau_{s_n} u_n)\xrightarrow[n\rightarrow\infty]{\cC^\infty_{loc}} (\lambda_\star, v_1).
	\end{equation*}
	In particular, after dividing by the  (free, proper and smooth) $\R$-action, the subsequence $(\lambda_n,[u_n])\in \overline{\cM}_\Lambda(x_-, x_+)$ converges to an element of the same space,
	\begin{equation*}
	(\lambda_n,[u_n])\xrightarrow[n\rightarrow\infty]{}(\lambda_\star, [v_1])\in \overline{\cM}_\Lambda(x_-, x_+),
	\end{equation*}
	and therefore $\overline{\cM}_\Lambda(x_-, x_+)$ is compact.
	Overall, $\overline{\cM}_\Lambda(x_-, x_+)$ is a smooth, compact manifold of dimension $\mu(x_-) - \mu(x_+)+1-1=1$, and its boundary is the zero-dimensional compact manifold $\partial \overline{\cM}_\Lambda(x_-, x_+) = \{0\}\times\overline{\cM}_{(H,J)}(x_-,x_+)\ \cup \ \{1\}\times\overline{\cM}_{(H',J)}(x_-,x_+)$. 
	Hence, the latter are finite sets with equal number of elements mod 2:
	\begin{equation*}
	\#_2\overline{\cM}_{(H,J)}(x_-,x_+) = \#_2\overline{\cM}_{(H',J)}(x_-,x_+).
	\end{equation*}  
	The equalities (\ref{eq:comp_of_diff_agree}) now follow immediately form the definition of the differential map.
\end{proof}

Let us sketch the proof of Claim~\ref{clm:reg_cobordism}, which follows the arguments made in \cite[Chapter 11.3.b]{audin-damian}.
\begin{proof}[Proof of Claim~\ref{clm:reg_cobordism}]
	Fix $x_\pm\in\cP(H_0)$ such that $x_+\subset U$. We first show that the space $\cM_\Lambda(x_-,x_+)$ has a structure of a manifold with boundary near the boundary (\ref{eq:boundary_of_cob}), and afterwards we prove that for perturbed paths the interior is a smooth manifold. 
	
	Let $\delta>0$ such that the path $\{H_\lambda\}$ is stationary for $\lambda\notin[\delta, 1-\delta]$ and every $h\in \cC_\varepsilon^\infty(\{H_\lambda\}_\lambda)$ satisfies 
	 $supp(h)\subset M\times S^1\times [\delta,1-\delta]$. In this case, 
	 \begin{equation*}
	 H_\lambda + h_\lambda =\begin{cases}
	 H_0, & \lambda\leq \delta,\\
	 H_1, & \lambda \geq 1-\delta,
	 \end{cases}
	 \end{equation*}
	for all $h \in \cC_\varepsilon^\infty(\{H_\lambda\}_\lambda)$. Fixing such $h$ and setting $\Lambda = (\{H_\lambda+h_\lambda\}_\lambda, J)$, 
	\begin{eqnarray}
	\cM_\Lambda (x_-,x_+)\cap\{\lambda<\delta\} &=& [0,\delta)\times \cM_{(H_0,J)}(x_-,x_+),\nonumber\\
	\cM_\Lambda (x_-,x_+)\cap\{\lambda>1-\delta\} &=& (1-\delta,1]\times \cM_{(H_1,J)}(x_-,x_+),\nonumber
	\end{eqnarray}
	are smooth manifolds with boundary, since the pairs $(H_0,J)$, $(H_1,J)$ are regular on $U$ and $x_+\subset U$.
	We conclude that near $\{0\}\times\cM_{(H,J)}(x_-,x_+)\cup \{1\}\times\cM_{(H',J)}(x_-,x_+)$ the space $\cM_\Lambda(x_-,x_+)$ has a structure of a manifold with boundary. 
	
	Let us now show that the interior of $\cM_\Lambda(x_-,x_+)$ is a smooth manifold. {Since the spaces $\cM_{(H,J)}(x_-,x_+)$ and $\cM_{(H',J)}(x_-,x_+)$ composing the boundary are one dimensional, it will follow that $\dim \cM_\Lambda(x_-,x_+)=2.$}
	The following statement is taken from \cite{audin-damian}, and states that the linearization $(d\cF)_u$ of the Floer map $\cF$ 
	is a Fredholm operator. 
	\begin{lemma}[{\cite[Theorem 8.1.5]{audin-damian}}] \label{lem:dF_is_Fredholm}
		For every non-degenerate Hamiltonian $H$, every almost complex structure $J$, compatible with $\omega$, and every $u\in \cM_{(H,J)}(x_-,x_+)$, $(d\cF)_u$ is a Fredholm operator of index $\mu(x_-)-\mu(x_+)$. 
	\end{lemma}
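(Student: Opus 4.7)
The lemma is the foundational Fredholm property for the linearized Floer operator. The plan is to reduce it to classical Fredholm theory for linear Cauchy--Riemann-type operators on the cylinder $\R \times S^1$ with asymptotically constant coefficients.

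First I would trivialize $u^*TM$ by a smooth unitary frame converging at each end to symplectic trivializations of $x_\pm^*TM$ in which the linearized Hamiltonian flow takes a standard form. In such coordinates $(d\cF)_u$ is represented by the operator
\begin{equation*}
Y \longmapsto \partial_s Y + J_0\, \partial_t Y + S(s,t)\, Y,
\end{equation*}
where $J_0$ is the standard complex structure on $\R^{2n}$ and $S: \R\times S^1 \to \mathrm{End}(\R^{2n})$ is smooth with $S(s,t) \to S_\pm(t)$ exponentially as $s \to \pm\infty$. By adjusting the trivialization one may arrange that the limits $S_\pm(t)$ are symmetric, with entries governed by the Hessian of $H_{t,\pm}$ along $x_\pm(t)$.

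Next, I would rewrite the operator as $\partial_s + A(s)$ where $A(s) = -J_0\, \partial_t - S(s,\cdot)$ is an unbounded self-adjoint family on $L^2(S^1;\R^{2n})$. Non-degeneracy of $x_\pm$ translates into invertibility of the asymptotic limits $A_\pm$: the symplectic paths $\Psi_\pm$ generated by $J_0 S_\pm(t)$ from $\Psi_\pm(0)=\mathrm{Id}$ satisfy $\det(\Psi_\pm(1) - \mathrm{Id}) \neq 0$, equivalently $0 \notin \spec(A_\pm)$. I would then invoke the classical Fredholm theorem for such families (Lockhart--McOwen, or Robbin--Salamon in the symplectic setting) to conclude that the operator is Fredholm as a map $W^{1,p}(\R\times S^1;\R^{2n}) \to L^p(\R\times S^1;\R^{2n})$ for any $1<p<\infty$, and that its Fredholm index equals the spectral flow of $\{A(s)\}_{s\in\R}$.

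Finally, I would identify this spectral flow with $\mu(x_-) - \mu(x_+)$ via the Robbin--Salamon formula relating spectral flow to the Conley--Zehnder index of the asymptotic symplectic paths $\Psi_\pm$. The main obstacle is the Fredholm step: one must cut off near the ends, invert the model operator $\partial_s + A_\pm$ (which, after Fourier decomposition in $t$, decouples into constant-coefficient ODEs on $\R$ whose solutions decay exponentially thanks to the spectral gap $0 \notin \spec(A_\pm)$), and patch the end parametrices with an interior inverse to produce a parametrix of $(d\cF)_u$ modulo compact errors. Standard elliptic regularity then ensures kernel elements are smooth, and the formal adjoint falls into the same framework, which pins down the index.
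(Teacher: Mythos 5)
This lemma is not proved in the paper at all: it is quoted verbatim as Theorem 8.1.5 of Audin--Damian, and your sketch is precisely the standard argument given there (and in Robbin--Salamon/Salamon's lectures): trivialize $u^*TM$ so that $(d\cF)_u$ becomes $\partial_s+J_0\partial_t+S(s,t)$ with symmetric asymptotic limits, deduce Fredholmness from invertibility of the asymptotic operators $\partial_s+A_\pm$ via end parametrices and elliptic estimates, and identify the index with $\mu(x_-)-\mu(x_+)$ through spectral flow/Conley--Zehnder indices. So your proposal is correct and follows essentially the same route as the cited source; no discrepancy with the paper to report.
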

	As in Notations~\ref{not:Transversality_stuff}, we denote by $\cP(x_-,x_+)$ the space of maps $(s,t)\mapsto \exp_{w(s,t)}Y(s,t)$, where $Y\in W^{1,p}(w^*TM)$, for $p>2$, and $w\in \cC^\infty(\R\times S^1;M)$ converges to $x_\pm$ with exponential decay. Consider the vector bundle $\cE\rightarrow \cP(x_-,x_+)\times \cC_\varepsilon^\infty(\{H_\lambda\}_\lambda)$, given by
	\begin{equation*}
	\cE = \left\{(u,h,Y)|(u,h)\in\cP(x_-,x_+)\times \cC_\varepsilon^\infty(\{H_\lambda\}_\lambda),\ Y\in L^p(u^*TM)\right\}.
	\end{equation*}
	We define a family of sections $\{\sigma_\lambda\}_{\lambda\in (0,1)}$ by 
	\begin{equation*}
	\sigma_\lambda(u,h) = \left(u,h,\frac{\partial u}{\partial s} +J(u)\frac{\partial u}{\partial t} +\grad_u(H_\lambda+h_\lambda)\right).
	\end{equation*}
	For fixed $\lambda_0\in (0,1)$, the map $\sigma_{\lambda_0}$ is transversal to the zero section of the vector bundle $\cE$ if and only if, when $\sigma_{\lambda_0} (u,h)=(u,h,0)$, the linearized map $(d\sigma_{\lambda_0})_{(u,h)}$ composed with the projection onto the fiber, namely
	\begin{eqnarray}
	\nonumber \tilde\Gamma_{\lambda_0} : W^{1,p}(\R\times S^1;\R^{2n})\times \cC_\varepsilon^\infty(\{H_\lambda\}_\lambda) &\rightarrow& L^p(\R\times S^1;\R^{2n}),\\
	\nonumber
	(Y,\eta) &\mapsto& (d\cF^{H_{\lambda_0}+h_{\lambda_0}})_u(Y)+\grad_u \eta_{\lambda_0}, 
	\end{eqnarray} 
	is surjective. Here, and in what follows, we identify the linear space $\cC_\varepsilon^\infty( \{H_\lambda\}_\lambda)$ with its tangent space. If $\lambda_0 \notin (\delta,1-\delta)$, then $h_{\lambda_0}=0$ and $H_{\lambda_0}$ is equal to either $H_0$ or $H_1$, which are both regular on $U$, when paired with $J$. In this case, the surjectivity of $\tilde\Gamma_{\lambda_0}$ follows from that of $d\cF^{H_{\lambda_0}+h_{\lambda_0}}$ (which is guaranteed due to the regularity of $H_0$, $H_1$). 
	Let us prove the surjectivity of $\tilde\Gamma_{\lambda_0}$ for $\lambda_0\in(\delta,1-\delta)$. To do this, we embed $\cC_\varepsilon^\infty(H_0)= \cC_\varepsilon^\infty(H_{\lambda_0})$ into $\cC_\varepsilon^\infty(\{H_\lambda\}_\lambda)$ by mapping  $h_{\lambda_0}\in \cC_\varepsilon^\infty(H_{\lambda_0})$ to a locally constant path, $h(\cdot, \cdot, \lambda) =h_{\lambda_0} (\cdot, \cdot)$ near $\lambda = \lambda_0$. Here we have used our assumption that  $\{H_\lambda\}_\lambda$ all have the same periodic orbits as $H_0$. 
	It is now clear that the surjectivity of the restricted map, 
	\begin{eqnarray}
	\nonumber 
	\Gamma : W^{1,p}(\R\times S^1;\R^{2n})\times \cC_\varepsilon^\infty(H_{\lambda_0}) &\rightarrow& L^p(\R\times S^1;\R^{2n}),\\
	\nonumber
	(Y,\zeta) &\mapsto& (d\cF^{H_{\lambda_0}+h_{\lambda_0}})_u(Y)+\grad_u \zeta_{\lambda_0}, 
	\end{eqnarray} 
	which is guaranteed by \cite[Proposition 8.1.4]{audin-damian}, implies the surjectivity of $\tilde\Gamma_{\lambda_0} $. We conclude that for every $\lambda_0\in(0,1)$, the section $\sigma_{\lambda_0}$ intersects the zero section transversely.
	As a consequence, the section
	\begin{eqnarray}
	\nonumber \sigma:(\delta,1-\delta)\times \cP(x_-,x_+)\times \cC_\varepsilon^\infty(\{H_\lambda\}_\lambda)&\rightarrow&(\delta,1-\delta)\times\cE\\
	\nonumber
	(\lambda, u, h) &\mapsto& \left(\lambda,\sigma_\lambda(u,h)\right)
	\end{eqnarray}
	also intersects the zero section transversely and we conclude that the intersection 
	\begin{equation*}
	\cZ(x_-,x_+) = \left\{(\lambda,u,\{H_\lambda+h_\lambda\}_\lambda)\ \big|\ \lambda\in (\delta,1-\delta), u\in \cM_{(H_\lambda+ h_\lambda, J)}(x_-,x_+)
	\right\}
	\end{equation*} 
	is a Banach manifold (see \cite[Propositions 8.1.3, 11.3.4]{audin-damian} for the analogous statements).
	The tangent space of $\cZ(x_-,x_+)$ at a point $(\lambda,u,\{H_\lambda+h_\lambda\})$ consists of all $(a, Y, \eta)\in \R\times W^{1,p}(\R\times S^1;\R^{2n})\times \cC_\varepsilon^\infty(\{H_\lambda\})$ that satisfy the equation 
	\begin{equation}\label{eq:tangent_to_Z}
	a\cdot\grad_u \frac{\partial (H_\lambda+h_\lambda)}{\partial \lambda} + (d\cF^{H_\lambda+h_\lambda})_u(Y) +\grad_u(\eta_\lambda)=0.
	\end{equation}
	Let $\pi:\cZ(x_-,x_+)\rightarrow \cC_\varepsilon^\infty(\{H_\lambda\}_\lambda)$ be the projection. In order to conclude the proof of the claim, it is sufficient to show that the set of regular values of $\pi$ is a residual subset of $\cC_\varepsilon^\infty(\{H_\lambda\}_\lambda)$. This will follow from the Sard-Smale theorem (see \cite[Theorem 8.5.7]{audin-damian}), as soon as we show that $\pi$ is a Fredholm map (the separability of the spaces follows from Claim~\ref{clm:C_epsilon_is_separable} and Remark~\ref{rem:C_epsilon_separable} above). Let us therefore show that for every $(\lambda, u, \{H_\lambda+h_\lambda\}_\lambda)\in \cZ(x_-,x_+)$, the operator
	\begin{eqnarray}
	\nonumber 
	(d\pi)_{(\lambda, u, \{H_\lambda+h_\lambda\}_\lambda)}:T_{(\lambda, u, \{H_\lambda+h_\lambda\}_\lambda)}\cZ(x_-,x_+) &\rightarrow&  \cC_\varepsilon^\infty (\{H_\lambda\}_\lambda),\\
	\nonumber
	(a, Y,\eta) &\mapsto& \eta,
	\end{eqnarray}   
	is a Fredholm operator. In analogy with the proof of \cite[Proposition 11.3.5]{audin-damian}, denote by  
	$
	V := \grad_u \frac{\partial (H_\lambda+h_\lambda)}{\partial \lambda}\in L^p(u^*TM)
	$ the vector field multiplying $a$ in (\ref{eq:tangent_to_Z}). 
	Then, the kernel of $(d\pi)_{(\lambda, u, \{H_\lambda+h_\lambda\}_\lambda)}$ is the space
	\begin{equation*}
	\left\{(a,Y,0)\ |\ (a,Y)\in \R\times W^{1,p}(u^*TM)\text{ and } aV+(d\cF^{H_\lambda+h_\lambda})_u(Y) =0 \right\}.
	\end{equation*}
	Let us show that this space is finite dimensional by splitting into two cases:
	\begin{enumerate}
		\item $V\notin \im((d\cF^{H_\lambda+h_\lambda})_u)$. In this case we find \begin{equation*}
		\ker (d\pi)_{(\lambda, u, \{H_\lambda+h_\lambda\}_\lambda)} = \left\{ (0,Y,0)\ |\ Y\in \ker((d\cF^{H_\lambda+h_\lambda})_u)\right\},
		\end{equation*}
		which is finite dimensional by Lemma~\ref{lem:dF_is_Fredholm}.
		\item $V\in \im((d\cF^{H_\lambda+h_\lambda})_u)$. Choose $Y_0\in W^{1,p}(u^*TM)$ such that $(d\cF^{H_\lambda+h_\lambda})_u(Y_0)=V$.  It follows that 
		\begin{equation*}
		\ker (d\pi)_{(\lambda, u, \{H_\lambda+h_\lambda\}_\lambda)} = \left\{ (a,Y,0)\ |\ a(d\cF^{H_\lambda+h_\lambda})_u(Y_0)+(d\cF^{H_\lambda+h_\lambda})_u(Y)=0\right\}.
		\end{equation*}
		This space is isomorphic to $\R Y_0+\ker ((d\cF^{H_\lambda+h_\lambda})_u)$, which is also finite-dimensional.
	\end{enumerate}
	Next, let us show that the image of $ (d\pi)_{(\lambda, u, \{H_\lambda+h_\lambda\}_\lambda)} $ is closed and has finite codimension. 
	Indeed, it is the inverse image of the subspace 
	\begin{equation}\label{eq:image_dpi_vs_dF}
	\R V+ \im ((d\cF^{H_\lambda+h_\lambda})_u)\subset L^p(u^* TM)
	\end{equation}
	under the linear map $\eta\mapsto \grad_u\eta$, viewed as a map $\cC_\varepsilon^\infty( \{H_\lambda\}_\lambda) \rightarrow L^p(u^*TM)$. By Lemma~\ref{lem:dF_is_Fredholm}, the subspace (\ref{eq:image_dpi_vs_dF}) is closed and of finite-codimension, and hence we conclude the same for  the image of $ (d\pi)_{(\lambda, u, \{H_\lambda+h_\lambda\}_\lambda)} $.
	Consequently, $\pi$ is indeed a Fredholm map, and by the Sard-Smale theorem, the set of its regular values is a residual subset  $\cC_\varepsilon^\infty( \{H_\lambda\}_\lambda)$.
	
	Denote by $\hreg\subset \cC_\varepsilon^\infty( \{H_\lambda\}_\lambda)$ the set of regular values of $\pi$, then for any $h\in\hreg$, setting $\Lambda=(\{h_\lambda+h_\lambda\}_\lambda, J)$, the set 
	$$
	\pi^{-1}(h) = \cM_\Lambda(x_-,x_+)\cap\{\lambda\in(0,1)\}
	$$ 
	is a smooth manifold (with respect to the $\cC^\infty_{loc}$ topology). Together with the discussion from the beginning of the proof, this implies that $\cM_\Lambda(x_-,x_+)$ is a manifold with boundary.
\end{proof}

\subsubsection{Perturbing homotopies that are regular on a subset.}
In this section we state and prove results, analogous to the ones from  Section~\ref{app:Hamiltonian_reg_in_U}, but for homotopies instead of Hamiltonians. 
Fix an almost complex structure $J$ on $M$, a CIB domain $U$, and $U_\circ\Subset U$. 

\begin{defin}\label{def:hom_reg_on_U}
	Let $H$ be a homotopy of Hamiltonians such that the pair $(H,J)$ has a barricade in $U$ around $U_\circ$.
	\begin{enumerate}
		\item We say that the pair $(H,J)$ is {\it regular on $U$} if $(H_\pm,J)$ are regular on $U$ (see Definition~\ref{def:Ham_reg_on_U}) and for every solution $u$ of the $s$-dependent Floer equation with respect to $(H,J)$, the linearization $(d\cF)_u$ of the Floer map $\cF$ is surjective. 
		
		In particular, by \cite[Theorem~8.1.2]{audin-damian}, for every $x_\pm \in \cP(H_\pm)$ such that $x_+\subset U$, the space of solutions $\cM_{(H,J)}(x_-, x_+)$ is a smooth manifold of dimension $\mu(x_-) - \mu(x_+)$. 
		\item We say that the pair $(H,J)$ is {\it semi-regular on $U$} if $(H_\pm,J)$ are semi-regular on $U$ (as in Definition~\ref{def:Ham_reg_on_U}) and for every $x_\pm \in \cP(H_\pm)$, with $\mu(x_-)<\mu(x_+)$ and such that $x_+\subset U$, we have  $\cM_{(H,J)}(x_-, x_+)=\emptyset$.
	\end{enumerate}		
\end{defin} 
As in Section~\ref{app:Hamiltonian_reg_in_U}, if a pair is regular on $U$, then it is also semi-regular on $U$, and every Floer-regular pair with a barricade is regular on $U$. 
\begin{rem}
	For a pair $(H,J)$ that is regular on $U$, the continuation map might not be defined everywhere. However, using Proposition~\ref{pro:appendix_broken_traj}, one can see that when $\mu(x_-)=\mu(x_+)$ and $x_+\subset U$, the zero dimensional manifold $\cM_{(H,J)}(x_-, x_+)$ is compact and hence finite. The composition $\pi_U\circ \Phi_{(H,J)}$ can be defined by counting the elements of such manifolds. We remark that this is a slight abuse of notations, as the continuation map $\Phi_{(H,J)}$ is not necessarily defined on its own. Due to the barricade, if $x_-\subset U_\circ$ then $x_+\subset U_\circ\subset U$. It follows that the composition $\Phi_{(H,J)}\circ\pi_{U_\circ}$ is well defined as well. 
\end{rem}
Our main goal for this section is to prove the following statement.
\begin{prop}\label{pro:app_pert_hom_regular_on_U}
	Suppose that $H$ is a homotopy such that $(H,J)$ is regular on $U$, and let $R>0$ such that $\partial_sH|_{|s|>R} =0$. Let $H'$ be a homotopy such that
	\begin{enumerate}
		\item $\partial_s H'|_{|s|>R}=0$, \label{itm:hom_reg_U_bdd_supp}
		\item $H'$ is $\cC^\infty$-close to $H$, and $H_\pm'$ agree with $H_\pm$ on their 1-periodic orbits up to second order,\label{itm:hom_reg_U_pert}
		\item $(H',J)$ is regular on $U$. \label{itm:hom_reg_U_reg_U}
	\end{enumerate}
	Then, the compositions of the continuation maps and projections agree:
	\begin{equation}\label{eq:comp_of_cont_agree}
	\pi_U\circ \Phi_{(H,J)}=\pi_U\circ \Phi_{(H',J)},
	\qquad\Phi_{(H,J)}\circ\pi_{U_\circ} = \Phi_{(H',J)}\circ\pi_{U_\circ}.
	\end{equation}  
\end{prop}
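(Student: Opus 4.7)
The plan is to mimic the proof of Proposition~\ref{pro:app_pert_Ham_regular_on_U}, adapting the cobordism argument from the Hamiltonian (autonomous, $\R$-action) setting to the homotopy (parameterized, continuation map) setting. The key dimensional shift: the continuation map counts elements of $\cM_{(H,J)}(x_-,x_+)$ when $\mu(x_-)=\mu(x_+)$ (no $\R$-quotient), so the parameterized moduli space introduced below will give a $1$-dimensional cobordism when the index difference is $0$, rather than $1$.

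As a first step, I will connect $H$ and $H'$ by a path $\{H_\lambda\}_{\lambda\in[0,1]}$ of homotopies of Hamiltonians that is stationary for $\lambda\notin[\delta,1-\delta]$, such that $H_\lambda$ has $\partial_s H_\lambda|_{|s|>R}=0$ for every $\lambda$, and such that the ends $H_{\lambda\pm}$ agree with $H_\pm$ on $\cP(H_\pm)$ up to second order (so in particular $\cP(H_{\lambda\pm})=\cP(H_\pm)$). Taking $H'$ sufficiently close to $H$, an analogue of Lemma~\ref{lem:semi_regular_is_open} (using the broken-trajectory convergence in Proposition~\ref{pro:appendix_broken_traj} together with the index decrease along each non-constant piece) ensures that every $(H_\lambda,J)$ is semi-regular on $U$ and that every $(H_{\lambda\pm},J)$ is semi-regular on $U$ in the sense of Definition~\ref{def:Ham_reg_on_U}.

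Next, for each pair $x_\pm\in\cP(H_\pm)$ with $x_+\subset U$, I will introduce the parameterized moduli space
\begin{equation*}
\cM_\Lambda(x_-,x_+):=\left\{(\lambda,u)\ :\ \lambda\in[0,1],\ u\in\cM_{(H_\lambda,J)}(x_-,x_+)\right\}.
\end{equation*}
An adaptation of Claim~\ref{clm:reg_cobordism} to homotopies—set up on the Banach manifold of compactly supported perturbations of the path $\{H_\lambda\}$ whose ends vanish on $\cP(H_\pm)$ up to second order and with $\partial_s$-support in $[-R,R]$, using the separability established in Section~\ref{app:regular_pair} and the Sard--Smale theorem—yields, after a $\cC^\infty$-small perturbation of the path (that keeps the endpoints $H,H'$ fixed since they are already regular on $U$), a smooth manifold with boundary of dimension $\mu(x_-)-\mu(x_+)+1$, with boundary equal to $\{0\}\times\cM_{(H,J)}(x_-,x_+)\cup\{1\}\times\cM_{(H',J)}(x_-,x_+)$. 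The surjectivity of the linearized universal section at an interior solution is established as in \cite[Proposition~8.1.4]{audin-damian}, using the existence of a bump supported in $M\times S^1\times(\delta,1-\delta)$ away from $\cP(H_\pm)$; this uses crucially that we only need regularity at interior $\lambda$, since the ends are already regular on $U$.

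The main work—and the step I expect to be most delicate—is the compactness argument when $\mu(x_-)=\mu(x_+)$ (so that $\cM_\Lambda(x_-,x_+)$ is $1$-dimensional and $x_+\subset U$). Given a sequence $(\lambda_n,u_n)$, pass to a subsequence with $\lambda_n\to\lambda_\star\in[0,1]$ and apply Proposition~\ref{pro:appendix_broken_traj} to the sequence of solutions of the $s$-dependent Floer equation for the converging sequence of homotopies $H_{\lambda_n}$. The resulting broken trajectory connecting $x_-$ to $x_+$ has the schematic form $(v_1,\ldots,v_k,w,v'_1,\ldots,v'_\ell)$, where $w\in\cM_{(H_{\lambda_\star},J)}$ and the $v_i,v'_j$ are solutions of the $s$-independent equations for $H_{\lambda_\star\pm}$; Lemma~\ref{lem:barricade_ban_broken_traj} confines the entire broken trajectory to $U$ (using that $(H_{\lambda_\star},J)$ and $(H_{\lambda_\star\pm},J)$ inherit barricades via Proposition~\ref{pro:appendix}). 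Semi-regularity on $U$ forces each $v_i,v'_j$ to be constant and strictly drops the index on any non-constant piece, so from $\mu(x_-)=\mu(x_+)$ one deduces $k=\ell=0$ and $(\lambda_n,u_n)\to(\lambda_\star,w)\in\cM_\Lambda(x_-,x_+)$ (no translation breaking is possible, since there is no $\R$-action on continuation solutions to absorb). Thus $\cM_\Lambda(x_-,x_+)$ is a compact $1$-manifold with boundary, so the two boundary components have equal $\#_2$, which together with the definition \eqref{eq:continuation_def} gives $\pi_U\circ\Phi_{(H,J)}=\pi_U\circ\Phi_{(H',J)}$. The second identity in \eqref{eq:comp_of_cont_agree} follows formally: by Proposition~\ref{pro:appendix} both pairs retain the barricade, so $\Phi\circ\pi_{U_\circ}=\pi_U\circ\Phi\circ\pi_{U_\circ}$ for each of $(H,J),(H',J)$, and the first identity then implies the second.
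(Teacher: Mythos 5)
Your proposal follows essentially the same route as the paper's proof: a semi-regular path of homotopies (the paper's Lemma~\ref{lem:hom_semi_regular_is_open} is exactly the analogue you invoke), a parameterized moduli space made into a compact one-dimensional cobordism via Sard--Smale (Claim~\ref{clm:reg_cobordism_hom}), compactness from broken-trajectory convergence plus barricades and the index monotonicity forced by semi-regularity, and the second identity deduced formally from the barricade. The only small discrepancy is that in the homotopy setting the surjectivity of the linearized universal section for perturbations supported in $M\times S^1\times[-R,R]\times[\delta,1-\delta]$ is supplied by the paper's Lemma~\ref{lem:Gamma_is_surj} rather than by \cite[Proposition~8.1.4]{audin-damian}, as noted in the proof of Claim~\ref{clm:reg_cobordism_hom}.
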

As before, the second equation in (\ref{eq:comp_of_cont_agree}) follows from the first, since both $(H,J)$ and $(H',J)$ have a barricade in $U$ around $U_\circ$ and thus $\Phi\circ\pi_{U_\circ} = \pi_U\circ\Phi\circ\pi_{U_\circ}$.
In analogy with the previous section, in order to prove Proposition~\ref{pro:app_pert_hom_regular_on_U}, we connect $H$ and $H'$ by a linear path (or, linear homotopy) of homotopies $\{H_\lambda\}_{\lambda\in[0,1]}$, such that the pairs $(H_\lambda, J)$ are all semi-regular on $U$. Then, given $x_\pm\in \cP(H_\pm)$, with $\mu(x_-)=\mu(x_+)$ and $x_+\subset U$, we show that the space 
\begin{equation}\label{eq:hom_cobordism_space}
\cM_\Lambda(x_-, x_+):=\left\{(\lambda, u): u \in \cM_{(H_\lambda,J)}(x_-, x_+)\right\}
\end{equation}
is a smooth, compact, 1-dimensional manifold with boundary, that realizes a cobordism between $\cM_{(H,J)}(x_-,x_+)$ and $\cM_{(H',J)}(x_-,x_+)$. We will then conclude that the number of elements in $\cM_{(H,J)}(x_-,x_+)$ and  $\cM_{(H',J)}(x_-,x_+)$ coincides modulo 2. 

As for the case of Hamiltonians, semi-regularity of homotopies is also an open condition, as the following lemma guarantees.
\begin{lemma}\label{lem:hom_semi_regular_is_open}
	Suppose that $(H,J)$ is semi-regular on $U$, and fix $R>0$. Then, for every homotopy $H'$ that is close enough to $H$, such that $\partial_s H'|_{|s|>R}=0$ and $H_\pm'$ agree with $H_\pm$ on their 1-periodic orbits up to second order, the pair $(H',J)$ is also semi-regular on $U$.
\end{lemma}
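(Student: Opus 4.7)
The plan is to follow the blueprint of Lemma~\ref{lem:semi_regular_is_open}, now adapted to homotopies. I argue by contradiction: suppose there exists a sequence $H_n$ of homotopies converging to $H$ in $\cC^\infty$ with $\partial_sH_n|_{|s|>R}=0$ and with $\cP(H_{n\pm})=\cP(H_\pm)$, but such that $(H_n,J)$ fails to be semi-regular on $U$ for every $n$. Semi-regularity of $(H,J)$ as a homotopy entails semi-regularity of $(H_\pm,J)$ as Hamiltonians, so by Lemma~\ref{lem:semi_regular_is_open} the pairs $(H_{n\pm},J)$ are semi-regular on $U$ for $n$ large enough. Hence, after discarding finitely many terms, the failure must come from the homotopy clause: for each $n$ there exist $x_\pm^n\in\cP(H_\pm)$ with $\mu(x_-^n)<\mu(x_+^n)$, $x_+^n\subset U$, and a solution $u_n\in\cM_{(H_n,J)}(x_-^n,x_+^n)$. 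Since $\cP(H_\pm)$ is finite, pass to a subsequence so that $x_\pm^n=x_\pm$ is independent of $n$.

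Now apply Proposition~\ref{pro:appendix_broken_traj} to $\{u_n\}$ with, say, $\sigma_n\equiv 0$. After a further subsequence, $\{u_n\}$ converges to a broken trajectory of $(H,J)$
\begin{equation*}
\overline{v}=(v_1,\dots,v_k,\,w,\,v_1',\dots,v_\ell'),
\end{equation*}
with $v_i\in\cM_{(H_-,J)}(x_{i-1},x_i)$, $w\in\cM_{(H,J)}(x_k,y_0)$, $v_j'\in\cM_{(H_+,J)}(y_{j-1},y_j)$, $x_0=x_-$, $y_\ell=x_+$. By the argument in the proof of Proposition~\ref{pro:appendix_broken_traj} the pieces $v_i$ and $v_j'$ are genuinely non-constant (the shifts $s_n^i$ and $s_n'^{j}$ are constructed exactly so that consecutive orbits $x_{i-1}\neq x_i$, $y_{j-1}\neq y_j$ lie in different $\epsilon$-balls, and the $s$-independent energy identity then forces a strict action drop). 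Because $x_+=y_\ell\subset U$ and all of $(H,J)$, $(H_\pm,J)$ have barricades in $U$ around $U_\circ$, Lemma~\ref{lem:barricade_ban_broken_traj} (applied to the broken trajectory read backwards, i.e., from $y_\ell=x_+$) guarantees $\overline{v}\subset U$; in particular every intermediate orbit $x_i,y_j$ is contained in $U$.

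Finally, I read the index inequality off the broken trajectory. Semi-regularity of $(H_-,J)$ on $U$ applied to each non-constant $v_i$ (whose right end $x_i$ lies in $U$) forces $\mu(x_{i-1})>\mu(x_i)$; semi-regularity of $(H_+,J)$ on $U$ applied to each non-constant $v_j'$ gives $\mu(y_{j-1})>\mu(y_j)$; and semi-regularity of $(H,J)$ on $U$ applied to $w$ (whose right end $y_0\subset U$) together with $w\neq\emptyset$ rules out $\mu(x_k)<\mu(y_0)$, i.e.\ $\mu(x_k)\geq \mu(y_0)$. Chaining these yields
\begin{equation*}
\mu(x_-)=\mu(x_0)\geq \mu(x_k)\geq \mu(y_0)\geq \mu(y_\ell)=\mu(x_+),
\end{equation*}
contradicting $\mu(x_-)<\mu(x_+)$. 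The edge case $k=\ell=0$ reduces to $w\in\cM_{(H,J)}(x_-,x_+)$ with $\mu(x_-)<\mu(x_+)$ and $x_+\subset U$, contradicting the homotopy clause of semi-regularity of $(H,J)$ on $U$ directly.

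The only delicate point is the bookkeeping just above, specifically the assertion that the pieces $v_i$ and $v_j'$ in the broken trajectory are non-constant; everything else is a direct combination of Proposition~\ref{pro:appendix_broken_traj}, Lemma~\ref{lem:barricade_ban_broken_traj}, and Lemma~\ref{lem:semi_regular_is_open}. This non-constancy is already implicit in the construction of the shifts in Proposition~\ref{pro:appendix_broken_traj}, so the argument goes through without the additional transversality input that was needed, e.g., in Claim~\ref{clm:reg_cobordism}.
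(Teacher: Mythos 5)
Your proposal is correct and follows essentially the same route as the paper's proof: argue by contradiction, use Proposition~\ref{pro:appendix_broken_traj} to extract a broken trajectory of $(H,J)$ with ends $x_\pm$, confine it to $U$ via Lemma~\ref{lem:barricade_ban_broken_traj}, and derive $\mu(x_-)\geq\mu(x_+)$ from semi-regularity of $(H,J)$ and $(H_\pm,J)$, contradicting $\mu(x_-)<\mu(x_+)$ (your explicit reduction of the failure to the homotopy clause via Lemma~\ref{lem:semi_regular_is_open} is a point the paper leaves implicit). Two small remarks: since ``semi-regular on $U$'' is only defined for pairs having a barricade in $U$ around $U_\circ$, both the stated conclusion and your dichotomy ``ends versus homotopy clause'' also require noting, via Proposition~\ref{pro:appendix}, that $(H',J)$ and $(H'_\pm,J)$ inherit the barricade --- the paper opens its proof with exactly this observation; and the non-constancy of the pieces $v_i$, $v_j'$, which you single out as the delicate point, is not actually needed, because semi-regularity of $(H_\pm,J)$ already makes the index non-increasing across every piece (empty moduli spaces for distinct ends with $\mu(x_-)\leq\mu(x_+)$, only constant solutions for equal ends), so the chain with $\geq$ holds regardless.
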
 
\begin{proof}
	First, notice that by Proposition~\ref{pro:appendix}, for every homotopy $H'$ that satisfies the conditions of the lemma, the pair $(H',J)$ has a barricade in $U$ around $U_\circ$.
	Assume for the sake of contradiction that there exist a sequence of homotopies $H_n$, converging to $H$, such that for each $n$, $H_n$ satisfies the conditions of the lemma, and $(H_n,J)$ is not semi-regular on $U$. Then, for each $n$, there exist $x_\pm^n$ satisfying $\mu(x_-^n)<\mu(x_+^n)$ and $x_+^n\subset U$, and a solution $u_n \in\cM_{(H_n,J)}(x_-^n, x_+^n)$. Since $x_\pm^n\in \cP(H_{n\pm}) = \cP(H_\pm)$ are elements of finite sets, we may assume, by passing to a subsequence, that $x_\pm^n = x_\pm$ are independent of $n$. By Proposition~\ref{pro:appendix_broken_traj}, there exists a subsequence of the solutions $u_n$ that converges to a broken trajectory
	$$
	\overline{v} =(v_1,\dots,v_k,w,v'_1,\dots,v'_\ell)
	$$ 
	of $(H,J)$.  Here $v_i$ and $v_j'$ are Floer solutions with respect to the Hamiltonians $H_-$ and $H_+$ respectively, and $w\in\cM_{(H,J)}$ is a solution with respect to the homotopy $H$.
	Moreover, the ends of the broken trajectory are $x_\pm$. Since $x_+$ is contained in $U$ and the pairs $(H,J)$, $(H_\pm,J)$ all have barricades in $U$ around $U_\circ$, it follows from Lemma~\ref{lem:barricade_ban_broken_traj} that the broken trajectory $\overline{v}$	is contained in $U$. As the pair $(H,J)$ is semi-regular on $U$, for every non-constant $v_i$ or $v_j'$, the index difference between the left end and the right end is positive. Moreover, the index difference between the ends of $w$ is non-negative. Therefore, under the notations of Proposition~\ref{pro:appendix_broken_traj}, we have
	$$
	\mu(x_-) = \mu(x_0)> \cdots>\mu(x_k)\geq \mu(y_0) > \cdots>\mu(y_\ell)=\mu(x_+),
	$$
	which contradicts our assumption that  $\mu(x_-^n)<\mu(x_+^n)$.
\end{proof}

As in the previous section, we show that for a suitable choice of a path of homotopies, $\{H_\lambda\}$, the set (\ref{eq:hom_cobordism_space}) is a smooth manifold. Our starting point is a path $\{H_\lambda\}_{\lambda\in [0,1]}$ that is stationary for $\lambda \notin[\delta,1-\delta]$, such that for all $\lambda\in [0,1]$, $H_\lambda$ satisfies properties \ref{itm:hom_reg_U_bdd_supp}-\ref{itm:hom_reg_U_pert} from Proposition~\ref{pro:app_pert_hom_regular_on_U}.
This time the space of perturbations, $\cC_\varepsilon^\infty(\{H_\lambda\}_\lambda)$, will consist of maps
\begin{equation*}
h:M\times S^1\times \R \times [0,1]\rightarrow\R, 
\end{equation*}
supported in $M\times S^1\times[-R,R]\times[\delta,1-\delta]$, such that
$\|h\|_\varepsilon<\infty$, where again $\|\cdot\|_\varepsilon$ Floer's norm from Definition~\ref{def:epsilon_norm}. We identify the map $h$ with the path of homotopies $\{h_\lambda(\cdot, \cdot):=h(\cdot, \cdot,\lambda)\}_\lambda$.

The following claim is an adjustment of \cite[Theorem 11.3.2]{audin-damian} to  the case where the ends of the path, $(H_0,J)$ and $(H_1,J)$, are not necessarily Floer-regular, but are regular on $U$, and the support of the perturbations is uniformly bounded. The proof is completely analogous to that of Claim~\ref{clm:reg_cobordism} above, with the single difference, that the surjectivity of the operator $\Gamma$ for homotopies is guaranteed by Lemma~\ref{lem:Gamma_is_surj}, instead of \cite[Proposition 8.1.4]{audin-damian}. We therefore omit the proof.
\begin{claim}\label{clm:reg_cobordism_hom}
	Let $\{H_\lambda\}_{\lambda\in [0,1]}$ be a path of homotopies as above, and assume that $(H_0, J)$ and $(H_1,J)$ are regular on $U$. Then, there exists a residual subset $\hreg\subset \cC_\varepsilon^\infty (\{H_\lambda\}_\lambda)$, such that if $h\in \hreg$, then for $\Lambda = (\{H_\lambda +h_\lambda\}_\lambda, J)$ and for every $x_\pm\in\cP(H_{0\pm})$ with $x_+\subset U$, the space $\cM_\Lambda(x_-,x_+)$ is a manifold with boundary, of dimension $\mu(x_-)-\mu(x_+)+1$, and its boundary is  
	\begin{equation}\label{eq:boundary_of_cob_hom}
	\partial \cM_\Lambda(x_-, x_+) = \{0\}\hspace{-0.08cm}\times\hspace{-0.08cm}\cM_{(H,J)}(x_-,x_+)\ \cup \ \{1\}\hspace{-0.08cm}\times\hspace{-0.08cm}\cM_{(H',J)}(x_-,x_+).
	\end{equation}
\end{claim}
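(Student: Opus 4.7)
The plan is to follow the structure of the proof of Claim~\ref{clm:reg_cobordism} line by line, substituting the homotopy transversality input (Lemma~\ref{lem:Gamma_is_surj}) wherever the Hamiltonian version \cite[Proposition 8.1.4]{audin-damian} was used. First I would exploit the support condition built into the space $\cC_\varepsilon^\infty(\{H_\lambda\}_\lambda)$: every admissible $h$ is supported in $M\times S^1\times[-R,R]\times[\delta,1-\delta]$, and the path $\{H_\lambda\}$ is itself stationary in $\lambda$ outside $[\delta,1-\delta]$. Consequently, for $\lambda\in[0,\delta)$ (resp.\ $\lambda\in(1-\delta,1]$) the perturbed homotopy coincides with $H_0=H$ (resp.\ $H_1=H'$), and we get the local identifications
\[
\cM_\Lambda(x_-,x_+)\cap\{\lambda<\delta\}=[0,\delta)\times\cM_{(H,J)}(x_-,x_+),
\]
\[
\cM_\Lambda(x_-,x_+)\cap\{\lambda>1-\delta\}=(1-\delta,1]\times\cM_{(H',J)}(x_-,x_+).
\]
Since $(H,J)$ and $(H',J)$ are regular on $U$ and $x_+\subset U$, both factors are smooth manifolds of dimension $\mu(x_-)-\mu(x_+)$, yielding the claimed boundary structure~(\ref{eq:boundary_of_cob_hom}) and reducing the problem to showing that the interior $\{\lambda\in(\delta,1-\delta)\}$ piece is a smooth manifold of the correct dimension.

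For the interior, I would introduce the universal moduli space
\[
\cZ(x_-,x_+):=\left\{(\lambda,u,h)\in(\delta,1-\delta)\times\cP(x_-,x_+)\times\cC_\varepsilon^\infty(\{H_\lambda\}_\lambda)\ \big|\ u\in\cM_{(H_\lambda+h_\lambda,J)}(x_-,x_+)\right\},
\]
realized as the zero set of a smooth section $\sigma$ of the obvious Banach bundle. To show $\cZ(x_-,x_+)$ is a Banach manifold it suffices to verify, at every zero $(\lambda_0,u,h)$, that the linearization
\[
\tilde\Gamma_{\lambda_0}(a,Y,\eta)=a\cdot\grad_u\partial_\lambda(H_\lambda+h_\lambda)\big|_{\lambda_0}+(d\cF^{H_{\lambda_0}+h_{\lambda_0}})_u(Y)+\grad_u\eta_{\lambda_0}
\]
is surjective. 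Here I would embed $\cC_\varepsilon^\infty([-R,R])$ into $\cC_\varepsilon^\infty(\{H_\lambda\}_\lambda)$ by $\zeta\mapsto\chi(\lambda)\cdot\zeta$, where $\chi$ is a bump function supported in $(\delta,1-\delta)$ with $\chi(\lambda_0)=1$; restricting $\tilde\Gamma_{\lambda_0}$ to $(Y,\chi\cdot\zeta)$ recovers the operator $\Gamma$ of Lemma~\ref{lem:Gamma_is_surj}, which is surjective with a continuous right inverse, and surjectivity of $\tilde\Gamma_{\lambda_0}$ follows a fortiori. The implicit function theorem then gives $\cZ(x_-,x_+)$ the structure of a separable Banach manifold (separability of the perturbation space is Claim~\ref{clm:C_epsilon_is_separable} together with Remark~\ref{rem:C_epsilon_separable}).

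Next I would verify that the projection $\pi\colon\cZ(x_-,x_+)\to\cC_\varepsilon^\infty(\{H_\lambda\}_\lambda)$, $(\lambda,u,h)\mapsto h$, is a Fredholm map of index $\mu(x_-)-\mu(x_+)+1$, by splitting into cases according to whether $V:=\grad_u\partial_\lambda(H_\lambda+h_\lambda)$ lies in $\im(d\cF^{H_\lambda+h_\lambda})_u$ or not; the kernel and cokernel computations are identical to those carried out in the proof of Claim~\ref{clm:reg_cobordism}, combined with Claim~\ref{clm:dF_is_Fredholm}, which supplies Fredholmness of $(d\cF^{H_\lambda+h_\lambda})_u$ of index $\mu(x_-)-\mu(x_+)$ in the homotopy setting. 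Sard--Smale then produces a residual set $\hreg\subset\cC_\varepsilon^\infty(\{H_\lambda\}_\lambda)$ of regular values for $\pi$ (taking the countable intersection over all pairs $x_\pm$); for $h\in\hreg$, elliptic regularity identifies $\pi^{-1}(h)$ with the smooth part of $\cM_\Lambda(x_-,x_+)$, which combined with the boundary analysis from the first paragraph yields the manifold-with-boundary structure of the claimed dimension. The main obstacle I anticipate is a bookkeeping one: one must check carefully that the $\chi$-localization used to invoke Lemma~\ref{lem:Gamma_is_surj} produces perturbations genuinely lying in the admissible space (i.e., compactly supported in both the $s$ and $\lambda$ directions with the prescribed bounds, and vanishing to second order at the periodic orbits of the ends), so that the surjectivity is established within the perturbation class used to define $\hreg$, rather than for an enlarged one.
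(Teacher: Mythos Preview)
Your proposal is correct and follows essentially the same approach as the paper, which in fact omits the proof entirely, stating only that it is completely analogous to that of Claim~\ref{clm:reg_cobordism} with Lemma~\ref{lem:Gamma_is_surj} replacing \cite[Proposition~8.1.4]{audin-damian}. Your write-up fleshes out exactly this analogy; one small remark is that your final bookkeeping worry about vanishing to second order on periodic orbits is unnecessary here, since in the homotopy setting the perturbation space $\cC_\varepsilon^\infty(\{H_\lambda\}_\lambda)$ only imposes compact support in $M\times S^1\times[-R,R]\times[\delta,1-\delta]$ and carries no such condition.
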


\begin{proof}[Proof of Proposition~\ref{pro:app_pert_hom_regular_on_U}]
	Recall that $H$ is a homotopy such that $(H,J)$ is regular on $U$, and $H'$ is a homotopy satisfying properties \ref{itm:hom_reg_U_bdd_supp}-\ref{itm:hom_reg_U_reg_U} above.
	Let $H_\lambda$ be a linear path (or, linear homotopy) between the homotopies $H$ and $H'$, that is stationary for $\lambda \notin[\delta, 1-\delta]$. Then, for each $\lambda$, the homotopy $H_\lambda$ is close to $H$ and its ends, $H_{\lambda\pm}$, agree with the ends of $H$ on $\cP(H_\pm)$. In particular, $\cP(H_{\lambda\pm}) = \cP(H_\pm)$ for all $\lambda\in[0,1]$. Taking $H'$ to be close enough to $H$, Lemma~\ref{lem:hom_semi_regular_is_open} guarantees that all of the homotopies $H_\lambda$ are semi-regular on $U$, when paired with $J$. In particular, for each $\lambda$, the pairs $(H_\lambda, J)$ and $(H_{\lambda\pm}, J)$ have a barricade in $U$ around $U_\circ$.
	
	By Claim~\ref{clm:reg_cobordism_hom}, there exists a small perturbation  $h\in \cC_\varepsilon^\infty(\{H_\lambda\}_\lambda)$, such that for $\Lambda = (\{H_\lambda +h_\lambda\}_\lambda, J)$ and for every $x_\pm\in\cP(H_{0\pm})$ with $x_+\subset U$, the space $\cM_\Lambda(x_-,x_+)$ is a manifold with boundary, of dimension $\mu(x_-)-\mu(x_+)+1$. Let us show that when $\mu(x_-)-\mu(x_+)=0$, the manifold $\cM_\Lambda(x_-, x_+)$ is compact. Let $(\lambda_n, u_n)\in \cM_\Lambda(x_-, x_+)$ be any sequence. After passing to a subsequence, we have $\lambda_n\rightarrow\lambda_\star\in [0,1]$, and hence $u_n\in\cM_{(H_{\lambda_n}, J)}(x_-, x_+)$ are solutions with respect to homotopies that converge to $H_{\lambda_\star}$. By Proposition~\ref{pro:appendix_broken_traj}, there exists a subsequence of $u_n$ converging to a broken trajectory $\overline{v}=\{v_1,\cdots,v_k,w,v_1',\dots,v_\ell'\}$ of $(H_{\lambda_\star},J)$. Since the pairs $(H_{\lambda_\star}, J)$ and $(H_{\lambda_\star\pm}, J)$ have a barricade in $U$ around $U_\circ$ and $x_+\subset U$, Lemma~\ref{lem:barricade_ban_broken_traj} guarantees that the broken trajectory is completely contained in $U$. The fact that $(H_{\lambda_\star}, J)$ is semi-regular on $U$ now implies that $v_i$ and $v_j'$ are index-decreasing, and $w$ is index non-increasing:
	\begin{equation*}
	\mu(x_-) = \mu(x_0)> \cdots>\mu(x_k)\geq \mu(y_0) > \cdots>\mu(y_\ell)=\mu(x_+).
	\end{equation*} 
	Recalling that $\mu(x_-)-\mu(x_+)=0$, we conclude that $\overline{v}$ does not contains non-constant solutions of the $s$-independent Floer equations, and hence $\overline{v}=w\in \cM_{(H_{\lambda_\star}, J)}(x_-,x_+)$. This implies that the above subsequence converges to an element of the space,
	\begin{equation*}
	(\lambda_n,u_n)\xrightarrow[n\rightarrow\infty]{}(\lambda_\star, w)\in \cM_\Lambda(x_-, x_+),
	\end{equation*}
	and therefore $\cM_\Lambda(x_-, x_+)$ is compact.
	
	Overall, $\cM_\Lambda(x_-, x_+)$ is a smooth, compact manifold of dimension $\mu(x_-) - \mu(x_+)+1=1$, and its boundary is $\partial \cM_\Lambda(x_-, x_+) = \{0\}\times\cM_{(H,J)}(x_-,x_+)\ \cup \ \{1\}\times\cM_{(H',J)}(x_-,x_+)$. Consequently, the latter are finite sets with equal number of elements mod 2:
	\begin{equation*}
	\#_2\cM_{(H,J)}(x_-,x_+) = \#_2\cM_{(H',J)}(x_-,x_+).
	\end{equation*}  
	The equalities (\ref{eq:comp_of_cont_agree}) follow immediately form the definition of the continuation maps.
\end{proof}

\subsubsection*{Perturbing homotopies that are constant on a subset.}
A particular application of Proposition~\ref{pro:app_pert_hom_regular_on_U} that will be useful, is when $H$ is a homotopy that is constant on the set $U$, and whose ends, $H_\pm$, are regular on $U$ when paired with $J$. 
In this case, it follows from Definition~\ref{def:hom_reg_on_U} that the pair $(H,J)$ is also regular on $U$. Moreover, for periodic orbits $x_\pm\in \cP(H_\pm)$ such that $x_+\subset U$, the space $\cM_{(H,J)}(x_-,x_+)$ coincides with $\cM_{(H_-,J)}(x_-,x_+)$. As a consequence, when $\mu(x_-)=\mu(x_+)$, the space  $\cM_{(H,J)}(x_-,x_+)$ is empty if $x_-\neq x_+$ and contains only constant solutions otherwise. We conclude that the continuation map with respect to $(H,J)$ agrees with the identity map after composing with the projections:
\begin{equation*}
 \pi_{U}\circ\Phi_{(H,J)} = \pi_{U}\circ\id,\quad
 \Phi_{(H,J)}\circ \pi_{U_\circ} = \id \circ  \pi_{U_\circ}.
\end{equation*}
Applying Proposition~\ref{pro:app_pert_hom_regular_on_U} we conclude that the same holds for perturbations of $H$.
\begin{cor}\label{cor:almost_constant_homotopy}
	Suppose that $H$ is a homotopy between two non-degenerate Hamiltonians $H_\pm$, such that $(H,J)$ is constant on $U$, namely $\partial_sH|_U=0$, and $(H_\pm,J)$ are regular on $U$. Fix $R>0$ and let $H'$ be a $\cC^\infty$-small perturbation of $H$, satisfying
	\begin{enumerate}
		\item $\partial_s H'|_{|s|>R}=0$,
		\item $H_\pm'$ agree with $H_\pm$ on their 1-periodic orbits up to second order,
		\item $(H',J)$ is regular on $U$.
	\end{enumerate}
	Then,
	\begin{equation}
	\Phi_{(H',J)}\circ\pi_{U_\circ}=\id\circ\pi_{U_\circ} , \qquad\pi_U\circ \Phi_{(H',J)}=\pi_U\circ \id.
	\end{equation}  
\end{cor}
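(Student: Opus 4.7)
The plan is to reduce the corollary to an immediate application of Proposition~\ref{pro:app_pert_hom_regular_on_U}. It therefore suffices to establish the analogous identities for the unperturbed pair $(H,J)$, namely
\begin{equation*}
\Phi_{(H,J)}\circ\pi_{U_\circ}=\id\circ\pi_{U_\circ},\qquad \pi_U\circ\Phi_{(H,J)} = \pi_U\circ\id,
\end{equation*}
and then transfer them to $H'$ using the proposition. Note that even when $\Phi_{(H,J)}$ is not globally defined (since $(H,J)$ need not be Floer-regular off of $U$), the compositions above are well-defined by the remark preceding Proposition~\ref{pro:app_pert_hom_regular_on_U}, because the presence of a barricade together with the regularity of $(H_\pm,J)$ on $U$ forces $(H,J)$ itself to be regular on $U$ in the sense of Definition~\ref{def:hom_reg_on_U} (the relevant Floer solutions lie in $U$, where $H$ is $s$-independent and equal to $H_\pm$).

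First I would establish, for any orbits $x_\pm\in\cP(H_\pm)$ with $x_+\subset U$ and $\mu(x_-)=\mu(x_+)$, the equality
\begin{equation*}
\cM_{(H,J)}(x_-,x_+)=\cM_{(H_-,J)}(x_-,x_+).
\end{equation*}
Indeed, since $(H,J)$ has a barricade in $U$ around $U_\circ$ and $x_+\subset U$, any $u\in\cM_{(H,J)}(x_-,x_+)$ satisfies $\im(u)\subset U$ by Definition~\ref{def:barricade} (for the statement with $\pi_{U_\circ}$ one argues symmetrically from $x_-\subset U_\circ$, again invoking the barricade); but on $U$ the homotopy $H$ is $s$-independent and equals $H_-=H_+$, so $u$ is actually a solution of the $s$-independent Floer equation for $(H_-,J)$. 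The energy identity (\ref{eq:energy_id_Hamiltonian}) then gives
\begin{equation*}
E(u)=\cA_{H_-}(x_-)-\cA_{H_-}(x_+),
\end{equation*}
and combined with the semi-regularity of $(H_-,J)$ on $U$ (which is implied by regularity on $U$) together with $\mu(x_-)=\mu(x_+)$, the only possibilities are $x_-\neq x_+$ with $\cM_{(H,J)}(x_-,x_+)=\emptyset$, or $x_-=x_+$ with $\cM_{(H,J)}(x_-,x_+)$ consisting only of the constant solution $u(s,t)=x_-(t)$. Counting mod~$2$ then yields
\begin{equation*}
\pi_U\circ\Phi_{(H,J)}=\pi_U\qquad\text{and}\qquad \Phi_{(H,J)}\circ\pi_{U_\circ}=\pi_{U_\circ},
\end{equation*}
from the definition (\ref{eq:continuation_def}) of the continuation map.

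Finally, I would invoke Proposition~\ref{pro:app_pert_hom_regular_on_U}: the hypotheses on $H'$ in the corollary are exactly conditions (1)--(3) of the proposition, so (\ref{eq:comp_of_cont_agree}) applies and gives $\pi_U\circ\Phi_{(H',J)}=\pi_U\circ\Phi_{(H,J)}$ and $\Phi_{(H',J)}\circ\pi_{U_\circ}=\Phi_{(H,J)}\circ\pi_{U_\circ}$. Composing with the identities already proved for $(H,J)$ completes the proof. The only real content of the argument is the step above that constant solutions are the sole contribution to the continuation map on $U$, which rests entirely on the barricade property, the $s$-independence of $H$ on $U$, and the energy identity; I do not anticipate any genuine obstacle beyond careful bookkeeping of domains and indices.
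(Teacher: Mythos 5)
Your argument is correct and is essentially the paper's own: the text preceding the corollary derives the identities for $(H,J)$ by observing that, for $x_+\subset U$, the barricade and the $s$-independence of $H$ on $U$ identify $\cM_{(H,J)}(x_-,x_+)$ with $\cM_{(H_-,J)}(x_-,x_+)$, whose (semi-)regularity on $U$ leaves only constant solutions in equal index, and then transfers the conclusion to $H'$ via Proposition~\ref{pro:app_pert_hom_regular_on_U}, exactly as you do (your appeal to the energy identity is harmless but not needed).
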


\appendix
\section{Incompressibility of domains with incompressible boundaries.}\label{app:incompressible}
Let $M^n$ be a smooth $n$-dimensional orientable manifold, and let $N^n$ be a smooth $n$\nobreakdash-dimensional orientable manifold with boundary such that there exists an embedding $\iota\colon N \to M$.
Denote by $U:=\operatorname{Im}\left(\iota\left(N\setminus \partial N\right)\right)$, and note that $\partial U = \operatorname{Im}\left(\iota\left(\partial N\right)\right)$.
\begin{prop}\label{pro:incompressible}
	If $\partial U$ is incompressible in $M$, then $U$ is incompressible in $M$.
\end{prop}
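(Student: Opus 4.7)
My plan is to show that the inclusion $U \hookrightarrow M$ induces an injection on $\pi_1$. Let $\gamma : S^1 \to U$ be a loop that is null-homotopic in $M$, and let $f : D^2 \to M$ be an extension of $\gamma$. I would first use smooth approximation (relative to $\partial D^2$) to replace $f$ by a smooth map transverse to the smooth submanifold $\partial U \subset M$; this is possible because $f$ can be assumed smooth and $\gamma = f|_{\partial D^2}$ avoids $\partial U$. Then $f^{-1}(\partial U)$ is a compact properly embedded $1$-submanifold of $\operatorname{int}(D^2)$, i.e.\ a finite disjoint union of embedded circles in the open disk.

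Next I would run an innermost-disk induction on the number $k$ of these circles. The base case $k = 0$ is easy: then $f(D^2)$ avoids $\partial U$, and since $f(\partial D^2) = \gamma \subset U$ while $D^2$ is connected and $M \setminus \partial U$ decomposes into the disjoint open sets $U$ and $M \setminus \overline{U}$, the image of $f$ lies entirely in $U$, so $\gamma$ is null-homotopic in $U$. For $k \geq 1$, pick an innermost circle $C$, bounding a subdisk $D' \subset \operatorname{int}(D^2)$ whose interior is disjoint from $f^{-1}(\partial U)$. Then $f(D')$ lies in $\overline{U}$ or in $M \setminus U$, and in either case $f|_C$ is a loop in $\partial U$ that is null-homotopic in $M$ via $f|_{D'}$. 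By the incompressibility of $\partial U$ in $M$, $f|_C$ is null-homotopic within $\partial U$, so there exists a continuous $g : D' \to \partial U$ with $g|_{\partial D'} = f|_C$.

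The key step is to use $g$ to reduce $k$. I would define a new map $f_1 : D^2 \to M$ by $f_1 = f$ on $D^2 \setminus D'$ and $f_1 = g$ on $D'$; this is continuous since $g$ and $f|_{D'}$ agree on $C$. I then push $f_1|_{D'}$ slightly off $\partial U$ using a two-sided tubular (collar) neighborhood $\partial U \times (-\varepsilon, \varepsilon) \subset M$, which exists because $\partial U$ is a smooth, codimension-one, two-sided submanifold of the orientable manifold $M$ (two-sidedness follows from the orientability of $M$ together with the fact that $U$ is a codimension-zero submanifold with boundary). Concretely, I choose a thin collar $A \cong C \times [0,1] \subset D'$ with $C \times \{0\} = \partial D'$, and modify $f_1$ on $D'$ so that on $D' \setminus A$ it is pushed entirely into the interior of whichever side of $\partial U$ previously contained $f(D')$, while on $A$ it interpolates between $f|_C$ on $\partial D'$ and the pushed-in value on the inner boundary of $A$, always staying in that closed side of the collar. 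After a final small perturbation rel $\partial D^2$ to restore transversality, I obtain $f_2 : D^2 \to M$ transverse to $\partial U$, equal to $f$ outside $D'$, with $f_2^{-1}(\partial U) \cap D' = \emptyset$; hence $\#$ circles has strictly decreased, and induction completes the argument.

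The main obstacle I anticipate is carrying out this push-off carefully, since after replacing $f|_{D'}$ by $g$ the map takes values in $\partial U$ on a whole disk, and a naive small perturbation could create many new intersection circles rather than remove them. Controlling this via the chosen collar---and in particular choosing the side of $\partial U$ to match the original behaviour of $f$ in a neighborhood of $C$, so that no new intersections appear along $C$---is the technical heart of the argument. Two-sidedness of $\partial U$ in $M$, which follows from the orientability hypotheses on $M$ and $N$, is what makes this consistent choice of side possible.
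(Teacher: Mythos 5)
Your proof is correct and rests on the same two pillars as the paper's: make the contracting disk transverse to $\partial U$, and use incompressibility of $\partial U$ to recontract the resulting intersection circles inside $\partial U$. Where you differ is in the combinatorics and the finishing move. You work with \emph{innermost} circles and run an induction on their number, which forces you to perform the push-off surgery: after replacing $f|_{D'}$ by a map into $\partial U$ you must nudge it off to the correct side and re-establish transversality without creating new intersection circles. You correctly identify this as the technical heart, and your collar-based interpolation handles it (one small imprecision: before the final perturbation $f_2$ still meets $\partial U$ along $C=\partial D'$ itself, only tangentially rather than transversally, so the perturbation near $C$ is what actually kills that circle). The paper instead takes the \emph{outermost} (maximal) circles, replaces $u$ on all of their disks simultaneously by contractions into $\partial U$, observes that the resulting map has image in $\overline{U}=U\cup\partial U$ (since outside the maximal disks $u$ already lands in $U$), and finishes by composing with a retraction $\overline{U}\to U$ coming from the collar of $\partial U$. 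That one-shot version avoids the induction and the delicate transversality restoration entirely, at the cost of needing the retraction of $\overline{U}$ onto $U$; your version is more surgical but needs more care at the push-off step. Both are valid, and both use the collar (two-sidedness of $\partial U$, which here is automatic since $\partial U$ bounds the codimension-zero piece $U$) in an essential way.
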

\begin{proof}
	In order to show that $\iota_*\colon \pi_1(U) \to \pi_1(M)$ is injective it is sufficient to prove that if a loop $\gamma$ in $U$ is contractible in $M$ then it is contractible in $U$.
	Let $\gamma\colon S^1 \to U$ be a loop that is contractible in $M$. Then, there exists a map $u\colon D \to M$ such that $u\vert^{}_{\partial D}\equiv \gamma$, where $D\subset \mathbb{R}^2$ denotes the unit disk.
	
	Without loss of generality we may assume that $\gamma$ and $u$ are smooth, and that $u\pitchfork\partial U$.
	Indeed, by Whitney's smooth approximation theorem, $u$ is homotopic to a smooth map, $\tilde{u}$. Since $\operatorname{Im}\gamma$ is compact and $U$ is open,  we can choose the smooth approximation such that $\tilde{\gamma}:=\tilde{u}\vert^{}_{\partial D}$ is homotopic to $\gamma$ in $U$. Applying Thom's transversality theorem, we may assume that $\tilde{u} \pitchfork \partial U$. We replace the maps $\gamma$ and $u$ by $\tilde{\gamma}$ and $\tilde{u}$, in order to keep the notations.
	
	Under the assumptions above, the preimage $C=u^{-1}\left(\partial U\right)$ is a compact one dimensional submanifold of $D$, hence a disjoint union of embedded closed curves, $C=\bigsqcup_j C_j$.
	Some of the curves $C_j$ may encompass others. We call a curve $C_j$ \emph{a maximal curve} if it is not encompassed by any other component of $C$. More formally, for each component $C_j$, denote by $D_j\subset D$ the embedded topological disk such that $\partial D_j = C_j$. The curve $C_j$ is maximal if $C_j \nsubseteq D_k$ for all $k\neq j$.
	We denote the set of maximal curves by $\mathcal{C}_{max} := \left\lbrace C_{j_1}, \ldots, C_{j_\ell}\right\rbrace$, and by $\mathcal{D}_{max} := \left\lbrace D_{j_1}, \ldots, D_{j_\ell}\right\rbrace$ the set of the corresponding topological disks.
	
	For every $1 \le i \le \ell$, the restriction $u\vert^{}_{C_{j_i}}$ is a loop in $\partial U$ which is contractible in $M$ by $u\vert^{}_{D_{j_i}}$. By the incompressibility of $\partial U$, the loop $u\vert^{}_{C_{j_i}}$ is contractible in $\partial U$, namely there exists a map $v_i \colon D_{j_i} \to \partial U$ such that $v_i\vert^{}_{C_{j_i}} \equiv u\vert^{}_{C_{j_i}}$. Using the maps $u$ and $v_i$ we can define a map that contracts $\gamma$ inside $\bar {U}$:
	\[
	\hat{u}= 
	\begin{cases} 
	v_i(x) & x\in D_{j_i} \\
	u(x) & \text{otherwise.}
	\end{cases}
	\]
	
	Let us check that $\hat{u}$ is a contraction of $\gamma$ in $\bar{U}$. Indeed, recalling that $u(\partial D)=\gamma \subset U$ and that $C:=u^{-1}\left(\partial U\right)$, it follows from the maximality of the curves in $\mathcal{C}_{max}$  that for all $x\in D\setminus \bigsqcup_{\mathcal{D}_{max}}{D_{j_i}}$, one has $u(x)\in U$, and therefore $\hat{u}(x) \in U$.
	Moreover, for every $x\in D_{j_i}$, $\hat{u}(x) \in \partial {U}$, and we conclude that $\operatorname{Im}\left(\hat u\right) \subseteq U\cup \partial U$.
	
	Using the fact that $\partial U$ has a collar neighborhood in $\bar{U}$, one can construct a continuous map $w\colon \bar{U} \to U$. The composition $w\circ \hat{u}$ is the desired contraction of $\gamma$ in $U$.
	
\end{proof}
\bibliographystyle{plain}
\bibliography{refs}

\paragraph{Yaniv Ganor,}$ $\\
Department of Mathematics\\ 
Technion - Israel Institute of Technology \\
Haifa, 3200003\\
Israel\\
E-mail: ganory@gmail.com

\paragraph{Shira Tanny,}$ $\\
School of Mathematical Sciences\\ 
Tel Aviv University \\
Ramat Aviv, Tel Aviv 69978\\
Israel\\
E-mail: tanny.shira@gmail.com

\end{document}